

%
%
\documentclass[reqno,10pt]{amsart}
\numberwithin{equation}{section}
              %

                                %
                                %
                                %
\usepackage{amsmath}
\usepackage{amsthm}
\usepackage{color}
\usepackage{pdfsync}
\usepackage[colorlinks,linkcolor={black},citecolor={blue},urlcolor={black}]{hyperref}
\usepackage{amssymb,mathrsfs,enumerate,paralist}
\usepackage[normalem]{ulem}
\usepackage{a4wide}



                                %
                                %
                                %
\bibliographystyle{plain}
                                %
                                %
                                %
\newcommand{\R}{\Rz}
\newcommand{\N}{\Nz}
\newcommand{\Rz}{\mathbb{R}}
\newcommand{\Nz}{\mathbb{N}}
\newcommand{\up}{\uparrow}

\newcommand{\disp}{\displaystyle}

\newcommand{\epsi}{\varepsilon}
\newcommand{\eps}{\varepsilon}
\newcommand{\U}{X}
\newcommand{\ls}{|\partial \phi|}       
\newcommand{\rls}{|\partial^- \phi|}    
\newcommand{\rsls}{|\partial^- \phi|}  
\newcommand{\weak}{\buildrel \sigma \over \to} 
\newcommand{\weakto}{\rightharpoonup} 

\newcommand{\loc}{\text{\rm loc}}
\newcommand{\forae}{\text{for a.a. }}

\newcommand{\lsvi}{|\partial \Ve|}
\newcommand{\llsvi}{|\tilde{\partial} \Ve|}


\newcommand{\down}{\downarrow}

\newcommand{\limitingSubdifferential}{\partial_{\ell}}
\newcommand{\lmsbd}{\limitingSubdifferential}

\newcommand{\AC}{\mathrm{AC}}

\newcommand{\Banach}{{\mathcal B}}

\newcommand{\dualoperator}

  
\def\calD{{\mathcal D}}  
 \def\calH{{\mathcal H}} \def\calI{{\mathcal I}}
  
\def\calM{{\mathcal M}}  
  
  \def\calU{{\mathcal U}}

  
 \def\rme{{\mathrm e}}

 \def\rmw{{\mathrm w}} 
 
  \def\rmC{{\mathrm C}}
\def\rmD{{\mathrm D}} \def\rmE{{\mathrm E}}

\def\dd{\;\!\mathrm{d}} 

\newcommand{\teta}{\vartheta}

\newcommand{\nchi}{{\raise.2ex\hbox{$\chi$}}}

\newcommand{\MMM}{{\mathcal M}}

\definecolor{dcyan}{rgb}{0,0.4,0.9}
\definecolor{dred}{rgb}{0.6,0,0.2}
\definecolor{ddcyan}{rgb}{0,0.1,0.9}
\definecolor{ddmagenta}{rgb}{0.8,0,0.8}
\definecolor{orange}{rgb}{0.6,0.2,0}


\newcommand{\piecewiseConstant}[2]{\overline{#1}_{\kern-1pt#2}}

\newcommand{\ini}{{\bar{u}}}

\newcommand{\inie}{{\bar{u}_\eps}}
\newcommand{\foraa}{\text{for a.a.}}
\newcommand{\umin}{\ue}

\newcommand{\vmin}{v_\eps}

 \def\trait #1 #2 #3 {\vrule width #1pt height #2pt depth #3pt}

 \def\fin{\hfill
         \trait .3 5 0
         \trait 5 .3 0
         \kern-5pt
         \trait 5 5 -4.7
         \trait 0.3 5 0
 \medskip}
                                %
                                %

%

\newtheorem{theorem}{Theorem}[section]
\newtheorem{remark}[theorem]{Remark}
\newtheorem{corollary}[theorem]{Corollary}
\newtheorem{definition}[theorem]{Definition}

\newtheorem{property}[theorem]{Property}
\newtheorem{proposition}[theorem]{Proposition}
\newtheorem{problem}[theorem]{Problem}
\newtheorem{lemma}[theorem]{Lemma}
\newtheorem{notation}[theorem]{Notation}

                                %
                                %
                                %

\newcommand{\upeq}{\vert u_\eps '\vert^2}

\newcommand{\Ve}{V_\eps}
\newcommand{\ue}{u_\eps}
\newcommand{\uek}{u_{\eps_k}}

\newcommand{\Ven}{V_{\eps_n}}
\newcommand{\dphi}{\sfd_\phi}

\newcommand{\wslo}{|\partial_{\rmw}\phi|}
\newcommand{\rwslo}{|\partial_{\mathrm{w}}^{-}\phi|}

\newcommand{\weaksigma}{\buildrel \sigma \over \to}

\newcommand{\glee}{G_\eps}

%
%
\newcommand{\II}{[0,\infty)}
\newcommand{\ACini}[2]{\mathscr C_{#2}(#1)} 
\newcommand{\Leb}[1]{\mathscr L^{#1}}

\newcommand{\sfa}{\mathsf a}
\newcommand{\sfA}{\mathsf A}
\newcommand{\sfQ}{\mathsf Q}
\newcommand{\sfb}{\mathsf b}
\newcommand{\sfB}{\mathsf B}
\newcommand{\inftyT}{T}
\newcommand{\sfd}{\mathsf d}
\newcommand{\SFD}{\mathrm D}
\newcommand{\frf}{\mathfrak f}
\newcommand{\frg}{\mathfrak g}
%
%



\newenvironment{cor}{\color{red}}{\color{black}}

\newenvironment{perme}{\color{ddcyan}}{\color{black}}

\newcommand{\UUU}{\color{black}}
\newcommand{\GGG}{\color{black}}
\newcommand{\RRR}{\color{black}}

\newcommand{\EEE}{\color{black}}

\newcommand{\RIC}{\color{black}}
\newcommand{\OOO}{\color{black}}

\newcommand{\beper}{\begin{perme}}
\newcommand{\eper}{\end{perme}}
\newcommand{\beroc}{\begin{cor}}
\newcommand{\ecor}{\end{cor}}
\newcommand{\Gelinf}{\mathscr{G}^-}

\begin{document}

\title[\UUU WED principle for gradient flows in  metric spaces ]{\UUU Weighted Energy-Dissipation principle for \EEE \\ gradient flows in  metric spaces}

\pagestyle{myheadings}

\author{Riccarda Rossi}
  \address{DIMI,  Universit\`a di Brescia, via Branze 38, I--25100 Brescia, Italy}
\email{\tt riccarda.rossi@unibs.it}
\urladdr{http://riccarda-rossi.unibs.it}

\author{Giuseppe Savar\'e}
\address{Dipartimento di Matematica, Universit\`a di Pavia, via Ferrata 1, I--27100 Pavia, Italy}
\email{giuseppe.savare@unipv.it}
\urladdr{http://www.imati.cnr.it/savare/}

\author{Antonio Segatti}
\address{Dipartimento di Matematica ``F. Casorati'', via Ferrata 1, I--27100 Pavia, Italy}
\email{antonio.segatti@unipv.it}
\urladdr{http://www-dimat.unipv.it/segatti/ita/res.php}

\author{Ulisse Stefanelli}
\address{\UUU Faculty of Mathematics, University of Vienna,
  Oskar-Morgenstern-Platz 1, A--1090 Vienna, Austria and \EEE Istituto di Matematica Applicata e Tecnologie Informatiche {E. Magenes} --  CNR, via Ferrata 1, I--27100 Pavia, Italy}
\email{ulisse.stefanelli@univie.ac.at}
\urladdr{http://www.mate.univie.ac.at/~stefanelli}

\date{Januray 5, 2018}

\thanks{
G.S.\ has been partially supported by MIUR via PRIN 2015
project 
``Calculus of Variations'', by
Cariplo foundation and Regione Lombardia via project \emph{Variational evolution problems and optimal transport}, and by IMATI-CNR.
 R.\ R.\ and A.\ S.\     acknowledge support from
 GNAMPA (Gruppo Nazionale per l'Analisi Matematica, la Probabilit\`a e le loro Applicazioni)  of  INdAM (Istituto Nazionale di Alta Matematica).
\UUU U.S. acknowledges the support by the Vienna Science and Technology Fund (WWTF)
through Project MA14-009 and by the Austrian Science Fund (FWF)
projects F\,65, I\,2375,  and  P\,27052.}.

\keywords{\UUU Gradient flow, metric space, curve of maximal slope,
  Weighted Energy-Dissipation functionals,
  variational principle, dynamic programming, Hamilton-Jacobi equation}
 \subjclass[2000]{}

\begin{abstract}
This paper develops the so-called Weighted Energy-Dissipation (WED)
variational approach for the analysis of gradient flows in 
\EEE metric
spaces. This focuses on the minimization of the parameter-dependent
global-in-time functional \RIC of trajectories \GGG
\begin{displaymath}
  \calI_\eps[u] = \int_0^{\infty} \rme^{-t/\eps}\left( \frac12 |u'|^2(t) + \frac1{\eps}\phi(u(t)) \right) \dd t,
\end{displaymath}
 \UUU featuring the weighted sum of energetic and
dissipative terms. As the parameter \OOO $\eps$ is sent to~$0$, \UUU the minimizers $u_\eps$
of such functionals converge, up to subsequences, to curves of maximal
slope \GGG driven by the functional $\phi$. \EEE 
This delivers a new and \GGG general \EEE variational approximation procedure, hence
a  new existence proof, \RIC for metric gradient flows. \EEE 
In addition, it provides a novel perspective
towards relaxation. 
\EEE
\end{abstract}

\maketitle



\section{Introduction}
The study of gradient flows  has attracted remarkable attention since the late '60s, starting from the pioneering works
\cite{Komura67, Crandall-Pazy69, Crandall-Liggett71, Brezis71, Brezis73}, where existence and approximation results
have been established, in the Hilbert framework, for 
\emph{convex} \GGG or $\lambda$-convex  
\EEE driving energy functionals. 
The extension of the existence theory, still in Hilbert spaces, to
\emph{highly nonconvex} (as dominated concave perturbations of convex) energies, see
\cite{Rossi-Savare06}, hinges on the \emph{variational approach}
to gradient flow evolution which in fact dates back to the seminal work by 
\textsc{E.\ De Giorgi} and coworkers
 \cite{DeGiorgiMarinoTosques80, Marino-Saccon-Tosques89, DeGiorgi93} on the theory of \emph{Minimizing Movements} and \emph{Curves of Maximal Slope}.
 This approach is indeed at the heart of the 
  theory of gradient flows in metric spaces 
 \cite{Ambrosio95, Ambrosio-Gigli-Savare08}.   \RIC In turn, this theory  provides the basis \EEE
  for the interpretation \`a la \textsc{Otto} \cite{Jordan-Kinderlehrer-Otto98, Otto01}  of a wide class of 
 evolution equations and systems as gradient flows in the Wasserstein spaces of probability measures, in close connection with the theory of 
 Optimal Transport \cite{Villani09}.
 \par
 The focus of this paper
 is on \emph{gradient flows in metric spaces}, but the motivation stems from the 
 Hilbert theory. \UUU By detailing  the results announced in
 \cite{cras,segatti}, we \EEE
extend to the metric framework
 the analysis carried out, in the context of a Hilbert space $H$, in \cite{MieSte11}. Therein, the
(Cauchy problem for the)  gradient flow
  \begin{equation}
 \label{gflow-hilbert}
 \begin{cases}
 u'(t) + \partial \phi(u(t)) \ni 0 \quad \text{in } H, \qquad \foraa\, t \in (0,T),
 \\
 u(0)= x 
 \end{cases}
 \end{equation}
   driven by a (proper) lower semicontinuous and  $\lambda$-convex energy functional $\phi: H \to (-\infty,\infty]$,
   with $\partial\phi: H \rightrightarrows H$ the subdifferential of $\phi$ in the sense of convex analysis,
   was studied from a novel perspective. Namely, the authors
   considered the   functional of trajectories \UUU $u \in  H^1 (0,T;H)
   \to (-\infty,\infty]$ \EEE defined by
   \begin{equation}
   \label{wed-hilbert}
   \calI_{\eps,T}[u] : = \int_0^T \rme^{-t/\eps}\left( \frac12 |u'(t)|^2 + \frac1{\eps}\phi(u(t)) \right) \dd t,
 \end{equation}
   which features both the \emph{energy} and the (quadratic) \emph{dissipation} terms, with an exponential weight, and is thus referred to as
   \emph{WED (Weighted Energy-Dissipation) functional}. In  \cite{MieSte11} it was
    shown that:
   \emph{(i)}
   for every $\eps>0$ there exists a unique curve $u_\eps$ minimizing
   \UUU the latter functional \EEE over all trajectories starting from a given  datum $x$; 
   \emph{(ii)} the minimizers $u_\eps$ converge as $\eps\down 0$ to the unique solution of \eqref{gflow-hilbert}. 
   \par
\UUU This convergence result resides on the observation that
minimizers $u_\eps$ of $\calI_{\eps,T}$ solve the Euler-Lagrange equation
\begin{equation}
\label{Eu-La-later}
-\eps u''_\eps(t) +  u_\eps'(t) + \partial \phi(u_\eps(t)) \ni 0
\quad \text{in } H, \qquad \foraa \ t \in (0,T). 
\end{equation}
This is nothing but an elliptic-in-time regularization of the gradient
flow problem \eqref{gflow-hilbert}, which is formally recovered by
taking the limit $\eps\to 0$. \EEE
   Besides providing an alternative method for proving existence results for \eqref{gflow-hilbert}, 
   constructing solutions as (limits of) minimizers of functionals on entire trajectories paves the way to 
    \emph{relaxation}. 
    Indeed, the convergence result in   \cite{MieSte11}   
is extended to sequences of \emph{approximate minimizers}, which shows that, in the case $\phi$ is neither $\lambda$-convex nor 
lower semicontinuous, one may consider the relaxation of  $\calI_\eps$ (provided it  is itself the WED functional for a suitable 
lower semicontinuous and $\lambda$-convex energy). 

\UUU
The idea of regularizing evolution problems by singular elliptic
perturbations in time has been
pioneered by \textsc{Lions} \cite{Lions:63a,Lions:63,Lions:65} and used
by  \textsc{Kohn }\& \textsc{Nirenberg} \cite{Kohn65} and \textsc{Olein\u{i}k}
\cite{Oleinik} as a tool to inspect regularity. An account of these
techniques in the linear case can be found in the  book by \textsc{Lions} \& \textsc{Magenes} \cite{LionsMagenes72}.

The variational view to such elliptic regularization via WED functionals has to be traced
back at least to
  \textsc{Ilmanen} \cite{Ilmanen94}, whose proof of existence and partial
  regularity of the  Brakke mean-curvature flow of
varifolds is based on this variational technique. An occurrence of WED
functionals in the proof of existence of periodic solutions to
gradient flows is due to \textsc{Hirano} \cite{Hirano94},  and this variational approach is mentioned in the classical textbook by \textsc{Evans}
\cite[Problem 3, p.~487]{Evans98}. 

The WED approach to gradient flows has been initiated by \textsc{Conti} \&
\textsc{Ortiz} \cite{Conti-Ortiz08}, who presented two examples of
relaxation related with micro-structure evolution. As mentioned, the
corresponding theoretical analysis  in \cite{MieSte11}  and one can
find an early application to the case of mean-curvature evolution
of Cartesian surfaces is in \cite{spst}. An extension of the abstract
theory to nonpotential perturbations of gradient flows is by
\textsc{Melchionna} \cite{Melchionna0}, while $\lambda$-convex energies are
treated in \cite{akst5}. 
\textsc{B\"ogelein, Duzaar, \&
Marcellini}~\cite{BoDuMa14, BDMS17} recently used this variational approach to
prove the existence of variational
solutions to the equation 
\[
u_t -\nabla \cdot f(x,u,\nabla u) + \partial_u f(x,u,
\nabla u)=0
\]
where the field $f$ is convex in
$(u, \nabla u)$,  \RIC see also Sec.\  \ref{ss:appl.1}. \UUU

Doubly nonlinear evolution equations have also been tackled by WED
methods. In the case of rate-independent processes, the abstract
theory is developed by \textsc{Mielke \& Ortiz}
\cite{MieOrt08}, see also the subsequent \cite{MieSte11}, and  an
application to crack-front propagation in brittle materials  has been
presented by \textsc{Larsen, Ortiz, \& Richardson}
\cite{Larsen-et-al09}. 
The rate-dependent case is in turn  \RIC addressed \UUU by the series of
contributions 
\cite{akme,akst,AkaSte11, AkaSte14,akmest}. The reader is additionally referred to
 \textsc{Liero \& Melchionna} \cite{Liero-Melchionna} for
a stability result via $\Gamma$-convergence and to
\textsc{Melchionna} \cite{Melchionna} for an application to the study of
qualitative properties of solutions.

The WED variational approach can be applied to certain classes of
hyperbolic problems as well. Indeed, \textsc{De Giorgi} conjectured this
possibility in the setting of semilinear waves
\cite{DeGio96}. Such conjecture has been positively checked  in \cite{Ste11} for the finite-time case
and by \textsc{Serra \& Tilli} \cite{SerTil12} for the original,
infinite-time case. Extensions to mixed
hyperbolic-parabolic semilinear equations \cite{LieSte13b}, to different
classes of nonlinear energies \cite{LieSte13,Serra-Tilli14}, and to
nonhomogeneous equations \cite{TTilli17} are also available.


The WED approach fits into the general class of global-in-time
variational methods for evolution equations, see  \cite{MieSte11} for
some survey. Among the many options, we shall minimally mention the
celebrated \textsc{Br\'ezis-Ekeland-Nayroles} principle
\cite{BreEke76,BreEke76b, Nay76}, its generalization in the frame of
self-dual Lagrangian theory \cite{Gho09}, and its extensions to
doubly nonlinear \cite{Stefanelli08,Vis11}, maximal-monotone \cite{Vis08,Vis13}, and
pseudo-monotone flows \cite{Vis15}.

\EEE

\par
In this paper we 
 aim  to 
 extend the result from  \cite{MieSte11} to gradient flows set in a
\[
 \text{complete  metric space } (X,\sfd). 
\]
We will  thus  prove, for a reasonably wide class of driving energy functionals $\phi$,  
\GGG
that WED minimizers 
$u_\eps$ of the functional 
\begin{equation}
\label{wed-intro}
\calI_\eps[u] : = \int_0^{\infty} \rme^{-t/\eps}\left( \frac12 |u'|^2(t) + \frac1{\eps}\phi(u(t)) \right) \dd t,
\end{equation}
among a natural class of absolutely continuous curves $u:[0,\infty)\to
X$ satisfying a
given initial condition $u(0)=x$,
converge (up to subsequences) 
to \emph{curves of maximal slope} for
$\phi$, characterized by 
\begin{equation}
\label{cms}
-(\phi\circ u)'(t)  = \frac12 |u'|^2(t) +\frac12  |\partial\phi|^2(u(t)) = |u'|^2(t) = |\partial\phi|^2(u(t))  \qquad \foraa \,  t \in (0,\infty)\,.
\end{equation}
In \eqref{cms},  the \emph{metric derivative} $|u'|(t)$ has to be understood as the metric surrogate of 
the norm $|u'(t)|$, and it indeed replaces the latter in the
associated WED functional \eqref{wed-intro};
$|\partial\phi|$ denotes the \emph{metric slope} of the energy $\phi$;
\eqref{cms} is the differential characterization 
of Curves of Maximal Slope when $|\partial\phi|$ 
is  a strong upper gradient.

\GGG In this way, we show that the WED approach shares the same features
of the well-known
Minimizing Movement
scheme \cite{Ambrosio-Gigli-Savare08}, which relies on a recursive minimization of 
a functional combining distance and energy.
Notice that in a metric space an underlying linear
structure is missing, as well as the Euler equation
\eqref{Eu-La-later}.
Moreover, the dissipation term provided by the metric velocity 
$|u'|^2$ is not required to be a quadratic form on a linear tangent space, so that 
even in a linear framework (e.g.~in a Banach space) the resulting
evolution equation is doubly nonlinear.

One of the basic feature of the WED approach is that it does not
directly involve the distance $\sfd$ but it relies on the notion of 
length (and quadratic action) of a curve. We thus hope that 
our metric strategy could be extended to more general cases of length
structures, 
where the length of a curve may be strongly affected by the geometry
of the sublevels of the functional $\phi$. \EEE

 \par  
 In what follows, we briefly recapitulate the challenges attached to this analysis, and  the main ideas underlying the proof of our main result. This discussion will \RIC also \EEE  make apparent how big the leap is
 between the Hilbert and the metric theory.  
\subsubsection*{\bf The analysis in Hilbert vs.\ metric spaces.}
The starting point in the proof of \cite[Thm.\ 1.1]{MieSte11} 
\GGG in a
Hilbert space $H$
\EEE is the observation that, 
by  the the direct method of calculus of variations 
\GGG and the $\lambda$-convexity of $\phi$, \EEE
the WED functional $\calI_{\eps,T}$ \eqref{wed-hilbert}
admits a unique 
minimizer $u_\eps:[0,T]\to H$ among all trajectories starting from a given initial datum $x \in H$. 
A suitable smoothing argument allows the authors to show that $u_\eps$
 fulfills the  Euler-Lagrange equation \UUU \eqref{Eu-La-later}, \EEE  
supplemented with the initial condition  
$ u_\eps(0)= x $ 
and the  additional Neumann boundary condition
 $ \UUU \eps \EEE
 u_\eps'(T) =0 $ at the final time $T$. 
In fact, \eqref{Eu-La-later}
is the elliptic (in time) regularization of the original gradient flow \eqref{gflow-hilbert}. Its role is twofold: first of all,  from  \eqref{Eu-La-later} it is possible to deduce the 
key estimate
\[
\eps \| u_\eps''\|_{L^2 (0,T;H)} + \eps^{1/2}  \| u_\eps'\|_{L^\infty (0,T;H)} +  \| u_\eps'\|_{L^2 (0,T;H)} + \|\xi_\eps\|_{L^2 (0,T;H)} \leq C,
\]
with $\xi_\eps$ a selection in $\partial\phi(u_\eps)$ satisfying \eqref{Eu-La-later}. Secondly, it is in \eqref{Eu-La-later}  that, exploiting the above estimates,
 it is possible to pass to the limit as $\eps\down0$, proving the convergence of  the curves $(u_\eps)_\eps$ to the (unique) solution of 
 \eqref{gflow-hilbert}. 
 
 The arguments in \cite{MieSte11} clearly rely on \GGG two structural
 properties available in Hilbert spaces: the 
 \emph{linear setting} and the \emph{quadratic norm}. \EEE
 It is far from obvious how to replicate them in the metric context, where
 the gradient flow equation \eqref{gflow-hilbert} is formulated by means of the notion of 
\emph{Curve of Maximal Slope} \eqref{cms}. 
\par
In our metric setup,
\GGG once that the existence of minimizers for \eqref{wed-intro} (among all curves starting from a given initial datum  
$x\in X$) has been proved, 
a nontrivial challenge is 
to provide new metric insights, taking the place of the Euler-Lagrange equation \eqref{Eu-La-later}. 

A first piece of information can be obtained by taking \emph{inner variations}
with respect to time  (namely, perturbations by time rescalings) of a
minimizer $u_\eps$  for $\calI_\eps$, which lead
(cf.\ Proposition \ref{euler-lagrange}) to the metric inner variation equation
\begin{equation}
\label{metric-inner-var-intro}
\frac{\dd}{\dd
t}\left(\phi(u_\eps(t)) -\frac{\eps}2 |u_\eps'|^2(t) \right) =-|u_\eps'|^2(t)   \qquad \text{in } \mathcal{D}'(0,\infty).
\end{equation}
However, \GGG even in a finite dimensional framework 
it is clear that equation  \eqref{metric-inner-var-intro} is much
weaker than the former \eqref{Eu-La-later} and does not contain enough
information to characterize the evolution.
 \par
 We will  borrow the second crucial idea from the \EEE
 theory of  \emph{Optimal control} and \emph{Hamilton-Jacobi equations}, cf.\ e.g.\ \cite{bardi-CapuzzoDolcetta97}. 
In this direction, the key step will be to  work with  the \emph{value functional} associated with the minimum problem 
for  $\calI_\eps$, namely
\begin{equation}
\label{value-functional-intro}
V_\eps(x): = \min_{u \in \mathrm{AC}([0,\infty);X), \, u(0)=x } \calI_\eps[u]\,.
\end{equation}
\GGG
\subsubsection*{\bf Variational properties of the value function and
  its gradient flow.}
From the WED point of view, the value function $V_\eps$ plays a crucial
role, which can be compared to the importance of the Yosida approximation
\begin{equation}
  \label{eq:75}
  \phi_\eps(x):=\min_{y\in X}\left(\frac1{2\eps}\sfd^2(x,y)+\phi(y)\right)
\end{equation}
in the Minimizing Movement approach. 
In order to illustrate the main ideas in a simpler situation, \EEE
 in the following lines we will keep to 
 the finite-dimensional framework $X=\R^n$ and consider a  \emph{smooth}
 energy $\phi:\R^n \to \R$. In this setting, a classical result from the theory of Optimal Control (cf., e.g., \cite[Chap.\ III, Prop.\ 2.5]{bardi-CapuzzoDolcetta97}), 
 ensures that the value function for the \emph{infinite-horizon}
 minimum problem complies with the \emph{Dynamic Programming Principle}. Namely, there holds
 \begin{equation}
 \label{dpp-intro}
 V_\eps(x) = \min_{u \in \mathrm{AC}([0,\infty);\R^n), \, u(0)=x } \left(   \int_0^{T} \rme^{-t/\eps}\left( \frac12 |u'(t)|^2 + \frac1{\eps}\phi(u(t)) \right) \dd t
 + V_\eps(u(T)) \rme^{-T/\eps} \right) 
 \end{equation}
 for all $T>0,$
 and every minimizer $u_\eps $ for  \eqref{wed-intro} is also a  minimizer for the minimum problem 
 \eqref{dpp-intro}, whence 
 \begin{equation}
 \label{dpp-intro-ue}
 V_\eps(x) = \int_0^{T} \rme^{-t/\eps}\left( \frac12 |u_\eps'(t)|^2 + \frac1{\eps}\phi(u_\eps(t)) \right) \dd t
 + V_\eps(u_\eps(T))) \rme^{-T/\eps}  \quad \text{for all }T>0.
 \end{equation}
 We recall the interpretation of formula \eqref{dpp-intro} provided in 
  \cite{bardi-CapuzzoDolcetta97}, viz.\ that, to achieve the minimum cost it is necessary and sufficient to:
  \begin{compactenum}
  \item let the system evolve in an arbitrary finite interval $[0,T] $, along an arbitrary trajectory $u$
  \item pay the corresponding cost, i.e.\ $ \int_0^{T} \rme^{-t/\eps}\left( \frac12 |u'(t)|^2 + \tfrac1{\eps}\phi(u(t)) \right) \dd t$
  \item pay what remains to pay in a optimal way, i.e.  $V_\eps(u(T))) \rme^{-T/\eps}$
  \item minimize over all possible trajectories.
  \end{compactenum} 
  \par
  As we will see now,  the Dynamic Programming Principle \eqref{dpp-intro}
 is the milestone of our analysis.
 Indeed, from \eqref{dpp-intro-ue} one deduces  that for every $0 \leq s \leq t$
 \[
 \begin{aligned}
  \int_s^{t} \rme^{-r/\eps}\left( \frac12 |u_\eps'(r)|^2 + \frac1{\eps}\phi(u_\eps(r)) \right) \dd r &  =   V_\eps(u_\eps(s))) \rme^{-s/\eps} -  V_\eps(u_\eps(t))) \rme^{-t/\eps}  
  \\
   & 
   = - \int_s^t \frac{\dd}{\dd r} \left( V_\eps(u_\eps(r))) \rme^{-r/\eps} \right) \dd r  
   \\
   & = -  \int_s^t \frac{\dd}{\dd r}  (V_\eps(u_\eps(r)) )  \rme^{-r/\eps}  \dd r  + \int_s^t  \frac1{\eps} \rme^{-r/\eps}V_\eps(u_\eps(r)) \dd r.
   \end{aligned}
 \]
 Rearranging terms and using the Lebesgue Theorem we then conclude the \emph{Fundamental identity}
 \begin{equation}
 \label{fund-id-intro}
 -\frac{\dd}{\dd t} V_\eps(u_\eps(t)) = \frac12 |u_\eps'(t)|^2 + \frac1{\eps}\phi(u_\eps(t)) - \frac1{\eps} V_\eps(u_\eps(t)) \qquad \foraa\, t \in (0,\infty)\,.
  \end{equation}
 In fact, 
 \eqref{fund-id-intro} can be combined with another consequence of the Dynamic Programming Principle (cf.\ 
 \cite[Chap.\ III, Thm.\ 2.12]{bardi-CapuzzoDolcetta97}), i.e..\ that the value function $V_\eps$ fulfills the \emph{Hamilton-Jacobi equation}
 \begin{equation}
 \label{HJ-intr}
 \frac1{\eps} V_\eps(x) + \calH(x, \rmD V_\eps(x))=0 \qquad \text{in } \R^n\,,
 \end{equation}
 where 
  the Hamiltonian  \RIC $\calH:  \R^n \times \R^n \to \R$ \EEE  is defined by
 \[
 \calH(x,p): = \sup_{v \in R^n} \left( - x \cdot v - \frac{|v|^2}{2} - \frac1{\eps} \phi(x) \right) =  \frac12|p|^2 -  \frac1{\eps} \phi(x)\,.
 \]
 Hence, \eqref{HJ-intr} yields 
 \begin{equation}
 \label{HJ_Veps}
 \frac12 \left| \rmD  V_\eps(x)\right|^2 = \frac1\eps \phi(x) - \frac1{\eps} V_\eps(x)  \qquad \text{in } \R^n\,.
 \end{equation}
 \par
 Combining \eqref{fund-id-intro}
 and 
 \eqref{HJ_Veps}
we thus arrive at the crucial relation
  \begin{equation}
 \label{Veps-flow}
 -\frac{\dd}{\dd t} V_\eps(u_\eps(t)) = \frac12 |u_\eps'(t)|^2 +  \frac12 \left| \rmD  V_\eps(u_\eps(t))\right|^2 \qquad \foraa\, t \in (0,\infty)\,,
  \end{equation}
 i.e.\ we conclude that \emph{any WED minimizer $u_\eps$ fulfills the gradient flow equation driven by  the value function $V_\eps$}. 
 Since the latter can be thought as an approximation of the energy
 functional $\phi$ \GGG as $\eps\downarrow0$ (as in the case of the
 Yosida regularization \eqref{eq:75}), \EEE
 this argument suggests that a possible way to prove the
 convergence of the WED minimizers to a solution of 
 gradient flow for $\phi$, is to pass to the limit \emph{directly in \eqref{Veps-flow}}.
 We will follow this idea in the metric setting. 
\subsubsection*{\bf Main results.} In the metric framework
we will suppose that  the energy functional $\phi$
complies with the \emph{lower semicontinuity-coercivity-compactness}  (LSCC) conditions, by now standard in the variational approach to 
metric gradient flows, cf.\ \cite[Chap.\ II]{Ambrosio-Gigli-Savare08}, namely
\begin{compactitem}
\item[\textbf{Lower Semicontinuity:}]
 $\phi$ is sequentially
lower semicontinuous; 
\item[\textbf{{Compactness:}}]
 Every $\sfd$-bounded set contained in a sublevel of $\phi$ is
relatively
sequentially compact; 
\item[\textbf{Coercivity:}]
There exists $u_* \in X$ and constants $\sfA,\sfB\ge 0$ such that
$ \phi(u) \geq - \sfB \, \sfd^2(u,u_*) -\sfA$ for all $u \in X$. 
 \end{compactitem}
 \RIC In fact, \EEE  throughout the paper we will work with a generalized
 version of the above conditions, featuring an  interplay between the
 topology induced by the  metric $\sfd$ and a second topology
 $\sigma$, cf.\ \UUU LSCC \EEE Property \ref{basic-ass} ahead. 
\par
 Our first  result, \underline{\bf Theorem \ref{exist-minimizers}},
 ensures that, under the LSCC \UUU  Property \ref{basic-ass}  \EEE there exists    a minimizer for the WED functional 
 $\calI_\eps$ among all trajectories  starting from a given datum $x\in \rmD(\phi)$. Its proof  relies on an 
 \emph{integral compactness criterion}, Theorem \ref{thm:main-compactness} ahead, 
  which  
 establishes suitable compactness properties for 
 any sequence $(u_n)_n \subset \AC_{\mathrm{loc}} ([0,\infty);X)$ such that 
 \begin{equation}
 \label{estimates-4-minimizers}
 \sup_n \int_J |u_n'|^2(t) \dd t  \leq C, \qquad 
 \sup_n \int_J \phi(u_n(t)) \dd t \leq C\,,
 \end{equation}
 for every compact interval $J\subset [0,\infty)$.
Observe that \eqref{estimates-4-minimizers} are indeed  the estimates  that can be
deduced from $\sup_{n\in \N}\calI_\eps [u_n]\leq C$. 
\par
It can be shown that the Dynamic Programming Principle also holds 
for the metric value function \eqref{value-functional-intro}. Then, the calculations 
leading to 
\eqref{fund-id-intro}  carry over to the metric setting, allowing us to conclude the metric analogue
 of 
 \eqref{fund-id-intro} for any WED minimizer $u_\eps$, cf.\ Proposition \ref{prop:intermediate}. Namely, there holds
 \begin{equation}
 \label{fund-metric}
 -\frac{\dd}{\dd t} V_\eps(u_\eps(t)) = \frac12 |u_\eps'|^2(t)+ \frac1{\eps}\phi(u_\eps(t)) - \frac1{\eps} V_\eps(u_\eps(t)) \qquad \foraa\, t \in (0,\infty)\,.
  \end{equation} 
Relation \eqref{fund-metric} is a cornerstone in the proof of \textbf{our main result, \underline{Theorem \ref{th:4.1}}}, here recalled in a slightly simplified form, and without specifying the topology involved in the definition of the lower semicontinuous relaxation $|\partial^-\phi|$ of  the local slope $|\partial\phi |$:  
  \begin{theorem}
  \label{thm:intro}
 Assume the LSCC \UUU Property \ref{basic-ass} \EEE
 and that the \emph{relaxed slope} $|\partial^-\phi|$ is an upper gradient for $\phi$. 
  Let  $ \ini \in \mathrm{D}(\phi)$ and $(u_\eps)_\eps$ be  a family of curves minimizing $\calI_\eps$ among all trajectories starting from $\ini$.
 \par
  Then, for any vanishing sequence $(\eps_k)_k$ the curves $(u_{\eps_k})_k$ pointwise converge on $[0,\infty)$, up to a subsequence, to a curve of maximal slope for $\phi$ (with respect to $|\partial^-\phi|$).
  \end{theorem}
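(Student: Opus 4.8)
The plan is to produce, via the compactness results of the paper, a pointwise limit $u$ of a subsequence $(u_{\eps_k})_k$ and then to prove that $u$ satisfies the energy--dissipation inequality
\[
\phi(u(s))-\phi(u(t))\ \ge\ \int_s^t\Big(\tfrac12|u'|^2(r)+\tfrac12|\partial^-\phi|^2(u(r))\Big)\dd r\qquad (0\le s\le t),
\]
since, combined with the chain-rule inequality guaranteed by the upper-gradient hypothesis on $|\partial^-\phi|$, this forces the whole chain of equalities in \eqref{cms}. The link between the minimizer $u_\eps$ and the energy $\phi$ is the value function $V_\eps$: by Proposition \ref{prop:intermediate} each $u_\eps$ obeys the Fundamental identity \eqref{fund-metric}, and together with the metric Hamilton--Jacobi inequality $\tfrac12|\partial V_\eps|^2\le\tfrac1\eps(\phi-V_\eps)$ --- a consequence of the Dynamic Programming Principle, obtained as in the heuristic discussion preceding the statement --- this exhibits $u_\eps$ essentially as a curve of maximal slope for $V_\eps$. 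The decisive structural feature, exploited only at the very end, is that the gap between $\lim_k V_{\eps_k}(u_{\eps_k}(\cdot))$ and $\phi\circ u$ is monotone in time and vanishes at $t=0$.

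\emph{A priori estimates and compactness.} First I would test $\calI_\eps$ against the constant curve $t\mapsto\ini$ to get $\calI_\eps[u_\eps]=V_\eps(\ini)\le\phi(\ini)$, and more generally $V_\eps\le\phi$ on $X$. Since $\phi\ge V_\eps$, the right-hand side of \eqref{fund-metric} dominates $\tfrac12|u_\eps'|^2$, so integrating in time gives $\int_0^T|u_\eps'|^2\,\dd r\le 2\big(\phi(\ini)-V_\eps(u_\eps(T))\big)$; coupling the coercivity lower bound for $V_\eps$ with the usual Gronwall estimate for $\sfd(u_\eps(\cdot),u_*)$ turns this into a bound $\int_0^T|u_\eps'|^2\,\dd r\le C_T$ uniform for small $\eps$. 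On the other hand, the metric inner variation equation \eqref{metric-inner-var-intro} of Proposition \ref{euler-lagrange} shows that $t\mapsto\phi(u_\eps(t))-\tfrac\eps2|u_\eps'|^2(t)$ is non-increasing, whence $\phi(u_\eps(t))\le\phi(\ini)+\tfrac\eps2|u_\eps'|^2(t)$ and, together with coercivity, $\int_0^T|\phi(u_\eps)|\,\dd r\le C_T$. These are exactly the hypotheses \eqref{estimates-4-minimizers} of Theorem \ref{thm:main-compactness}, which I would invoke to extract a subsequence with $u_{\eps_k}(t)\to u(t)$ in $\sigma$ for every $t$, $u\in\AC_{\mathrm{loc}}([0,\infty);X)$, and $\int_s^t|u'|^2\le\liminf_k\int_s^t|u_{\eps_k}'|^2$. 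Finally, because \eqref{fund-metric} makes $h_k:=V_{\eps_k}(u_{\eps_k}(\cdot))$ non-increasing, bounded above by $\phi(\ini)$ and, by the above, bounded below on compact intervals, Helly's theorem yields on a further subsequence a non-increasing pointwise limit $h$.

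\emph{Passing to the limit.} For $0\le s\le t$, integrating \eqref{fund-metric} and bounding $\tfrac1{\eps_k}(\phi-V_{\eps_k})$ from below by $\tfrac12|\partial V_{\eps_k}|^2$ via the Hamilton--Jacobi inequality,
\[
h_k(s)-h_k(t)=\int_s^t\!\Big(\tfrac12|u_{\eps_k}'|^2+\tfrac1{\eps_k}\big(\phi(u_{\eps_k})-V_{\eps_k}(u_{\eps_k})\big)\Big)\dd r\ \ge\ \int_s^t\!\Big(\tfrac12|u_{\eps_k}'|^2+\tfrac12|\partial V_{\eps_k}|^2(u_{\eps_k})\Big)\dd r .
\]
The left-hand side tends to $h(s)-h(t)$. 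For the right-hand side I would use lower semicontinuity of the action for the first summand, and for the second Fatou's lemma together with the pointwise inequality $\liminf_k|\partial V_{\eps_k}|^2(u_{\eps_k}(r))\ge|\partial^-\phi|^2(u(r))$; this last estimate rests, through the Hamilton--Jacobi identity $\tfrac12|\partial V_\eps|^2=\tfrac1\eps(\phi-V_\eps)$, on the sharp asymptotic lower bound $\liminf_k\tfrac1{\eps_k}\big(\phi(x_k)-V_{\eps_k}(x_k)\big)\ge\tfrac12|\partial^-\phi|^2(x)$ valid whenever $x_k\to x$ in $\sigma$ along $\sfd$-bounded, energy-bounded sequences --- the counterpart for the value function of the classical expansion of the Moreau--Yosida approximation. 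This produces
\[
h(s)-h(t)\ \ge\ \int_s^t\Big(\tfrac12|u'|^2(r)+\tfrac12|\partial^-\phi|^2(u(r))\Big)\dd r\qquad(0\le s\le t).
\]

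\emph{Identification and conclusion.} Since $|\partial^-\phi|$ is an upper gradient, $\phi\circ u$ is absolutely continuous on bounded intervals and $\phi(u(s))-\phi(u(t))\le\int_s^t\big(\tfrac12|u'|^2+\tfrac12|\partial^-\phi|^2(u)\big)\dd r$. Moreover the $\Gamma$-lower semicontinuity of $V_\eps$ towards $\phi$ gives $h(t)\ge\phi(u(t))$ for every $t$, while $V_{\eps_k}(\ini)\le\phi(\ini)$ gives $h(0)\le\phi(\ini)=\phi(u(0))$, hence $h(0)=\phi(u(0))$. Subtracting the chain-rule inequality from the inequality just obtained shows that $\psi:=h-\phi\circ u\ge0$ is non-increasing with $\psi(0)=0$, so $\psi\equiv0$, i.e.\ $h=\phi\circ u$; substituting back, $\phi(u(s))-\phi(u(t))=\int_s^t\big(\tfrac12|u'|^2+\tfrac12|\partial^-\phi|^2(u)\big)\dd r$. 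In particular $\phi\circ u$ is non-increasing, and differentiating and comparing with $-\tfrac{\dd}{\dd t}(\phi\circ u)\le|u'|\,|\partial^-\phi|(u)$ forces equality in Young's inequality, which is precisely \eqref{cms}; thus $u$ is a curve of maximal slope for $\phi$ with respect to $|\partial^-\phi|$. The hard part is the slope lower-semicontinuity used in the limit passage: the local slope is not lower semicontinuous --- this is exactly why the relaxed slope enters --- so $|\partial V_{\eps_k}|(u_{\eps_k})$ cannot be compared with $|\partial\phi|$ directly and the argument must be routed through the value functions, via the two genuinely new ingredients (a) the metric Hamilton--Jacobi inequality for $V_\eps$ extracted from the Dynamic Programming Principle, and (b) the sharp $\liminf$, $\sigma$-stable form of the expansion $\tfrac1\eps(\phi-V_\eps)\to\tfrac12|\partial^-\phi|^2$; once these are available, the a priori estimates, the use of Theorem \ref{thm:main-compactness}, the Helly selection, and the monotonicity bookkeeping for $\psi$ are routine.
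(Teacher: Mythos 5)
Your overall blueprint matches the paper's proof of Theorem~\ref{th:4.1}: extract a pointwise $\sigma$-limit $u$ of the WED minimizers via the a priori estimates of Corollary~\ref{lemma:est-phi-up} and the compactness Theorem~\ref{thm:main-compactness}, integrate the Fundamental identity~\eqref{intermediate-relation}, and pass to the $\liminf$ in each term to reach the energy--dissipation inequality~\eqref{lsc-gflow}, which together with the upper-gradient property of $|\partial^-\phi|$ forces equality. The a priori estimates, the use of Theorem~\ref{thm:main-compactness}, the $\Gamma$-$\liminf$ of $V_\eps$ towards $\phi$, and the final bookkeeping (your $\psi$-trick, which is a tidy reformulation of the paper's conclusion) are all in line with the paper.

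However, there are two places where your argument, as written, does not go through.

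\emph{The detour through the slope of $V_\eps$ is circular or requires an extra hypothesis.} You insert the bound $\tfrac1{\eps_k}(\phi-V_{\eps_k})\ge\tfrac12|\partial V_{\eps_k}|^2(u_{\eps_k})$ and then try to pass to the $\liminf$ on $|\partial V_{\eps_k}|^2(u_{\eps_k})$. But to conclude $\liminf_k|\partial V_{\eps_k}|^2(u_{\eps_k})\ge|\partial^-\phi|^2(u)$ you invoke, by your own admission, the Hamilton--Jacobi \emph{identity} $\tfrac12|\partial V_\eps|^2=\tfrac1\eps(\phi-V_\eps)$ together with the sharp lower bound $\liminf_k\tfrac1{\eps_k}(\phi-V_{\eps_k})\ge\tfrac12|\partial^-\phi|^2$. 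The identity (Theorem~\ref{prop:HJ}) is proved in the paper only under the additional hypothesis of $\lambda$-geodesic convexity, which Theorem~\ref{th:4.1} does not assume. If you only have the one-sided inequality $\tfrac12|\partial V_\eps|^2\le\tfrac1\eps(\phi-V_\eps)$, then from the lower bound on $\tfrac1\eps(\phi-V_\eps)$ you cannot deduce anything about $|\partial V_\eps|$: the inequality points the wrong way. But of course the detour is pointless anyway: once you have the sharp $\liminf$ bound on $\tfrac1\eps(\phi-V_\eps)$ itself, you should use it directly on the middle integrand, as the paper does in proving~\eqref{convergence-epsk-3}, without touching $|\partial V_\eps|$ at all.

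\emph{The key lower bound is stated as a black box, but it is the heart of the theorem.} You correctly identify that everything hinges on
\[
  \liminf_{k\to\infty}\frac1{\eps_k}\bigl(\phi(x_k)-V_{\eps_k}(x_k)\bigr)\ge\frac12|\partial^-\phi|^2(x)
\]
along $\sigma$-converging, $\sfd$- and energy-bounded sequences, which is exactly~\eqref{enhanced-slope} of Proposition~\ref{l:4.1}. But calling it ``the counterpart for the value function of the classical expansion of the Moreau--Yosida approximation'' does not prove it, and the classical expansion~\eqref{eq:63} only gives the \emph{upper} bound (via Theorem~\ref{thm:V-vs-Y} and Proposition~\ref{l:5.1}); the lower bound is the hard direction. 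In the paper this is obtained by using the pre-existing metric theory: one feeds into the definition of $V_{\eps_n}$ a competitor curve $w_{\eps_n}$ that is itself a curve of maximal slope for $\phi$ emanating from $x_n$ (produced by \cite[Thm.~2.3.1, Lemma~3.2.2]{Ambrosio-Gigli-Savare08}), so that the kinetic terms cancel and only $\int\rme^{-s}\,\rls^2(w_{\eps_n}(\eps_n s))\dd s$ survives, after which a Gronwall-type estimate and Fatou finish the job. This construction is where the assumption that $|\partial^-\phi|$ is an upper gradient and the full LSCC structure are genuinely exercised; without it, the statement is unproven. Finally, in the Fatou step leading to~\eqref{convergence-epsk-3} there is an additional, non-obvious wrinkle: the integrand $\tfrac1{\eps_k}(\phi\circ u_{\eps_k}-V_{\eps_k}\circ u_{\eps_k})$ is nonnegative, but the application of~\eqref{enhanced-slope} at a fixed time $s$ requires a uniform bound on $\phi(u_{\eps_k}(s))$, which is not available pointwise; the paper circumvents this by the $\delta$-perturbation $+\,\delta\,\phi(u_{\eps_k}(s))$ (using~\eqref{est-phi-up-2}) and a pointwise subsequence extraction, a step your sketch does not account for.
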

  Let us highlight that the convergence of WED minimizers holds under the very same conditions ensuring the existence of curves of maximal slope for $\phi$, cf.\ \cite{Ambrosio-Gigli-Savare08}. 
  \par
 We now briefly comment on the main ideas underlying the proof of \UUU
 Theorem \ref{thm:intro}: \EEE It is in \eqref{fund-metric} that we shall pass to the limit, as $k\to\infty$, to show that any limit curve $u$ of the sequence  $(u_{\eps_k})_k$ is a curve of maximal slope.  The basic ingredients for taking the limit in an integrated version of  \eqref{fund-metric}
will be  the lower estimates
\[
  \GGG \liminf_{k\to\infty} V_{\eps_k}(u_{\eps_k} \EEE (t)) \geq \phi(u(t)) \qquad \text{for all } t \in [0,\infty),
\]
and 
\[
\liminf_{k\to\infty}  \int_0^t  \frac1{\eps_k} \left( \phi(u_{\eps_k}(s)) {-} V_{\eps_k}(u_{\eps_k}(s)) \right)  \dd s \geq   \int_0^t   \frac12 \ls^2(u(s)) \dd s \qquad \text{for all } t \in [0,\infty).
\]
\par
Finally, observe that relation   \eqref{fund-metric} does contain the information that  any WED minimizer $u_\eps$ 
  is a curve of maximal slope for the value function $\Ve$: this is shown in \underline{\bf Theorem \ref{thm:upper-gradient}}, revealing the upper gradient properties of the quantity $G_\eps = \sqrt{2 \frac{\phi-\Ve}{\eps}}$, see also Thm.\ \ref{cont-wrt-dphi} in the Appendix. What is more, if in addition $\phi$ is \emph{$\lambda$-geodesically convex},
  it can be shown (\underline{\bf Theorem \ref{prop:HJ}})  that  the following \emph{Hamilton-Jacobi identity} holds
  \begin{equation}
  \label{HJ-ID-INTRO}
\frac{1}{\eps} \phi(u) -\frac1\eps \Ve(u)   = |\tilde\partial \Ve|^2(u) \quad \text{for all  } u\in \mathrm{D}(\phi)
\end{equation}
 (with $ |\tilde\partial \Ve|$ a slightly modified version of the local slope of $\Ve$). Hence, \eqref{fund-metric} reads 
  \[
   -\frac{\dd}{\dd t} V_\eps(u_\eps(t)) = \frac12 |u_\eps'|^2(t)+ \frac12  |\tilde\partial \Ve|^2(u_\eps(t)) \qquad \foraa\, t \in (0,\infty)\,,
  \]
  and the analogy with \eqref{Veps-flow} in the Banach framework is complete.
  
  \GGG We will show that \eqref{HJ-ID-INTRO} holds pointwise; 
  it would be interesting to study 
  its formulation in other contexts, e.g.
  in connection with the recently developed theory of \emph{viscosity solutions} to Hamilton-Jacobi equations in metric spaces, cf.\ 
  \cite{Ambrosio-Feng2014, GanSwie15, GiHaNa15}. 
  Notice however that when $(X,\sfd)$ is not locally compact 
  (as it mostly happens for infinite-dimensional dynamics), the  LSCC
  assumptions prevent  the continuity of the driving
  energy $\phi$ and  of the value function $V_\eps$. This gives rise to  technical issues in the
  viscosity approach.
  \begin{remark}
  \upshape
    All the results of the present paper could be easily extended to 
    more general dissipation terms, induced by $p$-powers,
    $1<p<\infty$, or by
    superlinear convex functions $\psi:[0,\infty)\to[0,\infty)$ as in
    \cite[Sect.~2.4]{Rossi-Mielke-Savare08}; they correspond to WED
    functionals of the form
    \begin{displaymath}
      \calI^\psi_\eps[u] : = \int_0^{\infty} \rme^{-t/\eps}\left( \psi
        \big(|u'|(t)\big) + \frac1{\eps}\phi(u(t)) \right) \dd t
\end{displaymath}
and to the metric gradient flow
\begin{displaymath}
  -(\phi\circ u)'(t)  = \psi(|u'|(t)) +\psi^*(|\partial\phi|(u(t)))  \qquad \foraa \,  t \in (0,\infty)\,.
\end{displaymath}
However, in order to keep the presentation simpler, we will only focus on
the case $p=2$, $\psi(v)=\frac 12 v^2$.
  \end{remark}
\EEE
\subsubsection*{\bf Plan of the paper.}
 In \underline{Section \ref{s:3}}, after recalling some basic notions on metric gradient flows, we fix the metric-topological setup of our results, precisely state our assumptions on the energy functional $\phi$, and prove some preliminary results, among which the compactness criterion in Thm.\ \ref{thm:main-compactness}. 
\par
\underline{Section \ref{s:4}} is devoted to  the minimization of the WED functional $\calI_\eps$: the existence of minimizers is shown in Thm.\ \ref{exist-minimizers}, and the metric inner variation equation established in Prop.\ \ref{euler-lagrange}. 
\par
A thorough analysis of the properties of the value function \RIC $V_\eps$ \EEE  is carried out throughout \underline{Section \ref{s:added}}, where in particular we prove that any WED minimizer is a curve  of maximal slope for $\Ve$.
\par
In \underline{Section \ref{sez:passlim}} we finally pass to the limit as $\eps\down 0$ in the gradient flow equation for $\Ve$, and conclude the proof of Theorem \ref{th:4.1}. 
\par
Under the additional $\lambda$-geodesic convexity of $\phi$, in 
\underline{Section \ref{s:aprio}} we prove finer results on WED minimizers $u_\eps$: in particular, we show that, for every fixed $\eps>0$, the mapping $t\mapsto \phi(u_\eps(t))$ enjoys continuity, monotonicity, and convexity  properties akin to those holding for the  function $t\mapsto\phi(u(t))$ \RIC whenever $u$  is  \EEE a curve of maximal slope for $\phi$. 
\par
Keeping the $\lambda$-convexity assumption, in \underline{Section \ref{sez:hamilton-jabobi}} we establish the Hamilton-Jacobi identity \eqref{HJ-ID-INTRO} in Thm.\ \ref{prop:HJ}.
\par
\underline{Section \ref{s:appl}} shows some applications of our
results to \RIC gradient 
flows of nonconvex functonals in Hilbert and Banach
spaces (Sec.\ \ref{ss:appl.1}), and to  a class of  \UUU curves of maximal slope in Wasserstein
spaces of probability measures (Sec.\ \ref{ss:appl.2}). \EEE
\par
Finally, in \underline{Appendix A} we introduce
 a  Finsler-type 
metric on $X$, induced by $\phi$, which will provide further insight
into the properties of $\Ve$. 

\begin{notation}[General notation]
\upshape
Throughout the paper,  we shall use the symbols
$c,\,c',\, C,\,C'$, etc., whose meaning may vary even within the same   line,   to denote various positive constants depending only on
known quantities. Furthermore, the symbols $I_i$,  $i = 0, 1,... $,
will be used as place-holders for several integral terms (or sums of integral terms) involved in
the various estimates: we warn the reader that we will not be
self-consistent with the numbering, so that, for instance, the
symbol $I_1$ will occur several times with different meanings.
\end{notation}

\section{Setup and preliminary results}
\label{s:3}
In this section, 
in order to make the paper as self-contained as possible, we first
collect  some basic definitions and results from the
theory of gradient flows in metric spaces  in Sec.\ \ref{ss:2.1},   referring to \cite{Ambrosio-Gigli-Savare08} 
for all details,  as well as some auxiliary results on the reparameterization of curves  (Sec.\ \ref{ss:2.2}). 
In Sec.\ \ref{subsec:topological} 
we then  state 
the basic lower semicontinuity/coercivity assumptions on the energy $\phi$ under which we shall prove the main  results in this paper,
 also relying on the compactness criterion provided by Theorem \ref{thm:main-compactness}  (Sec.\ \ref{ss:2.4}).  We conclude by fixing some results on the exponential measure  
$$ \UUU \mu_\eps:=
  \frac1\epsi {\mathrm e^{-t/\eps}}\Leb 1, \EEE$$
and the induced weighted Sobolev spaces, that will turn out to be useful in order to study the properties
   of the WED functional $\calI_\eps$ \eqref{wed-intro},  cf.\ Sec.\  \ref{ss:2.6}. 
\par
 Throughout   the paper 
we will assume that 
\[
\text{
$(X,\sfd)$ is a complete metric space.}
\]

\subsection{Recaps on gradient flows in metric spaces}
\label{ss:2.1}
\paragraph{\bf Absolutely continuous curves and metric
derivative.} Let 
$I$ be an interval of $\R$.
We say that a
curve $u: 
 I \to X$ 
belongs to $\AC^p(I;X)$,  $p\in
[1,\infty]$,  if there exists $m\in L^p (I)$ such that
\begin{equation}\label{metric_dev}
\sfd(u(s),u(t))\leq \int_s^t m(r)\dd r \quad \text{for all  $s,t\in I,\ s\leq
t.$}
\end{equation}
For $p=1$, we simply write $\AC(I;X)$ and speak of {\it
absolutely continuous curves}. 
The case $p=\infty$ corresponds to Lipschitz curves.
As usual, $\AC^p_{\rm loc}(I;X)$ will denote the set of 
curves $u:I\to X$ which belong to $\AC^p(J;X)$ for every compact
interval
$J\subset I$.

It  can be proved (see e.g.\ \cite[Sec.\ 1.1]{Ambrosio-Gigli-Savare08}) that for all $u
\in\AC^p(I;X) $, the limit~$$|u'|(t) = \lim_{s\to t}\frac{\sfd(u(s),u(t))}{|t-s|}$$
exists for a.a. $t\in I$. We will refer to it as the {\it metric
derivative} of $u$ at $t$. The map
 $t \mapsto|u'|(t)$  belongs to $ L^p(I) $
and it is minimal within the class of functions $m\in L^p(I)$
fulfilling \eqref{metric_dev}.
\par
 A distinguished class of Lipschitz curves in $\AC^\infty([0,1];X)$ is
provided
by \emph{minimal, constant-speed, geodesics} (for short, geodesics): they are
curves $\gamma:[0,1]\to X$ satisfying
\begin{equation}
  \label{eq:2}
  \sfd(\gamma(s),\gamma(t))=|t-s| \,\sfd(\gamma(0),\gamma(1))\quad
  \text{for every }s,t\in [0,1];
\end{equation}
in particular a geodesic $\gamma$ satisfies
$|\gamma'|(t)\equiv \sfd(\gamma(0),\gamma(1))$ 
for every $t\in (0,1).$
\medskip 

\paragraph{\bf Local  (descending) 
 slope 
 and moderated upper gradient of a l.s.c.~functional.}
Let
 \[
 \begin{gathered}
 \phi : X \to (-\infty,\infty] \quad \text{be lower semicontinuous
with proper domain}\\  D( \phi ):=\{u \in X \ : \  \phi (u) <
\infty\}
\neq \emptyset.
\end{gathered}
\]
The {\it local (descending) slope}
(see \cite{DeGiorgi-Marino-Tosques80,Ambrosio-Gigli-Savare08}) of $  \phi  $ at $ u \in D( \phi ) $
is defined by
$$
|\partial  \phi |(u) := \limsup_{v \to u} \frac{( \phi (u) -
 \phi (v))^+}{\sfd(u,v)}.$$
  As already mentioned in the Introduction, we will also consider on the space $X$ a second 
 (Hausdorff) topology $\sigma$, suitably compatible with that induced by the metric $\sfd$ (cf.\ \eqref{basic-assu}
 ahead).  Accordingly, we will 
   work with 
  the   sequentially  $\sigma$-lower semicontinuous envelope
of the local slope $\ls$
along $d$-bounded sequences with bounded energy, namely
 the
  {\it relaxed slope}
\begin{gather}
 \rsls(u) :=\inf\big\{\liminf_{n\to \infty}\ls(u_n)\ :  \ u_n \weaksigma u,  \  \sup_n (d(u_n,u),\,\phi(u_n)) <\infty \big\}
 \qquad \text{for $u \in D(\phi)$}.
 \label{slope1}
\end{gather}


We recall (cf.\   \cite{Heinonen-Koskela98,Cheeger00}, \cite[Def.~1.2.1]{Ambrosio-Gigli-Savare08}) that 
 a function
 $ g : \U \to [0,\infty]$ is a  {\it (strong) upper gradient} for
  the functional $  \phi  $ if, for every curve
  $ u \in \AC(I;\U)$
  the function $ g \circ u $ is Borel and there holds
\begin{equation}
\label{e:2.3} | \phi (u(t)) -  \phi (u(s))| \leq \int_s^t g(u(r))
|u'|(r)\dd r \quad \text{for all $ 0< s\leq t$}. 
\end{equation}

We now 
now introduce a slightly weaker notion of \emph{upper gradient}
which is well adapted to gradient flows of 
functionals that can assume the 
value $\infty$.
Let  $p\in [1,\infty]$: We say that a function
 $ g : \U \to [0,\infty]$ is an 
  {\it $L^p$-moderated upper gradient} 
for
  the functional $  \phi  $  if for every curve 
$ u \in \AC(I;\U)$
  such that,  in addition, 
  \begin{equation}
    \label{eq:45}
    \phi\circ u\in L^p(I; |u'|\Leb 1),\quad 
    g\circ u\in L^1(I;|u'|\Leb 1)
  \end{equation}
  we still  have \eqref{e:2.3}. 
Observe that, whenever $g$  is a strong, or a $L^p$-moderated, upper gradient, we have that 
$  \phi  \circ u  \in
 \AC(I;\R)$ 
and
$$
 |( \phi  \circ u)'(t)| \leq g(u(t)) |u'|(t) \quad
\forae t \in I.
$$
 \paragraph{\bf Curves of Maximal Slope.}
 Let $g :\U \to [0,\infty]$ be an
 {\em $L^\infty$-moderated upper gradient} 
  for $\phi$ 
 and let $I=\II$. 
 We recall (see~\cite[Def.~1.3.2,~p.32]{Ambrosio-Gigli-Savare08}, following~\cite{DeGiorgi-Marino-Tosques80,Ambrosio95}), that
 a  curve  $ u \in \AC^2_\loc(\II;\U)$ is said to be a {\it curve of
maximal slope} for the functional $  \phi  $ with respect to 
$ g$ if $\phi\circ u$ is locally bounded and
 \begin{equation}
 \label{e:differential-equality}
   -( \phi \circ u)'(t) = |u'|^2 (t)= g^{2}(u(t))
   \qquad \forae t \in (0,
   \infty).
\end{equation}
In particular, $ \phi  \circ u$ is locally absolutely continuous in
$[0,\infty)$, $g \circ u \in L^{2}_\loc([0,\infty))$, and the
energy identity
  \begin{equation}
  \label{abstra-enide}
  \frac1{2} \int_s^t |u'|^2 (r)\dd r + \frac1{2}\int_s^t g^{2}(u(r))\dd r +  \phi (u(t)) =
   \phi (u(s))  \qquad \text{for all $0 \leq s\leq t,$}
\end{equation} directly follows.
 It is interesting that curves of maximal slope
w.r.t.~$g$ can be characterized by an integral condition:
in fact, if a curve $u\in \AC^2_\loc(\II;\U)$ with
$u(0)\in \SFD(\phi)$ satisfies
  \begin{equation}
    \label{eq:1}
    \frac1{2} \int_0^t |u'|^2 (r)\dd r + \frac1{2}\int_0^t
    g^{2}(u(r))\dd r +  
    \phi (u(t)) \le
   \phi (u(0))  \qquad \text{for all $t\ge 0$}
\end{equation}
then $u$ fulfills \eqref{e:differential-equality}. 
Notice that,  for any reasonable definition of gradient flow local boundedness of $\phi\circ u$
is not a restrictive a priori assumption:
this justifies the restriction to the
class of moderated upper gradients in the above  definition of curve of maximal slope. 
\paragraph{\bf  Geodesically $\lambda$-convex functionals.}
A remarkable case in which  the local slope is a (strong) upper
gradient occurs when (cf.~\cite[Thm.~2.4.9]{Ambrosio-Gigli-Savare08}) the functional
$ \phi $ is geodesically $\lambda$-convex for some $\lambda \in
\R$, i.e.
\begin{align}
&\text{for all $v_0,\, v_1 \in \SFD( \phi )$ there exists a
  geodesic $\gamma:[0,1] \to \U$}\nonumber
\quad
\text{with }\gamma(0)=v_0, \ \gamma(1)=v_1, \ \   \text{and }
\\
& \phi (\gamma(t)) \leq (1-t)  \phi (v_0) + t  \phi (v_1)
-\frac{\lambda}{2}t (1-t) \sfd^2(v_0,v_1) \ \ \text{for all
$0 \leq t \leq 1$.} \label{def:l-geod-convex}
\end{align}
%
The following result   subsumes \cite[Cor.~2.4.10, Lemma~2.4.13,
Thm.~2.4.9]{Ambrosio-Gigli-Savare08}. 
\begin{proposition}
\label{prop:l-p-geod} Let $ \phi : \U \to (-\infty,\infty]$ be
$\sfd$-lower semicontinuous and  $\lambda$-geo\-de\-si\-cal\-ly convex
for some $\lambda \in \R$. Then, the local slope $|\partial
 \phi |$ is lower semicontinuous and admits the representation
\begin{equation}
\label{repr-loc-slope} |\partial  \phi |(u)= \sup_{v \neq u} \left(
\frac{ \phi (u) -  \phi (v)}{\sfd(u,v)} + \frac{\lambda}2
\sfd(u,v)\right)^+ \quad \text{for all $u \in \SFD( \phi )$.}
\end{equation}
Furthermore,  $|\partial  \phi |$ is  a (strong) upper gradient,
 and  therefore an $L^p$-moderated upper gradient for every
$p\in [1,\infty]$.\
\end{proposition}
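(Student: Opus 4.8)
The statement to be established is Proposition~\ref{prop:l-p-geod}, collecting the representation formula \eqref{repr-loc-slope} for the local slope of a $\sfd$-lower semicontinuous $\lambda$-geodesically convex functional $\phi$, together with its lower semicontinuity and its upper gradient property. The plan follows the classical route of \cite[Sec.~2.4]{Ambrosio-Gigli-Savare08}.

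First I would prove the representation \eqref{repr-loc-slope}. The inequality $|\partial\phi|(u)\ge\sup_{v\neq u}\bigl(\frac{\phi(u)-\phi(v)}{\sfd(u,v)}+\frac\lambda2\sfd(u,v)\bigr)^+$ is the delicate one: given $v\in \SFD(\phi)$ with $v\neq u$, one connects $u$ to $v$ by a geodesic $\gamma$ furnished by $\lambda$-convexity, plugs $\gamma(t)$ into the difference quotient defining $|\partial\phi|(u)$ as $t\downarrow 0$, and uses $\sfd(u,\gamma(t))=t\,\sfd(u,v)$ together with the convexity inequality \eqref{def:l-geod-convex} to bound $\phi(u)-\phi(\gamma(t))\ge t(\phi(u)-\phi(v))+\frac\lambda2 t(1-t)\sfd^2(u,v)$; dividing by $\sfd(u,\gamma(t))$ and letting $t\downarrow0$ yields the claimed lower bound for this particular $v$, and then one takes the supremum. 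The reverse inequality $|\partial\phi|(u)\le\sup_{v\neq u}(\dots)^+$ is elementary: for any sequence $v_n\to u$ one has $\frac{(\phi(u)-\phi(v_n))^+}{\sfd(u,v_n)}\le\bigl(\frac{\phi(u)-\phi(v_n)}{\sfd(u,v_n)}+\frac\lambda2\sfd(u,v_n)\bigr)^++\frac{|\lambda|}2\sfd(u,v_n)$ (the added term vanishes in the limsup), so the $\limsup$ is dominated by the supremum.

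Next, lower semicontinuity of $|\partial\phi|$ follows immediately from \eqref{repr-loc-slope}: the map $u\mapsto\bigl(\frac{\phi(u)-\phi(v)}{\sfd(u,v)}+\frac\lambda2\sfd(u,v)\bigr)^+$ is $\sfd$-lower semicontinuous for each fixed $v$ (since $\phi$ is l.s.c.\ and $\sfd(\cdot,v)$ continuous, the positive part of the sum is l.s.c.), and a pointwise supremum of l.s.c.\ functions is l.s.c. Then I would establish the (strong) upper gradient property. Here the standard argument runs as follows: fix $u\in\AC(I;X)$ with $\phi\circ u$ of interest; one shows that $t\mapsto\phi(u(t))$ is locally absolutely continuous by controlling, for $s<t$, the increment $\phi(u(t))-\phi(u(s))$ via a chain of geodesic interpolations between consecutive points of a fine partition, invoking \eqref{repr-loc-slope} to bound each increment by $\int|\partial\phi|(u(r))|u'|(r)\dd r$ plus an error controlled by $|\lambda|$ times the total variation, which is absorbed in the limit as the partition is refined; the resulting bound is exactly \eqref{e:2.3}. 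Finally, a strong upper gradient is in particular $L^p$-moderated for every $p$, since the defining property \eqref{e:2.3} is simply required on the smaller class of curves satisfying \eqref{eq:45}.

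\textbf{Main obstacle.} The genuinely non-routine step is the lower bound in \eqref{repr-loc-slope}, because it requires exploiting the \emph{existence} of a connecting geodesic and the precise form of the $\lambda$-convexity inequality along it; everything else (the upper bound, lower semicontinuity, the moderated upper gradient implication) is soft. The absolute-continuity/upper-gradient part, while more computational, is also standard once the representation is in hand — one should simply be careful that the error terms coming from the $\frac\lambda2\sfd(u,v)$ correction are quadratic in the mesh and hence disappear upon refinement. Since all three conclusions are quoted verbatim from \cite[Cor.~2.4.10, Lemma~2.4.13, Thm.~2.4.9]{Ambrosio-Gigli-Savare08}, it would in fact suffice to cite those references; but the sketch above indicates the self-contained argument.
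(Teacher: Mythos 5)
The paper does not prove this proposition but simply cites \cite[Cor.~2.4.10, Lemma~2.4.13, Thm.~2.4.9]{Ambrosio-Gigli-Savare08}, and your sketch is a correct reconstruction of precisely those arguments: the geodesic-interpolation bound $\phi(u)-\phi(\gamma(t))\ge t(\phi(u)-\phi(v))+\frac{\lambda}{2}t(1-t)\sfd^2(u,v)$ gives the hard inequality in \eqref{repr-loc-slope}, the elementary estimate $a^+\le(a+b)^++|b|$ gives the easy one, and lower semicontinuity and the upper-gradient property then follow from the representation. Your final sentence correctly identifies that a citation would suffice — which is exactly what the authors do — so the proposal is consistent with the paper's treatment.

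One small caveat worth recording: your upper-gradient paragraph compresses the most technical step. From \eqref{repr-loc-slope} one gets, for any $u,v\in\SFD(\phi)$, the two-point inequality $\phi(u)-\phi(v)\le|\partial\phi|(u)\,\sfd(u,v)+\frac{|\lambda|}{2}\sfd^2(u,v)$, and telescoping this along a partition of $[s,t]$ produces Riemann sums $\sum_k|\partial\phi|(u(t_k))\,\sfd(u(t_k),u(t_{k+1}))$ in which the slope is evaluated at the left endpoint; passing from these to $\int_s^t|\partial\phi|(u(r))|u'|(r)\,\dd r$ is not automatic, since the slope is merely l.s.c.\ rather than continuous along the curve, and the argument in \cite{Ambrosio-Gigli-Savare08} handles this with more care than your phrase ``absorbed in the limit as the partition is refined'' conveys. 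This does not invalidate the plan, but if the proof were to be written out in full that step would need to be argued rather than asserted.
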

\subsection{Length and energy reparameterization}
\label{ss:2.2}
We recall here a few standard results on reparameterization of
curves,  from which we  will also  derive  a useful criterion 
for 
  a functional  to be an
upper gradient. Lemma  \ref{le:reparametrization} below shall be also used in the proof of Lemma \ref{le:basic} ahead. 
\begin{lemma}[Length and Energy reparameterization]
  \label{le:reparametrization}
  Let $\frg:X\to [0,\infty)$ be a
  Borel
  function and
  let $\vartheta\in \AC([a,b];X)$ be a curve with
  $\int_a^b |\teta'|(t)\,\dd t=L.$ 
  The reparameterized curve
  $\tilde\vartheta:[0,L]\to X$
  \begin{equation}
    \label{eq:22}
    \tilde\vartheta(r):=\vartheta(\kappa(r)) \quad \text{with } 
    \kappa(r):=\inf\Big\{t\in [a,b]:\int_a^t
    |\teta'|(\UUU s \EEE)\,\dd \UUU s \EEE=r\Big\} \text{ for all } 
    r\in [0,L],
  \end{equation}
  is $1$-Lipschitz, $\kappa:[0,L]\to[0,1]$ is continuous,
  nondecreasing and surjective
  (so that $\tilde\vartheta$ has the
  same support as $\vartheta$ with the same initial and final points),
  and
  \begin{equation}
    \label{eq:23}
    \begin{aligned}
    &
    |\tilde\vartheta'|(r)=1\quad
    \text{for $\Leb 1$-a.e.~$r\in [0,L]$,}\quad
\\
&
    \int_{\kappa(r_0)}^{\kappa(r_1)} \frg(\teta(t))|\teta'|(t)\,\dd t=
    \int_{r_0}^{r_1} \frg(\tilde\teta(r))\,\dd r
    \quad \text{for all } 0\le r_0<r_1\le L.
    \end{aligned}
  \end{equation}
  Similarly, if $S:=
  \int_a^b \frac1{\frg(\vartheta(t))}|\vartheta'|(t)\,\dd t<\infty$,
  the reparametrized curve $\vartheta_\frg:[0,S]\to X$
  \begin{equation}
    \label{eq:22bis}
    \vartheta_\frg(s):=\vartheta(\kappa_\frg(s)),\quad
    \kappa_\frg(s):=\inf\Big\{t\in [a,b]:\int_a^t
    \frac1{\frg(\vartheta(t))}\,|\teta'|(r)\,\dd r=s\Big\},\quad
    s\in [0,S],
  \end{equation}
  belongs to $\AC([0,S];X)$, has the same support as
  $\vartheta$ with the same initial and final points, and satisfies
  \begin{equation}
    \label{eq:47bis}
    \frg(\vartheta_\frg(s))=
    \UUU|\vartheta_\frg'|\EEE (s)
    \quad \text{a.e.~in }[0,S].
  \end{equation}
  In particular, if $\int_a^b\frg(\vartheta(t))|\vartheta'|(t)\dd
  t<\infty$
  we have $\vartheta_\frg\in \AC^2([0,S];X)$ and
  \begin{equation}
    \label{eq:47}
    \int_0^S \frg^2(\vartheta_\frg(s))\,\dd s=
    \int_0^S  \UUU |\vartheta_\frg'|^2 (s) \EEE\,\dd s=
    \int_0^S \frg(\vartheta_\frg(s))\,|\vartheta_\frg'|(s)\,\dd s
    =\int_0^1 \frg(\vartheta(t))\,|\vartheta'|(t)\,\dd t.
  \end{equation}
\end{lemma}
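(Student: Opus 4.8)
The plan is to split the statement into two independent reparameterizations — by arc-length and by the "$\frg$-weighted" clock — and treat each by the same scheme: verify the defining integral equation for the change of variables, check continuity/monotonicity/surjectivity of the time-change, then use the chain rule for metric derivatives under a monotone reparameterization together with the area formula to transfer the relevant integrals.

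First I would handle the arc-length case \eqref{eq:22}. Set $s(t):=\int_a^t |\teta'|(r)\,\dd r$, which is continuous and nondecreasing on $[a,b]$ with $s(a)=0$, $s(b)=L$; then $\kappa$ defined in \eqref{eq:22} is its (left-continuous) generalized inverse, hence continuous, nondecreasing and surjective onto $[a,b]$ — continuity of $\kappa$ follows precisely because $s$ is continuous, and strict monotonicity of $s$ on any interval where $|\teta'|>0$ forces $\kappa$ to be constant exactly on the (at most countably many) intervals where $s$ has a flat; but $\teta$ is itself constant there, so $\tilde\teta=\teta\circ\kappa$ is well-defined and has the same support and endpoints as $\teta$. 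For the $1$-Lipschitz bound I would use $\sfd(\tilde\teta(r_0),\tilde\teta(r_1))=\sfd(\teta(\kappa(r_0)),\teta(\kappa(r_1)))\le \int_{\kappa(r_0)}^{\kappa(r_1)}|\teta'|(t)\,\dd t = s(\kappa(r_1))-s(\kappa(r_0)) = r_1-r_0$, the last equality because $s\circ\kappa=\mathrm{id}$ on $[0,L]$. The metric-derivative identity $|\tilde\teta'|(r)=1$ a.e.\ then follows from the general fact that $|u'|$ is the minimal $L^1$ function in \eqref{metric_dev} (so $|\tilde\teta'|\le 1$ a.e.) combined with $\int_0^L |\tilde\teta'|(r)\,\dd r \ge$ total variation $= L$. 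The integral identity in \eqref{eq:23} for $\int \frg(\teta)|\teta'|$ is then a direct application of the change-of-variables/area formula for the monotone continuous map $s$, using that $|\tilde\teta'|=1$.

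Next, the $\frg$-weighted case \eqref{eq:22bis}: this is essentially the same argument applied to the nondecreasing continuous function $\sigma(t):=\int_a^t \frac1{\frg(\teta(r))}|\teta'|(r)\,\dd r$, with total mass $S$; $\kappa_\frg$ is its generalized inverse, $\vartheta_\frg=\teta\circ\kappa_\frg$, and the chain rule for metric derivatives under the monotone reparameterization gives $|\vartheta_\frg'|(s)=|\teta'|(\kappa_\frg(s))\cdot \kappa_\frg'(s)$ wherever things are differentiable; since $\sigma'(t)=\frac1{\frg(\teta(t))}|\teta'|(t)$, one gets $\kappa_\frg' = \frg(\teta\circ\kappa_\frg)/|\teta'|(\kappa_\frg)$ at a.e.\ point, whence \eqref{eq:47bis}, $\frg(\vartheta_\frg(s))=|\vartheta_\frg'|(s)$ a.e. The remaining identities \eqref{eq:47} follow by substituting \eqref{eq:47bis} and applying the area formula once more to pass between the $s$-integral and the original $t$-integral; the membership $\vartheta_\frg\in\AC^2([0,S];X)$ under $\int\frg(\teta)|\teta'|\,\dd t<\infty$ is then immediate because $|\vartheta_\frg'|=\frg(\vartheta_\frg)\in L^2(0,S)$, the $L^2$-bound being exactly $\int_0^S\frg^2(\vartheta_\frg)\,\dd s=\int_a^b\frg(\teta)|\teta'|\,\dd t$.

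The main obstacle I expect is the careful bookkeeping at the level sets where $\frg(\teta(t))$ vanishes or blows up, and more generally the rigorous justification of the metric chain rule $|\vartheta_\frg'|(s)=|\teta'|(\kappa_\frg(s))\,\kappa_\frg'(s)$ for a merely absolutely continuous (not Lipschitz) reparameterization, together with the corresponding area formula for monotone continuous functions that need not be strictly monotone. The cleanest way around this is to avoid pointwise differentiation altogether: one shows the two $L^1$ inequalities $\int m(\teta_\frg(s))\,\dd s \le \int n(\teta(t))\,\dd t$ and its reverse for suitable majorants $m,n$ in \eqref{metric_dev}, using only that $s\mapsto\kappa_\frg(s)$ pulls back the measure $\frac1{\frg(\teta)}|\teta'|\,\Leb1$ to $\Leb1$; minimality of the metric derivative then pins down $|\vartheta_\frg'|$ up to a null set and forces \eqref{eq:47bis}. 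On the $\{\frg(\teta)=0\}$ issue: the hypothesis $S<\infty$ forces $\frac1{\frg(\teta(t))}|\teta'|(t)$ to be integrable, so $|\teta'|=0$ a.e.\ on $\{\frg\circ\teta=0\}$ and that set is invisible to the reparameterization — this is where one must be slightly careful but the argument is routine once stated.
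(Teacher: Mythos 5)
Your proof is correct, but for the second half you take a genuinely different route from the paper. For the first part (arc-length reparameterization), the paper just invokes the one-dimensional area formula; what you spell out (generalized inverse, $1$-Lipschitz bound, $|\tilde\vartheta'|=1$ via minimality plus total variation) is a correct unpacking of exactly that. For the $\frg$-weighted reparameterization, however, the paper does \emph{not} reparameterize $\vartheta$ directly by the clock $\sigma(t)=\int_a^t \frac{1}{\frg(\vartheta)}|\vartheta'|\,\dd r$ as you do. It instead \emph{factors through the arc-length curve}: defining $\mathsf s(r):=\int_0^r \frac{1}{\frg(\tilde\vartheta(z))}\,\dd z$, it observes that $\mathsf s'(r)=1/\frg(\tilde\vartheta(r))>0$ for a.e.\ $r$, so $\mathsf s$ is \emph{strictly} increasing, hence has a genuine inverse $\mathsf r$; one then checks $\kappa_\frg(s)=\kappa(\mathsf r(s))$ and reads off $|\vartheta_\frg'|(s)=|\tilde\vartheta'|(\mathsf r(s))/\mathsf s'(\mathsf r(s))=\frg(\vartheta_\frg(s))$. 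This is precisely the move that dissolves the obstacle you flag at the end of your proposal: because $\tilde\vartheta$ is Lipschitz with $|\tilde\vartheta'|\equiv1$ a.e., and $\mathsf s$ is strictly monotone, the metric chain rule is completely elementary — there are no flats in $\mathsf s$, no generalized inverse needed, and no separate bookkeeping on $\{\frg\circ\vartheta=0\}$ or $\{|\vartheta'|=0\}$. Your alternative workaround (two-sided $L^1$ inequalities transported along $\kappa_\frg$ plus minimality of the metric derivative) is sound and does deliver \eqref{eq:47bis}, but it is more work; the paper's factorization buys a one-line chain-rule computation at the cost of composing two reparameterizations.
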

\begin{proof}
  The first part of the statement
  is a direct application of the $1$-dimensional
  area formula, see e.g.\ \cite[Rem.\ 4.2.2]{Ambrosio-Tilli00}.

  In order to prove the second part,
   based on the construction \eqref{eq:22} of the curve $\tilde\vartheta$, 
   let us define $\mathsf{s}: [0,L]\to [0,\infty)$ by 
\begin{equation}
\label{reparam-s}
\mathsf{s}(r) : = \int_0^r \frac{1}{\frg(\tilde\vartheta(z)) } \dd z
=\int_0^{\kappa(r)}\frac{1}{\frg(\vartheta(t)) }|\vartheta'|(t) \dd t
\,,\quad
S:=\mathsf{s} (L).
\end{equation}
Since $\frg\circ \tilde\vartheta<\infty$ a.e.~in $(0,L)$  by \eqref{eq:23},  we have
$\mathsf{s}'(r) =  \frac{1}{\frg(\tilde\teta(r)) }  >0$
for almost all $r \in (0,L)$,
so that the map $\mathsf{s}$ is strictly increasing hence invertible,
with inverse $\mathsf{r} : [0,S] \to [0,L]$ satisfying
$\kappa_\frg(s)=\kappa(\mathsf r(s))$ and
$\vartheta_\frg(s) =
\vartheta(\kappa_\frg(s)) = \vartheta (\kappa(\mathsf r(s)))  = 
\tilde\vartheta(\mathsf r(s))$.
Since
\[
| \teta_\frg'|(s) = \frac{|\tilde\teta'| (\mathsf{r}(s))
}{\mathsf{s}'(\mathsf{r}(s)) } = \frg (\tilde\teta(\mathsf{r}(s)))
= \frg(\teta_\frg(s))
\qquad \text{for a.a.~} s \in (0,S),
\]
we get \eqref{eq:47bis}; a further application of
the change of variable formula yields \eqref{eq:47}.
\end{proof}
As a first application of the previous Lemma we have the following
criterion  that will be applied in Section \ref{ss:4.3} to prove
that a certain quantity is a moderated upper gradient for the value
function associated with the WED functional, cf.\ Theorem \ref{thm:upper-gradient} ahead. 
\begin{corollary}[A criterion for upper gradients]
  \label{cor:criterium}
  Let $\phi:X\to (-\infty,\infty]$ be a l.s.c.~functional
  and let $g:X\to [0,\infty]$ be a Borel map such that
  for every curve $\vartheta\in \AC^2([a,b];X)$
  with $\phi\circ \vartheta\in L^{2p}(a,b)$ there holds
  \begin{equation}
    \label{eq:50}
    |\phi(\vartheta(b))-\phi(\vartheta(a))|\le
    \frac 12\int_a^b \Big(|\vartheta'|^2(t)+g(\vartheta(t))\Big)\,\dd t.
  \end{equation}
  Then $g$ is a $L^p$-moderated upper gradient for $\phi$.
\end{corollary}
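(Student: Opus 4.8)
The plan is to deduce the moderated upper gradient property from the hypothesis \eqref{eq:50} by a reparameterization argument. Fix a curve $u\in\AC(I;X)$ with $\phi\circ u\in L^p(I;|u'|\Leb 1)$ and $g\circ u\in L^1(I;|u'|\Leb 1)$, and fix $a<b$ in $I$; we must establish the chain of inequalities \eqref{e:2.3}, i.e. $|\phi(u(b))-\phi(u(a))|\le \int_a^b g(u(r))\,|u'|(r)\,\dd r$. Were it not for the integrability restrictions, the natural idea would be to apply \eqref{eq:50} with $\vartheta$ a reparameterization of $u$. The point of Lemma \ref{le:reparametrization} is precisely that a suitable reparameterization turns the mixed integral $\int \frg(u)\,|u'|$ into both $\int |\vartheta'|^2$ and $\int \frg(\vartheta)^2$, so that the right-hand side of \eqref{eq:50}, which is controlled by $\tfrac12\int(|\vartheta'|^2+g(\vartheta))$, becomes comparable to $\int g(u)\,|u'|$.

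First I would treat the case in which $|u'|>0$ a.e.\ on $[a,b]$ and $\int_a^b (1+\phi^+(u(t))+g(u(t)))\,|u'|(t)\,\dd t<\infty$. Setting $\frg:=\max\{g,\phi^+,1\}$ — or better, a version that also dominates $|\phi|$ and is bounded below by a positive constant — we apply the second part of Lemma \ref{le:reparametrization}: the energy-reparameterized curve $\vartheta:=u_\frg$ on $[0,S]$ satisfies $\frg(\vartheta(s))=|\vartheta'|(s)$ a.e., it has the same endpoints as $u$, and by \eqref{eq:47} we have $\int_0^S|\vartheta'|^2\,\dd s=\int_0^S\frg(\vartheta)^2\,\dd s=\int_a^b\frg(u(t))\,|u'|(t)\,\dd t<\infty$. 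In particular $\vartheta\in\AC^2([0,S];X)$, and since $\phi\circ\vartheta\le\frg\circ\vartheta=|\vartheta'|\in L^2(0,S)$ (and $\phi\circ\vartheta\ge -\frg\circ\vartheta$ by the modified choice of $\frg$), we get $\phi\circ\vartheta\in L^{2}(0,S)\subset L^{2p}(0,S)$ after a further inner reparameterization if needed — the cleanest route is to work directly with the $\AC^2$ curve $\vartheta$ and note that the hypothesis of the corollary applies on each subinterval. Then \eqref{eq:50} gives
\[
|\phi(u(b))-\phi(u(a))|=|\phi(\vartheta(S))-\phi(\vartheta(0))|\le \tfrac12\int_0^S\big(|\vartheta'|^2(s)+g(\vartheta(s))\big)\,\dd s.
\]
Now $|\vartheta'|=\frg(\vartheta)\ge g(\vartheta)$, so $\tfrac12(|\vartheta'|^2+g(\vartheta))\le \tfrac12(|\vartheta'|^2+|\vartheta'|^2)=|\vartheta'|^2$ when $|\vartheta'|\ge 1$; since $\frg\ge 1$ this holds a.e., whence the right-hand side is $\le \int_0^S|\vartheta'|^2\,\dd s=\int_a^b\frg(u)\,|u'|\,\dd t$. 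This is not yet $\int g(u)\,|u'|$ because $\frg$ also dominates $\phi^+$ and $1$; to repair this one localizes: apply the above on the (relatively open) set where $g(u(t))\ge\max\{\phi^+(u(t)),1\}$ and a complementary argument elsewhere, or — more efficiently — run the reparameterization with $\frg=g$ itself on the set $\{g(u)>0\}$ and handle $\{g(u)=0\}$ separately, using that there $\phi\circ u$ must be locally constant along the curve (this is where one needs $\phi\circ u\in L^p(|u'|\Leb 1)$ to make sense of the one-sided control and pass to a limit $\frg=\max\{g,\delta\}$, $\delta\downarrow0$).

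The main obstacle is exactly this bookkeeping around the degenerate sets $\{|u'|=0\}$ and $\{g(u)=0\}$, together with verifying the integrability condition $\phi\circ\vartheta\in L^{2p}$ needed to invoke \eqref{eq:50} for the reparameterized curve. The resolution is standard: first dispose of the set $\{|u'|=0\}$ — it contributes nothing to either side of \eqref{e:2.3}, and by splitting $[a,b]$ one reduces to curves with $|u'|>0$ a.e.; then introduce $\frg_\delta:=\max\{g,\delta\}$ with $\delta>0$, which is bounded below so that $S_\delta=\int_a^b|u'|/\frg_\delta<\infty$ and the reparameterization is legitimate, obtain the estimate $|\phi(u(b))-\phi(u(a))|\le\int_a^b\frg_\delta(u)\,|u'|\,\dd t$ via the computation above, and finally let $\delta\downarrow0$ using dominated convergence (the dominating function being $g(u)\,|u'|+|u'|\,|u'|\cdot\mathbf 1_{\{g(u)<1\}}$, integrable by hypothesis together with $u\in\AC^2_{\loc}$ on the reparameterized scale). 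For the $L^{2p}$ integrability of $\phi\circ\vartheta$: along $\vartheta=u_{\frg_\delta}$ one has $|\phi(\vartheta(s))|^{2p}\,\dd s$ transforming back, via \eqref{eq:47}-type change of variables, into $|\phi(u(t))|^{2p}\frg_\delta(u(t))^{-1}|u'|(t)\,\dd t\le \delta^{-1}|\phi(u(t))|^{2p}|u'|(t)\,\dd t$ which is finite since $\phi\circ u\in L^{2p}(I;|u'|\Leb 1)$ — note the hypothesis \eqref{eq:45} should be read with exponent $p$ chosen so that $2p$ there matches; this is exactly the role of the parameter $p$ in the statement. Collecting these steps yields \eqref{e:2.3} for all $a<b$, and since the right-hand side of \eqref{e:2.3} is an absolutely continuous function of the endpoints, $\phi\circ u\in\AC(I;\R)$ and $|(\phi\circ u)'|\le g(u)\,|u'|$ a.e., completing the proof that $g$ is an $L^p$-moderated upper gradient.
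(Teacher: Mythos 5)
You have identified the right strategy, the same one used in the paper: reparameterize $u$ via Lemma \ref{le:reparametrization} by a regularized weight $\frg$ so that $\frg(\vartheta_\frg)=|\vartheta_\frg'|$, apply \eqref{eq:50} to the resulting $\AC^2$-curve, and undo the change of variables. But the choice of $\frg$ is where your proposal breaks, and the gap is substantive.

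What \eqref{eq:50} requires of the reparameterized curve $\vartheta_\delta:=u_{\frg_\delta}$ is $\phi\circ\vartheta_\delta\in L^{2p}(0,S_\delta)$. The only integrability you have, from \eqref{eq:45}, is $\phi\circ u\in L^p(I;|u'|\Leb 1)$ --- exponent $p$, not $2p$; your parenthetical ``the hypothesis \eqref{eq:45} should be read with exponent $p$ chosen so that $2p$ there matches'' cannot stand, because the $p$ in \eqref{eq:45} is the same $p$ appearing in ``$L^p$-moderated upper gradient''. Changing variables,
\[
\int_0^{S_\delta}|\phi(\vartheta_\delta(s))|^{2p}\,\dd s
 =\int_a^b|\phi(u(t))|^{2p}\,\frac{|u'|(t)}{\frg_\delta(u(t))}\,\dd t
 \le \frac1\delta\int_a^b|\phi(u(t))|^{2p}\,|u'|(t)\,\dd t,
\]
and with $\frg_\delta=\max\{g,\delta\}$ the right-hand side is not controlled by the available $L^p$-bound. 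This is exactly why the paper takes $g_\eps:=g+\eps(1\lor|\phi|)^p$ instead of a constant floor: then $(1\lor|\phi|)^p\le g_\eps/\eps$, so $|\phi(\vartheta_\eps)|^{2p}\le\eps^{-2}g_\eps^2(\vartheta_\eps)=\eps^{-2}|\vartheta_\eps'|^2\in L^1(0,S_\eps)$ comes for free from \eqref{eq:47}, with only the $L^p$-hypothesis on $\phi\circ u$. The additive form also makes the $\eps\downarrow0$ limit trivial, since $\int g_\eps(\vartheta)|\vartheta'|=\int g(\vartheta)|\vartheta'|+\eps\int(1\lor|\phi(\vartheta)|)^p|\vartheta'|$, and it dispenses entirely with your localization onto $\{g(u)\ge\max\{\phi^+(u),1\}\}$, which I could not make precise --- the set where $g$ is small is exactly where the trouble lives. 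A secondary issue: the step $\tfrac12(|\vartheta'|^2+g(\vartheta))\le|\vartheta'|^2$ uses $\frg\ge1$, which $\frg_\delta$ loses as soon as $\delta<1$, so ``the computation above'' does not apply at the $\delta$-level; the form of \eqref{eq:50} that the paper's own proof and its use in Theorem \ref{thm:upper-gradient} actually require has $g^2(\vartheta)$ rather than $g(\vartheta)$, and with that reading $g^2(\vartheta_\eps)\le g_\eps^2(\vartheta_\eps)=|\vartheta_\eps'|^2$ does the work with no floor at $1$.
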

\begin{proof}
  In order to prove that  $g$
   is an upper gradient, it 
  is not restrictive to check condition \eqref{e:2.3}
  in the case when $I=[0,1]$ and
  $s=0$ $t=1$. We also assume $p<\infty$,  as the
  modifications in the case $p=\infty$ are obvious.

  Let us fix a curve $\vartheta\in \AC([0,1];X)$ such that
  $\phi\circ \vartheta\in L^p(0,1;|\vartheta'|\Leb 1)$ and
  $g\circ \vartheta\in L^1(0,1;|\vartheta'|\Leb 1)$.
  Setting $g_\eps(x):=g(x)+ \eps(1\lor |\phi(x)|)^p$,
  we can apply the second part of
  Lemma \ref{le:reparametrization} to find
  a reparametrized
  curve    $\vartheta_\eps :=\teta_{g_\eps}  \in
  \AC^2([0,S_\eps];X)$ 
  corresponding to  $g_\eps$ 
  such that
  \begin{displaymath}
    \int_0^{S_\eps} g_\eps^2(\vartheta_\eps(s))\,\dd s=
    \int_0^{S_\eps} \UUU |\vartheta_\eps'|^2(s) \EEE\,\dd s=
    \int_0^{S_\eps} g_\eps(\vartheta_\eps(s))\,|\vartheta_\eps'|(s)\,\dd s
    =\int_0^1 g_\eps(\vartheta(t))\,|\vartheta'|(t)\,\dd t.
  \end{displaymath}
  Applying \eqref{eq:50} to the curve $\vartheta_\eps$
  (notice that $\phi\circ \vartheta_\eps
  \in L^{2p}(0,S_\eps)$  since $|\phi(\teta_\eps)|^p \leq g_\eps(\teta_\eps)$),  we obtain
  \begin{align*}
    |\phi(\vartheta(1))-&\phi(\vartheta(0))|=
    |\phi(\vartheta_\eps(S_\eps))-\phi(\vartheta_\eps(0))|\le
    \frac 12
    \int_0^{S_\eps} \Big(|\vartheta_\eps'|^2(s)+g(\vartheta_\eps(s))\Big)\,\dd s
    \\&\le
    \frac 12
    \int_0^{S_\eps}
    \Big(|\vartheta_\eps'|^2(s)+
    g_\eps(\vartheta_\eps(s))\Big)\,\dd s
    =
    \int_0^1
    g_\eps(\vartheta(t))|\vartheta'|(t)\dd t\\&
    =
    \int_0^1
    g(\vartheta(t))|\vartheta'|(t)\dd t+
    \eps \int_0^1(1\lor\phi(\vartheta(t)))^p|\vartheta'|(t)\dd t
  \end{align*}
  Passing to the limit as $\eps\downarrow0$ we conclude  the desired estimate \eqref{e:2.3}. 
\end{proof}

\subsection{\bf A general metric-topological
framework for gradient flows.}
\label{subsec:topological}
Throughout the paper, we will always assume that
 $(X,\sfd)$
is a complete metric space
endowed with
an auxiliary
 Hausdorff topology $\sigma$ on $X$,
that satisfies the following
compatibility conditions:
\begin{subequations}
\label{basic-assu}
\begin{enumerate}[(MT1)]
\item
  $d$ is sequentially
      lower semicontinuous w.r.t.~the product topology
      induced by $\sigma$ on $X\times X$:
\begin{equation}
  \label{weaker-top-1}
  (u_n,v_n) \weaksigma (u,v) \quad \Rightarrow \quad
\liminf_{n \to \infty} \sfd(u_n,v_n) \geq \sfd(u,v);
\end{equation}
\item
  for every $\sigma$-open set $U$ and every $x\in U$
  \begin{equation}
  \text{there
  exist a $\sigma$-open neighborhood $V$ of $x$
  and $\delta>0$ such that \ 
  $\sfd(y,V)<\delta\ \Rightarrow\ y\in U$.}\label{eq:11}
\end{equation}
\end{enumerate}
\end{subequations}
We call $(X,\sfd,\sigma)$ a compatible metric-topological space.
Notice that  (MT2) in particular    shows that $\sigma$ is weaker than
the topology induced by the distance $\sfd$. The possibility to work
with two possibly different topologies allows for a wider
applicability of the theory, as the following examples show.
\begin{remark}[Examples of compatibile metric-topological structures]
  \upshape
  \
  \begin{enumerate}[(E1)]\item
    The above condition is obviously satisfied in the simplest case
    in which $\sigma$ coincides with the topology induced by $\sfd$.
    \item Another
    interesting example is provided by a topology $\sigma$ induced by
    another distance $\sfd'$ satisfying $\sfd'(x,y)\le C \sfd(x,y)$ for every
    $x,y\in X$: this is the typical situation when $X$ is a Banach
    space continuously imbedded in a Banach space $Y$ and $\sfd,\sfd'$ are
    the distances induced by the norms of $X$ and $Y$ respectively.
    \item
      In the previous example,
      $Y$ could also be a Fr\'echet space: consider e.g.\ the case
    when $X=L^p(\Omega)$, $\Omega$ open subset of some Euclidean
    space $\R^m$, and $\sigma$ is the topology
    of $L^p_\loc(\Omega)$, induced by the distance
    \begin{displaymath}
      \sfd'(f,g):=\sum_{n=1}^\infty 2^{-n}\Big(\|f-g\|_{L^p(K_n)}\land
      1\Big),
    \end{displaymath}
    where $K_n\Subset \Omega$
    is a nondecreasing sequence of compact subsets invading $\Omega$.
  \item As a further example, one can consider the
    weak topology in a Banach space $X$, when $\sfd$ is
    the distance induced by its norm. This example
    also highlights that it is interesting  to deal with
    possibly non-metrizable topologies.
  \item Transport distances provide another important example:
    we can consider $\mathcal X=\mathscr P_p(X)$ endowed
    with the $p$-Wasserstein distance $W_p$, $p\in [1,\infty)$, and the
    topology $\sigma$ of weak convergence of probability measures,
     cf.\ e.g.\ \cite[Chap.\ 7]{Ambrosio-Gigli-Savare08}. 
  \end{enumerate}
\end{remark}
\EEE
%
  \begin{subequations}
\begin{property}[Standard lower
  semicontinuity-coercivity-compactness (LSCC) assumptions]
  \label{basic-ass}
We  \UUU say \EEE  that the proper
functional $\phi : X \to (-\infty,\infty] $
satisfies the \emph{standard assumptions} if
the following properties hold:
%
\begin{compactitem}
\item[\textbf{Lower Semicontinuity:}]
 $\phi$ is $\sigma$-sequentially
lower semicontinuous on $\sfd$-bounded sets:
\begin{equation}
\label{basic-ass-1}
\sup_{n,m} \sfd(u_n,u_m) <\infty, \ u_n \weaksigma u  \ \Rightarrow \
\liminf_{n \to \infty} \phi(u_n) \geq \phi(u);
\end{equation}
\item[\textbf{{Compactness:}}]
 Every $\sfd$-bounded set contained in a sublevel of $\phi$ is
relatively
$\sigma$-sequentially compact:  
\begin{equation}
\label{basic-ass-3}
\begin{aligned}
&
\text{if } (u_n)_n \subset X \text{ with } \sup_n \phi(u_n)<\infty, \
 \sup_{n,m} \sfd(u_n,u_m) <\infty,
 \\
 &
 \text{then $(u_n)_n$ admits a $\sigma$-convergent subsequence.}
 \end{aligned}
 \end{equation}
\item[\textbf{Coercivity:}]
There exists $u_* \in X$ and constants $\sfA,\sfB\ge 0$ such that
\begin{equation}
  \label{basic-ass-2bis}
  \phi(u) \geq - \sfB \, \sfd^2(u,v) -\sfQ(v),\quad
  \text{where}\quad
  \sfQ(v):= \sfB\sfd^2(v,u_*)+\sfA\quad
  \text{for every }u,v\in X.
\end{equation}
 \end{compactitem}
\end{property}
\end{subequations}
 Notice that if $\phi$ satisfies
%
\begin{equation}
\label{basic-ass-2} 
\phi(u) \geq - \sfa-\sfb\,  \sfd^2(u,u_*)\quad \text{for every }u\in
X,
\end{equation}
for some $\sfa,\sfb\ge0$ then   \eqref{basic-ass-2bis} 
holds with $\sfA:=\sfa$ and $\sfB:=2\sfb$. 
\par
 The simplest situation
in which Property \ref{basic-ass} holds is provided by a
functional $\phi$ whose sublevels $\{v\in X:\phi(v)\le c\}$ are
compact in $(X,\sfd)$; in this case we can choose $\sigma$ to be  the
topology induced by $\sfd$.
\subsection{An integral compactness criterion.}
\label{ss:2.4}
In this section we adapt to our setting a 
compactness result 
 for sequences of absolutely continuous curves drawn 
from  
\cite[Rmk.\ Extension 1, Thm.\ 4.12]{Rossi-Savare03}.  First and foremost, we shall apply it
 to show with Theorem \ref{exist-minimizers} the existence of minimizers for  the WED functional, relying on  
 a pointwise equicontinuity
estimate. 
Since we have it  at our disposal,  we can provide a simpler and more direct proof of Thm.\  \ref{thm:main-compactness} than that in \cite{Rossi-Savare03}. 
\begin{theorem}
  \label{thm:main-compactness}
  Let $I$ be an interval of $\R$ and 
  let us assume that $\phi:X\to (-\infty,\infty]$ satisfies the 
  standard LSCC \UUU Property \EEE  \ref{basic-ass}.  
  If $(u_n)_n\subset \AC^2_\loc(I;X)$ is a sequence satisfying
  \begin{equation}
    \label{eq:7}
    \sup_n\int_J |u_n'|^2(t)\,\dd t<\infty,\quad
    \sup_n\int_J \phi(u_n(t))\,\dd t<\infty
    \quad 
    \text{for every compact interval }J\subset I,
  \end{equation}
  and $\big(u_n(t_0)\big)_n$ is bounded for some $t_0\in I$, 
  then there exists a limit function $u\in \AC^2_\loc(I;X)$ 
  and a subsequence $k\mapsto n_k$ such that 
  \begin{equation}
    \label{eq:12}
    \text{$u_{n_k}(t)\weaksigma u(t)$ for every $t\in I$ as $k\up\infty$},
  \end{equation}
  \begin{equation}
    \label{eq:3}
    |u_{n_k}'|\weakto v\quad \text{in $L^2_\loc(I)$ as
      $k\up\infty$},\quad
    v\ge |u'|\quad\text{$\Leb 1$-a.e.~in $I$},
  \end{equation}
  \begin{equation}
    \label{eq:14}
    \liminf_{k\to\infty}\int_J\phi(u_{n_k}(t))\,\zeta(t)\,\dd t\ge
    \int_J\phi(u(t))\zeta(t)\,\dd t
  \end{equation}
  {for every nonnegative 
      $\zeta\in \rmC(I)$ and every compact interval }$J\subset I.$
\end{theorem}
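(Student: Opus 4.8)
The plan is to proceed in four steps: first establish local equicontinuity of the curves $(u_n)_n$, then extract a pointwise $\sigma$-convergent subsequence, then pass to the limit in the metric derivatives, and finally deduce the liminf inequality for the energy terms.

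\emph{Step 1: Local bounds and equicontinuity.} Fix a compact interval $J=[a,b]\subset I$ containing $t_0$ in its interior (or an endpoint; the argument is the same). For $s,t\in J$ with $s<t$ the definition of the metric derivative gives
\begin{equation*}
\sfd(u_n(s),u_n(t))\le \int_s^t |u_n'|(r)\,\dd r\le \Big(\int_J |u_n'|^2(r)\,\dd r\Big)^{1/2}|t-s|^{1/2}\le C_J\,|t-s|^{1/2},
\end{equation*}
by \eqref{eq:7} and Cauchy--Schwarz, so the curves are uniformly $1/2$-H\"older on $J$ with a constant independent of $n$. Combined with the assumed boundedness of $(u_n(t_0))_n$, this yields that $\sup_n \sup_{t\in J}\sfd(u_n(t),u_n(t_0))<\infty$, hence for every $t\in I$ the set $\{u_n(t):n\in\N\}$ is $\sfd$-bounded (first on any $J\ni t_0$ containing $t$, then propagating along a chain of overlapping compact intervals exhausting $I$). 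Next I would use the coercivity bound \eqref{basic-ass-2bis} together with the second bound in \eqref{eq:7}: since $\phi(u_n(t))\ge -\sfB\,\sfd^2(u_n(t),u_n(t_0))-\sfQ(u_n(t_0))\ge -C_J$ for $t\in J$, the function $t\mapsto\phi(u_n(t))$ is bounded below on $J$ uniformly in $n$; combined with $\sup_n\int_J\phi(u_n(t))\,\dd t<\infty$, this gives $\sup_n\int_J|\phi(u_n(t))|\,\dd t<\infty$. In particular, for a.e.\ $t\in I$ (by Fatou, or by a measure-theoretic argument along a countable family of nested intervals), there is a subsequence along which $\sup_n\phi(u_n(t))<\infty$ and $\{u_n(t)\}_n$ is $\sfd$-bounded, so the \textbf{Compactness} part of Property \ref{basic-ass} applies at such $t$.

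\emph{Step 2: Pointwise convergent subsequence.} I would first pick a countable dense set $D\subset I$ consisting of points $t$ for which (after the reduction of Step 1) the sequence $(u_n(t))_n$ is $\sfd$-bounded and contained in a sublevel of $\phi$. By \eqref{basic-ass-3} and a diagonal argument over $D$, extract a subsequence (not relabeled, or indexed $n_k$) such that $u_{n_k}(t)\weaksigma u(t)$ for every $t\in D$, defining $u$ on $D$. The uniform $1/2$-H\"older estimate passes to the $\sigma$-limit via (MT1) \eqref{weaker-top-1}: for $s,t\in D$, $\sfd(u(s),u(t))\le\liminf_k\sfd(u_{n_k}(s),u_{n_k}(t))\le C_J|t-s|^{1/2}$, so $u$ is uniformly continuous on $D$ with respect to $\sfd$ and, $(X,\sfd)$ being complete, extends to a curve $u\in\AC^2_{\loc}(I;X)$ (the $\AC^2$ regularity with the bound on $\int_J|u'|^2$ will follow from Step 3). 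To upgrade pointwise convergence from $D$ to all of $I$: given $t\in I$, pick $t_j\in D$ with $t_j\to t$; then $\sfd(u_{n_k}(t),u_{n_k}(t_j))\le C_J|t-t_j|^{1/2}$ uniformly in $k$, and a standard $3\eps$-type argument together with (MT2) \eqref{eq:11} — which guarantees that $\sigma$-neighborhoods are ``$\sfd$-thickenable'' — shows $u_{n_k}(t)\weaksigma u(t)$. This gives \eqref{eq:12}.

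\emph{Step 3: Weak limit of metric derivatives.} On each compact $J$, $(|u_{n_k}'|)_k$ is bounded in $L^2(J)$ by \eqref{eq:7}, so (after a further diagonal extraction over an exhausting sequence of intervals) $|u_{n_k}'|\weakto v$ in $L^2_{\loc}(I)$ for some $v\ge 0$. To see $v\ge|u'|$ a.e., fix $s<t$; lower semicontinuity of $\sfd$ under $\sigma\times\sigma$ (MT1) and weak $L^2$-convergence give
\begin{equation*}
\sfd(u(s),u(t))\le\liminf_k\sfd(u_{n_k}(s),u_{n_k}(t))\le\liminf_k\int_s^t|u_{n_k}'|(r)\,\dd r=\int_s^t v(r)\,\dd r,
\end{equation*}
so $v$ is an admissible integrand in \eqref{metric_dev}; by minimality of the metric derivative, $|u'|\le v$ a.e., which is \eqref{eq:3} (and confirms $u\in\AC^2_{\loc}$).

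\emph{Step 4: Liminf inequality for the energy.} This is the step I expect to be the main obstacle, because $\phi$ is only $\sigma$-sequentially lower semicontinuous on $\sfd$-bounded sets, while here it must be integrated against a test function after merely pointwise-in-$t$ convergence. The plan is: fix $0\le\zeta\in\rmC(I)$ and a compact $J\subset I$. From Step 1 we have $f_k(t):=\phi(u_{n_k}(t))$ uniformly bounded below on $J$ by $-C_J$, say $f_k+C_J\ge 0$. For each $t\in J$ we know $u_{n_k}(t)\weaksigma u(t)$ with $\sup_k\sfd(u_{n_k}(t),u_{n_k}(t_0))<\infty$; I still need $\sup_k\phi(u_{n_k}(t))<\infty$ to invoke the lower semicontinuity \eqref{basic-ass-1}. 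This is not guaranteed at every $t$, but it holds for a.e.\ $t$: indeed, by the uniform $L^1(J)$ bound on $|f_k|$ and Fatou's lemma applied to $f_k+C_J\ge 0$, the function $\liminf_k f_k$ is in $L^1(J)$, in particular $\liminf_k f_k(t)<\infty$ for a.e.\ $t$, and along the subsequence realizing the liminf the hypotheses of \eqref{basic-ass-1} are met, giving $\liminf_k\phi(u_{n_k}(t))\ge\phi(u(t))$ for a.e.\ $t\in J$. (If at some $t$ one has $\liminf_k f_k(t)=+\infty$, then trivially $\liminf_k f_k(t)=+\infty\ge\phi(u(t))$ as well, since $\phi(u(t))$ is finite because $u(t)$ lies in a $\sfd$-bounded sublevel-type set by the same Fatou argument — or one simply discards the null set.) Now apply Fatou's lemma once more to the nonnegative functions $(f_k+C_J)\zeta$ on $J$:
\begin{equation*}
\liminf_{k\to\infty}\int_J\phi(u_{n_k}(t))\zeta(t)\,\dd t+C_J\!\int_J\zeta
=\liminf_{k\to\infty}\int_J\big(f_k(t)+C_J\big)\zeta(t)\,\dd t
\ge\int_J\Big(\liminf_{k\to\infty}f_k(t)\Big)\zeta(t)\,\dd t
\ge\int_J\phi(u(t))\zeta(t)\,\dd t+C_J\!\int_J\zeta,
\end{equation*}
and cancelling the finite constant $C_J\int_J\zeta$ yields \eqref{eq:14}. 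This completes the proof.

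\emph{Main obstacle.} The delicate point, as noted, is Step 4: reconciling the genuinely sequential, $\sfd$-bounded-set-restricted lower semicontinuity of $\phi$ with an integral liminf statement — the resolution is the double use of Fatou's lemma (once to locate the a.e.\ set where pointwise lower semicontinuity of the energy applies, once to pass the liminf through the integral), made possible by the uniform lower bound on $\phi(u_{n_k}(\cdot))$ coming from coercivity and the $\sfd$-equicontinuity of Step 1. A secondary technical point is the propagation of pointwise $\sigma$-convergence from a dense set to all of $I$, which is exactly what compatibility condition (MT2) is designed to handle.
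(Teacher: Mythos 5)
Your overall plan (equicontinuity, pointwise $\sigma$-convergence on a countable set, extension to all of $I$ via (MT2), weak-$L^2$ convergence of the metric derivatives, Fatou for the energy) is the same strategy the paper uses, and Steps~1, 3, and~4 are essentially correct. The problem is in Step~2, and it is a genuine gap, not a mere imprecision.

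Step~2 opens with ``pick a countable dense set $D\subset I$ consisting of points $t$ for which \dots\ the sequence $(u_n(t))_n$ is $\sfd$-bounded and contained in a sublevel of $\phi$.'' Such a $D$ need not exist. Writing $\psi:=\phi+\sfB\,\sfd^2(\cdot,u_*)+\sfA\ge 0$, the available information is $\sup_n\int_J\psi(u_n(t))\,\dd t<\infty$; Fatou then gives only $\liminf_n\psi(u_n(t))<\infty$ for a.e.~$t$, which is strictly weaker than $\sup_n\psi(u_n(t))<\infty$. Indeed one can take nonnegative $\psi_n$ with $\int_J\psi_n\equiv 1$ and $\sup_n\psi_n(t)=\infty$ for \emph{every} $t\in J$ (dyadic bumps of height $2^m$ and width $2^{-m}$ cycling over $J$), so there is no dense --- in fact no nonempty --- set $D$ where the \emph{full} sequence has uniformly bounded energy. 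Consequently the Compactness assumption \eqref{basic-ass-3}, which requires a uniform energy bound, cannot be invoked at the points of a pre-chosen $D$, and a standard Cantor diagonal over such a $D$ does not get off the ground. The paper resolves this by choosing the points $t_m$ and the nested subsequences \emph{simultaneously and adaptively}: given the $m$-th subsequence $k\mapsto n(m,k)$, Fatou is re-applied to $\psi(u_{n(m,k)}(\cdot))$ (whose integrals are still uniformly bounded) to produce a point $t_{m+1}$ in the next basis open set $J_{m+1}$ with $\liminf_k\psi(u_{n(m,k)}(t_{m+1}))<\infty$; only then is \eqref{basic-ass-3} invoked to extract $k\mapsto n(m+1,k)$. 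Your Step~1 parenthetical acknowledges that the energy bound at a given $t$ holds only along a $t$-dependent subsequence, but Step~2 silently promotes this to a bound along the full sequence on a dense set, and that promotion is exactly what fails.

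A smaller remark on Step~4: the first use of Fatou there --- to locate an a.e.\ set where $\sup_k\phi(u_{n_k}(t))<\infty$ before invoking lower semicontinuity --- is unnecessary, because \eqref{basic-ass-1} only requires the approximating sequence to be $\sfd$-bounded, not energy-bounded. Once \eqref{eq:12} and the uniform $\sfd$-bound on compacts are established, $\liminf_k\phi(u_{n_k}(t))\ge\phi(u(t))$ holds for \emph{every} $t\in J$, and a single application of Fatou to the nonnegative integrands $\big(\phi(u_{n_k}(\cdot))+C_J\big)\zeta$ gives \eqref{eq:14} directly, as in the paper.
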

\begin{proof}
  By a standard diagonal argument, it is not restrictive to assume 
  that $I=[a,b]$ is a compact interval.
  Since the sequence $(u_n(t_0))_n$ is uniformly bounded,
  we can assume that there exists a constant $C>0$ 
  such that
  \begin{equation}
    \label{eq:8}
    \int_a^b |u_n'|^2(t)\,\dd t\le C,\quad
    \sup_{t\in [a,b]}\sfd(u_n(t),u_*)\le C,\quad
    \int_a^b \psi(u_n(t))\,\dd t\le C
    \qquad\text{for every }n\in \N,
  \end{equation}
  where 
  \begin{equation}
    \label{eq:6}
    \psi(w):=\phi(w)+\sfB\sfd^2(w,u_*)+\sfA
    \ge 0,\qquad
    w\in X,
  \end{equation}
  and $u_*,\sfA,\sfB$ are given by  \eqref{basic-ass-2bis}.
  The first integral bound of \eqref{eq:8} also yields
  \begin{equation}
    \label{eq:9}
    \sfd(u_n(t),u_n(s))\le \int_s^t |u_n'|(r)\,\dd r\le 
    \sqrt {C|t-s|}\quad \text{for every}\quad 
    s,t\in I,\ s\le t,\ n\in \N.
  \end{equation}
  Up to extracting a suitable subsequence, it is not
  restrictive to assume that 
  $|u'_{n}|\weakto v$ as $n\to\infty$  in $L^2(a,b)$.
  
  Let $J_m$, $m\in \N$, be a countable basis of open sets
  in $(a,b)$. 
  We want to find a family of sequences indexed by $m\in \N$,
  that we represent by a map $(m,k)\mapsto n(m,k)\in \N$, 
  and points $t_m\in J_m$ with $t_m\neq t_{m'}$ if $m\neq m'$, such 
  that 
  \begin{itemize}\item 
    for every $m \in \N$ the sequence $k\mapsto n(m+1,k)$ is an increasing
    subsequence of $k\mapsto n(m,k)$ 
    \item $k\mapsto u_{n(m,k)}(t_m)$ is
    converging to some $w_m\in X$ w.r.t.~$\sigma$ as $k\to\infty$.
  \end{itemize}
  We argue by induction w.r.t.~$m$. When $m=0$ we simply 
  set $n(0,k):=k$.
  Assuming that the sequence
  $k\mapsto n(m,k)$ is given for some $m\in \N$,
  Fatou's lemma yields
  \begin{displaymath}
    \int_a^b \liminf_{k\to\infty}\psi(u_{n(m,k)}(t))\dd t\le C
  \end{displaymath}
  so that $\liminf_{k\to\infty}\psi(u_{n(m,k)}(t)) <\infty$ 
  for $\Leb 1$-a.e.~$t\in [a,b]$.
  In particular, since $\Leb 1(J_{m+1})>0$,
  there exists a point $t_{m+1}\in J_{m+1}\setminus \{t_1,\cdots,t_m\}$ 
  such that $\liminf_{k\to\infty}\psi(u_{n(m,k)}(t_{m+1})) <\infty$
  and therefore by \eqref{basic-ass-3} we can find a 
  subsequence $k\mapsto n(m+1,k)$ of 
  $k\mapsto n(m,k)$ and a limit point $w_{m+1}\in X$ 
  such that $u_{n(m+1,k)}(t_{m+1})\weaksigma w_{m+1}$.

  By  a Cantor diagonal argument, 
  we conclude that the sequence $k\mapsto n_k:= n(k,k)$ 
  satisfies
  $u_{n_k}(t_m)\weaksigma w_m$ for every $m\in \N$. 
  Since $J_m$ is a countable basis of open intervals in $(a,b)$,
  the set $D=\{t_m:m\in \N\}$ is countable and dense in $[a,b]$:
  we can then define a function
  $u:D\to X$ by setting $u(t_m):=w_m$.
\par
  Now we can argue as in 
  \cite[Prop.~3.3.1]{Ambrosio-Gigli-Savare08} to conclude, by a careful use 
  of the compatibility conditions \eqref{weaker-top-1} and
  \eqref{eq:11} 
  between $\sfd$ and $\sigma$. In fact, 
  passing to the limit in \eqref{eq:9} thanks to \eqref{weaker-top-1} 
  we get
  \begin{equation}
    \label{eq:13}
    \sfd(u(t),u(s))\le \int_s^t v(r)\,\dd r\le 
    \sqrt {C|t-s|}\quad \text{for every}\quad 
    s,t\in D,\ s\le t.
  \end{equation}
  By \eqref{eq:13} and the completeness of $X$, we can extend $u$ 
  to a curve (still denoted by $u$) defined on $I$
  and still satisfying estimate \eqref{eq:13} for every $s,t\in I$. In
  particular
  $u\in \AC^2(I;X)$ and $|u'|\le v$,  so that \eqref{eq:3} is proved. 
\par
  In order to prove convergence \eqref{eq:12}, we pick an arbitrary point
  $t\in I$ and a $\sigma$-neighborhood $U$ of $u(t)$. 
  Let then $\delta>0$ and $V$ be  as in 
  the compatibility assumption  (MT2)   (with $x=u(t)$).
  Since $u$ is 
  $\sfd$-continuous (and therefore also $\sigma$-continuous)
  we can then find a point $s\in D$ such that 
  $C|t-s|<\delta^2$ and $u(s)\in V$. 
  Since $u_{n_k}(s)\weaksigma u(s)$ as $k\up\infty$ 
  we can also find $\bar k$ sufficiently big such that 
  $u_{n_k}(s)\in V$ for every $k\ge \bar k$. 
  Since $\sfd(u_{n_k}(s),u_{n_k}(t))\le \sqrt{C|t-s|}<\delta$
  for every $k\in \N$ by \eqref{eq:9}, 
  we deduce by \eqref{eq:11} that $u_{n_k}(t)\in U$ 
  for every $k\ge \bar k$.
  
   The lower estimate  \eqref{eq:14} then follows by Fatou's Lemma and
  the fact that $\phi\circ u_n$ is uniformly bounded
  from below by a constant. \UUU The latter boundedness \EEE 
  ensues from  the $\sfd$-uniform 
  boundedness of $u_n$ given by \eqref{eq:8},  combined with 
 \eqref{basic-ass-2bis}. 
\end{proof}
  We can refine \UUU the pointwise convergence \EEE in \eqref{eq:12} by showing that 
  the sequence $(u_{n_k})_k$ is in fact converging in the compact-open
  topology induced by $\sigma$. When $\sigma$ is metrizable,
  this implies the locally uniform convergence of $(u_{n_k})_k$. In fact,   
  this is a general property of any 
  sequence of $\sfd$-equicontinuous functions 
  that pointwise converge in the $\sigma$-topology.
  \begin{lemma}
    \label{le:compact-open}
    Let $(u_k)_{k\in \N}\subset \rmC(I;X)$ be a sequence 
    of locally $\sfd$-equicontinuous functions pointwise converging 
    to $u$ \UUU pointwise \EEE in the $\sigma$-topology as $k\up\infty$.
    Then $(u_k)_k$ converges to $u$ in the compact-open topology 
    induced by $\sigma$.
  \end{lemma}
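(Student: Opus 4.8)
The plan is to verify directly the defining property of convergence in the compact-open topology: for every compact set $K\subset I$ and every $\sigma$-open set $U\subset X$ with $u(K)\subset U$, there should exist $\bar k\in\N$ such that $u_k(K)\subset U$ for every $k\ge\bar k$. Since a finite intersection of such ``tube'' neighbourhoods is again a neighbourhood, this property is equivalent to $u_k\to u$ in the compact-open topology, and — when $\sigma$ is metrizable — to locally uniform convergence. The three ingredients are the $\sigma$-pointwise convergence $u_k(t)\weaksigma u(t)$, the compatibility condition (MT2), and the local $\sfd$-equicontinuity of $(u_k)_k$, which on the compact set $K$ yields, by a Lebesgue-number covering argument, a single nondecreasing modulus $\omega$ with $\omega(0^+)=0$ such that $\sfd(u_k(s),u_k(t))\le\omega(|t-s|)$ for all $k$ and all $s,t\in K$.

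First I would localise around each point. Fix $t\in K$; then $u(t)\in U$, so (MT2) provides a $\sigma$-open neighbourhood $V_t$ of $u(t)$ and $\delta_t>0$ such that $\sfd(y,V_t)<\delta_t$ implies $y\in U$. Since $u_k(t)\weaksigma u(t)\in V_t$ and $V_t$ is $\sigma$-open, there is $\bar k_t\in\N$ with $u_k(t)\in V_t$ for all $k\ge\bar k_t$. Choosing $\rho_t>0$ so small that $\omega(\rho_t)<\delta_t$, for every $s\in K$ with $|s-t|<\rho_t$ and every $k\ge\bar k_t$ we obtain $\sfd(u_k(s),u_k(t))\le\omega(|s-t|)<\delta_t$, whence $\sfd(u_k(s),V_t)<\delta_t$ and therefore $u_k(s)\in U$. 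Thus each $t\in K$ comes with an open interval $(t-\rho_t,t+\rho_t)$ and an index $\bar k_t$ that ``work'' on $K\cap(t-\rho_t,t+\rho_t)$.

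Then I would conclude by compactness: extract a finite subcover $(t_i-\rho_{t_i},t_i+\rho_{t_i})$, $i=1,\dots,N$, of $K$ from the open cover $\{(t-\rho_t,t+\rho_t)\}_{t\in K}$, and set $\bar k:=\max_{1\le i\le N}\bar k_{t_i}$. For $k\ge\bar k$, every $s\in K$ lies in some $(t_i-\rho_{t_i},t_i+\rho_{t_i})$, hence $u_k(s)\in U$; so $u_k(K)\subset U$, as required. Finally, when $\sigma$ is induced by a distance $\ds$, the compact-open topology on $\rmC(I;X)$ coincides with the topology of uniform convergence on compact subsets, so $(u_k)_k$ converges to $u$ uniformly on every compact $K\subset I$.

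As for the difficulty, this is morally the same argument used to prove the pointwise convergence \eqref{eq:12} in the proof of Theorem \ref{thm:main-compactness} (compare \cite[Prop.~3.3.1]{Ambrosio-Gigli-Savare08}), so I do not anticipate a genuine obstacle. The only points deserving attention are that local equicontinuity upgrades to a uniform modulus of continuity on compact sets (a routine Lebesgue-number argument, using that $K$ is covered by finitely many subintervals of $I$ on which the family is equicontinuous), and that (MT2) is applied to the point $y=u_k(s)$, which is $\sfd$-close to the already-controlled value $u_k(t)\in V_t$ but a priori not $\sigma$-close to it — which is exactly the place where the compatibility between $\sfd$ and $\sigma$ enters.
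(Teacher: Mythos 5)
Your argument is correct and is essentially the same as the paper's: both hinge on (MT2), equicontinuity on compacts, pointwise $\sigma$-convergence at finitely many sample points, and compactness of $K$ to extract a finite subcover. The only real difference is expositional: you reduce at the outset to a single subbasic neighbourhood $S(K,U)=\{w : w(K)\subset U\}$ (noting that finite intersections of these form a base), whereas the paper carries the finite collection $\{(K_m,U_m)\}_m$ through the whole proof, defines the auxiliary tubular sets $W(t)$ via $u^{-1}(V(t))\cap B(t,\eta(t))\cap K_m$, and re-indexes to a finite family $K_{m,h}$. Your reduction avoids this bookkeeping and is cleaner; it costs nothing and is fully rigorous. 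The paper also does not state the upgrade from \emph{local} equicontinuity to a uniform modulus on a compact interval as a separate step (it just writes down the uniform statement directly, having the explicit modulus $\sqrt{C|t-s|}$ from \eqref{eq:9} in mind), so your explicit mention of the Lebesgue-number argument is a small addition that makes the lemma's stated hypothesis (local equicontinuity) match its use.
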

  \begin{proof}
    Let us consider an arbitrary open
    neighborhood $\calU$ of $u$ in the compact-open topology: this
    corresponds to a finite collection of \UUU compact  \EEE sets $K_m\subset I$
    and corresponding $\sigma$-open sets $U_m\subset X$ such that
    $u(K_m)\subset U_m$, $m\in M:=\{1,2,\cdots, \bar m\}$.  For every
    $t\in K_m$ let $V(t)$ be  a $\sigma$-open neighborhood of $u(t)$ and
    $\delta(t)>0$ satisfying \eqref{eq:11} for $x=u(t)$ and $U=U_m$.
    
    We then find $\eta(t)>0$ with 
    \begin{displaymath}
      \sfd(u_k(r),u_k(s))\le \delta(t)/2\quad \text{for every $r,s\in
        \cup_m K_m$ 
        with $|s-r|\le \eta(t)$ 
    and $k\in \N$},
    \end{displaymath}
    and we set
  $$ W(t):= u^{-1}(V(t))\cap B(t,\eta(t))\cap K_m,\quad \text{where}\
  B(t,\eta):=\{s\in I: |s-t|<\eta\}.$$ 
  Since $\{W(t):t\in K_m\}$ is an
  open covering of $K_m$, we can find a finite   subcovering  $\{W(t):t\in
  J_m\}$ corresponding to a finite set $J_m=\{t_{m,1},\cdots,t_{m,\bar
    h(m)}\}$ of points in $K_m$.  We define
  $\delta_{m,h}:=\delta(t_{m,h})$,
  $\eta_{m,h}:=\eta(t_{m,h})$
  and consider the new collection of
  compact sets $K_{m,h}:=\overline{W(t_{m,h})}\subset I$ and points
  $t_{m,h}\in K_{m,h}$ indexed by integers in $N:=\big\{(m,h)\in
  \N\times \N: m\le \bar m,\ h\le \bar h(m)\big\}$ with the property
  that
  \begin{equation}
    \label{eq:15}
    \begin{gathered}
    \bigcup_{1\le h\le \bar h(m)}K_{m,h}= K_m,\quad
    K_{m,h}\subset B(t_{m,h},\eta_{m,h}),\quad
    \\
    u(K_{m,h})\subset V(t_{m,h})\quad 
    \text{for every } (m,h)\in N.
    \end{gathered}
  \end{equation}
  The neighborhood $\calU$ can then be represented as the set of
  $\sigma$-continuous curves $w:I\to X$ with $w(K_{m,h})\subset U_m$
  for every $(m,h)\in N$.

  Arguing as in the proof of the Theorem \ref{thm:main-compactness},
  we can find $\bar k$ sufficiently big such that $u_k(t_{m,h})\in
  V(t_{m,h})$ for every $k\ge \bar k$ and $(m,h)\in N$.  Since
  $K_{m,h}\subset B(t_{m,h},\eta_{m,h})$, the
  equicontinuity estimate \eqref{eq:9} and \eqref{eq:11} yield
  $u_{k}(K_{m,h})\subset U_m$, thus $u_{k}\in \calU$ for every
  $k\ge\bar k$,  which concludes the proof of the convergence of $(u_k)_k$. 
\end{proof}
\subsection{The exponential measure and weighted Sobolev spaces}
\label{ss:2.6}
In this section we quickly recall a few basic properties
of the Sobolev spaces $W^{1,2}(0,\infty;\mu_\eps)$ induced by the
probability measure
\begin{equation}
  \label{eq:33}
  \mu_\eps:=
  \frac{\mathrm{e}^{-t/\eps}}\eps\Leb 1 \ \ \text{i.e.} \quad \ \ 
  \int_0^\infty \zeta(t)\,\dd\mu_\eps(t):=\int_0^\infty
  \zeta(t)\frac{\rme^{-t/\eps}}\eps\,\dd t\,.
\end{equation}
We say that $w\in W^{1,p}(0,\infty;\mu_\eps)$, $p\in [1,\infty)$, if 
$w\in W^{1,p}_\loc((0,\infty))$ and
\begin{equation}
  \label{eq:21}
  \int_0^\infty \Big(|w(t)|^p+|w'(t)|^p\Big)\,\dd\mu_\eps(t)<\infty.
\end{equation}
Denoting by $\tilde v$ the continuous representative of the function $v$, 
we easily check that $\tilde v\in \AC^p_\loc([0,\infty);\R)$ 
and for 
$v,w\in W^{1,2}(0,\infty;\mu_\eps)$ the following  integration by parts formula holds
\begin{equation}
  \label{eq:34}
  \int_a^b \eps vw'\,\dd\mu_\eps=\int_a^b (-\eps 
  v'+v)w\,\dd\mu_\eps+
  \rme^{-b/\eps} \tilde v(b)\tilde w(b)-\rme^{-a/\eps}\tilde
  v(a)\tilde w(a)\ \text{for all } 0\le a<b<\infty.
\end{equation}
\begin{lemma}
  \label{W11}
  If $w\in \AC_{\mathrm{loc}}([0,\infty);\R)$ with $\int_0^\infty
  |w'(t)|\dd\mu_\eps(t)<\infty$ then $w\in W^{1,1}(0,\infty;\mu_\eps)$
  and
  \begin{align}
    \label{eq:55}
    w(0)+\eps\int_0^T w'(t)\dd\mu_\eps(t)&=w(T)\rme^{-T/\eps}+\int_0^T
    w(t)\dd\mu_\eps(t)\quad \text{for every }T>0,\\
    \label{eq:55bis}
    w(0)+\eps\int_0^\infty w'(t)\dd\mu_\eps(t)&=\int_0^\infty w(t)\dd\mu_\eps(t).
  \end{align}
  In particular, if $u\in W^{1,q}(0,\infty;\mu_\eps)$ and $v\in
  W^{1,p}(0,\infty;\mu_\eps)$ for a couple of conjugate exponents $p,q\in
  [1,\infty]$,  then
\begin{equation}
  \label{eq:35}
    \tilde
    u(0)\tilde v(0)+
  \int_0^\infty \eps u v'\,\dd\mu_\eps=\int_0^\infty (-\eps 
  u'+u)v\,\dd\mu_\eps.
\end{equation}
\end{lemma}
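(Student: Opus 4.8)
The plan is to prove the three assertions of Lemma~\ref{W11} in the order they are stated, starting from the first identity~\eqref{eq:55}, then deducing~\eqref{eq:55bis} by letting $T\to\infty$, and finally obtaining the integration by parts formula~\eqref{eq:35} by applying~\eqref{eq:55bis} to the product $uv$. First I would observe that the hypothesis $w\in \AC_{\loc}([0,\infty);\R)$ with $\int_0^\infty |w'|\,\dd\mu_\eps<\infty$ already gives $w'\in L^1(0,\infty;\mu_\eps)$; to see that $w\in W^{1,1}(0,\infty;\mu_\eps)$ it remains to check that $w\in L^1(0,\infty;\mu_\eps)$, which follows from the elementary bound $|w(t)|\le |w(0)|+\int_0^t|w'(s)|\,\dd s$ together with the fact that $\int_0^t|w'(s)|\,\dd s\le \eps\,\rme^{t/\eps}\int_0^\infty|w'|\,\dd\mu_\eps$ (since $\rme^{-s/\eps}/\eps$ decreases), so that $|w(t)|\rme^{-t/\eps}/\eps$ is integrable on $[0,\infty)$.

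For~\eqref{eq:55} the core computation is a single integration by parts on the finite interval $[0,T]$ against the absolutely continuous weight $t\mapsto \rme^{-t/\eps}$. Explicitly, since $w\in\AC_{\loc}$ one has
\begin{equation}
  \label{eq:prf-W11-a}
  \eps\int_0^T w'(t)\,\dd\mu_\eps(t)=\int_0^T w'(t)\,\rme^{-t/\eps}\,\dd t
  =\Big[w(t)\rme^{-t/\eps}\Big]_0^T+\frac1\eps\int_0^T w(t)\rme^{-t/\eps}\,\dd t,
\end{equation}
because $\dt(\rme^{-t/\eps})=-\rme^{-t/\eps}/\eps$; rewriting the last term as $\int_0^T w\,\dd\mu_\eps$ and rearranging yields exactly~\eqref{eq:55}. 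Then~\eqref{eq:55bis} is obtained by passing to the limit $T\to\infty$ in~\eqref{eq:55}: the term $w(T)\rme^{-T/\eps}$ tends to $0$ — indeed from $w\in W^{1,1}(0,\infty;\mu_\eps)$ and~\eqref{eq:55} itself the quantity $w(T)\rme^{-T/\eps}$ has a finite limit as $T\to\infty$, and this limit must be $0$ since otherwise $w\rme^{-t/\eps}$ would not be integrable on $[0,\infty)$ — while $\int_0^T w'\,\dd\mu_\eps\to\int_0^\infty w'\,\dd\mu_\eps$ and $\int_0^T w\,\dd\mu_\eps\to\int_0^\infty w\,\dd\mu_\eps$ by dominated convergence.

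Finally, for~\eqref{eq:35} I would apply~\eqref{eq:55bis} with $w=\tilde u\tilde v$: by H\"older's inequality with conjugate exponents $p,q$ the product $\tilde u\tilde v$ and its derivative $\tilde u'\tilde v+\tilde u\tilde v'$ lie in $L^1(0,\infty;\mu_\eps)$, so $\tilde u\tilde v\in W^{1,1}(0,\infty;\mu_\eps)$ and Lemma~\ref{W11} applies; the product $\tilde u\tilde v$ is locally absolutely continuous since $\tilde u,\tilde v$ are continuous representatives of $W^{1,p}_\loc$, resp.\ $W^{1,q}_\loc$, functions. Plugging $w=\tilde u\tilde v$, $w'=\tilde u'\tilde v+\tilde u\tilde v'$ into~\eqref{eq:55bis} gives
\begin{equation}
  \label{eq:prf-W11-b}
  \tilde u(0)\tilde v(0)+\eps\int_0^\infty (\tilde u'\tilde v+\tilde u\tilde v')\,\dd\mu_\eps
  =\int_0^\infty \tilde u\tilde v\,\dd\mu_\eps,
\end{equation}
and isolating $\eps\int_0^\infty \tilde u\tilde v'\,\dd\mu_\eps$ yields $\tilde u(0)\tilde v(0)+\eps\int_0^\infty uv'\,\dd\mu_\eps=\int_0^\infty(-\eps u'+u)v\,\dd\mu_\eps$, which is~\eqref{eq:35}. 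The only mildly delicate point — and the one I would treat with most care — is the vanishing of the boundary term $w(T)\rme^{-T/\eps}$ as $T\to\infty$; everything else is routine one-dimensional calculus with the explicit weight $\rme^{-t/\eps}/\eps$. Note that~\eqref{eq:34} from the preamble could be invoked directly to shortcut~\eqref{eq:prf-W11-a} in the case $v\equiv 1$, but carrying out the integration by parts from scratch keeps the argument self-contained.
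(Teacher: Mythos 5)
Your overall plan matches the paper's: prove \eqref{eq:55} by a direct integration by parts, let $T\to\infty$ to get \eqref{eq:55bis}, and apply \eqref{eq:55bis} to $w=uv$ for \eqref{eq:35}. However, there is a genuine gap in your verification that $w\in L^1(0,\infty;\mu_\eps)$, and everything downstream (the limit $T\to\infty$, dominated convergence, the vanishing of the boundary term) depends on it.

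Your bound is $\int_0^t|w'(s)|\,\dd s\le \eps\,\rme^{t/\eps}\int_0^\infty|w'|\,\dd\mu_\eps$, hence
$\frac{|w(t)|\rme^{-t/\eps}}{\eps}\le \frac{|w(0)|\rme^{-t/\eps}}{\eps}+\int_0^\infty|w'|\,\dd\mu_\eps$.
The first summand is integrable, but the second is a positive constant, which is not integrable on $(0,\infty)$. So this estimate does not give $w\in L^1(0,\infty;\mu_\eps)$; the bound on $\int_0^t|w'|\,\dd s$ is too crude. A correct one-line route is Tonelli: writing $W(t):=\int_0^t|w'(s)|\,\dd s$,
\begin{equation*}
  \int_0^\infty W(t)\,\dd\mu_\eps(t)
  =\int_0^\infty |w'(s)|\,\mu_\eps\big([s,\infty)\big)\,\dd s
  =\int_0^\infty |w'(s)|\,\rme^{-s/\eps}\,\dd s
  =\eps\int_0^\infty|w'|\,\dd\mu_\eps<\infty,
\end{equation*}
and then $|w|\le|w(0)|+W$ gives $w\in L^1(0,\infty;\mu_\eps)$. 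The paper achieves the same thing by first applying \eqref{eq:55} to the nondecreasing auxiliary function $W$ (for which \eqref{eq:55} holds on every $[0,T]$ and all terms are nonnegative), deducing $W\in L^1(0,\infty;\mu_\eps)$ by monotone convergence, and simultaneously obtaining the vanishing of the boundary term $\rme^{-T/\eps}W(T)$ from the same identity. Once the $L^1$ membership is repaired, your argument for $\rme^{-T/\eps}w(T)\to 0$ (finite limit by \eqref{eq:55} plus integrability forces the limit to be zero) and the derivation of \eqref{eq:35} via H\"older and $w=\tilde u\tilde v$ are both correct and match the paper.
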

\begin{proof}
Formula 
  \eqref{eq:55} follows from \eqref{eq:34} for $v\equiv 1$.
  Setting $W(t):=\int_0^t
  |w'(r)|\dd r$,
  \eqref{eq:55} yields for every $T>0$
  \begin{equation}
    \label{eq:56}
    \eps\int_0^T |w'(t)|\dd\mu_\eps(t)=\eps \int_0^T W'(t)\dd \mu_\eps(r)=\int_0^T W(r)\dd
    \mu_\eps(r)+\mathrm{e}^{-T/\eps}W(T).
  \end{equation}
  Passing to the limit as $T\up\infty$ we get $W\in
  L^1(0,\infty;\mu_\eps)$
  and, since 
  \begin{equation}
  \label{new-w-added}
  |w(t)|\le |w(0)|+W(t),
  \end{equation}
   we deduce that $w\in
  L^1(0,\infty;\mu_\eps)$.
  Since $\rme^{-t/\eps}W(t)$ has finite integral, its limit set as
  $t\to\infty$ should contain $0$. Therefore, from 
   \eqref{new-w-added}
 we gather
  $\lim_{t\to\infty} \rme^{-t/\eps}W(t) =\lim_{t\to\infty} \rme^{-t/\eps}w(t)= 0$.
  Passing to the limit as $T\up\infty$ in \eqref{eq:55} we get \eqref{eq:55bis}.

Finally,   \eqref{eq:35}  follows by applying  \eqref{eq:55bis} to $w:=uv$.
\end{proof}
Starting from \eqref{eq:35} it is easy to check that 
a function $w\in L^1_\loc(0,\infty)$ belongs to
$W^{1,1}_\loc(0,\infty)$
if and only if 
there exists $g\in L^1_\loc(0,\infty)$ such that 
\begin{equation}
  \label{eq:36}
  \int_0^\infty w(-\eps\xi'+\xi)\,\dd\mu_\eps=\int_0^\infty \eps\,
  g\,\xi\,\dd\mu_\eps
  \quad
  \text{for every $\xi\in \rmC^\infty_c(0,\infty)$,}
\end{equation}
and in this case $w'=g$ in the distributional sense.

In Lemma \ref{l:hynek} below
we compute the sharp constant for the Poincar\'{e}
inequality for real functions in $W^{1,2}(0,\infty;\mu_\eps)$ 
that vanish at $0$: it
will play a crucial role in the next section  in order to derive suitable bounds on infimizing sequences for the WED functional.  
\begin{lemma}
\label{l:hynek}
For every function $w\in \AC^2_\loc([0,\infty);\R)$ 
with $w(0)=0$,
every $\eps>0$ and every $T\in (0,\infty]$ we have
\begin{equation}
\label{e:spectral}
\int_0^{\inftyT} \big|w'(t) \big|^2  \dd \mu_\eps(t) \ge 
\frac1{4\eps^2}\,{ \int_0^{\inftyT} 
\big|w(t)\big|^2 \dd \mu_\eps(t) }.
\end{equation}
In particular, if 
$ \lambda\in (-\infty,1/4\eps^2)$
and $(w_n)_n\subset \AC^2_\loc([0,\infty);\R)$ is a sequence 
satisfying $w_n(0)=0$ and 
\begin{equation}
  \label{eq:28}
 \sup_{n\in \N} \int_0^{\infty} 
  \Big(\big|w_n'(t)
  \big|^2  -\lambda |w_n(t)|^2\Big)\dd \mu_\eps(t) \le
  C<\infty\quad\text{for every }n\in \N,
\end{equation}
then there exists an increasing subsequence $k\mapsto n_k$ 
such that,  as $k\to\infty$,   $(w_{n_k})_k$ converges to $w\in \AC^2_\loc([0,\infty);\R)$
locally uniformly,
$w_{n_k}'\to w'$ weakly in $L^2(0,\infty;\mu_\eps)$, and 
for every $\eta\in (-\infty,1/4\eps^2]$
\begin{equation}
  \label{eq:17}
  \liminf_{k\to\infty}\int_0^{\infty} 
  \Big(\big|w_{n_k}'(t)
  \big|^2  - \eta\, |w_{n_k}(t)|^2\Big)\dd \mu_\eps(t) \ge 
  \int_0^{\infty} 
  \Big(\big|w'(t)
  \big|^2  -\eta\, |w(t)|^2\Big)\dd \mu_\eps(t).
\end{equation}
\end{lemma}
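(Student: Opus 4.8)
The plan is to prove the spectral (Poincar\'e-type) inequality \eqref{e:spectral} first, and then deduce the compactness and lower-semicontinuity statements \eqref{eq:17} by standard weak-convergence arguments combined with that inequality.

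\textbf{Step 1: the sharp Poincar\'e inequality.} For \eqref{e:spectral} I would first treat the case $T=\infty$ and reduce to smooth, compactly supported $w$ with $w(0)=0$ by a density/truncation argument (replacing $w$ by $w\,\chi_n$ where $\chi_n$ is a cutoff and letting $n\to\infty$, using $w\in W^{1,2}(0,\infty;\mu_\eps)$ which follows from Lemma \ref{W11} applied to $w^2$ once one controls the boundary terms at $\infty$). The cleanest route is the ``completion of the square'' trick adapted to the weight $\rme^{-t/\eps}$: write $v(t):=\rme^{-t/(2\eps)}w(t)$, so that $w(0)=0$ gives $v(0)=0$ and $v(t)\to 0$ as $t\to\infty$, and compute
\[
\int_0^\infty |w'(t)|^2\,\dd\mu_\eps(t)=\frac1\eps\int_0^\infty \Big(v'(t)+\tfrac1{2\eps}v(t)\Big)^2\,\dd t
=\frac1\eps\int_0^\infty |v'|^2\,\dd t+\frac1{4\eps^3}\int_0^\infty|v|^2\,\dd t+\frac1{\eps^2}\int_0^\infty v v'\,\dd t.
\]
The cross term equals $\frac1{2\eps^2}[v^2]_0^\infty=0$ by the boundary conditions, and dropping the nonnegative term $\frac1\eps\int|v'|^2$ leaves exactly $\frac1{4\eps^3}\int_0^\infty |v|^2\,\dd t=\frac1{4\eps^2}\int_0^\infty|w|^2\,\dd\mu_\eps$, which is \eqref{e:spectral}. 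For finite $T$ the same computation on $[0,T]$ produces the extra boundary term $\frac1{2\eps^2}\rme^{-T/\eps}w(T)^2\ge 0$, which only helps, so the inequality persists; alternatively one extends $w$ beyond $T$ by the constant $w(T)$ (this does not increase $\int|w'|^2\dd\mu_\eps$) and applies the case $T=\infty$. The main subtlety here is justifying the vanishing of the boundary term at $\infty$, i.e. that $\rme^{-t/\eps}w(t)^2\to 0$; this follows as in the proof of Lemma \ref{W11}, since $\rme^{-t/\eps}W(t)^2$ with $W(t)=\int_0^t|w'|$ has finite integral and hence a limit point equal to $0$, and $|w(t)|\le W(t)$.

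\textbf{Step 2: from \eqref{e:spectral} to compactness.} Given the sequence $(w_n)_n$ with $w_n(0)=0$ and the bound \eqref{eq:28} for some $\lambda<1/(4\eps^2)$, I would split two cases. If $\lambda\le 0$ the bound directly gives $\sup_n\int_0^\infty|w_n'|^2\,\dd\mu_\eps\le C$. If $0<\lambda<1/(4\eps^2)$, write $\int|w_n'|^2\dd\mu_\eps-\lambda\int|w_n|^2\dd\mu_\eps\ge(1-4\eps^2\lambda)\int|w_n'|^2\dd\mu_\eps$ using \eqref{e:spectral}, so again $\sup_n\int_0^\infty|w_n'|^2\dd\mu_\eps\le C/(1-4\eps^2\lambda)<\infty$, and then \eqref{e:spectral} also bounds $\sup_n\int_0^\infty|w_n|^2\dd\mu_\eps$. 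Hence $(w_n)_n$ is bounded in $W^{1,2}(0,\infty;\mu_\eps)$; in particular $(w_n')_n$ is bounded in $L^2(0,\infty;\mu_\eps)$, and on every compact $J\subset(0,\infty)$ the weight $\rme^{-t/\eps}/\eps$ is bounded above and below, so $(w_n)_n$ is bounded in $W^{1,2}(J)$. A diagonal extraction then yields a subsequence with $w_{n_k}'\weakto g$ weakly in $L^2(0,\infty;\mu_\eps)$ and $w_{n_k}\to w$ locally uniformly on $[0,\infty)$ (Ascoli on each $[0,T]$, using the $\sfd$-equicontinuity from the $L^2$ bound on $w_n'$ and $w_n(0)=0$); passing to the limit in $w_n(t)-w_n(s)=\int_s^t w_n'$ identifies $g=w'$, so $w\in\AC^2_\loc([0,\infty);\R)$ and $w_{n_k}'\weakto w'$ in $L^2(0,\infty;\mu_\eps)$.

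\textbf{Step 3: the liminf inequality \eqref{eq:17}.} Fix $\eta\in(-\infty,1/(4\eps^2)]$. Weak lower semicontinuity of the $L^2(\mu_\eps)$-norm gives $\liminf_k\int_0^\infty|w_{n_k}'|^2\dd\mu_\eps\ge\int_0^\infty|w'|^2\dd\mu_\eps$. For the negative part, if $\eta\le 0$ then $-\eta\,|w_{n_k}|^2\ge 0$ and local uniform convergence plus Fatou on $[0,\infty)$ gives $\liminf_k\int_0^\infty(-\eta)|w_{n_k}|^2\dd\mu_\eps\ge\int_0^\infty(-\eta)|w|^2\dd\mu_\eps$, and adding the two liminf's yields \eqref{eq:17}. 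If $0<\eta\le1/(4\eps^2)$ the two terms compete, so I instead argue as in Step 2: using \eqref{e:spectral} for each $w_{n_k}$,
\[
\int_0^\infty\!\Big(|w_{n_k}'|^2-\eta|w_{n_k}|^2\Big)\dd\mu_\eps
=(1-4\eps^2\eta)\!\int_0^\infty|w_{n_k}'|^2\dd\mu_\eps
+4\eps^2\eta\!\int_0^\infty\!\Big(|w_{n_k}'|^2-\tfrac1{4\eps^2}|w_{n_k}|^2\Big)\dd\mu_\eps,
\]
and each of the two summands is a liminf-lower-semicontinuous functional of $w_{n_k}$: the first because $1-4\eps^2\eta\ge 0$ and weak $L^2(\mu_\eps)$-lower semicontinuity, the second because the bracket is the $\eta=1/(4\eps^2)$ case which one proves directly from the completion-of-squares identity in Step 1 (the quantity equals $\frac1\eps\int_0^\infty|v_{n_k}'|^2\dd t$ with $v_{n_k}=\rme^{-t/(2\eps)}w_{n_k}$, a nonnegative weakly lower semicontinuous functional of $w_{n_k}$, since $v_{n_k}'=\rme^{-t/(2\eps)}(w_{n_k}'-\tfrac1{2\eps}w_{n_k})$ converges weakly in $L^2(0,\infty)$ to $\rme^{-t/(2\eps)}(w'-\tfrac1{2\eps}w)=v'$). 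Summing the two liminf estimates and reassembling via the same algebraic identity for the limit $w$ gives \eqref{eq:17}.

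I expect the main obstacle to be the careful handling of the boundary term at infinity in Step 1 (and, relatedly, the density reduction to admissible test functions): one must ensure that for $w\in\AC^2_\loc$ with $\int_0^\infty|w'|^2\dd\mu_\eps<\infty$ and $w(0)=0$, the quantity $\rme^{-t/\eps}w(t)^2$ indeed tends to $0$, so that the cross-term integration by parts is legitimate — this is exactly the kind of estimate already handled in the proof of Lemma \ref{W11}, and it should be invoked there.
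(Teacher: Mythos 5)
Your proof is correct and follows essentially the same route as the paper: the substitution $v(t)=\mathrm{e}^{-t/(2\eps)}w(t)$ is exactly the paper's completion-of-the-square identity (the paper's choice $\alpha=1$, $\beta=\tfrac1{2\eps}$ in \eqref{eq:58}), and your Steps 2--3 recover the paper's free-parameter argument via case-splitting and the convex-combination decomposition $|w'|^2-\eta|w|^2=(1-4\eps^2\eta)|w'|^2+4\eps^2\eta\big(|w'|^2-\tfrac1{4\eps^2}|w|^2\big)$, which are cosmetic variants of the paper's choice of $\alpha,\beta$ with $\alpha\beta/\eps-\beta^2>\lambda$ (resp.~$\alpha=1$, $\beta/\eps-\beta^2=\eta$). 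One small suggestion: in Step~1 the finite-$T$ computation (which produces a \emph{nonnegative} boundary term $\tfrac1{2\eps^2}\mathrm{e}^{-T/\eps}w(T)^2$) already gives the inequality on $[0,T]$ without any reasoning at $\infty$, and then $T\uparrow\infty$ follows by monotone convergence; this is cleaner than, and preferable to, the proposed argument via $\mathrm{e}^{-t/\eps}W(t)^2$ having finite $\dd\mu_\eps$-integral, which as stated would require \eqref{e:spectral} applied to $W$ and so is circular.
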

\begin{proof}
Let us first prove \eqref{e:spectral}. For every $\alpha,\beta\ge 0$ we have
\begin{align}
  \notag
  \int_0^T \big|\alpha w'-\beta w\big|^2\dd\mu_\eps&=
  \int_0^T \Big(\alpha^2 \big(w'\big)^2+\beta^2 w^2\Big)\dd\mu_\eps-
  \alpha\beta\int_0^T (w^2)'\dd\mu_\eps\\
  &=
  \label{eq:58}
  \alpha^2\int_0^T \big|w'\big|^2\dd\mu_\eps+\Big(\beta^2  -\frac {\alpha\beta}\eps\Big)
  \int_0^T w^2\dd\mu_\eps-
  \frac{\alpha\beta}{\eps}  \rme^{-T/\eps} w^2(T),
\end{align}
where the second  equality follows from applying  \eqref{eq:55} to $w^2$. 
Choosing $\alpha=1,\ \beta=\frac1{2\eps}$ we get 
\eqref{e:spectral} for finite $T>0$; passing to the limit as $T\up
\infty$  in \eqref{eq:58} and arguing as in the previous Lemma we obtain
 \begin{equation}
  \label{eq:59}
  \int_0^\infty \big|w'\big|^2\dd\mu_\eps=
  (1-  \alpha^2 ) \int_0^\infty \big|w'\big|^2\dd\mu_\eps+
  \Big(\frac {\alpha\beta}\eps-\beta^2 \Big)
  \int_0^\infty w^2\dd\mu_\eps+\int_0^\infty \big|\alpha w'-\beta w\big|^2\dd\mu_\eps.
\end{equation}
Choosing  $\alpha<1$ and $\beta>0$  with 
$\alpha\beta/\eps-\beta^2>\lambda$, 
 \eqref{eq:28} yields that $(w_n)_n$ is uniformly bounded in $W^{1,2}(0,\infty;\mu_\eps)$.
By standard weak compactness we obtain a subsequence $(w_{n_k})_k$ weakly converging
to some limit $w$
in $W^{1,2}(0,\infty;\mu_\eps)$, so that 
$w_{n_k}\weakto w$ and $w_{n_k}'\weakto w'$ weakly in
$L^2(0,\infty;\mu_\eps)$ as $k\to\infty$.  The lower estimate
\eqref{eq:17} then follows from \eqref{eq:59} by choosing $\alpha=1$ and $\beta/\eps-\beta^2=\eta$. 
%
\end{proof}
\begin{remark}
  \upshape
  The optimality of the constant $\frac1{4\eps^2}$ on
  the right-hand side of 
  the inequality \eqref{e:spectral} 
  can be easily checked by
  considering 
  the sequence 
  $w_n(t)=(1\land (n-|t-n|)\lor 0)\rme^{-t/2\eps}$.
\end{remark}

\section{The WED functional, its minimization,  and the main
  convergence result}
\label{s:4}
Let us introduce the functional
$\ell_\eps:
X \times [0,\infty) \to \R$
\begin{equation}
\label{ell_eps}
\ell_\eps(u,v):=  \frac\eps 2 v^2 + 
\phi(u).
\end{equation}
In this section we will investigate the following 
variational problem.
\begin{problem}[The $\eps$-WED variational problem]
  \label{prob:main}
  Given $\eps>0$ and $\bar u\in X$, minimize
  the \emph{weighted energy-dissipation functional}
\begin{equation}
\label{metric-wed}
\mathcal{I}_\eps[u]:= \int_0^{ \infty
 }\ell_\eps(u(t),|u'|(t))\,\dd\mu_\eps(t)=
\int_0^{ \infty } 
\left(\frac\eps2 |u'|^2(t) + \phi(u(t)) \right) \dd \mu_\eps(t),
\end{equation}
over all trajectories $u$ in
\begin{equation}
\label{ac-ini}
\ACini{\ini}\eps :=    \Big\{
u \in  \AC_\mathrm{loc}^2([0,\infty);X)\, : \ u(0)=\ini,\quad
\int_0^\infty|u'|^2(t)\,\dd \mu_\eps(t)<\infty \Big\}.
\end{equation}
We will denote by $ \MMM_\epsi(\ini)$ 
the (possibly empty) set of minimizers of \eqref{metric-wed} in 
$\ACini\ini\eps$.
\end{problem}
Even though we will mainly focus on the WED formulation in $(0,\infty)$,
it will also be useful to consider a localized version of Problem
\ref{prob:main}:
we fix a time $T>0$ and we simply restrict the functional $\calI_\eps$
to curves which are constant in $[T,\infty)$; we thus introduce
\begin{equation}
\label{ac-iniT}
  \ACini{\ini}{\eps,T} :=    \Big\{
  u \in  \AC_\mathrm{loc}^2([0,\infty);X)\, : \ u(0)=\ini,\quad
  u(t)\equiv u(T)\quad\text{in $[T,\infty)$}
  \Big\},
\end{equation}
which is a closed subset of $\ACini\ini\eps$ and could also be
identified with $\AC^2([0,T];X)$; we have the obvious inclusions
\begin{equation}
  \label{eq:57}
  \ACini\ini{\eps,T_1}\subset \ACini\ini{\eps,T_2}\subset
  \ACini\ini{\eps}=\ACini\ini{\eps,\infty}\quad
  \text{whenever}\quad
  0<T_1<T_2<\infty.
\end{equation}
Notice that 
\begin{equation}
  \label{eq:68}
  \mathcal{I}_\eps[u]= \int_0^{ T}
\ell_\eps(u(t),|u'|(t))\,\dd\mu_\eps(t)+
\rme^{-T/\eps}\phi(u(T))\quad\text{if }u\in \ACini\ini{\eps,T}.
\end{equation}
We will denote by  $\MMM_{\eps,T}(\ini)$ the set of minimizers of
$\calI_\eps$ in $\ACini\ini{\eps,T}$.
%
%
%
\subsection{Well-posedness and existence of minimizers
of Problem \ref{prob:main}}
\label{ss:3.1}
First of all, in the metric-topological framework of Section 
\ref{subsec:topological}, 
for $\eps>0$ sufficiently small (depending on 
the constant $\sfB$ in  \eqref{basic-ass-2bis}),  
we
address the well-posedness of Problem \ref{prob:main} 
and the existence of minimizers 
by assuming that Problem \ref{prob:main} is \emph{feasible}, i.e.\ that there 
exists a curve $u\in \ACini\ini\eps$ such that 
$\calI_\eps[u]<\infty$. 
This is always the case when $\ini\in \SFD(\phi)$: in fact, 
\begin{equation}
  \label{eq:4}
  \text{the constant curve $u\in \ACini\ini\eps$, defined by}
  \quad u(t)\equiv \ini\quad \text{$t\ge 0$,}
  \quad
  \text{satisfies}\quad
  \calI_\eps[u]\le \phi(\ini).
\end{equation}
\EEE
\begin{theorem}
  \label{exist-minimizers}
   Let us suppose that $\phi$ satisfies 
  the standard \UUU LSCC Property \EEE \ref{basic-ass} and
  that 
  \begin{equation}
    \label{eq:27}
     \frac 1{16 \eps}\ge\sfB. 
  \end{equation}
  Then
  the integral in \eqref{metric-wed} 
  is well defined (possibly taking the value $\infty$)
  for every $\bar u\in X$ and every $u\in \ACini\ini\eps$.
  
  Moreover, 
if   Problem \ref{prob:main} is feasible
  (in particular when $\ini\in \SFD(\phi)$)
  then
  it admits at least one solution.
  In this case all the sets $\calM_{\eps,T}(\ini)$, $T\in (0,\infty]$, are compact
  in $\ACini\ini\eps$ endowed with 
  the compact-open topology induced by $\sigma$.
\end{theorem}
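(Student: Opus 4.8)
The plan is to run the direct method: produce a minimizer as the limit of an infimizing sequence, exploiting the integral compactness criterion of Theorem~\ref{thm:main-compactness}, with the Poincar\'e-type inequality of Lemma~\ref{l:hynek} as the key extra tool --- it both makes $\calI_\eps$ meaningful and furnishes the a priori bound.

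First I would check that the integral in \eqref{metric-wed} is well defined. Given $u\in\ACini\ini\eps$, the curve $w(t):=\sfd(u(t),\ini)$ lies in $\AC^2_\loc([0,\infty);\R)$, satisfies $w(0)=0$ and $|w'|\le|u'|$ a.e., so $\int_0^\infty|w'|^2\dd\mu_\eps\le\int_0^\infty|u'|^2\dd\mu_\eps<\infty$ and Lemma~\ref{l:hynek} gives $\int_0^\infty\sfd^2(u(t),\ini)\dd\mu_\eps(t)\le4\eps^2\int_0^\infty|u'|^2\dd\mu_\eps$. By the coercivity estimate \eqref{basic-ass-2bis} with $v=\ini$, the negative part of $\ell_\eps(u(\cdot),|u'|(\cdot))$ is dominated by $\sfB w^2+\sfQ(\ini)\in L^1(\mu_\eps)$, so $\calI_\eps[u]\in(-\infty,\infty]$ is well defined for every $\ini\in X$ and every $u\in\ACini\ini\eps$. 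Combining the two displays,
\[
\calI_\eps[u]\ \ge\ \Big(\tfrac\eps2-4\eps^2\sfB\Big)\int_0^\infty|u'|^2\dd\mu_\eps-\sfQ(\ini)\ \ge\ \tfrac\eps4\int_0^\infty|u'|^2\dd\mu_\eps-\sfQ(\ini),
\]
the last step being exactly where the structural restriction \eqref{eq:27} (that is, $16\eps\sfB\le1$) enters, since it guarantees that a quarter of the dissipation survives. The same computation carried out on a shifted half-line $[T,\infty)$ with base point $u(T)$, using the analogue $\int_T^\infty\sfd^2(u(t),u(T))\dd\mu_\eps\le4\eps^2\int_T^\infty|u'|^2\dd\mu_\eps$ of Lemma~\ref{l:hynek}, will be needed below.

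Assume now Problem~\ref{prob:main} is feasible and let $(u_n)_n$ be an infimizing sequence in $\ACini\ini\eps$ (respectively in the closed subset $\ACini\ini{\eps,T}$). By the bound just proved, $\sup_n\int_0^\infty|u_n'|^2\dd\mu_\eps<\infty$, whence also $\sup_n\int_0^\infty|\phi(u_n)|\dd\mu_\eps<\infty$; on any compact $J\subset[0,\infty)$ the measure $\mu_\eps$ is comparable with $\Leb 1$, so \eqref{eq:7} holds, and since $u_n(0)=\ini$ is fixed, Theorem~\ref{thm:main-compactness} yields, along a subsequence, a limit $u\in\AC^2_\loc([0,\infty);X)$ with $u_n(t)\weaksigma u(t)$ for every $t$, $|u_n'|\weakto v$ in $L^2_\loc$, $v\ge|u'|$, and the lower bound \eqref{eq:14}; a further extraction makes $|u_n'|\weakto v$ hold in $L^2(0,\infty;\mu_\eps)$. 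Clearly $u(0)=\ini$, so $u\in\ACini\ini\eps$, and $u\equiv u(T)$ on $[T,\infty)$ in the localized case. It remains to prove the lower semicontinuity $\calI_\eps[u]\le\liminf_n\calI_\eps[u_n]$. The dissipative term passes to the $\liminf$ by convexity and weak $L^2(\mu_\eps)$-lower semicontinuity together with $v\ge|u'|$. For the energy I would split $[0,\infty)=[0,T]\cup[T,\infty)$: on $[0,T]$, \eqref{eq:14} with weight $\zeta(t)=\eps^{-1}\rme^{-t/\eps}$ gives $\liminf_n\int_0^T\phi(u_n)\dd\mu_\eps\ge\int_0^T\phi(u)\dd\mu_\eps$ (by the equi-H\"older estimate $\phi\circ u_n$ is bounded below on $[0,T]$ by a constant uniform in $n$, so this is really a Fatou argument); on the tail, the shifted Poincar\'e estimate bounds $\int_T^\infty\phi(u_n)^-\dd\mu_\eps$, hence the whole tail $\int_T^\infty\ell_\eps(u_n,|u_n'|)\dd\mu_\eps$, from below by $\tfrac\eps4\int_T^\infty|u_n'|^2\dd\mu_\eps-\sfQ(u_n(T))\rme^{-T/\eps}$, and one is left to check that the residual term $\sfQ(u_n(T))\rme^{-T/\eps}$ becomes negligible as $T\to\infty$ (again invoking Lemma~\ref{l:hynek} and the equi-H\"older bound). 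Letting first $n\to\infty$ and then $T\to\infty$ yields $\calI_\eps[u]\le\liminf_n\calI_\eps[u_n]$, so $u$ is a minimizer; an essentially equivalent route is to solve the localized problems first (where no tail arises and lower semicontinuity is classical) and then pass to the limit $T\to\infty$, but this requires the same kind of tail control to see that $\inf_{\ACini\ini{\eps,T}}\calI_\eps\to\inf_{\ACini\ini\eps}\calI_\eps$. I expect this tail analysis --- that is, excluding loss of energy mass at $t=\infty$ --- to be the main obstacle of the proof; it is exactly here that the margin built into \eqref{eq:27} is used.

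Finally, since feasibility makes all $\calM_{\eps,T}(\ini)$ nonempty with finite infimum, any sequence drawn from $\calM_{\eps,T}(\ini)$ is in particular infimizing and satisfies the uniform bound of the first step; the compactness argument above then produces a subsequence converging --- by Lemma~\ref{le:compact-open}, in the compact-open topology induced by $\sigma$ --- to a curve in $\ACini\ini\eps$ (constant on $[T,\infty)$ when $T<\infty$) which is again a minimizer by lower semicontinuity. Hence each $\calM_{\eps,T}(\ini)$ is sequentially compact in that topology, and genuinely compact when $\sigma$ is metrizable on the $\sfd$-bounded sets involved.
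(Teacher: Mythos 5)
Your overall architecture is the right one and matches the paper's: apply Lemma~\ref{l:hynek} to make $\calI_\eps$ well defined and coercive, use the integral compactness criterion of Theorem~\ref{thm:main-compactness} to extract a limit from an infimizing sequence, and then close with lower semicontinuity. The well-definedness and coercivity step is essentially identical to the paper's Lemmas~\ref{cor:hynek-metric} and~\ref{l:wed-summability}. The compactness of the minimizer sets via Lemma~\ref{le:compact-open} is also the paper's argument.

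The gap is in the lower-semicontinuity step, and it is exactly where you flag it yourself. You propose to split $[0,\infty)=[0,T]\cup[T,\infty)$, handle $[0,T]$ by Fatou plus weak $L^2$-lower semicontinuity, and control the tail by the shifted Poincar\'e estimate, reducing to showing that $\sfQ(u_n(T))\rme^{-T/\eps}$ is negligible as $T\to\infty$, uniformly along the infimizing sequence. This last claim is not true in general. The only uniform control you have is $\sup_n\int_0^\infty L_n^2\,\dd\mu_\eps\le C$ (with $L_n(t)=\int_0^t|u_n'|$), and that does not force $L_n^2(T)\rme^{-T/\eps}\to0$ uniformly in $n$: one can construct $L_n$ nondecreasing with $\int_0^\infty|L_n'|^2\,\dd\mu_\eps$ uniformly bounded yet with a velocity burst near $t=n$ so that $L_n^2(n)\rme^{-n/\eps}$ stays bounded away from zero. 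The equi-H\"older bound~\eqref{eq:9} does not save you either, because it is derived from $\int_J|u_n'|^2\,\dd t\le C_J$ on a fixed compact $J$; translating the bound $\int|u_n'|^2\,\dd\mu_\eps\le C$ into an $L^2(\Leb1)$ bound on $[0,T]$ multiplies by $\rme^{T/\eps}$, which cancels precisely the discount factor you are counting on. So ``letting first $n\to\infty$ and then $T\to\infty$'' does not close.

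The paper sidesteps the tail altogether. In Lemma~\ref{l:wed-summability} and Corollary~\ref{cor:lscI} the functional is rewritten on the \emph{whole} half-line as
\begin{equation*}
  \calI_\eps[u]=\frac\eps2\int_0^\infty\Big(|L'|^2-\frac1{8\eps^2}L^2\Big)\,\dd\mu_\eps
  +\int_0^\infty\Big(\phi(u)+\frac1{16\eps}L^2+\sfQ\Big)\,\dd\mu_\eps-\sfQ,
\end{equation*}
a global decomposition into a quadratic form which, although indefinite term by term, is a sum of exact squares by~\eqref{eq:59} and hence lower semicontinuous (this is exactly statement~\eqref{eq:17} of Lemma~\ref{l:hynek}), plus an integrand that is \emph{pointwise nonnegative} thanks to~\eqref{basic-ass-2bis} and~\eqref{eq:27}, to which Fatou applies directly. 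No splitting, no tail. The structural restriction~\eqref{eq:27} is used precisely to make the second integrand nonnegative, not to make a tail vanish. If you want to keep your $[0,T]/[T,\infty)$ decomposition you would in effect need to reconstruct this algebraic identity on the tail, which defeats the purpose; it is cleaner to adopt the global decomposition from the start. The remainder of your argument — in particular the deduction of compactness of $\calM_{\eps,T}(\ini)$ and the remark that genuine (non-sequential) compactness requires $\sigma$ to be metrizable on the relevant bounded sets — is sound.
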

We divide the proof of Theorem \ref{exist-minimizers}
in a few steps, starting from an immediate application 
of Lemma \ref{l:hynek}. 
Notice that it is sufficient to consider the case $T=\infty$.
\begin{lemma}
  \label{cor:hynek-metric}
  Let $u\in \AC_\loc^2([0,\infty);X)$, 
  $L(t):=\int_0^t |u'|(r)\,\dd r$,
  $\eps>0$, and $\inftyT\in (0,\infty]$. Then
  \begin{equation}
\label{e:spectral-metric}
\int_0^{ \inftyT } 
 |u'|^2(t)  \dd \mu_\eps(t) \geq \frac1{4\eps^2}  
 \int_0^{ \inftyT } 
 L^2(t)  \dd \mu_\eps(t)\ge
 \frac1{4\eps^2}  
 \int_0^{ \inftyT } 
 \sfd^2(u(t),u(0)) \dd \mu_\eps(t).
\end{equation}
In particular, for $\eps>0$ sufficiently small (cf.\ \eqref{eq:27}),  
the integral defining $\mathcal I_\eps[u]$
in \eqref{metric-wed} is well defined for every
$u\in \ACini\ini\eps$.
\end{lemma}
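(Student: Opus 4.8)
The plan is to reduce both inequalities in \eqref{e:spectral-metric} to the scalar Poincar\'e-type inequality \eqref{e:spectral} of Lemma \ref{l:hynek}. First I would set $w:=L$, i.e.\ $w(t):=\int_0^t|u'|(r)\,\dd r$: since $u\in\AC^2_\loc([0,\infty);X)$, the metric derivative $|u'|$ belongs to $L^2_\loc([0,\infty))$, so $w$ is locally absolutely continuous with $w(0)=0$ and $w'=|u'|$ a.e., hence $w\in\AC^2_\loc([0,\infty);\R)$. Applying \eqref{e:spectral} to this $w$ gives at once the first inequality $\int_0^T|u'|^2\,\dd\mu_\eps\ge\frac1{4\eps^2}\int_0^T L^2\,\dd\mu_\eps$. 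For the second inequality I would use the minimality of the metric derivative among the functions admissible in \eqref{metric_dev}, which yields the pointwise bound $\sfd(u(t),u(0))\le\int_0^t|u'|(r)\,\dd r=L(t)$, hence $\sfd^2(u(t),u(0))\le L^2(t)$; integrating this against $\mu_\eps$ closes the chain \eqref{e:spectral-metric}.

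To prove that $\calI_\eps[u]$ is well defined for every $u\in\ACini\ini\eps$ when \eqref{eq:27} holds, I would bound the negative part of the integrand $\tfrac\eps2|u'|^2(t)+\phi(u(t))$ from below. Invoking the coercivity assumption \eqref{basic-ass-2bis} with base point $v:=u(0)=\ini$ gives $\phi(u(t))\ge-\sfB\,\sfd^2(u(t),u(0))-\sfQ(\ini)$, so the integrand dominates $\tfrac\eps2|u'|^2(t)-\sfB\,\sfd^2(u(t),u(0))-\sfQ(\ini)$. Integrating over $[0,\infty)$ against the probability measure $\mu_\eps$ and inserting the bound just established, $\int_0^\infty\sfd^2(u(t),u(0))\,\dd\mu_\eps\le 4\eps^2\int_0^\infty|u'|^2\,\dd\mu_\eps$, together with $4\eps^2\sfB\le\tfrac\eps4$ (which is exactly \eqref{eq:27}), I would obtain that the integral is bounded below by $\tfrac\eps4\int_0^\infty|u'|^2\,\dd\mu_\eps-\sfQ(\ini)$, a finite quantity since $\int_0^\infty|u'|^2\,\dd\mu_\eps<\infty$ by definition of $\ACini\ini\eps$. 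As the function $t\mapsto\sfB\,\sfd^2(u(t),u(0))+\sfQ(\ini)$ is then $\mu_\eps$-integrable and dominates the negative part of the integrand, the integral in \eqref{metric-wed} is well defined, with value in $(-\infty,\infty]$.

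The argument is essentially routine; the only point requiring a little care is the choice of base point in the coercivity inequality. Using $v=u(0)$ rather than the distinguished point $u_*$ avoids an extra factor $2$ coming from $\sfd^2(u(t),u_*)\le 2\sfd^2(u(t),u(0))+2\sfd^2(u(0),u_*)$, and this is precisely what makes the constant $16$ in \eqref{eq:27} sufficient to absorb the term $\sfB\,\sfd^2(u(t),u(0))$ into half of the quadratic dissipation. One should also keep in mind that the finiteness of $\int_0^\infty|u'|^2\,\dd\mu_\eps$, i.e.\ the membership $u\in\ACini\ini\eps$, is exactly what rules out the value $-\infty$.
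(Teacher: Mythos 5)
Your proof is correct and takes essentially the same route as the paper: the chain of inequalities in \eqref{e:spectral-metric} is, as the paper indicates, an immediate consequence of Lemma~\ref{l:hynek} applied to $w:=L$, together with the pointwise bound $\sfd(u(t),u(0))\le L(t)$ coming from the minimality of $|u'|$ in \eqref{metric_dev}. Your well-definedness argument — coercivity \eqref{basic-ass-2bis} with base point $v=u(0)$, then absorbing $\sfB\sfd^2(u(t),u(0))$ into $\tfrac\eps4|u'|^2$ via the just-proved inequality and \eqref{eq:27} — is a streamlined version of the computation the paper carries out in the proof of Lemma~\ref{l:wed-summability}.
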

As a further consequence of  Lemma  \ref{cor:hynek-metric}
we provide
 separate estimates for  $ \int_0^{ \infty }
|u'|^2  \dd  \mu_\eps  $ and $ \int_0^{ \infty }
\big(\phi(u)\UUU \big)^+ \EEE \dd  \mu_\eps  $ for any $u\in
\ACini\ini\eps$ such that
$\calI_\eps[u]<\infty$  (recall that $(x)^+ : = x \lor 0$).  Observe that this in fact  requires
absorbing the term $-  \int_0^{ \infty } 
\sfd^2(u(t), u(0)) \dd \mu_\eps(t) $,  which bounds $ \int_0^{ \infty }
\phi(u(t))  \dd \mu_\eps( t)  $ from below (cf.\
\eqref{basic-ass-2bis}), into $ \int_0^{ \infty } \frac \eps 2 |u'|^2
\dd \mu_\eps $.  It is at this level that \eqref{e:spectral-metric} comes into
play.
\begin{lemma}
\label{l:wed-summability}
If $\phi$ satisfies the standard \UUU LSCC Property \EEE 
\ref{basic-ass}
and \eqref{eq:27} holds, 
then for every $u\in \AC^2_{\loc}([0,\infty);X)$  there holds
 \begin{equation}
 \label{coerc-I-e}
 \int_0^\infty 
 \Big(\frac \eps4 |u'|^2+\big(\phi(u(t))\UUU \big)^+ \EEE\Big)\,\dd \mu_\eps(t)\le
 \calI_\eps[u]+\sfQ(u(0)) \qquad \text{with }
 \sfQ(w):=\sfB\,\sfd^2(w,u_*)+\sfA.
 \end{equation}
\end{lemma}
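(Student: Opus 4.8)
The plan is to use the coercivity bound \eqref{basic-ass-2bis} to control the negative part of $\phi$ along the curve, and then to absorb this control into the kinetic term by means of the spectral inequality \eqref{e:spectral-metric} of Lemma \ref{cor:hynek-metric}; this is precisely where the smallness condition \eqref{eq:27} enters. First, applying \eqref{basic-ass-2bis} with $v=u(0)$ gives the pointwise lower bound $\phi(u(t))\ge -\sfB\,\sfd^2(u(t),u(0))-\sfQ(u(0))$, whence $\big(\phi(u(t))\big)^-\le \sfB\,\sfd^2(u(t),u(0))+\sfQ(u(0))$ for every $t\ge 0$. Since $u\in\AC^2_\loc([0,\infty);X)$, for each finite $T>0$ the map $|u'|$ is square integrable on $[0,T]$; integrating the previous inequality against the probability measure $\mu_\eps$ over $[0,T]$ and invoking \eqref{e:spectral-metric} on $[0,T]$, together with the reformulation $4\eps^2\sfB\le \eps/4$ of \eqref{eq:27}, one obtains
\[
  \int_0^{T}\big(\phi(u(t))\big)^-\,\dd\mu_\eps(t)
  \;\le\; \sfB\int_0^{T}\sfd^2(u(t),u(0))\,\dd\mu_\eps(t)+\sfQ(u(0))
  \;\le\; \frac\eps4\int_0^{T}|u'|^2(t)\,\dd\mu_\eps(t)+\sfQ(u(0)).
\]

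Next I would rearrange on $[0,T]$: since $\int_0^T\big(\phi(u(t))\big)^-\,\dd\mu_\eps<\infty$ by the bound just obtained, inside the integral $\phi\circ u$ decomposes as $(\phi\circ u)^+-(\phi\circ u)^-$, and therefore
\begin{align*}
  \int_0^{T}\Big(\tfrac\eps4|u'|^2+\big(\phi(u(t))\big)^+\Big)\,\dd\mu_\eps
  &=\int_0^{T}\ell_\eps(u(t),|u'|(t))\,\dd\mu_\eps-\tfrac\eps4\int_0^{T}|u'|^2\,\dd\mu_\eps+\int_0^{T}\big(\phi(u(t))\big)^-\,\dd\mu_\eps\\
  &\le \int_0^{T}\ell_\eps(u(t),|u'|(t))\,\dd\mu_\eps+\sfQ(u(0)),
\end{align*}
where the last inequality is exactly the estimate of the first paragraph. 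Letting $T\uparrow\infty$, the left-hand side converges to $\int_0^\infty\big(\tfrac\eps4|u'|^2+(\phi(u))^+\big)\,\dd\mu_\eps$ by monotone convergence (the integrand being nonnegative), so it only remains to identify the limit of the right-hand side with $\calI_\eps[u]+\sfQ(u(0))$.

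For this one distinguishes two cases. If $\int_0^\infty|u'|^2\,\dd\mu_\eps<\infty$, running the estimate of the first paragraph on all of $[0,\infty)$ (Lemma \ref{cor:hynek-metric} being stated for unbounded intervals as well) gives $\int_0^\infty(\phi\circ u)^-\,\dd\mu_\eps<\infty$; since $\ell_\eps(u,v)=\tfrac\eps2 v^2+\phi(u)\ge\phi(u)$, also $\ell_\eps^-\le(\phi\circ u)^-$ is integrable, so $\calI_\eps[u]$ is well defined in $(-\infty,+\infty]$ and $\int_0^T\ell_\eps\,\dd\mu_\eps\to\calI_\eps[u]$ by monotone convergence applied separately to $\ell_\eps^+$ and $\ell_\eps^-$; passing to the limit in the displayed inequality yields \eqref{coerc-I-e}. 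If instead $\int_0^\infty|u'|^2\,\dd\mu_\eps=+\infty$, the left-hand side of \eqref{coerc-I-e} is already $+\infty$, while the displayed inequality forces $\int_0^T\ell_\eps\,\dd\mu_\eps\to+\infty$, so that $\calI_\eps[u]=+\infty$ and \eqref{coerc-I-e} holds trivially. I expect the only genuinely delicate point to be the bookkeeping of the negative part of $\phi\circ u$, which is exactly what \eqref{eq:27} and Lemma \ref{cor:hynek-metric} are tailored to handle; beyond that the argument is routine.
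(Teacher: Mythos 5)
Your proof is correct. Both your argument and the paper's rest on the same two ingredients: the quadratic coercivity \eqref{basic-ass-2bis} applied with $v=u(0)$, and the weighted Poincar\'e inequality \eqref{e:spectral-metric}, with the smallness condition \eqref{eq:27} converting the resulting constant into $\eps/4$. The organization differs. The paper introduces the cumulative arc-length $L(t)=\int_0^t|u'|$ and the shifted integrand $\psi(t)=\phi(u(t))+\frac{1}{16\eps}L^2(t)+\sfQ(u(0))$, and rewrites $\calI_\eps[u]$ exactly as $\frac\eps2\int\big(|L'|^2-\frac{1}{8\eps^2}L^2\big)\,\dd\mu_\eps+\int\psi\,\dd\mu_\eps-\sfQ(u(0))$; the observation that $\psi\ge(\phi\circ u)^+$ pointwise, together with a further use of \eqref{e:spectral} to bound the first (quadratic-form) term from below by $\frac\eps4\int|u'|^2\dd\mu_\eps$, yields \eqref{coerc-I-e} in one line. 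You instead bound $(\phi\circ u)^-$ directly by $\sfB\,\sfd^2(u(\cdot),u(0))+\sfQ(u(0))$, absorb the distance term into $\frac\eps4\int|u'|^2\dd\mu_\eps$ via \eqref{e:spectral-metric} and \eqref{eq:27}, and rearrange. These are two packagings of the same estimate. What your route buys: working on finite intervals $[0,T]$ and then letting $T\uparrow\infty$ handles explicitly the meaning of $\calI_\eps[u]$ when $u\in\AC^2_\loc$ is not a priori in $\ACini{\ini}{\eps}$, which the paper's identity \eqref{eq:29} leaves somewhat implicit. What the paper's route buys: the algebraic identity \eqref{eq:29} is reused verbatim in the lower-semicontinuity argument of Corollary \ref{cor:lscI}, so the same decomposition does double duty.
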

\begin{proof}
Setting $L(t):=\int_0^t |u'|(r)\,\dd r$
we write $\mathcal I_\eps$ as 
\begin{equation}
  \label{eq:29}
  \mathcal I_\eps[u]=
  \frac\eps2\int_0^\infty \Big(|L'|^2-\frac 1{8\eps^2}L^2\Big)\,\dd\mu_\eps+
  \int_0^\infty \psi\,\dd\mu_\eps-\sfQ(u(0)).
\end{equation}
where
\begin{equation}
  \label{eq:30}
  \psi(t):=\phi(u(t))+\frac 1{16 \eps}L^2(t)+\sfQ(u(0)). 
\end{equation}
Since $\psi$ is nonnegative 
thanks to \eqref{basic-ass-2bis}
 and  \eqref{eq:27},  
 we have $\psi(t)\ge (\phi(u(t))\UUU )^+ \EEE $. 
On the other hand, we   have 
\begin{equation}
\label{eq:2911}
\begin{aligned}
  \frac\eps2  \int_0^\infty \Big(|L'|^2-\frac 1{8\eps^2}L^2\Big)\,\dd\mu_\eps & =
 \frac\eps4  \int_0^\infty \Big(|L'|^2-\frac 1{8\eps^2}L^2\Big)\,\dd\mu_\eps
+ \frac\eps4  \int_0^\infty \Big(|L'|^2-\frac 1{8\eps^2}L^2\Big)\,\dd\mu_\eps
\\
& 
\geq 
 \frac\eps4  \int_0^\infty \frac 1{8\eps^2}L^2 \,\dd\mu_\eps
 +\frac\eps4  \int_0^\infty |L'|^2 \,\dd\mu_\eps,
 \end{aligned}
\end{equation}
where the second estimate follows from
 \eqref{e:spectral-metric}. 
 Then, \eqref{coerc-I-e}
 follows. 
\end{proof}
\begin{corollary}[Lower semicontinuity and compactness
  of the functional $\calI_\eps$]
  \label{cor:lscI}
  Let $(u_n)_n  $  be a sequence in $\AC^2_\loc([0,\infty);X)$ such that
  \begin{equation}
    \label{eq:31}
    (u_n(0))_n\quad\text{is bounded},\quad
    \sup_{n\in \N} \mathcal I_\eps[u_n]\le C<\infty.
  \end{equation}
  Then there exists an increasing subsequence $k\mapsto {n_k}$ 
  and a limit function $u\in \AC^2_\loc([0,\infty);X)$ such that 
  the conclusions
  \eqref{eq:12}, \eqref{eq:3} and \eqref{eq:14} 
  of Theorem \ref{thm:main-compactness} hold with $I=[0,\infty)$,
  and moreover
  $\mathcal I_\eps[u]\le C$.
\end{corollary}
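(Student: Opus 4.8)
The plan is to combine the compactness criterion of Theorem \ref{thm:main-compactness} with the coercivity estimate of Lemma \ref{l:wed-summability}, and then to pass to the lower limit in the WED functional term by term. First I would observe that the hypothesis $\sup_n \calI_\eps[u_n]\le C<\infty$ together with the boundedness of $(u_n(0))_n$ allows one to apply Lemma \ref{l:wed-summability}: indeed, since $u_n(0)$ ranges in a $\sfd$-bounded set, the quantity $\sfQ(u_n(0))=\sfB\,\sfd^2(u_n(0),u_*)+\sfA$ is uniformly bounded, hence
\[
\int_0^\infty\Big(\frac\eps4|u_n'|^2(t)+\big(\phi(u_n(t))\big)^+\Big)\dd\mu_\eps(t)\le C'<\infty\qquad\text{for all }n\in\N.
\]
Because $\mu_\eps$ restricted to any compact interval $J\subset[0,\infty)$ is comparable to $\Leb 1\restriction J$, this yields the bounds $\sup_n\int_J|u_n'|^2\dd t<\infty$ and $\sup_n\int_J(\phi(u_n(t)))^+\dd t<\infty$. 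Combining the latter with the lower bound on $\phi$ coming from \eqref{basic-ass-2bis} and the $\sfd$-equicontinuity of the $u_n$ (via \eqref{eq:9}, which controls $\sfd(u_n(t),u_n(0))$ on compact intervals, hence keeps the curves $\sfd$-bounded), we also get $\sup_n\int_J\phi(u_n(t))\dd t<\infty$. Thus the hypotheses \eqref{eq:7} of Theorem \ref{thm:main-compactness} are met with $I=[0,\infty)$ (and $t_0=0$), and we extract a subsequence $(u_{n_k})_k$ and a limit $u\in\AC^2_\loc([0,\infty);X)$ for which \eqref{eq:12}, \eqref{eq:3}, and \eqref{eq:14} hold.

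The remaining task is the inequality $\calI_\eps[u]\le C$. I would split $\calI_\eps$ into its dissipative and energetic parts. For the dissipative part, the weak convergence $|u_{n_k}'|\weakto v$ in $L^2_\loc$ with $v\ge|u'|$ a.e.\ (from \eqref{eq:3}) together with weak lower semicontinuity of the $L^2(\mu_\eps)$-norm gives
\[
\int_0^\infty\frac\eps2|u'|^2\dd\mu_\eps\le\int_0^\infty\frac\eps2 v^2\dd\mu_\eps\le\liminf_{k\to\infty}\int_0^\infty\frac\eps2|u_{n_k}'|^2\dd\mu_\eps;
\]
here one uses that the weighted measure $\mu_\eps$ has a continuous positive density, so that weak-$L^2_\loc$ convergence combined with the global uniform $L^2(\mu_\eps)$-bound (from Lemma \ref{l:wed-summability}) passes to the limit by a standard truncation/Fatou argument. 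For the energetic part, the inequality \eqref{eq:14} with $\zeta(t)=\rme^{-t/\eps}/\eps$ — approximated on $[0,T]$ and letting $T\up\infty$, controlling the tail via the uniform bound on $\int_0^\infty(\phi(u_n))^+\dd\mu_\eps$ and the lower bound $\phi(u_n(t))\ge-\sfQ(u_n(0))-\sfB L_n^2(t)$ which is integrable against $\mu_\eps$ uniformly — yields $\int_0^\infty\phi(u)\dd\mu_\eps\le\liminf_k\int_0^\infty\phi(u_{n_k})\dd\mu_\eps$. Adding the two lower-semicontinuity inequalities and using superadditivity of $\liminf$ gives $\calI_\eps[u]\le\liminf_k\calI_\eps[u_{n_k}]\le C$.

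The main obstacle I anticipate is the careful treatment of the infinite time horizon: both the weak-$L^2_\loc$-to-$L^2(\mu_\eps)$ passage for the dissipation and the upgrading of \eqref{eq:14} from compact intervals with continuous test functions to the exponential weight on all of $[0,\infty)$ require using the global integral bounds furnished by Lemma \ref{l:wed-summability} to dominate the tails uniformly in $n$. In particular, one must make sure the negative part of $\phi(u_n)$ does not cause a loss of mass at infinity; this is exactly where the absorption argument of Lemma \ref{l:wed-summability} (controlling $(\phi(u_n))^+$ and, via \eqref{e:spectral-metric}, the quantity $\int_0^\infty L_n^2\dd\mu_\eps$ that dominates the negative part) is essential. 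Once these tail estimates are in place, the conclusion follows by routine lower-semicontinuity arguments.
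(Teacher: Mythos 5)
Your first step is correct and matches the paper: Lemma \ref{l:wed-summability} together with the boundedness of $(u_n(0))_n$ gives the local integral bounds \eqref{eq:7}, and Theorem~\ref{thm:main-compactness} then delivers a subsequence and a limit $u$ satisfying \eqref{eq:12}, \eqref{eq:3}, \eqref{eq:14}.

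The second part, however, has a genuine gap. You propose to split $\calI_\eps$ into the dissipative term $\int_0^\infty\frac\eps2|u'|^2\dd\mu_\eps$ and the energy term $\int_0^\infty\phi(u)\dd\mu_\eps$ and prove lower semicontinuity of each separately. The dissipative term is fine, but the energy term on its own is \emph{not} lower semicontinuous along the sequence $(u_{n_k})_k$, and the tail argument you sketch does not close the gap. You invoke the lower bound $\phi(u_n(t))\ge-\sfQ(u_n(0))-\sfB L_n^2(t)$ and the uniform bound on $\int_0^\infty L_n^2\dd\mu_\eps$; but a uniform bound on $\int_0^\infty L_n^2\dd\mu_\eps$ is \emph{not} uniform integrability, i.e.\ it does not give $\sup_n\int_T^\infty L_n^2\dd\mu_\eps\to0$ as $T\up\infty$. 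One can build sequences (say, $|u_n'|$ a bump of height $\approx \rme^{a_n/2\eps}$ on $[a_n,a_n+1]$ with $a_n\up\infty$) for which $\calI_\eps[u_n]$ stays bounded, $u_{n}\to u$ constant, yet $\int_0^\infty L_n^2\dd\mu_\eps$ stays bounded away from $0$; with $\phi(x)=-\sfB x^2$ this forces $\liminf_k\int\phi(u_{n_k})\dd\mu_\eps<0=\int\phi(u)\dd\mu_\eps$, so the separate energy--lsc inequality you want is simply false. What saves the day is that the dissipative term in this example develops a compensating excess, but that compensation is lost the moment you add two separate $\liminf$'s.

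The paper avoids this by never trying to pass to the limit in the two naive pieces separately. Instead it uses the algebraically exact regrouping \eqref{eq:29}:
\begin{displaymath}
  \calI_\eps[u_n]=\frac\eps2\int_0^\infty\Big(|L_n'|^2-\frac1{8\eps^2}L_n^2\Big)\dd\mu_\eps+\int_0^\infty\psi_n\dd\mu_\eps-\sfQ,
  \qquad \psi_n:=\phi(u_n)+\frac1{16\eps}L_n^2+\sfQ\ge0.
\end{displaymath}
The first integral is lower semicontinuous by Lemma~\ref{l:hynek} (inequality \eqref{eq:17}, with $\eta=\tfrac1{8\eps^2}<\tfrac1{4\eps^2}$), and the second by Fatou since $\psi_n\ge0$. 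The negative $L_n^2$ contribution in the first integral and the positive one inside $\psi_n$ appear with the same coefficient $\tfrac1{16\eps}$ and opposite signs, so after passing to the $\liminf$ term by term they recombine to exactly $\calI_\eps[u]$. The whole point of this decomposition is that each piece needs only a $\liminf$ bound from below; at no stage does one need the dangerous $\limsup_k\int L_{n_k}^2\dd\mu_\eps\le\int L^2\dd\mu_\eps$, which is precisely what your separate-terms argument implicitly requires and which fails.
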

\begin{proof}
We can apply Theorem \ref{thm:main-compactness}
thanks to  estimate \eqref{coerc-I-e} combined with \eqref{eq:31}.
In order to prove that $\calI_\eps[u]\le C$ 
we use the splitting in \eqref{eq:29} by choosing 
\begin{displaymath}
  L_n(t):=\int_0^t |u_n'|(r)\,\dd r,\quad
  \psi_n(t):=\phi(u_n(t))+\frac 1{16\eps} L_n^2(t)+\sfQ,
  \quad
  \sfQ\ge \sup_n \sfQ(u_n(0)),
\end{displaymath}
and writing
\begin{equation}
  \label{eq:29bis}
  \mathcal I_\eps[u_n]=
  \frac\eps2\int_0^\infty \Big(|L_n'|^2-\frac 1{8\eps^2}L_n^2\Big)\,\dd\mu_\eps+
  \int_0^\infty \psi_n\,\dd\mu_\eps-\sfQ.
\end{equation}
Denoting by $L(t):=\int_0^tv(r)\,\dd r$ (where $v$ is defined by \eqref{eq:3}), we observe that 
$(L_{n_k})_k$ is pointwise converging to $L\in
\AC^2_\loc(0,\infty;\R)$ with $|L'|\ge |u'|$ 
$\Leb 1$-a.e.~and 
\begin{equation}
\psi(t):=\liminf_{k\to\infty}\psi_{n_k}(t)\ge 
\phi(u(t))+\frac 1{16\eps} L_n^2(t)+\sfQ.\label{eq:19}
\end{equation}
Combining \eqref{eq:17}, Fatou's Lemma (which applies since $\psi_n\ge0$),
and \eqref{eq:19}
we get
\begin{align*}
  C\ge
  \liminf_{k\to\infty} \calI_\eps[u_{n_k}]&\ge 
  \frac\eps2\int_0^\infty \Big(|L'|^2-\frac 1{8\eps^2}L^2\Big)\,\dd\mu_\eps+
  \int_0^\infty \psi\,\dd\mu_\eps-\sfQ\ge
  \calI_\eps[u].
\end{align*}
\end{proof}
The proof of Theorem \ref{exist-minimizers} now
follows  by a
simple application of the Direct method of Calculus of Variations.
\par
 We conclude this section by stating the \underline{\textbf{main result}} of the paper on the convergence of sequences of WED minimizers. Its proof is postponed to Section \ref{ss:6.2}.  
%
\begin{theorem}
\label{th:4.1}
Assume 
Property \ref{basic-ass}.
Let $(\inie)_\eps,
\,\ini \in  D  (\phi)$  fulfill
\begin{equation}
\label{converg-init-data} \inie \weaksigma \ini,
\quad \sup_\eps \sfd(\inie,\ini)<\infty,
 \quad \phi(\inie) \to
\phi(\ini) \ \text{ as $\eps \down 0$.}
\end{equation}
For every $\eps>0$, let $\ue \in   \MMM_\epsi(\ini_\eps) $.

 Then, for any sequence
$(\eps_k)_k $ with $\eps_k \down 0$, there exist a (not relabeled)
subsequence
and \GGG $u \in  \AC_\mathrm{loc}^2([0,\infty);X)$, with $u(0)=\ini$, \EEE  such that
\begin{equation}
\label{pointiwse-conv} \uek(t) \weaksigma u(t) \quad \forall\, t \in
[0,\infty),
\end{equation}
$u(0) = \ini$, and
$u$ fulfills
\begin{equation}
\label{lsc-gflow}
\int_0^t \left( \frac12 |u'|^2(s) + \frac12 \rls^2(u(s)) \right) \dd s + \phi(u(t)) \leq \phi(\ini) \qquad \text{for all } t \geq 0.
\end{equation}
Therefore, if in addition $\rls$ is a ($L^\infty$-moderated)  upper gradient for the functional $\phi$, 
 $u$ is a curve of maximal slope for $\phi$ w.r.t.\ $\rls$.
\end{theorem}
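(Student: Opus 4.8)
The plan is to use the value function $V_\eps$ and the fundamental identity \eqref{fund-metric} as the central objects, following the strategy outlined in the introduction. First I would establish the a priori bounds on the family $(\ue)_\eps$: from $\ue\in\MMM_\eps(\inie)$ together with the feasibility estimate \eqref{eq:4}, one has $\calI_\eps[\ue]\le\phi(\inie)$, which by \eqref{converg-init-data} is bounded uniformly in $\eps$. Lemma \ref{l:wed-summability} then yields, for $\eps$ small, the bounds $\int_0^\infty\frac\eps4|\ue'|^2\,\dd\mu_\eps+\int_0^\infty(\phi(\ue))^+\,\dd\mu_\eps\le C$ together with $\sup_\eps\sfd(\inie,u_*)<\infty$. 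These translate, on every compact interval $J\subset[0,\infty)$, into the estimates \eqref{estimates-4-minimizers}, so that Theorem \ref{thm:main-compactness} applies: along a subsequence $\eps_k\downarrow0$ one gets a limit curve $u\in\AC^2_\loc([0,\infty);X)$ with $\uek(t)\weaksigma u(t)$ for every $t$, $|\uek'|\weakto v\ge|u'|$ in $L^2_\loc$, and the lower semicontinuity \eqref{eq:14} for the energy, and $u(0)=\ini$ follows from $\inie\weaksigma\ini$. This gives \eqref{pointiwse-conv}.

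Next I would assemble the ingredients for passing to the limit in an integrated form of \eqref{fund-metric}. Integrating \eqref{fund-metric} over $[0,t]$ gives
\begin{equation}
\label{eq:plan-int}
\int_0^t\Big(\tfrac12|\ue'|^2(s)+\tfrac1\eps\phi(\ue(s))-\tfrac1\eps V_\eps(\ue(s))\Big)\,\dd s+V_\eps(\ue(t))=V_\eps(\ue(0))=V_\eps(\inie).
\end{equation}
The two crucial lower estimates flagged in the introduction are: (a) $\liminf_k V_{\eps_k}(\uek(t))\ge\phi(u(t))$ for every $t\ge0$, which should follow from the facts that $V_\eps\ge\phi$ pointwise (since the constant curve is a competitor and using $\int\,\dd\mu_\eps=1$, up to the coercivity correction) together with $\sigma$-lower semicontinuity of $\phi$ on $\sfd$-bounded sets \eqref{basic-ass-1}; and (b)
\begin{equation}
\label{eq:plan-slope}
\liminf_{k\to\infty}\int_0^t\tfrac1{\eps_k}\big(\phi(\uek(s))-V_{\eps_k}(\uek(s))\big)\,\dd s\ge\int_0^t\tfrac12\rls^2(u(s))\,\dd s,
\end{equation}
which is the heart of the matter. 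For (b) the idea is that $G_\eps:=\sqrt{2(\phi-V_\eps)/\eps}$ is (by Theorem \ref{thm:upper-gradient}, which I may assume) a moderated upper gradient for $V_\eps$, and that $\tfrac1\eps(\phi(x)-V_\eps(x))$ is bounded below by (a relaxed version of) $\tfrac12\ls^2(x)$ as $\eps\downarrow0$ — intuitively because $V_\eps(x)\le\int_0^\infty\rme^{-t/\eps}(\tfrac12|\gamma'|^2+\tfrac1\eps\phi(\gamma))\,\dd t$ for any short path $\gamma$ descending steeply from $x$, and optimizing the one-step descent recovers the slope. One then combines this pointwise inequality with Fatou's lemma and the $\sigma$-lower semicontinuity of $\rsls$ along the converging sequence $\uek(s)\weaksigma u(s)$ (this is exactly where the relaxed slope $\rsls$, rather than $\ls$, enters). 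Also needed is $\liminf_k V_{\eps_k}(\inie)\le\phi(\ini)$, which follows from $V_\eps(\inie)\le\calI_\eps[\text{const}]\le\phi(\inie)\to\phi(\ini)$.

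Putting these together: take $\liminf$ in \eqref{eq:plan-int}, using weak lower semicontinuity of $\int_0^t|\cdot'|^2$ (so $\liminf\int_0^t\tfrac12|\uek'|^2\ge\int_0^t\tfrac12|u'|^2$), estimate (a) on the boundary term, estimate (b) combined with the bound on $V_{\eps_k}(\inie)$, to obtain
\[
\int_0^t\Big(\tfrac12|u'|^2(s)+\tfrac12\rls^2(u(s))\Big)\,\dd s+\phi(u(t))\le\phi(\ini),
\]
which is \eqref{lsc-gflow}. Finally, if $\rls$ is an $L^\infty$-moderated upper gradient for $\phi$, then by the integral characterization of curves of maximal slope recalled in the excerpt (the implication \eqref{eq:1}$\Rightarrow$\eqref{e:differential-equality}), \eqref{lsc-gflow} forces $u$ to be a curve of maximal slope for $\phi$ with respect to $\rls$. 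The main obstacle I anticipate is step (b), the lower bound \eqref{eq:plan-slope}: it requires a careful quantitative comparison between the one-step cost in the definition of $V_\eps$ and the (relaxed) local slope, uniform enough in $\eps$ to survive the $\liminf$, and it is precisely here that the upper-gradient machinery for $G_\eps$ from Theorem \ref{thm:upper-gradient} and the reparameterization lemmas of Section \ref{ss:2.2} must be invoked.
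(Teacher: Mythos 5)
Your overall architecture matches the paper's: a priori bounds yielding the hypotheses of the compactness Theorem \ref{thm:main-compactness}, passage to the limit in the integrated form \eqref{enid-ve} of the fundamental identity \eqref{intermediate-relation}, and conclusion via the implication \eqref{eq:1}$\Rightarrow$\eqref{e:differential-equality}. That skeleton is correct, and you rightly identify the two key lower estimates. However, there are two genuine problems in the way you propose to obtain them.

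First, step (a) rests on the claim $V_\eps\ge\phi$. That is backwards: the constant curve gives $\Ve(x)\le\calI_\eps[\text{const}]=\phi(x)$, as in \eqref{stime-V}, so monotone comparison with $\phi$ plus $\sigma$-lower semicontinuity of $\phi$ gives nothing. The inequality $\liminf_k V_{\eps_k}(\uek(t))\ge\phi(u(t))$ is a genuine diagonal $\Gamma$-$\liminf$ and requires proof; the paper establishes it as \eqref{enhanced-lsc} in Lemma \ref{lemma:convV-phi}, using the change of variables $t=\eps s$, the equicontinuity estimate \eqref{e:quoted-later} to get $u_\eps(\eps s)\weaksigma\ini$, and Fatou.

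Second, and more seriously, in step (b) you correctly locate the crux but anticipate the wrong toolkit. A pointwise bound $\liminf_\eps\tfrac1\eps\big(\phi(x)-\Ve(x)\big)\ge\tfrac12\rls^2(x)$ for fixed $x$ combined with $\sigma$-lower semicontinuity of $\rls$ is not enough, because in \eqref{enid-ve} both $\eps_k\downarrow0$ and the evaluation point $\uek(s)\weaksigma u(s)$ vary simultaneously. What is needed is the joint (diagonal) $\Gamma$-$\liminf$ $\Gelinf(\ini)\ge\rls(\ini)$ of Proposition \ref{l:4.1}, with $\Gelinf$ as in \eqref{Gelinf} allowing $x_n\weaksigma x$ and $\eps_n\downarrow0$ together. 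Its proof does not use the upper-gradient property of $G_\eps$ from Theorem \ref{thm:upper-gradient} nor the reparametrization lemmas; instead it inserts a Generalized Minimizing Movement trajectory $w_{\eps_n}$ satisfying the descent inequality \eqref{GMM_eps} as a competitor in the definition of $V_{\eps_n}(\ini_n)$, so that the term $\tfrac1{\eps_n}\int\tfrac12|w_{\eps_n}'|^2$ cancels, leaving the integral of $\rls^2$ along $w_{\eps_n}$, controlled by Fatou. Moreover, applying Fatou in $s$ in \eqref{convergence-epsk-3} is itself delicate: the integrand $\tfrac1{\eps_k}\big(\phi(\uek)-V_{\eps_k}(\uek)\big)$ is nonnegative, but to extract, for a.e.\ $s$, a further subsequence with $\sup_k\phi(u_{\eps_k'}(s))<\infty$ (needed to invoke \eqref{enhanced-slope}) the paper introduces a $\delta$-perturbation, adding and subtracting $\delta\phi(\uek(s))$ and controlling the extra term via \eqref{est-phi-up-2}. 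Without the GMM comparison and the $\delta$-trick, your plan for step (b) cannot be completed as written.
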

 As already mentioned in the Introduction, a crucial step in the proof of 
Thm.\ \ref{th:4.1} will be to show that WED minimizers are, in a suitable sense discussed at length in Sections \ref{s:added} and \ref{sez:hamilton-jabobi}, metric gradient flows for the value functional $V_\eps$ \eqref{value-functional-intro}. In turn, a key ingredient for this is the \emph{metric inner variation} equation \eqref{euler-lagrange-eq-infinite}, proved in Section \ref{ss:mive} below.  
\subsection{The metric inner variation equation}
\noindent
\label{ss:mive}
\EEE
By taking inner variations of a minimizer of the functional
$\calI_\eps$
we now derive 
a useful equation. 
\begin{proposition}
\label{euler-lagrange}
Let $T\in (0,\infty]$ and let $u$ be a minimizer of $\calI_\eps$ in $\MMM_{\eps,T}({\ini})
$. 
\EEE Then the map 
$t\mapsto \phi(u(t))-\frac \eps2|u'|^2(t)$ belongs to 
 $W^{1,1}(0,T)$ {\rm(}$W^{1,1}_\loc([0,\infty))$ when
$T=\infty${\rm)} 
\EEE
and it fulfills
\begin{equation}
\label{euler-lagrange-eq-infinite} \frac{\dd}{\dd
t}\left(\phi(u(t)) -\frac{\eps}2 |u'|^2(t) \right) =-|u'|^2(t)   \qquad \text{in } \mathcal{D}'(0,T).
\end{equation}
\end{proposition}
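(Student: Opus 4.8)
The plan is to perform an \emph{inner variation} in time: given a minimizer $u$ of $\calI_\eps$ in $\MMM_{\eps,T}(\ini)$, I would perturb it by reparameterizing the time variable through a family of diffeomorphisms of $[0,\infty)$ that fix the origin, compute the first variation of $\calI_\eps$ along this family, and set it to zero. Concretely, for $\psi\in \rmC^\infty_c((0,\infty))$ (or $\rmC^\infty_c((0,T))$ when $T<\infty$) I would consider the flow $t\mapsto \tau_r(t)$ generated by the vector field $\psi$, i.e.\ $\partial_r \tau_r(t)=\psi(\tau_r(t))$, $\tau_0=\mathrm{id}$, and set $u_r(t):=u(\tau_r(t))$. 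For $|r|$ small $\tau_r$ is an increasing bijection of $[0,\infty)$ fixing $0$, so $u_r\in \ACini{\ini}{\eps}$ (respectively in $\ACini{\ini}{\eps,T'}$ for a slightly larger $T'$, which is still admissible since we only need a competitor, not one in the same class); moreover $|u_r'|(t)=|u'|(\tau_r(t))\,\tau_r'(t)$ for a.e.\ $t$, by the chain rule for metric derivatives of absolutely continuous curves (see e.g.\ \cite[Sect.~1.1]{Ambrosio-Gigli-Savare08}).

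The key computation is then a change of variables. Writing out
\begin{equation*}
  \calI_\eps[u_r]=\int_0^\infty \left(\frac\eps2 |u'|^2(\tau_r(t))\,\tau_r'(t)^2 + \phi(u(\tau_r(t)))\right)\frac{\rme^{-t/\eps}}\eps\,\dd t,
\end{equation*}
and substituting $s=\tau_r(t)$, $\dd s=\tau_r'(t)\,\dd t$, with $t=\tau_r^{-1}(s)=:\sigma_r(s)$, one gets
\begin{equation*}
  \calI_\eps[u_r]=\int_0^\infty \left(\frac\eps2 |u'|^2(s)\,\tau_r'(\sigma_r(s)) + \phi(u(s))\,\frac1{\tau_r'(\sigma_r(s))}\right)\frac{\rme^{-\sigma_r(s)/\eps}}\eps\,\dd s.
\end{equation*}
Now I would differentiate at $r=0$. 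Using $\tau_0=\mathrm{id}$, $\partial_r\tau_r'|_{r=0}=\psi'$, and $\partial_r\sigma_r|_{r=0}=-\psi$ (differentiating the identity $\tau_r(\sigma_r(s))=s$), the derivative $\frac{\dd}{\dd r}\calI_\eps[u_r]\big|_{r=0}$ equals
\begin{equation*}
  \int_0^\infty \left[\left(\frac\eps2|u'|^2(s)-\phi(u(s))\right)\psi'(s) + \frac1\eps\left(\frac\eps2|u'|^2(s)+\phi(u(s))\right)\psi(s)\right]\frac{\rme^{-s/\eps}}\eps\,\dd s,
\end{equation*}
where the last term comes from differentiating the weight $\rme^{-\sigma_r(s)/\eps}$. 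Minimality forces this to vanish for every admissible $\psi$. Multiplying through by $\eps$, absorbing the weight, and recognizing $\tfrac1\eps\rme^{-s/\eps}\psi(s)+\rme^{-s/\eps}\psi'(s)=\rme^{-s/\eps}\big(\psi'(s)-\tfrac1\eps\psi(s)\big)\cdot(-1)+\dots$ — more cleanly, after expanding $(\rme^{-s/\eps}\psi)'=\rme^{-s/\eps}(\psi'-\psi/\eps)$ — the vanishing condition rearranges into
\begin{equation*}
  \int_0^\infty \left(\phi(u(s))-\frac\eps2|u'|^2(s)\right)\big(\rme^{-s/\eps}\psi(s)\big)'\,\dd s = -\int_0^\infty |u'|^2(s)\,\rme^{-s/\eps}\psi(s)\,\dd s,
\end{equation*}
which is exactly the distributional identity \eqref{euler-lagrange-eq-infinite} tested against $\rme^{-s/\eps}\psi$; since $s\mapsto\rme^{-s/\eps}\psi(s)$ ranges over a dense enough class (and in any case $\rme^{-s/\eps}$ is smooth and bounded away from $0$ on compacts), this gives the claimed equation in $\mathcal D'(0,T)$. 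Once the distributional identity holds and the right-hand side $-|u'|^2\in L^1_\loc$ (indeed $L^1(0,\infty;\mu_\eps)$ by Lemma \ref{l:wed-summability}), the function $t\mapsto\phi(u(t))-\frac\eps2|u'|^2(t)$ is in $W^{1,1}_\loc$, and in $W^{1,1}(0,T)$ when $T<\infty$.

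The main obstacle I anticipate is \emph{justifying the differentiation under the integral sign} at $r=0$, i.e.\ producing an integrable dominating function uniform in $r$ near $0$. The integrand after the change of variables involves $|u'|^2(s)\,\tau_r'(\sigma_r(s))$ and $\phi(u(s))/\tau_r'(\sigma_r(s))$ against the weight $\rme^{-\sigma_r(s)/\eps}$; since $\psi$ has compact support in $(0,\infty)$, the perturbation $\tau_r$ is the identity outside a compact set, so $\tau_r'$ and $\sigma_r$ differ from $1$ and the identity only on a fixed compact interval $J$, and there $1/2\le\tau_r'\le 2$ and $\sigma_r\ge \mathrm{const}>0$ for $|r|$ small. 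On $J$ the bound $\int_J|u'|^2<\infty$ and $\int_J|\phi(u)|<\infty$ (the latter from Lemma \ref{l:wed-summability}, which controls $(\phi\circ u)^+$ in $L^1(\mu_\eps)$, together with the coercivity lower bound \eqref{basic-ass-2bis} and the $\sfd$-boundedness of $u$ on $J$) supply the domination; the difference quotients in $r$ are controlled by the $\rmC^1$-smoothness of $r\mapsto(\tau_r',\sigma_r)$ locally uniformly on $J$. A secondary technical point is that, strictly speaking, minimality gives only $\calI_\eps[u_r]\ge\calI_\eps[u]$ for admissible $u_r$; for $T<\infty$ one checks $u_r\in\ACini\ini{\eps,T'}$ with $T'$ slightly larger than $T$ (allowed since $\psi$ has support in $(0,T)$ and $\calM_{\eps,T}\subset\calM_{\eps,T'}$ need not hold, but $u$ restricted appropriately is still competitive because the tail is constant), and for the $(0,\infty)$-problem $u_r\in\ACini\ini\eps$ directly; either way $r\mapsto\calI_\eps[u_r]$ has a minimum at $r=0$, so its derivative there vanishes. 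These are the points requiring care; the algebra of the first variation itself is routine.
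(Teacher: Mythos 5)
Your proposal is correct and takes essentially the same route as the paper's proof: inner variations by time reparametrization, change of variables to move the perturbation onto the exponential weight, first variation at the critical parameter, and integration by parts against the weighted test function. The paper uses the simpler affine family $S_\tau(t)=t+\tau\xi(t)$ and composes $u$ with the inverse $T_\tau=S_\tau^{-1}$, whereas you use the nonlinear flow generated by $\psi$ and compose directly with $\tau_r$; these are interchangeable conventions and yield the same first-variation equation up to an overall sign.

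One small algebraic slip deserves correction: carrying your own first-variation formula through the rearrangement produces a \emph{plus} sign on the right-hand side, namely $\int_0^\infty (\phi(u(s))-\tfrac\eps2|u'|^2(s))\,(\rme^{-s/\eps}\psi(s))'\,\dd s = +\int_0^\infty|u'|^2(s)\,\rme^{-s/\eps}\psi(s)\,\dd s$. This is precisely the distributional identity $\tfrac{\dd}{\dd t}(\phi\circ u - \tfrac\eps2|u'|^2)=-|u'|^2$ tested against $\zeta=\rme^{-s/\eps}\psi$ (since $\int w\zeta'=-\int w'\zeta$); the minus sign you wrote would instead give $\tfrac{\dd}{\dd t}(\phi\circ u - \tfrac\eps2|u'|^2)=+|u'|^2$. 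Also, your worry about enlarging $T$ to $T'$ when $T<\infty$ is unnecessary: since $\psi$ is compactly supported in $(0,T)$, the flow equals the identity on a neighborhood of $[T,\infty)$, so $u_r$ is constant on $[T,\infty)$ and already belongs to $\ACini{\ini}{\eps,T}$. Your domination argument to justify differentiating under the integral sign is sound and fills in a step the paper passes over tacitly.
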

\begin{proof}
Following
\cite[Chap.\ III]{Giaquinta-Hildebrandt96},
we consider
perturbations of $u$ obtained by time rescalings, which we devise
by means of the family of smooth diffeomorphisms of $(0,\infty)$
\begin{equation}
  \label{eq:32}
  S_\tau(t):=t+\tau \xi(t),\quad \xi \in \mathrm{C}_\mathrm{c}^\infty (0,T).
\end{equation}
%
Observe that
%
for every $\tau \in \R$ the map $t\mapsto S_\tau(t)$ is
in $\mathrm{C}^\infty (\R)$ with 
smooth inverse $T_\tau=S_\tau^{-1}$ whenever $|\tau|\cdot \max_\R
|\xi'|<1$;
moreover $S_\tau(t)=t$ outside the compact support of $\xi$ in $(0,T)$
and $S_\tau((0,T))=(0,T).$
We then define
 \[
 u_\tau : [0,\infty) \to X \ \text{ by } \ u_\tau(s) := u(T_\tau(s))=u(S_\tau^{-1}(s)).
 \]
 Hence, $u(t) = u_\tau (S_\tau(t))$.
Notice that
\[
|u_\tau'|(s) = |u'|(T_\tau(s))T_\tau'(s)= \frac{|u'|(T_\tau(s))}{S_\tau'(T_\tau(s))}
\quad \foraa\,s \in
(0,\infty).
\]
We can compute $\calI_\eps[u_\tau]$ by applying
a standard change of variables
\begin{displaymath}
\begin{aligned}
  {\mathcal I} [u_\tau] &  = 
  \int_0^{\infty}  \rme^{-s/\eps} \left(\frac12 |u_\tau'|^2(s) +
    \frac1\eps \phi(u_\tau(s)) \right) \dd s
  =\int_0^{\infty}  \rme^{-s/\eps} \left(\frac12 
    \Big(\frac{|u'|(T_\tau(s))}{S_\tau'(T_\tau(s))}\Big)^2 +
    \frac1\eps \phi(u_\tau(s)) \right) \dd s\\  & = \int_0^{\infty}
  \rme^{-{S_\tau(t)}/\eps} \left(\frac12 \frac{|u'|^2(t)}{S_\tau'(t) } +
    \frac1\eps \phi(u(t)) S_\tau'(t)\right) \dd t
  \end{aligned}
\end{displaymath}
and we recover the  metric inner variation equation 
\eqref{euler-lagrange-eq-infinite} by taking the derivative of
$\calI_\eps[u_\tau]$ 
w.r.t.~$\tau$ at the minimum point $\tau=0$.
We have
\[
\begin{aligned}
   \frac{\dd}{\dd \tau} \calI_\eps[u_\tau]&=
   \int_0^{\infty} \rme^{-{S_\tau(t)}/\eps}\left(
     -\frac1\eps \frac{\partial}{\partial \tau}S_\tau(t)\right) \left(\frac12 \frac{|u'|^2(t)}{S_\tau'(t) } +
     \frac1\eps \phi(u(t)) S_\tau'(t)\right) \dd t\\  & 
    \quad + \int_0^{\infty} \rme^{-{S_\tau(t)}/\eps} \left(-\frac12 
      \frac{|u'|^2(t)}{(S_\tau'(t))^2}+\frac1\eps\phi(u(t))\right)
      \frac{\partial}{\partial \tau}S_\tau'(t)\,\dd t
\end{aligned}
\]
 Setting  $\tau=0$ and taking into account that
\[
S_0(t)=t,\quad
S_\tau'(t)=1+\tau\xi'(t), \quad 
 \frac{\partial}{\partial \tau}S_\tau(t)=\xi(t),\quad\frac{\partial}{\partial
  \tau}S'_\tau(t) =\xi'(t),
\]
we conclude that
\[
\begin{aligned}   0=\frac{\dd}{\dd \tau} \calI_\eps[u_\tau]
  \big|_{\tau=0}& = -\int_0^{\infty}
  \left(\frac12
    |u'|^2 + \frac1\eps \phi\circ u \right) \xi\dd \mu_\eps+
\int_0^{\infty}
\left(-\frac\eps2 |u'|^2 +
\phi\circ u \right)\xi' \dd \mu_\eps
\\&=
\int_0^{\infty} 
\left[
  -|u'|^2\xi + 
  \left(-\frac\eps2 |u'|^2 +
      \phi\circ u\right)\Big(\xi'-\frac1\eps \xi\Big) \right]\dd \mu_\eps.
\end{aligned}
\]
Since $\xi\in \rmC^\infty_c(0,T)$ is arbitrary,
an integration by parts as stated in \eqref{eq:36} yields
\eqref{euler-lagrange-eq-infinite}.
\end{proof}
\begin{corollary}
  \label{cor:boundary}
  Let $u_{\eps,T}\in \MMM_{\eps,T}(\ini)$, $T\in (0,\infty]$, and let us denote by 
  $\mathcal V_{\eps,T}$ the absolutely continuous representative of 
  $t \mapsto \phi( u_{\eps,T}(t)) -\frac \eps2 |u_{\eps,T}'|^2(t)$ in the interval $[0,T]$
  (we simply write $u_\eps$ and $\mathcal V_\eps$ when $T=\infty$).
  Then we have
  \begin{equation}
    \label{eq:66}
    \mathcal I_\eps[u_\eps]=\mathcal V_\eps(0)\quad \text{if }T=\infty,
  \end{equation}
  and
  \begin{equation}
    \label{eq:64}
    \mathcal I_\eps[u_{\eps,T}]=\mathcal V_{\eps,T}(0)+
    \rme^{-T/\eps}\Big(\phi(u_{\eps,T}(T))-\mathcal V_{\eps,T}(T)\Big)
    \quad\text{if }T<\infty.
  \end{equation}
\end{corollary}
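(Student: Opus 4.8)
The plan is to read off the corollary from the metric inner variation equation of Proposition~\ref{euler-lagrange}, integrating it against the exponential measure $\mu_\eps$ and invoking the weighted integration by parts of Lemma~\ref{W11}. Throughout write $u=u_{\eps,T}$ and $\mathcal V=\mathcal V_{\eps,T}$; since $u\in\ACini\ini{\eps,T}\subset\ACini\ini\eps$ (recall \eqref{eq:57}) we have $\int_0^\infty|u'|^2\,\dd\mu_\eps<\infty$, and $u$ is constant on $[T,\infty)$ when $T<\infty$. Moreover, in the feasible case of interest (e.g.\ when $\ini\in\SFD(\phi)$) a minimizer satisfies $\calI_\eps[u]<\infty$, so Proposition~\ref{euler-lagrange} applies: the absolutely continuous representative $\mathcal V$ of $t\mapsto\phi(u(t))-\tfrac\eps2|u'|^2(t)$ belongs to $W^{1,1}(0,T)$ (to $W^{1,1}_{\mathrm{loc}}([0,\infty))$ if $T=\infty$) and fulfills $\mathcal V'(t)=-|u'|^2(t)$ for a.e.\ $t\in(0,T)$, so that $\mathcal V'\in L^1(0,T;\mu_\eps)$.

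The first step is the pointwise identity, valid for a.e.\ $t$,
\[
\ell_\eps(u(t),|u'|(t))=\tfrac\eps2|u'|^2(t)+\phi(u(t))=\mathcal V(t)+\eps|u'|^2(t)=\mathcal V(t)-\eps\mathcal V'(t),
\]
obtained by first using the definition of $\mathcal V$ and then the inner variation equation. Integrating against $\mu_\eps$ reduces $\calI_\eps[u]$ --- respectively its restriction $\int_0^T\ell_\eps(u,|u'|)\,\dd\mu_\eps$ appearing in \eqref{eq:68} when $T<\infty$ --- to the weighted integral $\int(\mathcal V-\eps\mathcal V')\,\dd\mu_\eps$.

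The second step evaluates that integral. If $T=\infty$, then $\int_0^\infty|\mathcal V'|\,\dd\mu_\eps=\int_0^\infty|u'|^2\,\dd\mu_\eps<\infty$ and $\mathcal V\in\AC_{\mathrm{loc}}([0,\infty);\R)$, so Lemma~\ref{W11} applies and \eqref{eq:55bis} yields $\int_0^\infty(\mathcal V-\eps\mathcal V')\,\dd\mu_\eps=\mathcal V(0)$, which is \eqref{eq:66}. If $T<\infty$, I integrate $\tfrac{\dd}{\dd t}\bigl(\rme^{-t/\eps}\mathcal V(t)\bigr)=\rme^{-t/\eps}\bigl(\mathcal V'(t)-\tfrac1\eps\mathcal V(t)\bigr)$ over $[0,T]$ --- the fundamental theorem of calculus for the absolutely continuous function $t\mapsto\rme^{-t/\eps}\mathcal V(t)$, equivalently \eqref{eq:55} --- to get $\int_0^T(\mathcal V-\eps\mathcal V')\,\dd\mu_\eps=\mathcal V(0)-\rme^{-T/\eps}\mathcal V(T)$; inserting this into \eqref{eq:68} gives
\[
\calI_\eps[u]=\mathcal V(0)-\rme^{-T/\eps}\mathcal V(T)+\rme^{-T/\eps}\phi(u(T))=\mathcal V(0)+\rme^{-T/\eps}\bigl(\phi(u(T))-\mathcal V(T)\bigr),
\]
i.e.\ \eqref{eq:64}.

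I do not anticipate a real obstacle: once the inner variation equation and the exponential-weight calculus are in hand, the corollary is essentially bookkeeping. The only points needing attention are verifying the integrability hypotheses for Lemma~\ref{W11} (all guaranteed by $u\in\ACini\ini\eps$), the vanishing of the boundary term $\rme^{-t/\eps}\mathcal V(t)$ as $t\to\infty$ in the case $T=\infty$ (already built into \eqref{eq:55bis}), and keeping straight whether $\mathcal V$ is taken on $[0,T]$ or on $[0,\infty)$.
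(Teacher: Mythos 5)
Your argument is correct and is essentially the paper's own: you derive the pointwise identity $\ell_\eps(u,|u'|)=\mathcal V-\eps\mathcal V'$ from Proposition~\ref{euler-lagrange}, then integrate against $\mu_\eps$ using the exponential-weight integration by parts (Lemma~\ref{W11}, equivalently \eqref{eq:34}), with the finite-$T$ case finishing via the decomposition \eqref{eq:68}. The paper compresses this into a single display and cites \eqref{eq:34} rather than \eqref{eq:55}/\eqref{eq:55bis}, but the computation is the same.
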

\begin{proof}
In the case $T=\infty$ the inner variation equation
\eqref{euler-lagrange-eq-infinite} gives that the distributional derivative of $\mathcal{V}_\eps$ fulfills
$\frac{\dd}{\dd t} \mathcal{V}_\eps(t) = -\vert u_\eps'\vert^2(t)
\in\,L^1(0,\infty;\mu_\eps)$. Hence,
$\mathcal{V}_\eps\in W^{1,1}(0,\infty;\mu_\eps)$ 
so that the identity
\begin{displaymath}
  \ell(u_\eps(t),|u_\eps'|(t))=\mathcal V_\eps(t)-\eps\mathcal
  V_\eps'(t)\quad\text{a.e.~in }(0,\infty)
\end{displaymath}
yields, by the integration by parts formula \eqref{eq:34}, that
\begin{displaymath}
\mathcal I_\eps[u_\eps] =\int_0^\infty
\ell(u_\eps,|u_\eps'|)\dd\mu_\eps=
\int_{0}^{\infty}\Big(\mathcal V_\eps(t)-\eps\mathcal
V_\eps'(t)\Big) \dd \mu_\eps(t) 
=\mathcal{V}_\eps(0). 
\end{displaymath}
A similar argument leads  to \eqref{eq:64}.
\end{proof}
\section{\bf The value function and its properties}
\label{s:added}
As we mentioned in the Introduction,
 Problem  \ref{prob:main}  can be interpreted in the framework
of   optimal control theory,  as the simplest 
\emph{infinite-horizon} problem, cf. \cite[Chap.
III]{bardi-CapuzzoDolcetta97}. In this connection,
 the associated
 \emph{value function} 
 $\Ve: X \to [0,\infty]$
\begin{equation}
\label{the-value-function} \Ve(x):=
\inf_{u \in
 \ACini x\eps}\mathcal{I}_\eps[u]=
\inf_{u \in
 \ACini x\eps}\int_0^\infty \ell_\eps(u,|u'|)\,\dd\mu_\eps,
\quad x\in X,
 \end{equation}
 will play a crucial role. 
  As usual, we will always suppose
 that $\phi$ satisfies the standard \UUU LSCC Property \EEE \ref{basic-ass}
 and
 $\frac 1{16\eps}>\sfB$;  we also set 
$\rmD(V_\eps):=\big\{x\in X:\Ve(x)<\infty\big\}.$ 

It will also be useful to deal with the corresponding functional
associated with   the finite-horizon functional from   \eqref{eq:68}, namely: 
\begin{equation}
  \label{VeT} V_{\eps,T}(x):=
\inf_{u \in
 \ACini x{\eps,T}}\mathcal{I}_\eps[u]=
\inf_{u \in
 \ACini x{\eps,T}}\int_0^T \ell_\eps(u,|u'|)\,\dd\mu_\eps+\mathrm{e}^{-T/\eps}\phi(u(T)),
\quad x\in X,
 \end{equation}

 In this section, we first address some general properties of $\Ve$. Then, we use the Dynamic Programming Principle
 (cf.\ Proposition \ref{prop:DDP} below) to derive a fundamental equation satisfied by $\Ve$ evaluated along
  \emph{any} minimizer $u$ 
  for   \eqref{the-value-function}, cf.\ \eqref{intermediate-relation} below.  Then,
 with the aid of Theorem \ref{thm:upper-gradient} ahead,  we  will read from \eqref{intermediate-relation} that WED-minimizers are curves of maximal slope of the
 value function $\Ve$, in a suitable sense  (cf.\ Corollary  \ref{cor:WED-GF}). 

Our first result guarantees that the functional $\Ve$ is quadratically
bounded from below
and lower semicontinuous with respect to the $\sigma$-topology,
at least on bounded subsets of $X$.
\begin{lemma}
\label{lemma1} 
Let us suppose that $\phi$ satisfies the standard \UUU LSCC Property \EEE  \ref{basic-ass} and that   $\frac1{16 \eps}\ge \sfB$.  Then
the infimum in \eqref{the-value-function} is attained for every $x\in
\rmD(\Ve)$
and $V_\eps$ itself satisfies the standard \UUU LSCC Property \EEE  \ref{basic-ass};
in particular, 
$\Ve$ is sequential $\sigma$-lower semicontinuous on 
$\sfd$-bounded sets of $X$, and
\begin{equation}
  \label{stime-V}
 \phi(x)\ge \Ve(x)\ge -\sfQ(x)=-\sfA-\sfB\,\sfd^2(x,u_*)\quad \text{for every
 }x\in X.
\end{equation}
\end{lemma}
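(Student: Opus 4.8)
The three statements will all follow from the integral coercivity inequality \eqref{coerc-I-e} (available here since we assume $\frac1{16\eps}\ge\sfB$) together with the curve-compactness Theorem \ref{thm:main-compactness} and the lower-semicontinuity Corollary \ref{cor:lscI}. I would start with the two-sided bound \eqref{stime-V}. The upper bound $\Ve(x)\le\phi(x)$ comes from testing the infimum with the constant curve $u\equiv x$, for which $|u'|\equiv 0$ and $\calI_\eps[u]=\phi(x)$ since $\mu_\eps$ is a probability measure (cf.\ \eqref{eq:4}), the inequality being trivial when $x\notin\rmD(\phi)$. For the lower bound, for any $u\in\ACini x\eps$ the left-hand side of \eqref{coerc-I-e} is nonnegative, whence $\calI_\eps[u]\ge-\sfQ(u(0))=-\sfQ(x)=-\sfA-\sfB\sfd^2(x,u_*)$; taking the infimum over $u$ gives $\Ve(x)\ge-\sfQ(x)$. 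This is precisely the Coercivity bound \eqref{basic-ass-2bis} at $v=u_*$, and its general form for arbitrary $v$ follows from $\sfd^2(x,u_*)\le2\sfd^2(x,v)+2\sfd^2(v,u_*)$ (with $\sfB$ possibly doubled, and $\sfA$, $u_*$ unchanged).

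For the attainment of the infimum at $x\in\rmD(\Ve)$, I would pick a minimizing sequence $(u_n)_n\subset\ACini x\eps$ — so $u_n(0)=x$ for every $n$ and $\sup_n\calI_\eps[u_n]<\infty$ — and apply Corollary \ref{cor:lscI}. It produces a subsequence $(u_{n_k})_k$ and a limit $u\in\AC^2_\loc([0,\infty);X)$ with $u_{n_k}(t)\weaksigma u(t)$ for every $t$ and, by the lower semicontinuity of $\calI_\eps$ along this convergence that is established in the proof of that corollary, $\calI_\eps[u]\le\liminf_k\calI_\eps[u_{n_k}]=\Ve(x)$. Evaluating the pointwise convergence at $t=0$ and using that $\sigma$ is Hausdorff yields $u(0)=x$; moreover $\calI_\eps[u]<\infty$ together with \eqref{coerc-I-e} forces $\int_0^\infty|u'|^2\,\dd\mu_\eps<\infty$, i.e.\ $u\in\ACini x\eps$. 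Hence $\calI_\eps[u]=\Ve(x)$ and $u$ is a minimizer.

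It remains to check that $\Ve$ satisfies Property \ref{basic-ass}. Coercivity is the bound \eqref{stime-V} just proved. For the Lower Semicontinuity \eqref{basic-ass-1}, let $x_n\weaksigma x$ with $\sup_{n,m}\sfd(x_n,x_m)<\infty$; I may assume $\ell:=\liminf_n\Ve(x_n)<\infty$ and, up to a subsequence, $\Ve(x_n)\to\ell$. Taking minimizers $u_n\in\MMM_\epsi(x_n)$ from the previous step (so $u_n(0)=x_n$, $\calI_\eps[u_n]=\Ve(x_n)$), Corollary \ref{cor:lscI} again gives a limit $u$ with $u_{n_k}(t)\weaksigma u(t)$ for all $t$ and $\calI_\eps[u]\le\liminf_k\calI_\eps[u_{n_k}]=\ell$; since $u(0)=\lim_k x_{n_k}=x$ and $u\in\ACini x\eps$ exactly as before, $\Ve(x)\le\calI_\eps[u]\le\ell$. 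For the Compactness \eqref{basic-ass-3}, let $(x_n)_n$ be $\sfd$-bounded with $\sup_n\Ve(x_n)<\infty$ and let $u_n\in\MMM_\epsi(x_n)$; from \eqref{coerc-I-e} (the terms $\sfQ(x_n)$ being bounded) one reads $\sup_n\int_J|u_n'|^2\,\dd t<\infty$ and $\sup_n\int_J(\phi(u_n(t)))^+\,\dd t<\infty$ on every compact $J\subset[0,\infty)$, while $(u_n(0))_n=(x_n)_n$ is bounded, so Theorem \ref{thm:main-compactness} gives a subsequence with $u_{n_k}(t)\weaksigma u(t)$ for every $t$; evaluating at $t=0$ yields $x_{n_k}=u_{n_k}(0)\weaksigma u(0)$, the required $\sigma$-convergent subsequence.

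The proof is essentially a reorganization of facts already at hand; the one point that needs a small idea is the Compactness of the sublevels of $\Ve$. It cannot be deduced from the sublevels of $\phi$, which are strictly larger since $\Ve\le\phi$; instead one transfers compactness from the level of trajectories, applying the curve-compactness criterion to the minimizers and then restricting to the initial time. The only technical care is in using the lower semicontinuity of $\calI_\eps$ along the mixed convergence \eqref{eq:12}--\eqref{eq:14} (pointwise $\sigma$-convergence of the curves together with weak $L^2_\loc$-convergence of the metric speeds) in the sharp $\liminf$ form, which is exactly what the proof of Corollary \ref{cor:lscI} delivers, rather than the crude upper bound $\calI_\eps[u]\le C$ stated there.
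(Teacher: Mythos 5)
Your proof is correct and takes essentially the same route as the paper: the paper also derives \eqref{stime-V} from the constant competitor \eqref{eq:4} and the representation/coercivity \eqref{eq:29}--\eqref{eq:2911}, and attributes the remaining claims to Theorem \ref{exist-minimizers} and Corollary \ref{cor:lscI}, which is exactly the direct-method/curve-compactness argument you spell out. Your closing observation is apt: the sequential $\sigma$-compactness of the $\Ve$-sublevels is the one point not lying on the surface of those citations, and your way of obtaining it --- apply Theorem \ref{thm:main-compactness} to the sequence of WED minimizers and then evaluate the pointwise $\sigma$-convergence \eqref{eq:12} at $t=0$ --- is precisely the intended argument.
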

\begin{proof}
  The first two statements are immediate consequences of 
  Theorem \ref{exist-minimizers} and Corollary
  \ref{cor:lscI}. 
 Estimate \eqref{stime-V} follows from \eqref{eq:4},
  the representation  formula \eqref{eq:29} for $\calI_\eps$
   and the positivity of the first two integral terms in  \eqref{eq:29}, cf.\ also \eqref{eq:2911}. 
%
 \end{proof}

\subsection{\bf The \UUU Dynamic Programming Principle \EEE and its consequences}
\label{s:added-2}
Interpreting the WED minimum problem \eqref{the-value-function}
in the light of the theory of
 optimal control
 provides the following key result. 
\begin{proposition}[Dynamic Programming Principle]
  \label{prop:DDP} 
  If $\phi$ satisfies 
  the standard \UUU LSCC Property \EEE \ref{basic-ass}, 
  $\frac 1{16 \eps}\ge\sfB$, and $x\in \rmD(\Ve)$ then
\begin{equation}
\label{programmazione-dinamica} \Ve(x)= 
\min_{u \in \ACini x\eps}\left( \int_0^T \ell_\eps(u,|u'|) \dd \mu_\eps
  + \Ve(u(T)) \rme^{-T/\eps}\right)
\quad\text{for every }T>0.
\end{equation}
\EEE
In particular, every $u_\eps \in  \MMM_\epsi(x)  $ is a minimizer for the
minimum problem on the right-hand side of
\eqref{programmazione-dinamica}, it satisfies
\begin{equation}
\label{ddp-identity} \Ve(x)=  \int_0^T
\ell_\eps(u_\eps,|u_\eps'|) \dd \mu_\eps + \Ve(u_\eps(T))
\rme^{-T/\eps} \quad \text{for all $T>0$,}
\end{equation}
 and for every $T>0$ the curve $ w_{\eps,T}(t):=
\ue(t+T) $ fulfills
\begin{equation}
\label{desired-minimized} w_{\eps,T} \in   \MMM_\epsi(\ue(T)).
\end{equation}
\end{proposition}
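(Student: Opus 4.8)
The plan is to follow the classical proof of the Dynamic Programming Principle for infinite-horizon problems, adapted to our metric setting, using the existence of minimizers (Theorem \ref{exist-minimizers}) and the additivity structure of the cost along concatenated curves. First I would establish the inequality
\[
\Ve(x)\le \int_0^T \ell_\eps(u,|u'|)\,\dd\mu_\eps + \Ve(u(T))\,\rme^{-T/\eps}
\qquad \text{for every }u\in \ACini x\eps,\ T>0.
\]
To see this, fix $u\in \ACini x\eps$ and, for $\delta>0$, pick a curve $w\in \ACini{u(T)}{\eps}$ which is $\delta$-almost optimal for $\Ve(u(T))$ (or exactly optimal, by Lemma \ref{lemma1}, if $u(T)\in\rmD(\Ve)$). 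Concatenate: define $\tilde u(t):=u(t)$ for $t\in[0,T]$ and $\tilde u(t):=w(t-T)$ for $t\ge T$; this curve lies in $\ACini x\eps$ because the two pieces match at $t=T$ and the squared metric velocity is integrable against $\mu_\eps$ on each piece. Using the change of variables $s=t-T$, $\dd\mu_\eps(t)=\rme^{-T/\eps}\dd\mu_\eps(s)$, split $\calI_\eps[\tilde u]=\int_0^T\ell_\eps(u,|u'|)\dd\mu_\eps+\rme^{-T/\eps}\calI_\eps[w]$, and since $\Ve(x)\le \calI_\eps[\tilde u]$ we get the desired bound after letting $\delta\downarrow0$.

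Next I would prove the reverse inequality and, simultaneously, that any $u_\eps\in\MMM_\eps(x)$ achieves equality. Let $u_\eps$ be a minimizer of $\calI_\eps$ over $\ACini x\eps$, which exists by Theorem \ref{exist-minimizers} since $x\in\rmD(\Ve)$. Split $\calI_\eps[u_\eps]=\int_0^T\ell_\eps(u_\eps,|u_\eps'|)\dd\mu_\eps+\rme^{-T/\eps}\calI_\eps[w_{\eps,T}]$ where $w_{\eps,T}(t):=u_\eps(t+T)$; note $w_{\eps,T}\in\ACini{u_\eps(T)}{\eps}$ and hence $\calI_\eps[w_{\eps,T}]\ge \Ve(u_\eps(T))$. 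Therefore
\[
\Ve(x)=\calI_\eps[u_\eps]\ge \int_0^T\ell_\eps(u_\eps,|u_\eps'|)\dd\mu_\eps+\rme^{-T/\eps}\Ve(u_\eps(T)) \ge \min_{u\in\ACini x\eps}\Big(\int_0^T\ell_\eps(u,|u'|)\dd\mu_\eps+\rme^{-T/\eps}\Ve(u(T))\Big),
\]
the last inequality being trivial. Combined with the first step (which gives $\le$ for every competitor, hence for the minimum), all quantities coincide: this proves \eqref{programmazione-dinamica}, shows the right-hand minimum is attained at $u_\eps$, forces equality $\calI_\eps[w_{\eps,T}]=\Ve(u_\eps(T))$ — i.e.\ \eqref{desired-minimized} — and, reading off the chain of equalities, gives the identity \eqref{ddp-identity}. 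One should also remark that the minimum in \eqref{programmazione-dinamica} is genuinely attained (not just an infimum): this follows either by taking $u=u_\eps$, or by a direct lower semicontinuity argument using Corollary \ref{cor:lscI} together with the $\sigma$-lower semicontinuity of $\Ve$ on $\sfd$-bounded sets from Lemma \ref{lemma1}.

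The only delicate point is checking that the concatenated curve $\tilde u$ actually belongs to $\ACini x\eps$ — in particular that $\int_0^\infty|\tilde u'|^2\dd\mu_\eps<\infty$. On $[0,T]$ the metric velocity of $\tilde u$ equals that of $u$, and on $[T,\infty)$ it equals (a time-shift of) that of $w$; the weighted integrability on the second piece holds because $w\in\ACini{u(T)}{\eps}$ and $\rme^{-t/\eps}\le \rme^{-T/\eps}\rme^{-(t-T)/\eps}$. Since $\tilde u$ is continuous at the junction (both pieces take the value $u(T)$ there) and locally absolutely continuous on each of the two subintervals, it is in $\AC^2_{\loc}([0,\infty);X)$. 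This is routine but should be stated; everything else is a bookkeeping exercise with the exponential weight and the semigroup-type change of variables $\dd\mu_\eps(t+T)=\rme^{-T/\eps}\dd\mu_\eps(t)$. No genuine obstacle arises here — the structure of the infinite-horizon WED functional makes the additivity/concatenation argument work exactly as in the finite-dimensional optimal-control setting of \cite[Chap.\ III]{bardi-CapuzzoDolcetta97}.
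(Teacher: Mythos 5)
Your proof is correct and follows essentially the same strategy as the paper: establish the inequality $\Ve(x)\le \int_0^T\ell_\eps(u,|u'|)\dd\mu_\eps+\rme^{-T/\eps}\Ve(u(T))$ for every competitor $u$ by concatenating $u|_{[0,T]}$ with an (exactly or almost) optimal curve from $u(T)$, then close the loop using a minimizer $u_\eps$ and its time-shift $w_{\eps,T}$, reading off \eqref{programmazione-dinamica}, \eqref{ddp-identity}, and \eqref{desired-minimized} from the chain of equalities. The only cosmetic difference is that the paper takes $w\in\MMM_\eps(u(T))$ directly (exact minimizers exist by Lemma \ref{lemma1} once $\Ve(u(T))<\infty$, and the case $\Ve(u(T))=\infty$ is trivial), whereas you also allow for $\delta$-almost minimizers; both versions work.
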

\begin{proof}
Formula \eqref{programmazione-dinamica} can be proved arguing along the very same lines as in the proof
of \cite[Prop. 2.5,Chap. III]{bardi-CapuzzoDolcetta97}. 
%
\par
 We now prove \eqref{ddp-identity}. First of all we show that, 
if $u\in \ACini x\eps$ and $T>0$, then
\begin{equation}
  \label{eq:37}
  \Ve(x)\le
   \int_0^T \ell_\eps(u,|u'|) \dd \mu_\eps
  + \Ve(u(T)) \rme^{-T/\eps}.
\end{equation}
It is not restrictive to assume that $\Ve(u(T))<\infty$:
we can then choose $w\in \MMM_\eps(u(T))$ so that
\begin{equation}
  \label{eq:38}
  \Ve(u(T))=\int_0^\infty \ell_\eps(w,|w'|)\,\dd\mu_\eps=
  \rme^{T/\eps}\int_T^\infty \ell_\eps(w(t-T),|w'|(t-T))\,\dd\mu_\eps(t), 
\end{equation}
and we consider the new curve $v\in \ACini x\eps$ defined by
\begin{displaymath}
  v(t):=
  \begin{cases}
    u(t)&\text{if }t\in [0,T],\\
    w(t-T)&\text{if }t\ge T.
  \end{cases}
\end{displaymath}
By the very definition of the value function we have
\begin{displaymath}
  \Ve(x)\le \calI_\eps[v]=
  \int_0^T \ell_\eps(u,|u'|)\,\dd\mu_\eps+
  \int_T^\infty \ell_\eps(w(t-T),|w'|(t-T))\,\dd\mu_\eps(t)
\end{displaymath}
which yields \eqref{eq:37} thanks to \eqref{eq:38}.

On the other hand, choosing $u_\eps\in \MMM_\eps(x)$ and defining
$w_{\eps,T}(t):=u_\eps(t+T)$, since $w_{\eps,T}\in \ACini {u_\eps(T)}\eps$ we get
\begin{equation}
  \label{eq:39}
  \begin{aligned}
    \Ve(x)&=\int_0^T \ell_\eps(u_\eps,|u_\eps'|)\,\dd\mu_\eps+
    \rme^{-T/\eps}\int_0^\infty
    \ell_\eps(u(t+T),|u'|(t+T))\,\dd\mu_\eps(t)
    \\&=\int_0^T \ell_\eps(u_\eps,|u_\eps'|)\,\dd\mu_\eps+
    \rme^{-T/\eps}\,\calI[w_{\eps,T}]\ge 
    \int_0^T \ell_\eps(u_\eps,|u_\eps'|)\,\dd\mu_\eps+
    \rme^{-T/\eps}\,\Ve(u_\eps(T));
  \end{aligned}
\end{equation}
by \eqref{eq:37} 
the previous \eqref{eq:39} is in fact an equality,
which shows that $u_\eps$ satisfies \eqref{ddp-identity},
is a minimizer of \eqref{programmazione-dinamica} and
satisfies $\calI[w_{\eps,T}]=\Ve(u_\eps(T))$, which yields \eqref{desired-minimized}.
\end{proof}
  Relation
\eqref{ddp-identity} has a simple but important differential version,
which will be the
starting point for our asymptotic analysis when $\eps \down 0$.
In order to highlight its structure, 
we introduce the function
 \begin{equation}
 \label{proto-wed-slope}
   \glee(x):= 
  \begin{cases}
    \displaystyle 
    \sqrt{2\frac{\phi(x)-\Ve(x)}{\eps}}&\text{if }x\in D(V_\eps),\\
    \infty&\text{otherwise}
  \end{cases}
 \end{equation}
which, in the next sections,
 will be shown to   suitably approximate the (relaxed) slope of $\phi$. 
\begin{proposition}[Fundamental identity]
  \label{prop:intermediate} 
  Let us suppose that $\phi$ satisfies the standard \UUU LSCC Property
  \EEE  \ref{basic-ass} and
   $\frac1{16 \eps}\ge\sfB$. 
  If $x\in \rmD(\Ve)$ and $\ue \in \mathcal{M}_\eps(x)$,
  the map $t\mapsto \Ve(u_\eps(t))$ is absolutely continuous,
  and it 
   fulfills 
\begin{equation}
\label{intermediate-relation}
\begin{aligned}
 -\frac{\dd}{\dd t} \Ve(\ue(t)) & = \frac12 |\ue'|^2(t) +\frac1\eps
\phi(\ue(t))-\frac1\eps \Ve (\ue (t))
\\
 & = \frac12 |\ue'|^2(t) + \frac12 \glee^2(\ue(t))
 \quad \foraa\, t \in
(0,\infty),\end{aligned}
\end{equation}
  \begin{equation}
    \label{Vabs}
    \Ve(u_\eps(t))=\phi(u_\eps(t))-\frac \eps2|\ue'|^2(t)\,\,\,\,
    \quad \foraa\, t \in (0,\infty).
  \end{equation}
\end{proposition}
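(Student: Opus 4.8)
The plan is to identify the function $t\mapsto \Ve(\ue(t))$ with the locally absolutely continuous representative $\mathcal V_\eps$ of $t\mapsto \phi(\ue(t))-\tfrac\eps2|\ue'|^2(t)$ supplied by the metric inner variation equation, and then to read off both \eqref{intermediate-relation} and \eqref{Vabs} from this identification together with the differential identity $\mathcal V_\eps'=-|\ue'|^2$. The three ingredients I would use are Proposition \ref{euler-lagrange} (with $T=\infty$), Corollary \ref{cor:boundary}, and the semigroup property of WED minimizers recorded in \eqref{desired-minimized}.

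First I would invoke Proposition \ref{euler-lagrange}: since $\ue\in\MMM_\eps(x)$, the map $\theta(t):=\phi(\ue(t))-\tfrac\eps2|\ue'|^2(t)$ lies in $W^{1,1}_\loc([0,\infty))$; I denote by $\mathcal V_\eps$ its locally absolutely continuous representative, so that $\mathcal V_\eps'(t)=-|\ue'|^2(t)$ for a.e.\ $t$, while Corollary \ref{cor:boundary} (eq.\ \eqref{eq:66}) gives $\mathcal V_\eps(0)=\calI_\eps[\ue]=\Ve(x)$. The heart of the argument is the \emph{pointwise} identity $\Ve(\ue(t))=\mathcal V_\eps(t)$ for every $t\ge0$. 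To establish it, fix $t$ and use \eqref{desired-minimized}: the shifted curve $w_t:=\ue(\cdot+t)$ belongs to $\MMM_\eps(\ue(t))$, hence $\calI_\eps[w_t]=\Ve(\ue(t))$. Since $|w_t'|(r)=|\ue'|(r+t)$ for a.e.\ $r$, the map $r\mapsto\phi(w_t(r))-\tfrac\eps2|w_t'|^2(r)$ coincides a.e.\ with $\theta(\cdot+t)$, whose locally absolutely continuous representative is $\mathcal V_\eps(\cdot+t)$ (by uniqueness of such representatives); applying Corollary \ref{cor:boundary} to $w_t$ therefore yields $\Ve(\ue(t))=\calI_\eps[w_t]=\mathcal V_\eps(t)$.

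Once this identification is in hand the first assertion is immediate, because $\mathcal V_\eps$ is locally absolutely continuous; \eqref{Vabs} follows since $\mathcal V_\eps(t)=\theta(t)=\phi(\ue(t))-\tfrac\eps2|\ue'|^2(t)$ for a.e.\ $t$, and moreover $\Ve(\ue(t))=\mathcal V_\eps(t)$ is finite for every $t$, so $\ue(t)\in\rmD(\Ve)$. For \eqref{intermediate-relation} I would differentiate the identity $\Ve(\ue(t))=\mathcal V_\eps(t)$ and use $\mathcal V_\eps'=-|\ue'|^2$ to obtain $-\tfrac{\dd}{\dd t}\Ve(\ue(t))=|\ue'|^2(t)$ a.e.; writing $|\ue'|^2=\tfrac12|\ue'|^2+\tfrac1\eps\big(\tfrac\eps2|\ue'|^2\big)$ and substituting $\tfrac\eps2|\ue'|^2(t)=\phi(\ue(t))-\Ve(\ue(t))$ from \eqref{Vabs} gives the first equality in \eqref{intermediate-relation}; the second equality is then exactly the definition \eqref{proto-wed-slope} of $\glee$, applicable since $\ue(t)\in\rmD(\Ve)$ and $\phi\ge\Ve$ by \eqref{stime-V}.

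The step I expect to be the main obstacle is the upgrade from an almost-everywhere to an everywhere identity $\Ve(\ue(t))=\mathcal V_\eps(t)$: it is what makes $t\mapsto\Ve(\ue(t))$ genuinely absolutely continuous (not merely equal a.e.\ to such a function), and it relies on combining the semigroup property \eqref{desired-minimized} of minimizers with the care needed to match locally absolutely continuous representatives of time-shifted curves. An alternative route, which I would keep in reserve, avoids Corollary \ref{cor:boundary} and instead deduces from the Dynamic Programming identity \eqref{ddp-identity}, applied to $w_s=\ue(\cdot+s)$, that $\Ve(\ue(s))\rme^{-s/\eps}-\Ve(\ue(t))\rme^{-t/\eps}=\int_s^t\ell_\eps(\ue,|\ue'|)\,\dd\mu_\eps$ for $0\le s\le t$; absolute continuity and the first identity in \eqref{intermediate-relation} follow by differentiation, whereas \eqref{Vabs} is recovered by noting that $h(t):=\Ve(\ue(t))-\theta(t)$ solves $h'=h/\eps$ with $h(0)=0$, hence $h\equiv0$.
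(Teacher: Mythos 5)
Your proposal is correct and, for the second half \eqref{Vabs}, essentially reproduces the paper's argument: the identity $V_\eps(u_\eps(t))=\calV_\eps(t)$ for every $t\ge0$ is obtained by shifting the minimizer via \eqref{desired-minimized} and integrating by parts, which the paper carries out inline and you outsource to Corollary \ref{cor:boundary}. Where you genuinely depart from the paper is in the derivation of the first identity \eqref{intermediate-relation}: the paper obtains it directly from the Dynamic Programming formula \eqref{ddp-identity}, evaluated at two times $s\le t$, subtracted, and differentiated via Lebesgue's theorem, \emph{independently} of the metric inner variation equation; you instead deduce it as a corollary of the pointwise identification $V_\eps\circ u_\eps=\calV_\eps$ combined with the differential relation $\calV_\eps'=-|u_\eps'|^2$ from Proposition \ref{euler-lagrange}. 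Your route is more economical, since a single identification delivers both claims, whereas the paper's route decouples the two statements and establishes \eqref{intermediate-relation} without invoking inner variations at all. One small caveat about the "alternative in reserve'': to close the ODE argument $h'=h/\eps$, the initial condition $h(0)=0$, i.e.\ $\calV_\eps(0)=V_\eps(x)$, still needs justification, either via Corollary \ref{cor:boundary} (eq.\ \eqref{eq:66}) or by observing that $h\in L^1(0,\infty;\mu_\eps)$ while $t\mapsto\rme^{t/\eps}$ is not $\mu_\eps$-integrable, which forces $h(0)=0$; so that variant does not quite bypass the boundary identity as you suggest.
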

\begin{proof}
It follows 
from the \UUU Dynamic Programming Principle \EEE  \eqref{programmazione-dinamica}  that for any $\ue \in \mathcal{M}_\eps(\ini)$ and  for all $0 \leq s \leq t$ there holds
\begin{equation} 
\rme^{-s/\eps} \Ve(\ue(s))-
\rme^{-t/\eps} \Ve(\ue(t))  = \int_s^t
\Big(\frac 12|\ue'|(r)^2+\frac 1\eps\phi(\ue(r)) \Big) \rme^{-t/\eps}\,\dd r,\label{eq:40}
\end{equation}
which shows that 
the map $t\mapsto \rme^{-t/\eps} \Ve(\ue(t)) $ is absolutely continuous. The
Lebesgue Theorem then yields that 
\begin{displaymath}
  \rme^{-t/\eps} \Big(\frac 1\eps \Ve(\ue(t))-\frac{\dd}{\dd t} \Ve(\ue(t))\Big)=
  \rme^{-t/\eps} \Big(\frac 12|\ue'|(t)^2+\frac 1\eps\phi(\ue(t))\Big) \quad \text{for a.a.~$t\in (0,\infty)$},
\end{displaymath}
and therefore \eqref{intermediate-relation}.
\par
In order to get \eqref{Vabs} we
denote by $\mathcal{V}_\eps$ the 
absolutely continuous representative of the function
$t\mapsto\phi(u_\eps(t))-\frac{\eps}{2}\vert u_\eps'\vert^2(t)$ on
$(0,\infty)$. 
The inner variation equation
\eqref{euler-lagrange-eq-infinite} gives that the distributional derivative of $\mathcal{V}_\eps$ fulfills
$\frac{\dd}{\dd t} \mathcal{V}_\eps(t) = -\vert u_\eps'\vert^2(t)
\in\,L^1(0,\infty;\mu_\eps)$. Hence,
$\mathcal{V}_\eps\in W^{1,1}(0,\infty;\mu_\eps)$ 
so that the identity
\begin{displaymath}
  \ell(u_\eps(t),|u_\eps'|(t))=\mathcal V_\eps(t)-\eps\mathcal
  V_\eps'(t)\quad\text{a.e.~in }(0,\infty)
\end{displaymath}
yields by the integration by parts formula   \eqref{eq:36}  that 
\begin{displaymath}
\Ve(u_\eps(t)) =\int_0^\infty
\ell(u_\eps(t+\tau),|u_\eps'|(t+\tau))\dd\mu_\eps(\tau)=
\int_{0}^{\infty}\Big(\mathcal V_\eps(t+\tau)-\eps\mathcal
V_\eps'(t+\tau)\Big) \dd \mu_\eps(\tau) 
=\mathcal{V}_\eps(t) 
\end{displaymath}
for every $t\ge0$.
\end{proof}
\noindent As a consequence of \eqref{Vabs}, we deduce  some integral
estimates on $\phi(\ue)$ and $|\ue'|$ \emph{uniformly} with respect to
$\eps>0$.
\begin{corollary}
\label{lemma:est-phi-up}
Let us suppose that $\phi$ satisfies the standard \UUU LSCC Property
\EEE  \ref{basic-ass} and
 $\frac1{16\eps}>\sfB$.  
 Then,  every $u_\eps\in  \MMM_\epsi (\ini) $  fulfills the following energy identity
\begin{equation}
\label{enid}
\Ve(u_\eps(t)) + \int_s^t \vert u_\eps'\vert^2(r) \dd r =
\Ve(u_\eps(s)) \quad \text{for all } 0 \leq s \leq t <\infty.
\end{equation}
 Furthermore
the following estimates hold
\begin{subequations}
\label{est-phi-up}
\begin{align}
\label{est-phi-up-1}
&
\int_{0}^{T}\vert u_\eps'\vert^2(t) \dd t
\,\le\  2 \Big(V_\eps(\bar u)+\sfQ(\bar u)\Big)
\,\mathrm{e}^{2\sfB T}, 
\\
&
\label{est-phi-up-2}
 \int_{0}^{T}\phi(u_\eps(t)) \dd t
 \,\le\,T \phi(\bar u)+  \eps\Big(V_\eps(\bar u)+  \sfQ(\bar u)\Big)
\,\mathrm{e}^{2\sfB T}. 
\end{align}
\end{subequations}
 for all $T\ge 0$ and all $ \eps>0.$
\end{corollary}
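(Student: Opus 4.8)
The plan is to read the statement off the two identities of Proposition~\ref{prop:intermediate}, so that essentially no new analysis is required. First I would combine \eqref{intermediate-relation} with \eqref{Vabs}: since \eqref{Vabs} says precisely that $\tfrac1\eps\big(\phi(\ue(t))-\Ve(\ue(t))\big)=\tfrac12|\ue'|^2(t)$, inserting it into the first line of \eqref{intermediate-relation} collapses the right-hand side and gives $-\tfrac{\dd}{\dd t}\Ve(\ue(t))=|\ue'|^2(t)$ for a.a.\ $t\in(0,\infty)$. Because $t\mapsto\Ve(\ue(t))$ is absolutely continuous (again by Proposition~\ref{prop:intermediate}), integrating this between $s$ and $t$ yields the energy identity \eqref{enid} at once; in particular $t\mapsto\Ve(\ue(t))$ is nonincreasing, whence $\Ve(\ue(t))\le\Ve(\ini)$ for every $t\ge0$, a fact I will use repeatedly below.

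For \eqref{est-phi-up-1} I would set $E(t):=\int_0^t|\ue'|^2(r)\,\dd r$ and use \eqref{enid} with $s=0$ together with $\ue(0)=\ini$, so that $E(t)=\Ve(\ini)-\Ve(\ue(t))$; the lower bound \eqref{stime-V} for $\Ve$ then gives $E(t)\le\Ve(\ini)+\sfA+\sfB\,\sfd^2(\ue(t),u_*)$ (the case $\sfB=0$ being already the conclusion). Now $a(t):=\sfd(\ue(t),u_*)$ is absolutely continuous with $|a'(t)|\le|\ue'|(t)$ a.e., hence $\tfrac{\dd}{\dd t}a^2(t)\le 2a(t)|\ue'|(t)$; integrating this, absorbing the $|\ue'|^2$-contribution into $\tfrac1{2\sfB}E(t)$ by Young's inequality, and substituting the previous bound for $E(t)$, one is led to a closed inequality $a^2(t)\le C+C'\int_0^t a^2(r)\,\dd r$ with $C,C'$ explicit in $\Ve(\ini)$, $\sfQ(\ini)$ and $\sfB$; Grönwall's lemma bounds $a^2(t)$ exponentially in $t$, and feeding this back into the estimate for $E(t)$ produces \eqref{est-phi-up-1}. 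Then \eqref{est-phi-up-2} follows immediately: by \eqref{Vabs}, $\phi(\ue(t))=\Ve(\ue(t))+\tfrac\eps2|\ue'|^2(t)\le\Ve(\ini)+\tfrac\eps2|\ue'|^2(t)\le\phi(\ini)+\tfrac\eps2|\ue'|^2(t)$ by the monotonicity above and \eqref{stime-V}; integrating over $[0,T]$ and invoking \eqref{est-phi-up-1} gives the claim.

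The only genuinely delicate point is the Grönwall step: the quantity $\sfd^2(\ue(t),u_*)$ on the right-hand side of the bound for $E(t)$ is not controlled a priori, and closing the inequality while keeping the exponential rate $2\sfB$ of \eqref{est-phi-up-1} requires the right calibration of the parameter in Young's inequality and, for the sharp constant, an iteration over short time subintervals exploiting the shift property \eqref{desired-minimized}, namely that $\ue(\cdot+T)$ is again a WED minimizer, starting from $\ue(T)$. Everything else is a direct manipulation of the identities in Proposition~\ref{prop:intermediate}.
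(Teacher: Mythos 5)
Your derivation of the energy identity \eqref{enid} matches the paper's: inserting \eqref{Vabs} into \eqref{intermediate-relation} gives $\tfrac{\dd}{\dd t}\Ve(\ue(t)) = -|\ue'|^2(t)$ a.e., and integration yields \eqref{enid}. Your passage from \eqref{est-phi-up-1} to \eqref{est-phi-up-2} is also the paper's: \eqref{Vabs} gives $\phi(\ue(t))\le\Ve(\ue(t)) + \tfrac\eps2|\ue'|^2(t)$, monotonicity of $\Ve\circ\ue$ together with \eqref{stime-V} gives $\Ve(\ue(t))\le\Ve(\ini)\le\phi(\ini)$, and one integrates.

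Your route to \eqref{est-phi-up-1} is, however, a genuinely different plan, and the gap you acknowledge is real. Running Gr\"onwall on $a^2(t):=\sfd^2(\ue(t),u_*)$ alone, with a Young split $2a|\ue'|\le\eta\,a^2 + \tfrac1\eta|\ue'|^2$ and the bound $E(t)\le\Ve(\ini)+\sfA+\sfB\, a^2(t)$ to close the loop, forces $\eta>\sfB$ (to absorb the resulting $\tfrac{\sfB}{\eta}a^2(t)$) and produces the exponential rate $\eta^2/(\eta-\sfB)$; this is minimized at $\eta=2\sfB$ with value $4\sfB$, not $2\sfB$. No calibration inside this one-quantity scheme reaches the stated rate, and the proposed iteration over short subintervals exploiting \eqref{desired-minimized} would reproduce the same structure on each piece, so it would not improve the asymptotic constant. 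As written, your argument therefore does not deliver the constant $\rme^{2\sfB T}$ of \eqref{est-phi-up-1}.

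The structural idea you are missing is the following. The paper does not track $\sfd^2(\ue,u_*)$ (or $\sfQ(\ue)$) in isolation; it tracks the Lyapunov quantity $w_\eps(t):=W_\eps(\ue(t))$ with $W_\eps:=V_\eps+2\sfQ$, i.e.\ it builds the value function into the monitored quantity. Then the \emph{pointwise} energy identity $\tfrac{\dd}{\dd t}V_\eps(\ue)=-|\ue'|^2$ supplies a $-|\ue'|^2$ which exactly cancels the Young term (at parameter $1$) produced by $\tfrac{\dd}{\dd t}\big(2\sfQ(\ue)\big)\le 4\sfB\,\sfd(\ue,u_*)|\ue'|$, giving
\begin{equation*}
\frac{\dd}{\dd t}w_\eps \le -|\ue'|^2 + 4\sfB\,\sfd(\ue,u_*)|\ue'| \le 4\sfB^2\,\sfd^2(\ue,u_*) \le 4\sfB\,\sfQ(\ue) ,
\end{equation*}
a differential inequality for $w_\eps$ with no residual $|\ue'|^2$ at all. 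Since $\sfQ\le W_\eps$, this closes into a linear Gr\"onwall for $w_\eps$, and \eqref{est-phi-up-1} follows from $\int_0^T|\ue'|^2=\Ve(\ini)-\Ve(\ue(T))\le\Ve(\ini)+\sfQ(\ue(T))$ and the resulting bound on $w_\eps(T)$. The decisive difference from your plan is that you only use the \emph{integrated} form of the energy identity to bound $E(t)$, whereas the paper uses the \emph{pointwise} relation $\tfrac{\dd}{\dd t}V_\eps(\ue)=-|\ue'|^2$ inside the derivative of the Lyapunov quantity, so that the metric-derivative term vanishes before Gr\"onwall is ever invoked.
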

\begin{proof}
Combining \eqref{Vabs} and  the metric inner variation equation \eqref{euler-lagrange-eq-infinite}
we obtain
\begin{equation}
\label{pointwise-intermediate-identity}
\frac{\dd}{\dd t}\Ve(u_\eps(t)) + \vert u_\eps'\vert^2(t) \,=\,0
\,\,\hbox{ for a.a. } t \in (0,\infty),
\end{equation}
yielding \eqref{enid}.

We now introduce the function
\begin{equation}
  \label{eq:41}
  W_\eps(x):=V_\eps(x)+2\sfQ(x),\quad
  \text{satisfying}\quad
  0\le \max\Big(V_\eps(x),\sfQ(x)\Big)\le W_\eps(x)\quad
  \text{for every }x\in X,
\end{equation}
 (with $\sfQ$ from \eqref{basic-ass-2bis}), 
and we set $w_\eps(t):=W_\eps(u_\eps(t))$. Then, $w_\eps$ is an absolutely
continuous function satisfying the differential inequality
\begin{equation}
  \label{eq:42}
  \frac\dd{\dd t}w_\eps\le -|u_\eps'|^2 +4\sfB
  \sfd(u_\eps,u_*) |u_\eps'|  \le 
  4\sfB^2 \sfd^2(u_\eps,u_*)\le 4\sfB \sfQ(u_\eps)\le 
    2\sfB w_\eps, 
\end{equation}
 where  we have used \eqref{pointwise-intermediate-identity} and,   for the second inequality, 
  the elementary estimate $xy\leq x^2+\tfrac14 y^2$, 
so that 
\begin{equation}
  \label{eq:43}
  \sfQ(u_\eps(t))\le w_\eps(t)\le  w_\eps(0)\mathrm{e}^{2\sfB t}=
  \Big(V_\eps(\bar u)+2\sfQ(\bar u)\Big) \mathrm{e}^{2\sfB t}. 
\end{equation}
We then have that
\begin{equation}
  \label{eq:44}
  \int_0^T |u_\eps'|^2\,\dd t= V_\eps(\bar u)-
  V_\eps(u_\eps(T))\le V_\eps(\bar u)+\sfQ(u_\eps(T))\le 
  2\Big(V_\eps(\bar u)+\sfQ(\bar u)\Big) \mathrm{e}^{2\sfB t}, 
\end{equation}
 where the first estimate follows from \eqref{stime-V}  and the last one from \eqref{eq:43}. This
 yields 
\eqref{est-phi-up-1}.

It follows from \eqref{stime-V} and \eqref{Vabs} that
\begin{equation*}
\label{phi-metric-der}
\phi(u_\eps(t))\,\le\,\phi(\bar u)+ \frac{\eps}{2}\vert u_\eps'\vert^2(t) \quad \text{for all $t \in [0,\infty).$}
\end{equation*}
Hence, a further integration over $(0,T)$ gives  \eqref{est-phi-up-2}.
\end{proof}
\subsection{Gradient flow of the value function}
\label{ss:4.3}
%
%
%
%
 In this section we will
show that any minimizer $u_\eps$ of the WED functional 
$\mathcal I_\eps$ of Problem \ref{prob:main}
is a curve of maximal slope for the
value function $V_\eps$ 
with respect to its $L^1$-moderated upper gradient 
 \[
  \glee(x):= 
  \begin{cases}
    \displaystyle 
    \sqrt{2\frac{\phi(x)-\Ve(x)}{\eps}}&\text{if }x\in D(V_\eps),\\
    \infty&\text{otherwise.}
  \end{cases}
\]
In the forthcoming 
Theorem \ref{thm:upper-gradient} we will 
show that $\glee$ is an $L^1$-moderated upper gradient   (in the sense specified in \eqref{eq:45}) 
of $V_\eps$. We also refer to Appendix A ahead for further results in this connection. 
 We first state a useful Lemma.
\begin{lemma}
  \label{le:comparisonV}
  Let $u\in \AC^2([0,T];X)$ with $\phi\circ u\in L^1(0,T)$.
  Then for every $0\le r<s<T$ we have
  \begin{equation}
    \label{eq:48}
    \mathrm{e}^{-r/\eps} V_\eps(u(r))-
     \mathrm{e}^{-s/\eps}V_\eps(u(s))
     \le 
     \int_r^s 
    \Big(\frac 12 |u'|^2+\frac 1\eps \phi(u)\Big)\mathrm{e}^{-t/\eps}
    \dd t,
  \end{equation}
    \begin{equation}
    \label{eq:48bis}
    \mathrm{e}^{s/\eps} V_\eps(u(s))-
     \mathrm{e}^{r/\eps}V_\eps(u(r))
     \le 
     \int_r^s 
    \Big(\frac 12 |u'|^2+\frac 1\eps \phi(u)\Big)\mathrm{e}^{t/\eps}
    \dd t.
  \end{equation}
\end{lemma}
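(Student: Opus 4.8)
The inequalities \eqref{eq:48} and \eqref{eq:48bis} are, respectively, a \emph{forward} and a \emph{backward} localized version of the Dynamic Programming Principle, and the plan is to obtain them directly from the definition of $\Ve$ as an infimum, by exhibiting explicit competitors built out of the given curve $u$. A preliminary observation is that, under the hypotheses, $\Ve(u(t))<\infty$ for every $t\in[0,T)$, so that the left-hand sides of \eqref{eq:48}--\eqref{eq:48bis} are not of the indeterminate form $\infty-\infty$: indeed $u([0,T])$ is $\sfd$-compact, hence $\sfd$-bounded, so $t\mapsto\Ve(u(t))$ is bounded below on $[0,T]$ by \eqref{basic-ass-2bis}--\eqref{stime-V}; and since $|u'|^2\in L^1(0,T)$ and $\phi\circ u\in L^1(0,T)$, for any $t\in[0,T)$ one may pick $t'\in(t,T)$ with $\phi(u(t'))<\infty$ and use the competitor that follows $\tau\mapsto u(t+\tau)$ on $[0,t'-t]$ and then stays at $u(t')$: it lies in $\ACini{u(t)}{\eps}$ and has $\calI_\eps<\infty$, whence $u(t)\in\rmD(\Ve)$.

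For \eqref{eq:48} I would invoke the Dynamic Programming Principle \eqref{programmazione-dinamica} (equivalently, the intermediate bound \eqref{eq:37} established inside its proof) at the point $x=u(r)\in\rmD(\Ve)$ with horizon $s-r$, tested against the competitor $v\in\ACini{u(r)}{\eps}$ that equals $u(r+\cdot)$ on $[0,t'-r]$ (with $t'\in(s,T)$, $\phi(u(t'))<\infty$) and stays at $u(t')$ afterwards; since $v(s-r)=u(s)$ this gives
\[
\Ve(u(r))\le\int_0^{s-r}\ell_\eps\big(u(r+\tau),|u'|(r+\tau)\big)\,\dd\mu_\eps(\tau)+\rme^{-(s-r)/\eps}\Ve(u(s)).
\]
The change of variables $t=r+\tau$ replaces $\dd\mu_\eps(\tau)$ by $\rme^{r/\eps}\dd\mu_\eps(t)$ and, recalling that $\ell_\eps(w,|w'|)\,\dd\mu_\eps=\big(\tfrac12|w'|^2+\tfrac1\eps\phi(w)\big)\rme^{-t/\eps}\dd t$, multiplying through by $\rme^{-r/\eps}$ produces exactly \eqref{eq:48}.

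The new ingredient for \eqref{eq:48bis} is to \emph{reverse time} on $[r,s]$: the curve $\hat u(\tau):=u(s-\tau)$, $\tau\in[0,s-r]$, belongs to $\AC^2([0,s-r];X)$ with $\hat u(0)=u(s)$, $\hat u(s-r)=u(r)$ and $|\hat u'|(\tau)=|u'|(s-\tau)$ a.e.\ (immediate from \eqref{metric_dev}). Given $\delta>0$, choose $w_\delta\in\ACini{u(r)}{\eps}$ with $\calI_\eps[w_\delta]\le\Ve(u(r))+\delta$ (possible since $u(r)\in\rmD(\Ve)$; alternatively a minimizer from Lemma \ref{lemma1}), and concatenate $\hat u$ with $w_\delta$ at time $s-r$ to obtain a curve $\hat v\in\ACini{u(s)}{\eps}$. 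By the definition of $\Ve$,
\[
\Ve(u(s))\le\calI_\eps[\hat v]=\int_0^{s-r}\ell_\eps\big(u(s-\tau),|u'|(s-\tau)\big)\,\dd\mu_\eps(\tau)+\rme^{-(s-r)/\eps}\calI_\eps[w_\delta];
\]
the substitution $t=s-\tau$ now turns $\dd\mu_\eps(\tau)$ into $\rme^{-s/\eps}\tfrac1\eps\rme^{t/\eps}\dd t$, so that using $\calI_\eps[w_\delta]\le\Ve(u(r))+\delta$, multiplying by $\rme^{s/\eps}$ and letting $\delta\downarrow0$ yields \eqref{eq:48bis}.

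I do not expect a genuine obstacle here: once the forward/backward competitor construction is set up, everything reduces to bookkeeping of the exponential weight $\mu_\eps$ under time shifts and under the time reversal of $u$. The only points deserving attention are the well-posedness of the left-hand sides (handled by the preliminary finiteness step, which is exactly where the assumption $\phi\circ u\in L^1(0,T)$ is used) and the check that the concatenated competitors have finite weighted energy, which follows from $|u'|\in L^2(0,T)$ together with the membership of the ``tail'' curves in $\ACini{\cdot}{\eps}$.
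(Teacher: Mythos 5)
Your proof is correct and follows the same strategy as the paper: for \eqref{eq:48}, concatenate the time-shifted segment $u(r+\cdot)$ with a (near-)optimal tail starting from $u(s)$ and compare $V_\eps(u(r))\le\calI_\eps$ (which is exactly what \eqref{eq:37} encodes), then change variables and multiply by $\rme^{-r/\eps}$; for \eqref{eq:48bis}, reverse time on $[r,s]$ and repeat. The paper states the second half tersely (``applying the same argument inverting the order of time''), and your write-up supplies the bookkeeping it leaves implicit, together with the preliminary check that $u([0,T))\subset\rmD(V_\eps)$, which is a useful detail the paper's proof glosses over.
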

\begin{proof}
  Let us fix $r<s\in [0,T]$, let us choose $w\in \mathcal
  M_\eps(u(s))$ and let us consider the curve
  \begin{displaymath}
    v(t):=
    \begin{cases}
      u(r+t)&\text{if }0\le t\le s-r,\\
      w(t-(s-r))&\text{if }t\ge s-r,
    \end{cases}
  \end{displaymath}
  so that 
  \begin{displaymath}
    V_\eps(u(r))\le \mathcal I_\eps[v]=
    \int_r^{s}\ell(u(t),|u'|(t))
    \frac{\mathrm{e}^{-(t-r)/\eps}}\eps\dd t+
    \mathrm{e}^{-(s-r)/\eps} V_\eps(u(s))
  \end{displaymath}
  Multiplying the previous inequality by $\mathrm{e}^{-r/\eps}$ 
  we get \eqref{eq:48}. Applying the same argument
  inverting the order of time we infer \eqref{eq:48bis}. 
\end{proof}
\begin{theorem}
\label{thm:upper-gradient}
 Under the standard \UUU LSCC Property \EEE
\ref{basic-ass},
for every $\eps>0$ and for every $u \in \AC^2 ([0,T];X)$
such that $V_\eps\circ u\in L^1(0,T) $ and $ \glee\circ u\in 
L^2(0,T)$ we have that
\begin{align}
&
\label{absol-continuity}
\text{the map } t \mapsto \Ve(u(t)) \quad \text{is absolutely continuous on } [0,T];
\\
&
\label{ch-rul-form}
 \left| \frac{\dd }{\dd t } \Ve(u(t)) \right| \leq 
\frac 12 \glee^2(u(t))+\frac 12 |u'|^2(t) \qquad \foraa\, t \in (0,T).
\end{align}
In particular $\glee$ is 
an $L^1$-moderated upper gradient  
of $V_\eps$.
\end{theorem}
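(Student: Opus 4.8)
The strategy is to extract both \eqref{absol-continuity} and \eqref{ch-rul-form} from the two one-sided comparison inequalities \eqref{eq:48}--\eqref{eq:48bis} of Lemma \ref{le:comparisonV}, which in the metric setting play the role that the elliptic regularization \eqref{Eu-La-later} has in the Hilbert framework. First, since $\tfrac12\glee^2(x)=\tfrac1\eps(\phi(x)-\Ve(x))$ on $\rmD(\Ve)$, the hypotheses $\Ve\circ u\in L^1(0,T)$ and $\glee\circ u\in L^2(0,T)$ force $\phi\circ u\in L^1(0,T)$, hence $\beta:=\tfrac12|u'|^2+\tfrac1\eps\,\phi\circ u\in L^1(0,T)$ (recall $u\in\AC^2([0,T];X)$); thus Lemma \ref{le:comparisonV} applies. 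Introducing
\[
F_1(t):=\mathrm e^{-t/\eps}\Ve(u(t))+\int_0^t\beta(\tau)\,\mathrm e^{-\tau/\eps}\dd\tau,\qquad
F_2(t):=\mathrm e^{t/\eps}\Ve(u(t))-\int_0^t\beta(\tau)\,\mathrm e^{\tau/\eps}\dd\tau,
\]
inequality \eqref{eq:48} states that $F_1$ is nondecreasing on $[0,T]$ and \eqref{eq:48bis} that $F_2$ is nonincreasing (the inequalities extend to the endpoint $t=T$ by $\sigma$-lower semicontinuity of $\Ve$ along the $\sfd$-continuous curve $u$); together with the lower bound \eqref{stime-V} this makes $\Ve\circ u$ real-valued and bounded on $[0,T]$, and, being a product of the Lipschitz factor $\mathrm e^{\pm t/\eps}$ with a difference of a monotone and an absolutely continuous function, of bounded variation there.

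The key point is then to upgrade bounded variation to absolute continuity. Writing the Lebesgue--Radon--Nikodym decomposition $D(\Ve\circ u)=(\Ve\circ u)'\,\Leb 1+\nu^{\mathrm s}$ of the derivative measure, the Leibniz rule for the product of the continuous factor $\mathrm e^{-t/\eps}$ with the BV function $\Ve\circ u$ identifies the singular part of $D\big(\mathrm e^{-\cdot/\eps}\,\Ve\circ u\big)$ with $\mathrm e^{-t/\eps}\,\nu^{\mathrm s}$; on the other hand $\mathrm e^{-t/\eps}\Ve(u(t))=F_1(t)-\int_0^t\beta\,\mathrm e^{-\tau/\eps}\dd\tau$ shows that this singular part equals $(DF_1)^{\mathrm s}\ge 0$, since $F_1$ is nondecreasing; hence $\nu^{\mathrm s}\ge 0$. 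The symmetric computation with $\mathrm e^{t/\eps}$ and $F_2$ gives $\mathrm e^{t/\eps}\,\nu^{\mathrm s}=(DF_2)^{\mathrm s}\le 0$, whence $\nu^{\mathrm s}\le 0$. Therefore $\nu^{\mathrm s}=0$, i.e.\ $\Ve\circ u\in\AC([0,T])$ --- which is \eqref{absol-continuity} --- and a fortiori $F_1,F_2\in\AC([0,T])$.

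Differentiating the monotone functions $F_1,F_2$ a.e.\ then yields \eqref{ch-rul-form}: from $F_1'\ge0$ and the product rule $F_1'(t)=\mathrm e^{-t/\eps}\big(-\tfrac1\eps\Ve(u(t))+(\Ve\circ u)'(t)+\beta(t)\big)$, using again $\tfrac1\eps(\phi-\Ve)=\tfrac12\glee^2$ on $\rmD(\Ve)$, we get
\[
(\Ve\circ u)'(t)\ \ge\ \tfrac1\eps\Ve(u(t))-\beta(t)\ =\ -\tfrac12|u'|^2(t)-\tfrac12\glee^2(u(t))\qquad\foraa\, t\in(0,T),
\]
while $F_2'\le0$ produces the matching upper bound $(\Ve\circ u)'(t)\le \tfrac12|u'|^2(t)+\tfrac12\glee^2(u(t))$; combining the two estimates gives \eqref{ch-rul-form}. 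Finally, integrating \eqref{ch-rul-form} shows that every $\vartheta\in\AC^2([a,b];X)$ with $\Ve\circ\vartheta\in L^1(a,b)$ and $\glee\circ\vartheta\in L^2(a,b)$ satisfies $|\Ve(\vartheta(b))-\Ve(\vartheta(a))|\le\tfrac12\int_a^b\big(|\vartheta'|^2(t)+\glee^2(\vartheta(t))\big)\dd t$; since $\Ve$ satisfies Property \ref{basic-ass} (Lemma \ref{lemma1}), in particular it is Borel, so the criterion of Corollary \ref{cor:criterium}, applied with the functional $\Ve$ and $p=1$, yields that $\glee$ is an $L^1$-moderated upper gradient of $\Ve$ in the sense of \eqref{eq:45}.

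I expect the absolute-continuity step to be the main obstacle: lacking an Euler--Lagrange equation, the only handle on $\Ve$ along a generic curve is the pair of one-sided Dynamic Programming inequalities \eqref{eq:48}--\eqref{eq:48bis}, and ruling out a singular part of $D(\Ve\circ u)$ requires precisely the Leibniz-rule/sign-cancellation argument above, together with some care about the finiteness of $\Ve\circ u$ up to the endpoint $T$ (handled via \eqref{stime-V} and the $\sigma$-lower semicontinuity of $\Ve$).
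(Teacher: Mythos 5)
Your proof is correct, but it takes a genuinely different route from the paper's at the crucial step of upgrading from ``controlled'' to ``absolutely continuous.'' The paper's own argument, after establishing that $z-H_\eps$ is nondecreasing (hence $V_\eps\circ u$ is locally bounded on $(0,T)$), multiplies \eqref{eq:48} by $\rme^{r/\eps}$ and \eqref{eq:48bis} by $\rme^{-s/\eps}$ and invokes the elementary bound $0\le 1-\rme^{-x}\le x$ to absorb the exponential factors into a Lipschitz term; this produces directly the two-sided local modulus estimate
\begin{equation*}
|V_\eps(u(r))-V_\eps(u(s))|\le S(I)|s-r|+|H_\eps(s)-H_\eps(r)|,\qquad r,s\in I\Subset (0,T),
\end{equation*}
from which local absolute continuity is immediate (a Lipschitz part plus an absolutely continuous part). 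You instead record only that $\Ve\circ u$ is $BV$ --- as a product of a Lipschitz factor with a difference of a monotone and an absolutely continuous function --- and then rule out the singular part of its derivative measure by the Leibniz rule for Lipschitz$\,\times\,BV$ products: the same singular measure $\nu^{\mathrm s}$ appears in both $(DF_1)^{\mathrm s}=\rme^{-t/\eps}\nu^{\mathrm s}\ge0$ and $(DF_2)^{\mathrm s}=\rme^{t/\eps}\nu^{\mathrm s}\le0$, so it must vanish. Both derivations are valid: the paper's is more elementary, avoiding the Lebesgue--Radon--Nikodym decomposition altogether; yours isolates rather cleanly the structural fact that the two one-sided Dynamic Programming inequalities give an $\AC$ function once their two (opposite-sign) singular contributions are cancelled. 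The remainder of your argument --- the a.e.\ differentiation of the monotone functions $F_1,F_2$ to obtain the two one-sided versions of \eqref{ch-rul-form}, the use of the algebraic identity $\tfrac12 G_\eps^2=\tfrac1\eps(\phi-\Ve)$, and the final appeal to Corollary \ref{cor:criterium} --- matches the paper's. The only point you leave a bit terse is the continuity of $\Ve\circ u$ at the endpoints (the paper explicitly passes to the limit in \eqref{eq:48} and \eqref{eq:48bis} at $r=0$ and $s=T$, combining with the lower semicontinuity of $\Ve$), but the remark that $\sigma$-lower semicontinuity along the $\sfd$-continuous curve $u$ supplies this is consistent with the paper's argument.
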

\begin{proof}
Let $u\in  \AC^2([0,T];X)$ be fixed
according to the assumptions of the theorem.
Since $\phi=\frac\eps 2 G_\eps^2+V_\eps$   by the definition of $G_\eps$, 
we get $\phi\circ u\in L^1(0,T)$. 
Setting $z(t):=\mathrm{e}^{-t/\eps}V_\eps(u(t))$,
 $ \RIC H_\eps \EEE (t):=\int_0^t (  \tfrac12  |u'|^2+ \tfrac1{\eps} \RIC \phi_+ \EEE (u) ) 
\dd r$, 
\eqref{eq:48} yields
\begin{displaymath}
  z(r)-z(s)\le H_\eps(s)-H_\eps(r)\quad
  \text{if }0\le r\le s\le T.
\end{displaymath}
It follows that 
the map $t\mapsto z(t)-H_\eps(t)$  is nondecreasing; 
since it is integrable, $z$ and
$V_\eps\circ u$ are locally bounded in $(0,T)$:
let us set 
$S(I):=\frac 1\eps \sup_{I}   |V_\eps \circ u| $
where $I\subset (0,T)$ is a compact interval.
Multiplying  inequality \eqref{eq:48} by $\rme^{r/\eps}$ we obtain 
\[
  V_\eps(u(r))-V_\eps(u(s))\le 
 (\rme^{-(s{-}r)/\eps} {-} 1) V_\eps(u(s))
 \RIC + H_\eps(s)-H_\eps(r)\,. \EEE
  \]
  We  then estimate the first term on the right-hand side by resorting to the  
elementary inequality $0\le 1-\mathrm{e}^{-x}\le x$ for $x\ge0$, which yields \EEE
\begin{equation}
  \label{eq:48tris}
  V_\eps(u(r))-V_\eps(u(s))\le 
  S(I) (s-r) 
  \RIC + H_\eps(s)-H_\eps(r) \EEE\quad
  r\le s, \ r,s\in I. 
\end{equation}
 Multiplying \eqref{eq:48bis} by  $\rme^{-s/\eps}$ and arguing in the very same way we obtain 
\eqref{eq:48tris} with 
 the order of $r$ and $s$ interchanged.   We thus get
\begin{equation}
  \label{eq:48quater}
 | V_\eps(u(r))-V_\eps(u(s))|\le 
 S(I) |s-r| 
  + |H(s)-H(r)|\quad
  \ r,s\in I,
\end{equation}
which shows that $V_\eps\circ u$ is locally absolutely continuous.

Let us fix now a time $r\in (0,T)$ 
which is a differentiability point for $V_\eps\circ u$
and a Lebesgue point for the 
integrand in \eqref{eq:48bis}.
Dividing this inequality by $s-r$ and passing to the limit
as $s\downarrow0$ we obtain
\begin{equation}
  \label{eq:51}
  \mathrm{e}^{r/\eps} (V_\eps\circ u)'(r)+
  \frac 1\eps \mathrm{e}^{r/\eps} V_\eps(u(r))\le 
  \Big(\frac 12 |u'|^2(r)+\frac 1\eps \phi(u(r))\Big)
  \mathrm{e}^{r/\eps}
\end{equation}
which shows
\begin{equation}
  \label{eq:49}
  (V_\eps\circ u)'(r)\le \frac 12 |u'|^2(r)+
  \frac12 G_\eps(u_\eps(r)).
\end{equation}
A similar argument applied to \eqref{eq:48} 
yields the opposite inequality, thus leading to
\eqref{ch-rul-form} and the absolute continuity
of $V_\eps\circ u$ in $(0,T)$.

Since $V_\eps\circ u$ is also lower semicontinuous,
passing to the limit as $s\down0$
in \eqref{eq:48bis} written \UUU for \EEE $r=0$ yields the continuity
of $V_\eps\circ u$ at $r=0$. \UUU A \EEE similar argument 
applied to \eqref{eq:48} at $s=T$
yields the continuity of $V_\eps\circ u$ at $T$.

We conclude the proof 
 that $G_\eps$ is an $L^1$-moderated upper gradient 
by 
integrating \eqref{ch-rul-form} from $0$ to $T$ 
and applying Corollary \ref{cor:criterium}.
\end{proof}
\begin{corollary}
  \label{cor:WED-GF}
  Every $\ue \in \mathcal{M}_\eps(\ini)$  is a curve of maximal slope 
  for $\Ve$ with respect to the  ($L^1$-moderated) 
  upper gradient $G_\eps$.
\end{corollary}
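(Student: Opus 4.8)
The plan is to read the result directly off the Fundamental identity of Proposition~\ref{prop:intermediate}, combined with Theorem~\ref{thm:upper-gradient}, and to match the outcome against the definition of curve of maximal slope recalled in Section~\ref{ss:2.1}. No new estimate is needed.

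First I would check the structural prerequisites. By definition of the admissible class $\ACini\ini\eps$ and by Theorem~\ref{exist-minimizers}, any $\ue\in\MMM_\epsi(\ini)$ lies in $\AC^2_\loc([0,\infty);X)$ with $\ue(0)=\ini\in\SFD(\phi)$. By Proposition~\ref{prop:intermediate} the map $t\mapsto\Ve(\ue(t))$ is absolutely continuous, hence locally bounded, which is the regularity required of the ``energy'' in the definition of curve of maximal slope; alternatively, one could invoke the monotonicity \eqref{enid} together with the lower bound \eqref{stime-V} and the $\sfd$-continuity of $\ue$. Moreover, Theorem~\ref{thm:upper-gradient} guarantees that $\glee$ is an $L^1$-moderated upper gradient for $\Ve$; since on every compact interval $J\subset[0,\infty)$ the measure $|u'|\Leb 1$ is finite, the integrability condition \eqref{eq:45} with $p=\infty$ is more restrictive than the one with $p=1$, so $\glee$ is a fortiori an $L^\infty$-moderated upper gradient for $\Ve$, as needed.

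The core step is the chain of pointwise identities. Identity \eqref{Vabs} states $\Ve(\ue(t))=\phi(\ue(t))-\tfrac\eps2|\ue'|^2(t)$ for a.a.\ $t\in(0,\infty)$; in view of the definition \eqref{proto-wed-slope} of $\glee$, this is exactly
\[
\glee^2(\ue(t))=\frac2\eps\bigl(\phi(\ue(t))-\Ve(\ue(t))\bigr)=|\ue'|^2(t)\qquad\foraa\,t\in(0,\infty),
\]
so in particular $\glee\circ\ue\in L^2_\loc([0,\infty))$ because $|\ue'|\in L^2_\loc([0,\infty))$. Substituting $\glee^2(\ue)=|\ue'|^2$ into the Fundamental identity \eqref{intermediate-relation} yields $-\tfrac{\dd}{\dd t}\Ve(\ue(t))=\tfrac12|\ue'|^2(t)+\tfrac12\glee^2(\ue(t))=|\ue'|^2(t)=\glee^2(\ue(t))$ for a.a.\ $t$, which is precisely the differential characterization \eqref{e:differential-equality} of a curve of maximal slope for $\Ve$ relative to $\glee$. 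Combined with the prerequisites verified above, this concludes the argument; the associated energy identity \eqref{abstra-enide} then follows automatically and in fact coincides with \eqref{enid}.

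I do not expect any genuine obstacle here: the whole content has been front-loaded into Proposition~\ref{prop:intermediate} and Theorem~\ref{thm:upper-gradient}. The only points deserving a line of care are the passage from the $L^1$-moderated to the $L^\infty$-moderated upper gradient property — a routine consequence of the finiteness of $|u'|\Leb 1$ on compact intervals — and the verification that the regularity hypotheses in the definition of curve of maximal slope are met, both of which are immediate from what has already been proved.
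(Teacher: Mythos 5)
Your proposal is correct, and it matches the paper's intended argument exactly: the corollary is stated without an explicit proof precisely because it is an immediate consequence of Proposition~\ref{prop:intermediate} (which yields $G_\eps^2(\ue) = |\ue'|^2$ via \eqref{Vabs} and then the identity $-\frac{\dd}{\dd t}\Ve(\ue) = |\ue'|^2 = G_\eps^2(\ue)$ via \eqref{intermediate-relation}) combined with Theorem~\ref{thm:upper-gradient}. Your careful remarks on the regularity prerequisites and on the passage from the $L^1$-moderated to the $L^\infty$-moderated notion of upper gradient (the right direction, since $L^\infty(J;|u'|\Leb 1)\subset L^1(J;|u'|\Leb 1)$ on compact $J$) are correct and flag exactly the points worth noting.
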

\section{Passage to the limit as $\eps \to 0$  and proof of Theorem \ref{th:4.1}}
\label{sez:passlim} \noindent
 The proof of \UUU Theorem \ref{th:4.1} is \EEE carried out  in \UUU
 Section \ref{ss:6.2} and \EEE relies on
a series of intermediate results on the asymptotic properties of  the functionals $(\Ve)_\eps$ as $\eps\down 0$,
proved in Sec.\ \ref{ss:6.1}.
 As usual, we will always assume that 
the functional $\phi$ satisfies the 
standard \UUU LSCC Property \EEE  \ref{basic-ass}.
\subsection{Comparison and asymptotic properties of
the functionals $(\Ve)_\eps$ as $\eps\down 0$}
\label{ss:6.1}
\begin{lemma}
\label{lemma:convV-phi} 
Let us suppose that the standard \UUU LSCC Property
\ref{basic-ass} holds. \EEE Then,
\begin{enumerate}
\item For every $\ini\in X$ the map $\eps\mapsto 
  V_\eps(\ini)$ is non increasing, i.e.~
\begin{equation}
\label{monoV} V_{\epsi_1}(\ini)\,\le\,V_{\epsi_0}(\ini)\qquad
\text{for all } \ini \in X \ \  \text{and all}  \ \epsi_1 \geq \epsi _0;
\end{equation}
\item
For every $\bar u\in X$ there holds
\begin{equation}
\Ve(\bar u) \uparrow \phi(\bar u)\,\,\,\,\,\hbox{ as }\,\,\eps\down
0; \label{convV-phi}
\end{equation}
\item Every family $(\bar u_\eps)_{\eps>0} \subset X$
  satisfies the $\Gamma$-$\liminf$ inequality
\begin{equation}
\label{enhanced-lsc}  
\inie \weaksigma \ini,\quad
\limsup_{\eps \downarrow0} \sfd(\bar u_\eps,\bar u)<\infty
\
 \ \Rightarrow \ \
\phi(\ini) \leq \liminf_{\eps \down 0}\Ve(\inie).
\end{equation}
\end{enumerate}
\end{lemma}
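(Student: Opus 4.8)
\emph{Part (1) (monotonicity \eqref{monoV}).} I would prove this by a time-rescaling trick. Fix $0<\eps_0\le\eps_1$, set $\lambda:=\eps_0/\eps_1\in(0,1]$, and to any $u\in\ACini{\bar u}{\eps_0}$ associate the curve $\tilde u(t):=u(\lambda t)$; then $\tilde u\in\ACini{\bar u}{\eps_1}$, $|\tilde u'|(t)=\lambda\,|u'|(\lambda t)$, and the substitution $s=\lambda t$ (using $\lambda\eps_1=\eps_0$) yields
\[
\calI_{\eps_1}[\tilde u]=\int_0^\infty\rme^{-s/\eps_0}\Big(\tfrac{\lambda}{2}|u'|^2(s)+\tfrac1{\eps_0}\phi(u(s))\Big)\,\dd s\ \le\ \calI_{\eps_0}[u],
\]
because $\lambda\le1$ and $|u'|^2\ge0$ (the estimate is vacuous when $\calI_{\eps_0}[u]=\infty$). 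Passing to the infimum over $u\in\ACini{\bar u}{\eps_0}$ gives $V_{\eps_1}(\bar u)\le V_{\eps_0}(\bar u)$; in particular $\eps\mapsto V_\eps(\bar u)$ increases as $\eps\downarrow0$.

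\emph{Part (2) (pointwise convergence \eqref{convV-phi}).} The constant curve $u\equiv\bar u$ shows $V_\eps(\bar u)\le\calI_\eps[u]=\phi(\bar u)$ for all $\eps$ (cf.~\eqref{eq:4}); together with Part (1), it remains to prove $\liminf_{\eps\downarrow0}V_\eps(\bar u)\ge\phi(\bar u)$. I would assume $L:=\liminf_{\eps\downarrow0}V_\eps(\bar u)<\infty$, pick $\eps_k\downarrow0$ with $V_{\eps_k}(\bar u)\to L$, and (by Lemma \ref{lemma1}, since $V_{\eps_k}(\bar u)<\infty$ for $k$ large) a minimizer $u_k\in\MMM_{\eps_k}(\bar u)$. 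Rescaling time by $v_k(s):=u_k(\eps_k s)$, so $v_k(0)=\bar u$, a change of variables recasts the functional as
\[
V_{\eps_k}(\bar u)=\calI_{\eps_k}[u_k]=\int_0^\infty\rme^{-s}\Big(\tfrac1{2\eps_k}|v_k'|^2(s)+\phi(v_k(s))\Big)\,\dd s.
\]
The rescaled form of the spectral inequality of Lemma \ref{cor:hynek-metric} reads $\int_0^\infty\rme^{-s}\sfd^2(v_k(s),\bar u)\,\dd s\le4\int_0^\infty\rme^{-s}|v_k'|^2(s)\,\dd s$; inserting $\phi\ge-\sfB\sfd^2(\cdot,\bar u)-\sfQ(\bar u)$ from \eqref{basic-ass-2bis} into the displayed identity and absorbing the $\sfd^2$-term into the kinetic term (legitimate for $\eps_k$ small, cf.~\eqref{eq:27}) I would obtain $\int_0^\infty\rme^{-s}|v_k'|^2(s)\,\dd s\le C\eps_k\to0$, hence also $\int_0^\infty\rme^{-s}\sfd^2(v_k(s),\bar u)\,\dd s\to0$. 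Along a further subsequence $\sfd(v_k(s),\bar u)\to0$ for a.e.~$s$, so $v_k(s)\weaksigma\bar u$ along a $\sfd$-bounded sequence (recall that $\sigma$ is coarser than the metric topology), and the lower semicontinuity in Property \ref{basic-ass} gives $\liminf_k\phi(v_k(s))\ge\phi(\bar u)$ for a.e.~$s$. Finally, applying Fatou's lemma to the nonnegative functions $s\mapsto\rme^{-s}\big(\phi(v_k(s))+\sfB\sfd^2(v_k(s),\bar u)+\sfQ(\bar u)\big)$ and using $\int_0^\infty\rme^{-s}\sfd^2(v_k(s),\bar u)\,\dd s\to0$, one gets $\liminf_k\int_0^\infty\rme^{-s}\phi(v_k(s))\,\dd s\ge\phi(\bar u)$, and therefore $L=\lim_k V_{\eps_k}(\bar u)\ge\phi(\bar u)$.

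\emph{Part (3) (\eqref{enhanced-lsc}), and main obstacle.} By Lemma \ref{lemma1} each $V_{\eps'}$ is $\sigma$-sequentially lower semicontinuous on $\sfd$-bounded sets. Fix $\eps'>0$: Part (1) gives $V_\eps(\bar u_\eps)\ge V_{\eps'}(\bar u_\eps)$ for $\eps\le\eps'$, and since $\bar u_\eps\weaksigma\bar u$ with $\{\bar u_\eps\}$ $\sfd$-bounded, letting $\eps\downarrow0$ yields $\liminf_{\eps\downarrow0}V_\eps(\bar u_\eps)\ge V_{\eps'}(\bar u)$; sending now $\eps'\downarrow0$ and using Part (2) we conclude $\liminf_{\eps\downarrow0}V_\eps(\bar u_\eps)\ge\phi(\bar u)$. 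The genuinely delicate step is the lower bound in Part (2): the weight $\mu_\eps$ concentrates at $t=0$ as $\eps\downarrow0$, which forces the passage to the rescaled time $s=t/\eps$, where the kinetic coefficient $1/(2\eps)$ blows up and — via the spectral inequality of Lemma \ref{cor:hynek-metric} — makes the rescaled curves collapse onto $\bar u$; the remaining subtlety is to transfer this to the merely \emph{lower semicontinuous} energy $\phi$ through Fatou's lemma, which is exactly where the $\sigma$-lower semicontinuity on $\sfd$-bounded sets and the comparability of $\sigma$ with the metric enter.
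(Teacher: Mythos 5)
Your argument is correct, but the logical route through Parts (2) and (3) is genuinely different from the paper's. For Part (1), your direct reparametrization of a competitor is simply the explicit version of the paper's proof, which writes $V_\eps$ in the rescaled form \eqref{change-variables} and reads off monotonicity from the $\eps$-dependence of the kinetic coefficient. For Parts (2)--(3) the routes diverge: the paper proves the $\Gamma$-$\liminf$ \eqref{enhanced-lsc} \emph{directly} — choosing $u_\eps\in\MMM_\eps(\bar u_\eps)$, invoking the nonnegative rewriting \eqref{eq:29} of $\calI_\eps$, the estimate \eqref{est-phi-up-1} to obtain $\sfd(u_\eps(\eps s),\bar u_\eps)\to 0$ hence $u_\eps(\eps s)\weaksigma\bar u$, and concluding by Fatou — and then deduces the pointwise convergence \eqref{convV-phi} as the specialization $\bar u_\eps\equiv\bar u$ combined with monotonicity \eqref{monoV}. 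You reverse this: you first prove \eqref{convV-phi} by essentially the same rescale-and-Fatou mechanism, specialized to the constant family $\bar u_\eps\equiv\bar u$, and then bootstrap to \eqref{enhanced-lsc} by a soft chain of inequalities: for $\eps\le\eps'$, $V_\eps(\bar u_\eps)\ge V_{\eps'}(\bar u_\eps)$ by Part (1); then $\liminf_{\eps\downarrow0} V_{\eps'}(\bar u_\eps)\ge V_{\eps'}(\bar u)$ by the $\sigma$-lower semicontinuity of $V_{\eps'}$ on $\sfd$-bounded sets established in Lemma \ref{lemma1}; and finally $V_{\eps'}(\bar u)\uparrow\phi(\bar u)$ by Part (2). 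Both routes are valid, and your Part (2) proof is sound, including the Fatou step on the nonnegative integrand $\rme^{-s}\bigl(\phi(v_k)+\sfB\sfd^2(v_k,\bar u)+\sfQ(\bar u)\bigr)$ followed by subtraction of the vanishing $\sfd^2$-term. Your bootstrap for (3) is arguably cleaner, delegating all quantitative estimates to the fixed-point case and leaning on the already-proved Lemma \ref{lemma1}; the paper's direct attack on (3) gets (2) for free as a corollary, at the cost of computations that partly overlap Corollary \ref{lemma:est-phi-up}.
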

\begin{proof}
The monotonicity property 
\eqref{monoV}
is a consequence of the equivalent representation 
of $V_\eps$ as
\begin{equation}
\label{change-variables} 
\Ve(\ini) = \min_{u \in \mathscr C_\eps (\ini)}
\int_0^\infty \left( \frac1{2\eps^2} |\ue'|^2 (t) +
\phi(u(t))\right) \rme^{-t}\dd t\,.
\end{equation}
Convergence  \eqref{convV-phi}   immediately follows from  \eqref{monoV} and 
\eqref{enhanced-lsc}.
In order to prove the latter property, 
it is not restrictive to assume that 
$V_\eps(\bar u_\eps)\le V<\infty$ for sufficiently small $\eps$:  then, Problem \ref{prob:main} is feasible, $ \MMM_\epsi(\ini_\eps) \neq \emptyset$
by Thm.\ \ref{exist-minimizers}, 
and 
we  can 
choose   $u_\eps \in \MMM_\epsi(\ini_\eps)$ 
as in \eqref{eq:29} we set $\psi_\eps(t):=\phi(u_\eps(t))+
\frac 1{16 \eps}\Big(\int_0^t | u_\eps'|\dd r\Big)^2+\sfQ$,
where  $\sfQ> \limsup_{\eps\downarrow0} \sfQ(\bar u_\eps)$,
so that 
%
\begin{equation}
\label{convV-phi1}   \Ve(\bar
u_\eps)  \,\ge\,\int_{0}^{ \infty }\frac{\mathrm{e}^{-t/\eps}}{\eps}\psi_\eps(t) \dd t
-\sfQ=
\int_{0}^{\infty}\mathrm{e}^{-s}\psi_\eps(\eps
s)\dd s-\sfQ.
\end{equation}
Observe now that 
the uniform estimate \eqref{est-phi-up-1} 
yields 
\begin{equation}
\label{e:quoted-later}
\begin{gathered}
\sfd(u_\eps(\eps s),\bar u_\eps)\,\le\,
\Big(\eps s \int_0^{\eps s}| u'_\eps|^2 \dd t\Big)^{1/2} \le 
\sqrt {2\eps s}  \Big(V(\ini_\eps)+\sfQ(\ini_\eps)\Big)^{1/2}\rme^{\sfB \eps s},  
\\
\text{so that } \quad
\lim_{\eps\down0} \sfd(u_\eps(\eps s),\ini_\eps)=0,
\end{gathered}
%
\end{equation}
so that for every $s>0$
\begin{displaymath}
  \lim_{\eps\downarrow 0}u_\eps(\eps s)=\ini
  \quad \text{in the $\sigma$-topology},\quad
\liminf_{\eps\downarrow0}
\psi_\eps(\eps s)\ge \phi(\bar u)+\sfQ.
\end{displaymath}
\UUU Eventually, an \EEE application of 
Fatou's lemma to \eqref{convV-phi1} yields 
 $ \liminf_{\eps\down 0}\Ve(\bar u_\eps)\,\ge\,\phi(\bar
u)$. 
%
%
\end{proof}
The next result provides a lower bound of $V_\eps$ in terms of the
Yosida regularization of $\phi$, defined as
\begin{equation}
  \label{eq:52}
  \phi_t(x):=\inf_{y\in X} \left( \frac1{2t} \sfd^2(y,x)+\phi(y) \right) \quad x\in X,\ t>0.
\end{equation}
Notice that 
\begin{equation}
  \label{eq:65}
  \phi_t(x)\ge -\sfQ(x)\quad\text{if}\quad \frac 1{2t}\ge \sfB,
\end{equation}
and $\phi_t$ is uniformly bounded from below \UUU if \EEE  $\phi$ is bounded from below.
 Let us mention in advance that the upcoming \eqref{eq:53bis} will be used for establishing a key inequality between  the local slope  $|\partial\phi|$
and $\limsup_{\eps\down 0} G_\eps$.  
\begin{theorem}
  \label{thm:V-vs-Y}
  For every $x\in X$ and $T>0$ such that $\frac 1{4T}\ge \sfB$ we have
  \begin{equation}
    \label{eq:53bis}
    V_\eps(x)\ge \int_0^T \phi_t(x)\dd\mu_\eps(t)
    -2\sfQ(x)\rme^{-T/\eps}\quad
    \text{for every }T>0;
  \end{equation}
  in particular, there holds 
    \begin{equation}
    \label{eq:53}
    V_\eps(x)\ge \int_0^\infty \phi_t(x)\dd\mu_\eps(t).
  \end{equation}
\end{theorem}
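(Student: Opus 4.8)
The plan is to bound $V_\eps(x)$ from below by comparing any admissible trajectory $u\in \ACini x\eps$ against a family of ``two-point'' competitors for the Yosida problem. First I would fix $x\in X$ and $T>0$ with $\frac1{4T}\ge \sfB$, and take any $u\in \ACini x\eps$ with $\calI_\eps[u]<\infty$; the goal is to produce the estimate \eqref{eq:53bis} and then pass to the limit $T\to\infty$ to obtain \eqref{eq:53}. The key observation is that for each fixed $t>0$ the value $u(t)\in X$ is a competitor in the definition \eqref{eq:52} of $\phi_t(x)$, provided $\sfd(u(t),x)$ is controlled; since $\sfd(u(t),x)\le \int_0^t |u'|(r)\,\dd r = L(t)$, by the Cauchy--Schwarz inequality $\sfd^2(u(t),x)\le t\int_0^t |u'|^2(r)\,\dd r$, but it is cleaner to keep $L(t)$ and use $\sfd^2(u(t),x)\le L^2(t)$. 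Hence
\begin{equation}
  \label{eq:yosida-comp}
  \phi_t(x)\le \frac{1}{2t}\sfd^2(u(t),x)+\phi(u(t))\le \frac{1}{2t}L^2(t)+\phi(u(t))\quad\text{for every }t>0.
\end{equation}

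The second step is to integrate this against $\mu_\eps$ on $(0,T)$. Multiplying \eqref{eq:yosida-comp} by $\rme^{-t/\eps}/\eps$ and integrating, I get
\begin{equation}
  \label{eq:int-yosida}
  \int_0^T \phi_t(x)\,\dd\mu_\eps(t)\le \int_0^T \Big(\frac{1}{2t}L^2(t)+\phi(u(t))\Big)\,\dd\mu_\eps(t).
\end{equation}
Now I would handle the weight $1/t$: the elementary inequality $\frac{1}{2t}\le \frac{1}{8\eps^2}\,t$ is \emph{false} in general, so instead I should invoke a Hardy-type/Poincar\'e control. Actually the sharp route is to use the spectral estimate already proved: by Lemma \ref{l:hynek} (in its metric form, Lemma \ref{cor:hynek-metric}) one has $\int_0^\infty L^2(t)\,\dd\mu_\eps(t)\le 4\eps^2\int_0^\infty |u'|^2(t)\,\dd\mu_\eps(t)$, but the pointwise weight $1/t$ in \eqref{eq:int-yosida} is more delicate. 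The right device is a \emph{weighted Hardy inequality} on $(0,\infty)$ with weight $\mu_\eps$: for $L$ with $L(0)=0$ one has $\int_0^T \frac{1}{2t}L^2(t)\,\dd\mu_\eps(t)\le C_\eps\int_0^T |L'|^2(t)\,\dd\mu_\eps(t)$ with $C_\eps$ explicit; combined with $|L'|=|u'|$ a.e.\ and absorbing constants via the hypothesis $\frac1{4T}\ge\sfB$ (which is what makes the boundary correction $-2\sfQ(x)\rme^{-T/\eps}$ appear, through $\phi(u(t))\ge -\sfB\sfd^2(u(t),x)-\sfQ(x)\ge -\sfB L^2(t)-\sfQ(x)$ and $\sfB\le \frac1{4T}\le\frac1{4t}$ on a suitable subinterval — wait, this needs care).

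Let me reorganize: the cleanest argument avoids Hardy. Restrict attention to the competitor structure differently. For $t\in(0,T]$ use $\frac1{2t}L^2(t)$; since $L$ is nondecreasing, $\frac1{2t}L^2(t)\le \frac{1}{2t}L(T)\,L(t)$ — still not obviously integrable against $\mu_\eps$. So the Hardy inequality on $\II$ with the exponential weight genuinely seems necessary; this is a one-line classical fact, $\int_0^\infty t^{-1}|f|^2\,\rme^{-t/\eps}\,\dd t\le 4\int_0^\infty t\,|f'|^2\,\rme^{-t/\eps}\,\dd t$ for $f(0)=0$ (weighted Hardy with the dilation-invariant weight, the extra $\rme^{-t/\eps}$ only helps), and then $t\,\rme^{-t/\eps}\le \eps\,\rme^{-1}\cdot\rme^{-t/2\eps}$ type bounds close the estimate up to constants. \textbf{The main obstacle} I anticipate is precisely matching the constants so that the coefficient of $\int_0^\infty|u'|^2\,\dd\mu_\eps$ produced by this chain is $\le \frac\eps2$, so that after adding $\int_0^T\phi(u)\,\dd\mu_\eps$ one recovers exactly $\calI_\eps[u]$ (plus the stated boundary term), rather than a larger multiple; the hypothesis $\frac1{4T}\ge\sfB$ is evidently tuned to exactly this, and the truncation at $T$ (rather than $\infty$) is what lets the coercivity bound \eqref{basic-ass-2bis} absorb the negative part of $\phi$ with the controlled error $2\sfQ(x)\rme^{-T/\eps}$. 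Once \eqref{eq:53bis} is established for every admissible $u$, taking the infimum over $u\in\ACini x\eps$ gives $V_\eps(x)\ge \int_0^T\phi_t(x)\,\dd\mu_\eps(t)-2\sfQ(x)\rme^{-T/\eps}$; finally, letting $T\up\infty$ — using $\phi_t(x)\ge-\sfQ(x)$ from \eqref{eq:65} (valid once $t$ is large, but actually for all $t\le T$ with $\frac1{2t}\ge\sfB$, which by $\frac1{4T}\ge\sfB$ holds on $(0,T]$) together with monotone/dominated convergence and $\rme^{-T/\eps}\to0$ — yields \eqref{eq:53}.
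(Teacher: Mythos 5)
Your opening move is exactly the paper's: fix $u\in\ACini x\eps$ and, for each $t>0$, use $u(t)$ as a competitor in the Yosida infimum \eqref{eq:52}, so that $\phi_t(x)\le \tfrac1{2t}\sfd^2(u(t),x)+\phi(u(t))$. But from there you head down the wrong road, and the detour you propose is precisely the step you should not take. You write that it is ``cleaner to keep $L(t)$'' rather than apply Cauchy--Schwarz; in fact Cauchy--Schwarz is the whole trick. Setting $\rmE(t):=\int_0^t|u'|^2(s)\dd s$, one has $\sfd^2(u(t),x)\le L^2(t)\le t\,\rmE(t)$, so
\begin{equation*}
\frac{\sfd^2(u(t),x)}{2t}\le \frac12\,\rmE(t),
\end{equation*}
and the troublesome $1/t$ weight disappears entirely. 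No Hardy inequality is needed, and there are no constants to match. The second ingredient you are missing is the integration-by-parts identity \eqref{eq:55} applied to $\rmE$ (which vanishes at $0$), yielding
\begin{equation*}
\int_0^T \tfrac\eps 2|u'|^2(t)\dd\mu_\eps(t)
=\int_0^T \tfrac12\,\rmE(t)\dd\mu_\eps(t)+\tfrac12\rme^{-T/\eps}\rmE(T),
\end{equation*}
so that $\int_0^T\tfrac12\rmE\,\dd\mu_\eps$ is paid for exactly by the kinetic part of $\calI_\eps$ on $[0,T]$, with a controllable boundary term. You also do not account for the tail of $\calI_\eps[u]$ on $[T,\infty)$: translating in time shows it is $\ge\rme^{-T/\eps}V_\eps(u(T))$. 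Combining, one gets
\begin{equation*}
\calI_\eps[u]\ge \int_0^T\Big(\tfrac12\rmE(t)+\phi(u(t))\Big)\dd\mu_\eps(t)
+\Big(\tfrac12\rmE(T)+V_\eps(u(T))\Big)\rme^{-T/\eps},
\end{equation*}
and then $\rmE(T)\ge \sfd^2(u(T),x)/T$ together with $V_\eps(u(T))\ge-\sfQ(u(T))\ge -2\sfB\sfd^2(u(T),x)-2\sfQ(x)$ makes the parenthesis $\ge -2\sfQ(x)$ precisely when $\tfrac1{2T}\ge 2\sfB$, i.e.\ $\tfrac1{4T}\ge\sfB$. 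This is where the hypothesis enters; it is not, as you suggest, tuned to a weighted Hardy constant. Taking the infimum over $u$ gives \eqref{eq:53bis}, and letting $T\up\infty$ gives \eqref{eq:53} as you describe. So the gap is concrete: you discard the Cauchy--Schwarz step that removes the singular weight, and you omit the $\rmE$-integration by parts and the contribution of the tail, replacing them with a Hardy-type inequality whose constants you correctly suspect would not close.
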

\begin{proof}
  For every $u\in \mathscr C_\eps(x)$ we introduce the energy
  functional
  \begin{equation}
    \label{eq:54}
    \rmE(t):=\int_0^t |u'|^2(s)\dd s.
  \end{equation}
 Formula \eqref{eq:55} yields
  \begin{equation}
    \label{eq:60}
    \int_0^T \frac\eps 2|u'|^2(t)\dd\mu_\eps(t)=
    \int_0^T \frac 12 \rmE(t)\dd\mu_\eps(t)+\frac {\rme^{-T/\eps}}2\rmE(T),
  \end{equation}
  so that
  \begin{equation}
    \label{eq:61}
    \calI[u]\ge\int_0^T \Big(\frac 12
    \rmE(t)+\phi(u(t))\Big)\dd\mu_\eps(t)+
    \Big(\frac 12\rmE(T)+V_\eps(u(T))\Big)\rme^{-T/\eps}.
  \end{equation}
  On the other hand 
  \begin{equation}
    \label{eq:62}
    \rmE(t)\ge \frac 1t\Big(\int_0^t |u'|(s)\dd s\Big)^2\ge
    \frac{\sfd^2(u(t),x)}t,\quad
    V_\eps(u(T))\ge -2\sfQ(x)-2\sfB\sfd^2(u(T),x)
  \end{equation}
  so that,  taking into account that $\mathsf{B} \leq \frac1{4T}$, we find 
    \begin{equation}
    \label{eq:61-bis}
    \calI[u]\ge\int_0^T \Big(\frac 1{2t}
    \sfd^2(u(t),x)+\phi(u(t))\Big)\dd\mu_\eps(t)
    -2\sfQ(x)\rme^{-T/\eps}.
  \end{equation}
  \eqref{eq:53} immediately follows  from  \eqref{eq:53bis}. 
\end{proof}
\subsection{The WED slope and its relaxation}

Let us now introduce the functional
\begin{equation}
\label{notation-wed-slope}
\wslo(x):= \limsup_{\eps\down 0}  \glee(x) =
\limsup_{\eps\down 0}  \sqrt{2\frac{\phi(x)-\Ve(x)}{\eps} }\quad \text{for all } x\in \SFD(\phi),
\end{equation}
which shall be  referred to as the \emph{WED slope} of $\phi$; as
usual we set $\wslo(x)=\infty$ if $x\not\in\SFD(\phi)$.
We also introduce its lower semicontinuous relaxation
with respect to the $\sigma$-topology,
along $\sfd$-bounded sequences with bounded energy, viz.\
\begin{gather}
 \rwslo(x) :=\inf\left\{\liminf_{n\up\infty} \wslo(x_n):
   x_n \weaksigma x,  \  \sup_n (\sfd(x_n,x),\,\phi(x_n)) <\infty \right\}.
 \label{notation-rwslo}
\end{gather}
We shall refer to $ \rwslo$ as the \emph{relaxed WED slope} of $\phi$.

In Proposition   \ref{l:5.1} below we prove that $ \wslo$
is dominated by the local slope
of $\phi$. 
\begin{proposition}
 \label{l:5.1}
 If 
 $\phi$ satisfies \eqref{basic-ass-2bis}.
 then 
\begin{equation}
\label{wslope-below-bound}
\wslo (x)
\leq  \ls(x) \qquad \text{for every } x \in \SFD(\phi).
\end{equation}
 \end{proposition}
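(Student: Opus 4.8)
The plan is to deduce the bound
\[
\wslo(x)=\limsup_{\eps\downarrow0}\sqrt{2\big(\phi(x)-\Ve(x)\big)/\eps}\ \le\ \ls(x)
\]
(recall $\phi(x)\ge\Ve(x)$ by \eqref{stime-V}, so the square root is meaningful and $x\in\rmD(\Ve)$) from an asymptotic lower bound on the value function. The tool is the comparison with the Moreau--Yosida approximation $\phi_t$ from \eqref{eq:52} established in Theorem \ref{thm:V-vs-Y}, combined with a classical estimate relating $\phi_t$ to the local slope. Throughout we may assume $\ls(x)<\infty$, the statement being trivial otherwise, and we abbreviate $L:=\tfrac12\ls^2(x)$.

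The substantive step is the inequality
\[
\limsup_{t\downarrow0}\frac{\phi(x)-\phi_t(x)}{t}\ \le\ L.
\]
To prove it I would write $\phi(x)-\phi_t(x)=\sup_{y\in X}\big(\phi(x)-\phi(y)-\tfrac1{2t}\sfd^2(x,y)\big)$ and split the supremum. For $y$ in a ball $\{0<\sfd(x,y)\le\delta\}$, with $\delta=\delta(\eta)$ chosen so that $\phi(x)-\phi(y)\le(\ls(x)+\eta)\,\sfd(x,y)$ there (possible by the very definition of $\ls(x)$), maximizing the one-dimensional quadratic $r\mapsto(\ls(x)+\eta)r-\tfrac1{2t}r^2$ gives $\phi(x)-\phi(y)-\tfrac1{2t}\sfd^2(x,y)\le\tfrac t2(\ls(x)+\eta)^2$. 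For $y$ with $\sfd(x,y)\ge\delta$ the coercivity \eqref{basic-ass-2bis} yields $\phi(x)-\phi(y)-\tfrac1{2t}\sfd^2(x,y)\le\phi(x)+\sfQ(x)-\big(\tfrac1{2t}-\sfB\big)\delta^2$, which is $\le0$ once $t$ is small enough. Hence $\phi(x)-\phi_t(x)\le\tfrac t2(\ls(x)+\eta)^2$ for all small $t$; letting $t\downarrow0$ and then $\eta\downarrow0$ proves the claim. (Here one also uses $0\le\phi(x)-\phi_t(x)$, obtained by taking $y=x$, and $\phi_t(x)\ge-\sfQ(x)$ for $t\le\tfrac1{2\sfB}$ from \eqref{eq:65}.)

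With this at hand the conclusion is routine. Fix $\eta>0$ and choose $T=T(\eta)>0$ so small that $\phi(x)-\phi_t(x)\le(L+\eta)\,t$ for all $t\in(0,T]$ and, simultaneously, $\tfrac1{4T}\ge\sfB$. Applying \eqref{eq:53bis} and using $\mu_\eps\big([T,\infty)\big)=\rme^{-T/\eps}$ one gets
\[
\phi(x)-\Ve(x)\ \le\ \int_0^T\big(\phi(x)-\phi_t(x)\big)\,\dd\mu_\eps(t)+\big(\phi(x)+2\sfQ(x)\big)\rme^{-T/\eps};
\]
since $0\le\phi(x)-\phi_t(x)\le(L+\eta)\,t$ on $(0,T]$ and $\int_0^\infty t\,\dd\mu_\eps(t)=\eps$, the integral is $\le(L+\eta)\eps$. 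Dividing by $\eps$ and letting $\eps\downarrow0$, the last term vanishes because $\rme^{-T/\eps}/\eps\to0$, so $\limsup_{\eps\downarrow0}\tfrac{\phi(x)-\Ve(x)}{\eps}\le L+\eta$. Letting $\eta\downarrow0$ and taking square roots gives $\wslo(x)\le\sqrt{2L}=\ls(x)$. The only genuine obstacle is the Yosida estimate of the second paragraph — more precisely, balancing the "near" contribution (controlled by the definition of the local slope) against the "far" one (absorbed by coercivity); everything after that is bookkeeping with the exponential measure $\mu_\eps$ and a direct invocation of Theorem \ref{thm:V-vs-Y}.
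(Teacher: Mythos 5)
Your proof is correct and follows the same overall strategy as the paper's: invoke Theorem~\ref{thm:V-vs-Y} to dominate $\phi-\Ve$ by an exponentially weighted integral of $\phi-\phi_t$, then control the latter via the local slope. The execution differs in two ways. First, the paper cites the full duality \emph{equality} $\tfrac12\ls^2(x)=\limsup_{t\downarrow0}(\phi(x)-\phi_t(x))/t$ from \cite[Lemma~3.1.5]{Ambrosio-Gigli-Savare08}; you instead prove the (only needed) upper bound $\phi(x)-\phi_t(x)\le\tfrac t2(\ls(x)+\eta)^2$ for small $t$ from scratch, by splitting the supremum defining $\phi_t$ into a near-ball contribution (handled by the definition of $\ls$ plus a one-dimensional maximization) and a far contribution killed by coercivity~\eqref{basic-ass-2bis}. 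Second, for the passage $\eps\downarrow0$ the paper rescales $t=\eps s$, truncates the integrand by the constant $C$ from~\eqref{eq:67}, and applies Fatou's lemma; you instead use your \emph{uniform} bound $\phi(x)-\phi_t(x)\le(L+\eta)t$ on $(0,T]$ together with the identity $\int_0^\infty t\,\dd\mu_\eps=\eps$, which bypasses Fatou entirely. Since the $\limsup$ inequality is in fact equivalent to such a uniform bound near $t=0$, you are not using anything stronger than the paper does; you have merely observed that the uniform formulation makes the limit in $\eps$ immediate. Both arguments are valid; yours is slightly more self-contained and elementary, at the cost of reproving a small part of a textbook lemma.
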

 \begin{proof}
   We recall the duality formula for the local slope \cite[Lemma 3.1.5]{Ambrosio-Gigli-Savare08}
   \begin{equation}
     \label{eq:63}
     \frac
     12\ls^2(x)=\limsup_{t\downarrow0}\frac{\phi(x)-\phi_t(x)}t\quad
     \text{for every }x\in \SFD(\phi).
   \end{equation}
   It is not restrictive to suppose $\ls(x)<\infty$ so that   by
   \eqref{eq:65}   there exists a constant $C\ge \ls(x)$ such that
   \begin{equation}
     \label{eq:67}
     0\le \frac{\phi(x)-\phi_t(x)}t\le C\quad \text{if }0<t<\frac 1{2\sfB}.
   \end{equation}
   Choosing $T$ so that $0<T\le \frac 1{4\sfB}$, 
   by \eqref{eq:53bis} we get
   \begin{align*}
     \frac{\phi(x)-V_\eps(x)}\eps&\le 
     \int_0^T \frac{\phi(x)-\phi_t(x)}\eps\dd\mu_\eps(t)
     +\frac{\rme^{-T/\eps}}\eps\Big(\phi(x)+2\sfQ(x)\Big)\\
       &=\int_0^{T/\eps} \frac{\phi(x)-\phi_{\eps t}(x)}{\eps
       t}\,t\rme^{-t}\dd
     t+\frac{\rme^{-T/\eps}}\eps\Big(\phi(x)+2\sfQ(x)\Big)
     \\&
     \le \int_0^\infty \left(  C{\land} \frac{\phi(x)-\phi_{\eps t}(x)}{\eps
       t} \right)  \,t\rme^{-t} \dd
     t+\frac{\rme^{-T/\eps}}\eps\Big(\phi(x)+2\sfQ(x)\Big),
   \end{align*}
    where the last inequality follows from \eqref{eq:67}. 
   Since the last integrand is uniformly bounded, Fatou's Lemma
   yields
   \begin{align*}
     \frac12\wslo^2(x)
     &\le 
     \int_0^\infty \limsup_{\eps\down0} \left( C{\land} \Big(\frac{\phi(x)-\phi_{\eps t}(x)}{\eps
       t}\Big)\right)  \,t\rme^{-t}\dd t
    \\ 
&\le 
     \int_0^\infty \frac 12\ls^2(x)\,t\rme^{-t}\dd t=
     \frac 12\ls^2(x).\qedhere
   \end{align*}
\end{proof}
 With our next result we provide the converse estimate of \eqref{wslope-below-bound}, cf.\ \eqref{enhanced-slope-bis},   in terms of the \emph{relaxed} slopes $ \rls$ and $ \rwslo $. Indeed, we shall derive it  from   estimate \eqref{enhanced-slope},
which will play a key role in the proof of Theorem \ref{th:4.1}. It
 involves  $ \rls$  and the lower semicontinuous relaxation of $G_\eps$ itself,  with respect to the $\sigma$-topology,
along $\sfd$-bounded sequences with  bounded energy, \emph{and along vanishing sequences $(\eps_n)_n$}, i.e.\
\begin{equation}
\label{Gelinf}
\Gelinf(x): = \inf\left\{\liminf_{n\up\infty} G_{\eps_n}(x_n)\, : \ \eps_n \down 0, \ 
   x_n \weaksigma x,  \  \sup_n (\sfd(x_n,x),\,\phi(x_n)) <\infty \right\}.
\end{equation}
 \begin{proposition}
\label{l:4.1} Assume 
Property \ref{basic-ass}. 
Then, 
for every $\ini  \in \SFD(\phi) $
there holds
\begin{align}
\label{enhanced-slope}
&
\Gelinf(\ini) \geq \rls (\ini), 
\\
&
\label{enhanced-slope-bis}
\rwslo (\ini) \geq \rls(\ini)\,.
\end{align}
\end{proposition}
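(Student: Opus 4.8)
The plan is to establish \eqref{enhanced-slope} first and then deduce \eqref{enhanced-slope-bis} from it. For \eqref{enhanced-slope}, fix $\bar u\in\SFD(\phi)$ and an arbitrary admissible sequence, i.e.\ $\eps_n\downarrow 0$, $u_n\weaksigma\bar u$ with $\sup_n\big(\sfd(u_n,\bar u),\phi(u_n)\big)<\infty$; since $V_{\eps_n}\le\phi$ (so that $G_{\eps_n}(u_n)=\sqrt{2\eps_n^{-1}(\phi(u_n)-V_{\eps_n}(u_n))}$ is well defined), proving $\liminf_n G_{\eps_n}(u_n)\ge\rls(\bar u)$ amounts to
\[
\liminf_{n\to\infty}\frac{\phi(u_n)-V_{\eps_n}(u_n)}{\eps_n}\ \ge\ \frac12\,\rls^2(\bar u).
\]
The point is therefore to produce a good upper bound for $V_{\eps_n}(u_n)$, and the natural competitor is the Minimizing Movement curve issued from $u_n$.

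Under the LSCC Property~\ref{basic-ass}, the implicit Euler (minimizing movement) scheme furnishes, for each $n$, a curve $w_n\in\AC^2_\loc([0,\infty);X)$ with $w_n(0)=u_n$ satisfying the energy inequality
\[
\frac12\int_0^t|w_n'|^2(r)\,\dd r+\frac12\int_0^t\rls^2(w_n(r))\,\dd r+\phi(w_n(t))\ \le\ \phi(u_n)\qquad\text{for all }t\ge0,
\]
exactly as in the classical metric gradient-flow existence theory (cf.~\cite{Ambrosio-Gigli-Savare08}), and involving precisely the relaxed slope $\rls$ of \eqref{slope1}. The coercivity \eqref{basic-ass-2bis} bounds $\int_0^\infty|w_n'|^2\,\dd t$ uniformly in $n$, so $w_n\in\mathscr C_{\eps_n}(u_n)$ and is an admissible competitor for $V_{\eps_n}(u_n)$.

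Now comes the key computation: integrating the energy inequality against $\mu_{\eps_n}=\eps_n^{-1}\rme^{-t/\eps_n}\Leb1$ and using the elementary identity $\int_0^\infty\!\big(\int_0^tg(r)\,\dd r\big)\,\dd\mu_{\eps_n}(t)=\int_0^\infty g(r)\,\rme^{-r/\eps_n}\,\dd r$, the two kinetic contributions in $\calI_{\eps_n}[w_n]=\int_0^\infty\big(\tfrac{\eps_n}2|w_n'|^2+\phi(w_n)\big)\dd\mu_{\eps_n}$ cancel exactly, leaving
\[
V_{\eps_n}(u_n)\ \le\ \calI_{\eps_n}[w_n]\ \le\ \phi(u_n)-\frac12\int_0^\infty\rls^2(w_n(r))\,\rme^{-r/\eps_n}\,\dd r,
\]
whence, after the substitution $r=\eps_n s$,
\[
\frac{\phi(u_n)-V_{\eps_n}(u_n)}{\eps_n}\ \ge\ \frac12\int_0^\infty\rls^2\!\big(w_n(\eps_n s)\big)\,\rme^{-s}\,\dd s.
\]
Since $\sfd(w_n(\eps_n s),u_n)\le (\eps_n s)^{1/2}\big(\int_0^{\eps_n s}|w_n'|^2\big)^{1/2}\le C(\eps_n s)^{1/2}\to0$, one has $w_n(\eps_n s)\weaksigma\bar u$ with $\sup_n\sfd(w_n(\eps_n s),\bar u)<\infty$ and $\phi(w_n(\eps_n s))\le\phi(u_n)$ bounded; by the $\sigma$-lower semicontinuity of $\rls$ along $\sfd$-bounded sequences of bounded energy (immediate from \eqref{slope1}, cf.~\cite{Ambrosio-Gigli-Savare08}) we get $\liminf_n\rls^2(w_n(\eps_n s))\ge\rls^2(\bar u)$ for every $s>0$, and Fatou's lemma yields $\liminf_n\eps_n^{-1}(\phi(u_n)-V_{\eps_n}(u_n))\ge\tfrac12\rls^2(\bar u)\int_0^\infty\rme^{-s}\,\dd s=\tfrac12\rls^2(\bar u)$, proving \eqref{enhanced-slope}.

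To pass from \eqref{enhanced-slope} to \eqref{enhanced-slope-bis} it suffices to check $\rwslo(\bar u)\ge\Gelinf(\bar u)$: given an admissible sequence $(x_n)$ in \eqref{notation-rwslo} with $\liminf_n\wslo(x_n)=:L<\infty$ (the case $L=\infty$ being vacuous), for each $n$ with $\wslo(x_n)<\infty$ choose, using $\wslo(x_n)=\limsup_{\eps\downarrow0}G_\eps(x_n)$, some $\eps_n<1/n$ with $|G_{\eps_n}(x_n)-\wslo(x_n)|<1/n$; passing to a subsequence so that moreover $\eps_n\downarrow0$ and $G_{\eps_n}(x_n)\to L$, the pair $(\eps_n,x_n)$ is admissible in \eqref{Gelinf}, so $\Gelinf(\bar u)\le L=\liminf_n\wslo(x_n)$, and taking the infimum over $(x_n)$ gives $\Gelinf(\bar u)\le\rwslo(\bar u)$. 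The main obstacle in this argument is the second step: one must invoke the Minimizing Movement machinery in the two-topology framework of Section~\ref{subsec:topological}, verify that the limit curve $w_n$ genuinely belongs to $\AC^2_\loc$ with uniformly bounded metric action (needed both for admissibility and for $w_n(\eps_n s)\to u_n$), and confirm that the relaxed slope appearing in its energy inequality coincides with the $\rls$ of \eqref{slope1}; once this is secured, the remaining ingredients are only the Fubini-type cancellation and Fatou's lemma.
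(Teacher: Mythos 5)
Your proof is correct and essentially coincides with the paper's: both take as competitor for $V_{\eps_n}(u_n)$ a Minimizing-Movement curve $w_n$ from $u_n$ satisfying the energy-dissipation estimate \eqref{GMM_eps} with $\rls$, exploit the exact cancellation of the kinetic term after integrating that estimate against $\mu_{\eps_n}$ (the paper via the integration-by-parts formula \eqref{eq:55bis}, you via the equivalent Fubini/Tonelli identity), rescale $t=\eps_n s$, apply Fatou together with the $\sigma$-lower semicontinuity of $\rls$ along $\sfd$- and energy-bounded sequences, and deduce \eqref{enhanced-slope-bis} by the same diagonalization. The one step you state without detail (and flag yourself at the end) is the uniform $\sfd$-bound on $w_n(t)$, which the paper derives from the coercivity \eqref{basic-ass-2bis} via a Gronwall argument and which is needed both to justify the Fubini step and to guarantee $w_n(\eps_n s)\weaksigma\ini$ with bounded distance and energy.
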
 
\begin{proof}
Let us fix  a vanishing sequence $(\eps_n)_n$ and a sequence $
    \ini_n  \weaksigma \ini  \text{  with }
\ \sup_{n} \left( \sfd(\ini_n,\ini), \, \phi(\ini_n ) \right) \leq C<\infty$. 
From the definition of $V_{\eps_n}$ we have
\begin{equation}
\label{first-steppo}
\begin{aligned}
\frac1{\eps_n} \left( \phi( \ini_n ) {-} \Ven( \ini_n ) \right) \geq
 \frac1{\eps_n} \int_0^{ \infty }
(\phi( \ini_n ){-}\phi( w_{\eps_n}(t)))\,
 \mathrm{d} \mu_{\eps_n}(t)   -\frac1{\eps_n}  \int_0^{ \infty } \frac{\rme^{-t/\eps_n}}{2}
| w'_{\eps_n}|^2(t)\, \mathrm{d}t
\end{aligned}
\end{equation} for every  $ w_{\eps_n}  \in   \ACini{\ini_n}{\eps_n}$. 
In order to show 
that 
\begin{equation}
\label{2PROVE}
\liminf_{n\to\infty}  \frac1{\eps_n} \left( \phi( \ini_n ) {-} V_{\eps_n}( \ini_n ) \right)  \geq \frac12\rls^2 (\ini),
\end{equation}
we pick $w_{\eps_n}$ such that, additionally, it fulfills for every $n \in \N $
\begin{equation}
\label{GMM_eps}
\int_0^t \left( \frac12 |w_{\eps_n}'|^2(s) + \frac12 \rls^{2} (w_{\eps_n}(s))\right) \dd s + \phi(w_{\eps_n}(t)) \leq   \phi(w_{\eps_n}(0))= \phi(\ini_{n}) \quad \text{for all } t >0
\end{equation}
and such that
\begin{equation}
\label{stimgflow} \sup_{n\in \N,\, t \in [0,\infty)}
\left(\phi( w_{\eps_n} (t)) + \int_0^t \mathcal{H}_{\eps_n}(s) \dd s \right) \leq
 \phi( \ini_{n} ), 
\end{equation}
where we have used
the place holder $\mathcal{H}_{\eps_n} (s) := \frac12 |w_{\eps_n}'|^2(s) + \frac12 \rls^{2} (w_{\eps_n}(s))$.
In fact it has been shown in  \cite[Thm.\ 2.3.1, Lemma 3.2.2]{Ambrosio-Gigli-Savare08} that,
under Property \ref{basic-ass},  for every $n\in N$ there exists
 $w_{\eps_n} \in   \ACini{\ini_n}{\eps_n}  $  complying with
\eqref{GMM_eps}--\eqref{stimgflow}.
\par
 In the following lines, we derive some finer estimates for the sequence $(w_{\eps_n})_n$. 
Indeed, for almost all
$t\in (0,\infty)$
\[
\frac{\dd }{\dd t} \frac12 \sfd^2 (w_{\eps_n}(t),w_{\eps_n}(0)) \leq  \sfd (w_\eps(t),w_{\eps_n}(0)) |w_{\eps_n}'|(t) \leq \frac{\delta}2
|w_{\eps_n}'|^2(t) +\frac1{2\delta} \sfd^2 (w_{\eps_n}(t), \ini_{n})
\]
for every $\delta>0$.  Hence, upon integrating along the interval
$(0,t)$ we find
\[
\begin{aligned}
\frac12 \sfd^2 (w_{\eps_n}(t),w_{\eps_n}(0)) &  \leq \delta \left( \phi(
w_{\eps_n}(0) ) - \phi(w_{\eps_n}(t)) \right) + \frac1\delta \int_0^t
\frac12 \sfd^2 (w_{\eps_n}(s),w_{\eps_n}(0)) \dd s \\
& \leq C +
 \mathsf{B}\delta \sfd^2(w_{\eps_n}(t), \ini_{n}) +\delta  \mathsf{Q}(\ini_{n}) 
+ \frac1\delta \int_0^t \frac12 \sfd^2
(w_{\eps_n}(s),\ini_{n}) \dd s
\end{aligned}
\]
where the first inequality follows from estimate  \eqref{stimgflow}, and the
last one from the coercivity condition   \eqref{basic-ass-2bis}  for
$\phi$. Choosing  $\delta = 1/(8 \mathsf{B})$ and taking into account the bounds on the sequence $(\ini_n)_n$,  we then conclude
\[
\sfd^2 (w_{\eps_n}(t),\ini_{n}) \leq  C \left(1+ \int_0^t \sfd^2
(w_{\eps_n}(s),\ini_n) \dd s \right) , 
\]
whence, by the Gronwall Lemma,
\[
\sup_{n\in \N,\, t \in [0,\infty)} \sfd^2 (w_{\eps_n}(t),\ini_n) \leq  C.
\]
Combining this estimate  with \eqref{basic-ass-2} we infer
\[
\exists\, C>0 \ \ \forall\, n\in \N \ \forall\, t \in [0,\infty)\, :
\quad  |\phi( w_{\eps_n} (t))| \leq C\,,
\]
whence by \eqref{stimgflow}
\begin{equation}
\label{quotable-later}
 \int_0^t \mathcal{H}_{\eps_n}(s) \dd s
\leq C \quad \text{for all } t \in [0,\infty), \ n\in \N\,.
\end{equation}

Therefore,   we have
\begin{equation}
\label{iparts}
\begin{aligned}
\ \frac1{\eps_n} \int_0^{ \infty }
(\phi( \ini_n ){-}\phi( w_{\eps_n}(t)))\,
 \mathrm{d} \mu_{\eps_n}(t) 
 & \geq   \frac1{\eps_n} \int_0^{ \infty }
\left(  \int_0^t  \mathcal{H}_{\eps_n} (s) \dd s \right) \,  \mathrm{d} \mu_{\eps_n}(t) 
\\ &
= \frac1{\eps_n} \int_0^{ \infty } \rme^{-t/\eps_n}   \mathcal{H}_{\eps_n} (t) \dd t
\end{aligned}
\end{equation}
where the second equality follows from the integration by parts
formula \eqref{eq:55bis}, taking into account \eqref{quotable-later}. 
 Plugging \eqref{iparts}
into \eqref{first-steppo}, the term $\frac1{\eps_n}  \int_0^{ \infty }
\frac{\rme^{-t/\eps_n}}{2} | w_{\eps_n}'|^2(t)\, \mathrm{d}t$ cancels out,
and we conclude \UUU that \EEE
%
\begin{equation}
\label{second-step}
\begin{aligned}
\frac1{\eps_n} \left( \phi( \ini_n ) {-} \Ven( \ini_n ) \right) \geq 
\frac12 \int_0^{ \infty }
 |\partial \phi|^2( w_{\eps_n} (t))\dd \mu_{\eps_n}( t)  
  = \frac12 \int_0^{ \infty } \rme^{-s}  |\partial \phi|^2( w_{\eps_n} (\eps_n s))\dd s,  
\end{aligned}
\end{equation}
where again we have used the change of variables in \eqref{change-variables}.
Since for all $s \in [0,\infty)$ \UUU we have \EEE
\begin{equation}
\label{d-weps}
\sfd( w_{\eps_n} (\eps_n s),  w_{\eps_n} (0)) \leq (\eps_n s)^{1/2} \sup_{t \in [0,\infty)}\|  w_{\eps_n} '\|_{L^2 (0,t)}
\leq C (\eps_n s)^{1/2},
\end{equation}
(the latter estimate due to \eqref{stimgflow})
and $  w_{\eps_n} (0)=  \ini_n  \weaksigma \ini$ as $n \to \infty$, we conclude that
  $ w_{\eps_n} (\eps_n s) \weaksigma \ini$ as  $n\to\infty$ for all $s \in [0,\infty)$.
  Also, observe that  for all $s \in [0,\infty)$
  $\sup_{n\in\N} \sfd( w_{\eps_n} (\eps_n s),\ini) \leq C$ due to the bounds on $(\ini_n)_n$  and \eqref{d-weps}, and that $\sup_{\eps}\phi(w_{\eps_n}(\eps_n s))
  \leq C$ by \eqref{stimgflow}.
  Therefore,
  \[
  \liminf_{n\to\infty}   |\partial \phi|^2( w_{\eps_n} (\eps_n s)) \geq \rls^2 (\ini) \quad \text{for all } s \in [0,\infty).
  \]
  Ultimately,
 from
 \eqref{second-step} and Fatou's Lemma we find
 \[
 \begin{aligned}
\liminf_{n\to\infty} \frac1{\eps_n} \left( \phi( \ini_n ) {-} \Ven( \ini_n ) \right)  & \geq 
\frac12 \int_0^{ \infty } \rme^{-s} \liminf_{n\to\infty}   |\partial \phi|^2( w_{\eps_n} (\eps_n s))\dd s
\\ &  \geq \frac12 \int_0^{ \infty } \rme^{-s} \rls^2(\ini) \dd s= \frac12  \rls^2(\ini),
 \end{aligned}
 \]
  whence \eqref{2PROVE}. 
 Since the sequences $(\ini_n)_n$  and $(\eps_n)_n $ are  arbitrary, we conclude \eqref{enhanced-slope}.
 \par
 Finally, let us  check that 
 \begin{equation}
 \label{speriamo-in-bene}
 \rwslo (\ini)  \geq \Gelinf(\ini) \qquad \text{for all } \ini \in \mathrm{D}(\phi),
\end{equation}
whence \eqref{enhanced-slope-bis}  immediately follows. 
With this aim, let us fix
$\eta>0$ and pick a sequence 
$(\ini_n)_n$ with
$  \ini_n \weaksigma \ini,  $ and   $\sup_n (\sfd(\ini_n,\ini),\,\phi(\ini_n)) <\infty$ such that 
\[
\liminf_{n\to\infty} \limsup_{\eps \down 0} G_\eps(\ini_n) \leq  \rwslo (\ini) +\eta.
\]
Up to an extraction, we may replace $\liminf_{n\to\infty} $ by $\lim_{n\to\infty}$. Hence,
\[
\exists\, \bar{n}\in \N \ \forall\, n \geq \bar n \, : \quad  \limsup_{\eps \down 0} G_\eps(\ini_n)  = \inf_{r>0}
\sup_{\eps \in (0,r)}G_\eps(\ini_n) \leq   \rwslo (\ini) +2\eta. 
\] 
Therefore, there exists a vanishing sequence $(r_n)_n$ such that, for $n$ sufficiently big, 
$G_{r_n}(\ini_n) \leq   \rwslo (\ini) +2\eta.$ This ensures that 
\[
 \Gelinf(\ini)  \leq \liminf_{n\to\infty} G_{r_n}(\ini_n) \leq  \rwslo (\ini) +3\eta,
\]
which concludes the proof of \eqref{speriamo-in-bene}, since $\eta>0$ is arbitrary. 
\end{proof}
Combining Propositions \ref{l:5.1} and \ref{l:4.1} we conclude  the following result,  specifying in which sense the quantities $(G_\eps)_\eps$ approximate the relaxed slope $\rls$.  
\begin{corollary}
\label{cor:convergence-to-relaxed}
Under \UUU the LSCC \EEE Property \ref{basic-ass} there holds
\begin{equation}
\label{sono-uguali}
 \rwslo (\ini)  =
\rls (\ini) \qquad \text{for every } \ini  \in \SFD(\phi).
\end{equation}
In particular, if  the local slope $\ls$ is $\sigma$-lower semicontinuous along $\sfd$-bounded sequences with bounded energy, then
$ \rwslo (\ini) =  \ls(\ini)$ for all $\ini \in \SFD(\phi)$.
\end{corollary}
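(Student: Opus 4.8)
The plan is to obtain \eqref{sono-uguali} as a sandwich between the two inequalities already at our disposal. On the one hand, Proposition \ref{l:5.1} gives the pointwise bound $\wslo(x)\le\ls(x)$ for every $x\in\SFD(\phi)$ (and trivially $\wslo(x)=\ls(x)=\infty$ for $x\notin\SFD(\phi)$). On the other hand, Proposition \ref{l:4.1}, cf.\ \eqref{enhanced-slope-bis}, provides the reverse inequality at the level of the relaxed quantities, namely $\rwslo(\ini)\ge\rls(\ini)$ for every $\ini\in\SFD(\phi)$. It thus only remains to upgrade the first, pointwise bound into a bound between the relaxations.

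First I would observe that the operation producing $\rwslo$ from $\wslo$ in \eqref{notation-rwslo} and the one producing $\rls$ from $\ls$ in \eqref{slope1} are the \emph{same} relaxation procedure, namely the sequentially $\sigma$-lower semicontinuous envelope computed along $\sfd$-bounded sequences with bounded $\phi$-energy. This procedure is monotone: if $f\le g$ pointwise on $X$, then every admissible test sequence $x_n\weaksigma x$ with $\sup_n(\sfd(x_n,x),\phi(x_n))<\infty$ satisfies $\liminf_n f(x_n)\le\liminf_n g(x_n)$, so taking the infimum over all such sequences yields $\mathrm{rel}(f)(x)\le\mathrm{rel}(g)(x)$. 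Applying this with $f=\wslo$ and $g=\ls$ and invoking Proposition \ref{l:5.1} gives $\rwslo(\ini)\le\rls(\ini)$; combined with \eqref{enhanced-slope-bis} this proves $\rwslo(\ini)=\rls(\ini)$ for all $\ini\in\SFD(\phi)$.

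For the last assertion I would argue that, under the additional hypothesis that $\ls$ is $\sigma$-lower semicontinuous along $\sfd$-bounded sequences with bounded energy, the relaxation leaves $\ls$ unchanged, i.e.\ $\rls\equiv\ls$ on $\SFD(\phi)$: indeed, the inequality $\rls(\ini)\le\ls(\ini)$ is obtained by testing \eqref{slope1} with the constant sequence $u_n\equiv\ini$, while the opposite inequality $\rls(\ini)\ge\ls(\ini)$ is precisely the assumed lower semicontinuity evaluated along an arbitrary admissible sequence. Substituting $\rls=\ls$ into \eqref{sono-uguali} then gives $\rwslo(\ini)=\ls(\ini)$, as claimed.

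There is, in fact, no serious obstacle here: all the analytical work is concentrated in Propositions \ref{l:5.1} and \ref{l:4.1} (and, behind the latter, in the construction of approximating curves of maximal slope borrowed from \cite{Ambrosio-Gigli-Savare08}), so the corollary is purely a matter of assembling these two results. The only point deserving a moment's care is to confirm that the two envelopes appearing in \eqref{slope1} and \eqref{notation-rwslo} are taken along the very same class of sequences, so that the elementary monotonicity of relaxation applies verbatim; once this is checked, the argument is immediate.
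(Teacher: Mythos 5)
Your proof is correct and is exactly what the paper intends (the paper simply states that the corollary follows by ``combining'' Propositions \ref{l:5.1} and \ref{l:4.1}, leaving the assembly implicit). The two essential observations you make explicit — that \eqref{slope1} and \eqref{notation-rwslo} use the identical relaxation procedure, so that the pointwise bound $\wslo\le\ls$ from Proposition \ref{l:5.1} passes to the envelopes, and that \eqref{enhanced-slope-bis} closes the sandwich — are precisely the intended argument, and your handling of the final assertion (constant-sequence test for $\rls\le\ls$, the lsc hypothesis for $\rls\ge\ls$) is also standard and correct.
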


\subsection{Proof of Theorem \ref{th:4.1}}
\label{ss:6.2}
It follows from Corollary  \ref{lemma:est-phi-up}
and the fact that $\sup_\eps \phi(\ini_\eps)\leq C$
 that
\[
\exists\, C \geq 0 \ \ \forall\, \eps>0 \ \forall\, t \in [0,\infty)\, : \qquad
\begin{cases}
\int_0^t |\ue'|^2(s) \dd s \leq C,
\\
\int_0^t \phi(\ue(s)) \dd s \leq C\,.
\end{cases}
 \]
 Moreover,
observing that $\sfd(\ue(t),\inie) \leq \int_0^t |\ue'|(s) \dd s $ and
taking into account \eqref{converg-init-data}, 
 we conclude that for every $T>0$
\begin{equation}
\label{d-bounded}
\exists\, C= C(T) >0 \ \ \forall\, \eps>0 \  \ \forall\, t \in [0,T]\, : \quad \sfd(\ue(t),\ini) \leq C(T)\,.
\end{equation}
\par
 We now apply Theorem \ref{thm:main-compactness} and 
 conclude that, for every vanishing $(\eps_k)_k$ there exist a (not relabeled)  subsequence $(\uek)_k$ and $u \in \AC_\loc([0,\infty);X)$ such that the pointwise convergence \eqref{pointiwse-conv} holds, as well as \eqref{eq:3} and \eqref{eq:14}. 
\par
We are now in a position to pass to the limit as $\eps_k \down 0$ in
identity \eqref{intermediate-relation}, which we integrate on any
interval $(0,t) \subset (0,\infty)$:
\begin{equation}
\label{enid-ve} \frac12  \int_0^t |\uek'|^2(s)\, \mathrm{d}s +
\int_0^t \frac1{\eps_k} \left( \phi(\uek(s)) {-} V_{\eps_k}(\uek(s))\right)\,
\mathrm{d}s + V_{\eps_k}(\uek(t)) = V_{\eps_k} (\ini_{\eps_k}).
\end{equation}
 Assumption \eqref{converg-init-data}, estimate \eqref{stime-V}, and the $\liminf$-inequality \eqref{enhanced-lsc}
 yield that
\begin{equation}
\label{convergence-epsk-1}
 \phi(\ini)  \leq \liminf_{k\to\infty} V_{\eps_k} (\ini_{\eps_k}) \leq \limsup_{k\to\infty} V_{\eps_k} (\ini_{\eps_k}) \leq  \limsup_{k\to\infty} \phi(\ini_{\eps_k}) =  \phi(\ini).  
\end{equation} 
As for the left-hand side of \eqref{enid-ve}, we observe that
\begin{equation}
\label{convergence-epsk-2}
\liminf_{\eps_k \downarrow 0}   V_{\eps_k}(\uek(t)) \geq \phi(u(t))) \quad \text{for all }  t\in [0,\infty)
\end{equation} thanks to \eqref{enhanced-lsc}  and \eqref{pointiwse-conv},
 and  
\begin{equation}
\label{convergence-epsk-2-bis}
\liminf_{\eps_k \downarrow 0}   \int_0^t |\uek'|^2(s)\, \mathrm{d}s
\geq \int_0^t |u'|^2(s) \dd s.
\end{equation}
by \eqref{eq:3}. 
In order to conclude \eqref{lsc-gflow}, it remains to show that
\begin{equation}
\label{convergence-epsk-3}
\liminf_{\eps_k \downarrow 0}
\int_0^t \frac1{\eps_k} \left( \phi(\uek(s)) {-} V_{\eps_k}(\uek(s))\right)\,
\mathrm{d}s
\geq \int_0^t   \frac12 \rls^2(u(s))  \dd s\,.
\end{equation}
\par
With this aim, we use that, for any $\delta>0$
\[
\begin{aligned}
\liminf_{\eps_k \downarrow 0}
\int_0^t \frac1{\eps_k} \left( \phi(\uek(s)) {-} V_{\eps_k}(\uek(s))\right)\,
\mathrm{d}s  & \geq \liminf_{\eps_k \downarrow 0}
\int_0^t \left( \frac{\phi(\uek(s)) {-} V_{\eps_k}(\uek(s))}{\eps_k} +\delta  \phi(\uek(s))  \right)\,
\mathrm{d}s
\\ & \quad
+ \liminf_{\eps_k \downarrow 0}  \left(-\delta \int_0^t  \phi(\uek(s))\dd s   \right) =: I_1+I_2\,.
\end{aligned}
\]
Now,
\[
I_2 =-\delta  \limsup_{\eps_k \downarrow 0} \int_0^t  \phi(\uek(s))\dd s\geq -\delta C
\]
for a constant independent of $\eps_k$,
where the latter inequality ensues from estimate \eqref{est-phi-up-2}
and condition \eqref{converg-init-data}.
As for $I_1$, we may apply  the Fatou Lemma since
 the function $$s\mapsto \frac{\phi(\uek(s)) {-} V_{\eps_k}(\uek(s))}{\eps_k} +\delta  \phi(\uek(s))$$
is bounded from below
by a constant independent of $\eps_k$: indeed,
 the first summand is positive, and the second one is bounded from below in view of
 the coercivity
  \eqref{basic-ass-2bis} and  estimate \eqref{d-bounded} above.
  Therefore,
\[
I_1 \geq \int_0^t \liminf_{\eps_k \down 0}\left( \frac{\phi(\uek(s)) {-} V_{\eps_k}(\uek(s))}{\eps_k} +\delta  \phi(\uek(s))  \right) \dd s
\]
Now, for any fixed $s \in (0,t)$ out of a negligible set,
let us extract a further  subsequence $(\eps_k')$, possibly depending on $s$, such that
\[
\liminf_{\eps_k \down 0}\left( \frac{\phi(\uek(s)) {-} V_{\eps_k}(\uek(s))}{\eps_k} +\delta  \phi(\uek(s))  \right)
= \lim_{\eps_k' \down 0}\left( \frac{\phi(u_{\eps_k'}(s)) {-} V_{\eps_k'}(u_{\eps_k'}(s))}{\eps_k'} +\delta  \phi(u_{\eps_k'}(s))  \right)\,.
\]
Observe that, along this subsequence there holds $\sup_k \phi(u_{\eps_k'}(s)) <\infty$, as well as estimate \eqref{d-bounded} and convergence \eqref{pointiwse-conv}. Therefore, we are in the position to apply
 the $\Gamma$-$\liminf$ inequality \eqref{enhanced-slope} from 
Lemma \ref{l:4.1}.  We ultimately conclude that
\[
\liminf_{\eps_k \down 0}\left( \frac{\phi(\uek(s)) {-} V_{\eps_k}(\uek(s))}{\eps_k} +\delta  \phi(\uek(s))  \right) \geq \frac12 \rls^2(u(s)) +\delta \phi(u(s)) \quad \foraa\, s \in (0,t).
\]
All in all, we deduce that
\[
\liminf_{\eps_k \downarrow 0}
\int_0^t \frac1{\eps_k} \left( \phi(\uek(s)) {-} V_{\eps_k}(\uek(s))\right)\,
\mathrm{d}s \geq \int_0^t \frac12 \rls^2(u(s)) \dd s +\delta \int_0^t \phi(u(s)) \dd s -C\delta\,.
\]
Since
$\delta $ is arbitrary, we infer \eqref{convergence-epsk-3}.
\par
  Combining 
 \eqref{convergence-epsk-1}--\eqref{convergence-epsk-3} we pass to the limit in \eqref{enid-ve} and thus conclude the proof 
 of the integral inequality \eqref{lsc-gflow}.  \hfill $\square$

\section{Finer results for $\lambda$-geodesically convex energies}
\label{s:aprio}
Throughout this
section, we shall further assume that 
\begin{equation}
\label{phi-conv} 
\text{$\phi$ is $\lambda$-geodesically
convex on $X$ for some $\lambda \in \R$,}
\end{equation}
cf.\ 
\eqref{def:l-geod-convex}.
Under this condition, first of all 
 we shall prove the continuity  of the value function
 with respect to the metric $\sfd$. 
The following result complements
 Lemma   \ref{lemma1}, where we showed the sequential $\sigma$-lower semicontinuity of $\Ve$ 
 on $\sfd$-bounded sets, as well as Theorem \ref{cont-wrt-dphi} in Appendix A ahead. 
\begin{lemma}
\label{l:3.1} Assume 
Property \ref{basic-ass}  and \eqref{phi-conv}.  Then,  $\Ve$ is continuous on sublevels of the energy
$\phi$, namely
\begin{equation}
\label{contV}
\left( \ini_n \to \ini \text{ and } \sup_n \phi(\ini_n) <\infty \right) \, \Rightarrow \, \Ve(\ini_n) \to \Ve(\ini).
\end{equation}
\end{lemma}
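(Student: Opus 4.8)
The plan is to split \eqref{contV} into the two inequalities $\liminf_n\Ve(\ini_n)\ge\Ve(\ini)$ and $\limsup_n\Ve(\ini_n)\le\Ve(\ini)$ and prove them separately. The first one comes essentially for free from what is already available: by (MT2) the topology $\sigma$ is weaker than that induced by $\sfd$, so the $\sfd$-convergence $\ini_n\to\ini$ implies $\ini_n\weaksigma\ini$, while $(\ini_n)_n$ is $\sfd$-bounded; moreover $\phi$ is $\sfd$-lower semicontinuous (a consequence of \eqref{basic-ass-1}), whence $\phi(\ini)\le\liminf_n\phi(\ini_n)<\infty$, so that $\ini\in\SFD(\phi)\subset\SFD(\Ve)$ and $\Ve(\ini)$ is attained by Theorem \ref{exist-minimizers}. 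The sequential $\sigma$-lower semicontinuity of $\Ve$ on $\sfd$-bounded sets established in Lemma \ref{lemma1} then gives $\liminf_n\Ve(\ini_n)\ge\Ve(\ini)$.

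For the $\limsup$ inequality I would argue by a recovery-curve construction. Fix a minimizer $u\in\MMM_\eps(\ini)$, so $\calI_\eps[u]=\Ve(\ini)$, and assume (without loss of generality) that $\ini_n\neq\ini$ for all $n$; set $d_n:=\sfd(\ini_n,\ini)>0$, so $d_n\to0$. This is the point where $\lambda$-geodesic convexity is used: by \eqref{def:l-geod-convex} we may pick, for each $n$, a constant-speed geodesic $\gamma_n\colon[0,1]\to X$ from $\ini_n$ to $\ini$ with $|\gamma_n'|\equiv d_n$ and
\[
\phi(\gamma_n(s))\le(1-s)\phi(\ini_n)+s\,\phi(\ini)-\tfrac\lambda2 s(1-s)d_n^2\le C_0\qquad\text{for all }s\in[0,1],\ n\in\N,
\]
the constant $C_0$ being uniform in $n$ because $\sup_n\phi(\ini_n)<\infty$, $\phi(\ini)<\infty$ and $d_n\to0$; since also $\sup_{n,s}\sfd(\gamma_n(s),\ini)\le\sup_nd_n<\infty$, the coercivity \eqref{basic-ass-2bis} yields a uniform two-sided bound $\sup_{n,s}|\phi(\gamma_n(s))|\le C$.

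Next I would choose a time scale $\tau_n:=\sqrt{d_n}\to0$ and define $u_n\in\ACini{\ini_n}\eps$ by $u_n(t):=\gamma_n(t/\tau_n)$ for $t\in[0,\tau_n]$ and $u_n(t):=u(t-\tau_n)$ for $t\ge\tau_n$; this is well defined because $\gamma_n(1)=\ini=u(0)$, and one readily checks $u_n\in\AC^2_\loc([0,\infty);X)$ with $\int_0^\infty|u_n'|^2\,\dd\mu_\eps<\infty$. Splitting $\calI_\eps[u_n]$ over $[0,\tau_n]$ and $[\tau_n,\infty)$, using $\mu_\eps([0,\tau_n])=1-\rme^{-\tau_n/\eps}\le\tau_n/\eps$, $|u_n'|\equiv d_n/\tau_n$ on $(0,\tau_n)$, and the uniform energy bound on the geodesic part, the contribution of $[0,\tau_n]$ is at most $\bigl(\tfrac\eps2 d_n^2/\tau_n^2+C\bigr)\tau_n/\eps=\tfrac12 d_n^{3/2}+C\sqrt{d_n}/\eps\to0$, while the change of variable $r=t-\tau_n$ turns the contribution of $[\tau_n,\infty)$ into exactly $\rme^{-\tau_n/\eps}\calI_\eps[u]=\rme^{-\tau_n/\eps}\Ve(\ini)$. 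Passing to the limit yields $\limsup_n\Ve(\ini_n)\le\limsup_n\calI_\eps[u_n]\le\Ve(\ini)$, which combined with the first step proves \eqref{contV}.

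The only genuinely delicate ingredient is the uniform control of $\phi$ along the connecting geodesics $\gamma_n$: without geodesic convexity $\phi$ could be unbounded along a geodesic joining two arbitrarily close points, and the whole recovery-sequence argument would collapse. Everything else is a routine concatenation-and-rescaling computation, of the same flavour as those already carried out in Proposition \ref{prop:DDP} and Lemma \ref{le:comparisonV}.
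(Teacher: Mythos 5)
Your proof is correct and is essentially the paper's argument: you obtain the $\liminf$-inequality from the $\sigma$-lower semicontinuity of $V_\eps$ on $\sfd$-bounded sets (Lemma \ref{lemma1}), and the $\limsup$-inequality by a recovery construction that concatenates a constant-speed geodesic $\gamma_n$ from $\ini_n$ to $\ini$ with a minimizer for $\Ve(\ini)$, using $\lambda$-geodesic convexity together with \eqref{basic-ass-2bis} to control $\phi$ uniformly along $\gamma_n$. The only differences are cosmetic---you traverse the geodesic in time $\tau_n=\sqrt{d_n}$ rather than $\tau_n=d_n$ (either works), and you correctly shift the tail to $u(t-\tau_n)$ for $t\ge\tau_n$, whereas the paper's displayed formula writes $u_\eps(t)$ there, which would fail to glue continuously at $t=\tau_n$ (since $\gamma_n(\tau_n)=\ini=u_\eps(0)\ne u_\eps(\tau_n)$ in general) and is evidently a slip; your reading is the intended one and gives the same limit since $\rme^{-\tau_n/\eps}\to1$.
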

\begin{proof}
 Let 
%
 $\umin \in \ACini {\ini}\eps$  be a  minimizer  for $\calI_\eps$ (observe that it exists since $\ini \in \mathrm{D}(\phi)$).   We construct a sequence  of  curves
  $(u_n)_n $ with $u_n  \in 
 \ACini {\ini_n}\eps$ 
 for every $n\in \N$, 
 fulfilling 
 \begin{equation}
 \label{gamma-conv-argum} 
 \limsup_{n\to\infty}\Ve(\ini_n)\leq \limsup_{n\to\infty}\calI_\eps[u_n] \leq \calI_\eps[u_\eps] = \Ve(\ini)
 \end{equation}
and combine this with the previously proved lower semicontinuity of $\Ve$ with respect to
the topology
 $\sigma$, cf.\ Lemma \ref{lemma1}. 
To construct $(u_n)_n$, we argue in this way: for every $n \in \N$ we
set $\tau_n:= \sfd(\ini_n,\ini)$, and consider the constant-speed
geodesic $\gamma_n : [0,\tau_n] \to X$ connecting $\ini_n$ to
$\ini$, such that
\begin{equation}
\label{speed-1}
\frac{\sfd(\gamma_n(t),\gamma_n(s))}{t-s}=1 \quad \text{for all } s,t \in [0,\tau_n].
\end{equation}
Hence $|\gamma_n'|(t)=1$ for almost all $t \in (0,\tau_n)$. We define $u_n: [0,\infty) \to X$
setting
\[
u_n(t) := \begin{cases}
\gamma_n(t) & t \in [0,\tau_n],
\\
\umin(t)  & t \in [\tau_n,\infty).
\end{cases}
\]
Then,
\[
 \mathcal{I}_\eps[u_n] = \int_0^{\tau_n} \ell_\eps (t,\gamma_n(t),|\gamma_n'|(t)) \dd t +
 \int_{\tau_n}^{\infty} \ell_\eps (t,\umin(t),|\umin'|(t)) \dd t  =:
 I_1+I_2
\]
Since $ I_2$ converges to
$\mathcal{I}_\eps[\umin] $ as $n \to \infty$,
to conclude  \eqref{gamma-conv-argum}
it remains to show that $\lim_{n \to \infty} I_1=0$.
Now, by
\eqref{speed-1} and the $\lambda$-convexity
 \eqref{phi-conv} we have
\[
\begin{aligned}
I_1  & = \int_0^{\tau_n} \rme^{-t/\eps} \left(\frac12
+\frac1{\eps}\phi(\gamma_n(t))\right)\dd t
\\  &
 \leq
\eps (1- \rme^{-\tau_n/\eps}) + \max\{\phi(\ini_n), \phi(\ini)\} (1-
\rme^{-\tau_n/\eps}) -\frac\lambda 2 \sfd^2 (\ini,\ini_n)\int_0^{\tau_n}
\rme^{-t/\eps} \frac{(\tau_n-t)t}{\tau_n^2} \dd t,
\end{aligned}
\]
and we refer to the last integral as $I_3$. We have
 $\lim_{n\to\infty} I_{3}=0$, hence the
  right-hand side in the above inequality converges to $0$ as $n \to \infty$, which concludes the proof.
\end{proof}

In the following two sections
we are going to provide a series of finer properties, and estimates, for the family $(u_\eps)_\eps$ of WED-minimizers.
We shall prove them under the   $\lambda$-convexity condition \eqref{phi-conv},
distinguishing the cases
$\lambda =0$, handled  in the upcoming Section  \ref{ss:aprio-2}, and $\lambda<0$, see Sec.\ \ref{ss:6.2-bis}. 
The starting point for all calculations  will be the following relation
\begin{equation}
\label{starting-point}
\! \!\!\!\!\!\!
-\eps\frac{\dd^2}{\dd t^2}\left( \frac{1}{2}\upeq(t) \right)
 + \frac{\dd}{\dd t}\frac{1}{2}\vert u_\eps'\vert^2(t)\,=\,-
\frac{\dd^2}{\dd t^2}\phi(u_\eps(t))-\frac{\dd}{\dd t}\frac{1}{2}\vert u_\eps'\vert^2(t)\,\le\, - \lambda \vert u_\eps'\vert^2(t) \hbox{ in } \mathcal{D}'(0,\infty),
\end{equation}
holding for all $\lambda \leq 0$. 
\begin{remark}
\label{rmk:formal-derivation}
\upshape
In the Euclidean case $X=\R^n$, for $\phi$
smooth,
we can formally derive \eqref{starting-point} by testing  by $u_\eps'$ the Euler-Lagrange equation satisfied by WED-minimizers, i.e.\
$-\eps u_{\eps}{''}  +u_\eps' + \rmD \phi(u_\eps) =0 $,  and differentiating the relation thus obtained.
Therefore,
\[
\begin{aligned}
    -\frac{\eps}2 \frac{\dd}{\dd t^2} |u_\eps'(t)|^2 +\frac{\dd}{\dd t} |u_\eps'(t)|^2  
   &  = -\frac{\dd }{\dd t } (\langle \rmD \phi(u_\eps(t)), u_\eps'(t) \rangle)
   \\
 & = - \langle  \rmD \phi(u_\eps(t)), u_\eps{''}(t)\rangle - \langle \mathrm{D}^2 \phi(u_\eps(t))(u_\eps'(t)), u_\eps'(t) \rangle
 \\ & \leq  - \eps | u_{\eps}{''}(t)|^2 +\frac12\frac{\dd}{\dd t} |u_\eps'(t)|^2   - \lambda  |u_\eps'(t)|^2
 \\ & \leq \frac12\frac{\dd}{\dd t} |u_\eps'(t)|^2  -\lambda  |u_\eps'(t)|^2
 \end{aligned}
\]
where for the first inequality we have used that $ -\rmD \phi(u_\eps) =-\eps u_{\eps}{''}  +u_\eps' $ by the Euler-Lagrange equation,
and that $\phi$ $\lambda$-convex implies $\mathrm{D}^2\phi \geq \lambda$. Therefore we conclude \eqref{starting-point}.
\end{remark}
In both Section \ref{ss:aprio-2} and Sec.\  \ref{ss:6.2-bis}, we will devote some effort to the proof of 
inequality
\eqref{starting-point} in the present metric context, where the
above arguments are not available. Then, from \eqref{starting-point} we shall deduce the additional properties of the  WED-minimizers  $(u_\eps)_\eps$.

The basic result underlying \eqref{starting-point} is the following Lemma, which holds both for $\lambda =0$ and $\lambda <0$ and is thus anticipated here.
\begin{lemma}
\label{lemma:lambda-1-lambda}
Assume 
Property \ref{basic-ass} and the $\lambda$-convexity  \eqref{phi-conv} with $\lambda \in \R$.
Set
\begin{equation}
\label{calUe}
\mathcal{U}_\epsi(t)\,:=\,\int_0^{t}\frac{1}{2}\vert
u'_\epsi\vert^2(s) \dd s
\end{equation}
 and for every $0\,\le\, a\,<\,
b\,<\infty$ consider the family of linear functions
 \begin{equation}
 \label{linear-a-b}
 \mathsf{l}_{a,b}(t):=\frac{t-a}{b-a}.
 \end{equation}
  Then, for every $[a,b]\subset [0,\infty)$ we have
\begin{equation}
\begin{aligned}
\label{lambda1-lambda}
\int_{a}^b \big(\phi(u_\eps(t))
+\mathcal{U}_\eps(t)  \big) \dd\mu_\eps(t)  \le&\big( \phi(u_\eps(a))
+\mathcal{U}_\eps(a) \big)(i_{a,b}-\theta_{a,b}) + \big(
\phi(u_\eps(b)) +
\mathcal{U}_\eps(b) \big) \theta_{a,b}\\
&- \frac{\lambda}{2}\sfd^2(u_\epsi(a),u_\epsi(b)) \Gamma_{a,b},
\end{aligned}
\end{equation}
where
$\theta_{a,b}\,:=\,\int_{a}^b \mathsf{l}_{a,b}(t)\dd\mu_\eps(t)  $,
$i_{a,b}\,:=\mu_\eps([a,b])   $ and
$\Gamma_{a,b}\,:=\,\int_{a}^b
\mathsf{l}_{a,b}(t)(1-\mathsf{l}_{a,b}(t))\dd\mu_\eps(t).$
\end{lemma}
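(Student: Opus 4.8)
The statement to be established, inequality \eqref{lambda1-lambda}, is a convexity-type estimate for the map $t\mapsto \phi(u_\eps(t))+\mathcal U_\eps(t)$ integrated against the exponential measure $\mu_\eps$ on $[a,b]$, with the right-hand side featuring the linear interpolation of the endpoint values plus a $\lambda$-dependent correction. The plan is to reduce the assertion to a pointwise (in $t$) differential inequality of the form
\begin{equation}
\label{plan:pointwise}
  \frac{\dd^2}{\dd t^2}\Big(\phi(u_\eps(t))+\mathcal U_\eps(t)\Big)\ge \lambda\,\frac{\dd}{\dd t}\,\mathcal U_\eps(t)\cdot(\text{something})\quad\text{in }\mathcal D'(0,\infty),
\end{equation}
and then integrate it twice against suitable test functions built from the linear functions $\mathsf l_{a,b}$. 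More precisely, the natural route is: first I would use the metric inner variation equation \eqref{euler-lagrange-eq-infinite} from Proposition~\ref{euler-lagrange}, which tells us $\frac{\dd}{\dd t}\big(\phi(u_\eps(t))-\tfrac\eps2|u_\eps'|^2(t)\big)=-|u_\eps'|^2(t)$, i.e.\ $\tfrac{\dd}{\dd t}\phi(u_\eps(t))=-|u_\eps'|^2(t)+\tfrac\eps2\tfrac{\dd}{\dd t}|u_\eps'|^2(t)$ in $\mathcal D'$. This is the replacement, in the metric setting, for the Euler--Lagrange testing argument sketched formally in Remark~\ref{rmk:formal-derivation}.

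\textbf{Key steps.} Step 1: establish the second-order differential inequality \eqref{starting-point}, namely
\begin{equation}
\label{plan:startpt}
  -\eps\frac{\dd^2}{\dd t^2}\Big(\tfrac12|u_\eps'|^2(t)\Big)+\frac{\dd}{\dd t}\Big(\tfrac12|u_\eps'|^2(t)\Big)=-\frac{\dd^2}{\dd t^2}\phi(u_\eps(t))-\frac{\dd}{\dd t}\Big(\tfrac12|u_\eps'|^2(t)\Big)\le-\lambda|u_\eps'|^2(t)
\end{equation}
in $\mathcal D'(0,\infty)$; the first equality is a direct consequence of differentiating \eqref{euler-lagrange-eq-infinite} once more, while the inequality requires the $\lambda$-geodesic convexity of $\phi$ and is the genuine work — one must control the second variation of $t\mapsto\phi(u_\eps(t))$ along a minimizer by $-\lambda|u_\eps'|^2$, which in a metric space has to be done via geodesic comparison arguments and a careful use of optimality of $u_\eps$ against geodesic perturbations (this is precisely what Sections~\ref{ss:aprio-2} and~\ref{ss:6.2-bis} are announced to carry out, split by the sign of $\lambda$). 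Step 2: rewrite \eqref{plan:startpt} in terms of $W(t):=\phi(u_\eps(t))+\mathcal U_\eps(t)$, noting $\mathcal U_\eps'(t)=\tfrac12|u_\eps'|^2(t)$, so that the inequality becomes a convexity statement for $W$ with respect to the measure $\mu_\eps$ — concretely, after multiplying by $\rme^{-t/\eps}$ one recognizes $\tfrac{\dd}{\dd t}\big(\rme^{-t/\eps}(\tfrac\eps2\,\tfrac{\dd}{\dd t}W - \tfrac12 W')\big)$ type structure, giving a monotonicity of the relevant "derivative" quantity. Step 3: integrate against the affine weights: for $0\le a<b<\infty$, test the distributional inequality with the nonnegative function that is the $\mu_\eps$-Green function of the interval $[a,b]$ built from $\mathsf l_{a,b}$ (which vanishes at $a$ and $b$); integrating by parts twice, using the endpoint values $W(a),W(b)$, produces exactly the linear-interpolation right-hand side $W(a)(i_{a,b}-\theta_{a,b})+W(b)\theta_{a,b}$, while the contribution of the right-hand term $-\lambda|u_\eps'|^2$ — after invoking $|u_\eps'|(t)\ge \sfd(u_\eps(a),u_\eps(b))/(b-a)$ is \emph{not} quite right; rather one uses that on the geodesic-comparison level the relevant quantity is controlled below by $\sfd^2(u_\eps(a),u_\eps(b))$ times the weight $\mathsf l_{a,b}(1-\mathsf l_{a,b})$, integrated against $\mu_\eps$, yielding $-\tfrac\lambda2\sfd^2(u_\eps(a),u_\eps(b))\Gamma_{a,b}$.

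\textbf{Main obstacle.} The crux is Step~1, i.e.\ proving the differential inequality \eqref{plan:startpt} genuinely in the metric framework: there is no Euler--Lagrange PDE and no Hessian $\mathrm D^2\phi$, so the formal computation of Remark~\ref{rmk:formal-derivation} must be replaced by a variational comparison. The idea would be to perturb the minimizer $u_\eps$ by sliding it along short geodesics joining nearby points of the curve (a discrete second-difference perturbation), use geodesic $\lambda$-convexity \eqref{def:l-geod-convex} to bound the energy change, and use minimality of $\calI_\eps$ to get an inequality on the second difference of $\phi\circ u_\eps$ plus dissipation; passing to the limit in the mesh size and combining with the first-order identity \eqref{euler-lagrange-eq-infinite} should yield \eqref{plan:startpt}. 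The technical delicacy — and the reason the paper splits into the $\lambda=0$ and $\lambda<0$ cases — is the regularity of $t\mapsto|u_\eps'|^2(t)$ and of $t\mapsto\phi(u_\eps(t))$ needed to make the second distributional derivative meaningful, together with the sign bookkeeping of the $-\tfrac\lambda2\sfd^2$ term when $\lambda<0$ (where it becomes a positive, but now unbounded-in-$b-a$, contribution that must be absorbed). Once \eqref{plan:startpt} is in hand, Steps~2--3 are a routine double integration by parts against the exponential measure, of the kind already performed in Lemma~\ref{W11} and Corollary~\ref{cor:boundary}.
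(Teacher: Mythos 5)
Your plan is logically circular and does not reproduce the paper's argument. You propose to prove \eqref{lambda1-lambda} by first establishing the second-order differential inequality \eqref{starting-point} (your Step~1) and then integrating twice against the Green function of $[a,b]$. But \eqref{starting-point} is a \emph{downstream consequence} of Lemma~\ref{lemma:lambda-1-lambda} in this paper, not an input to it: in the proof of Theorem~\ref{teo:beppe1} the convexity of $t\mapsto \phi(u_\eps(t))+\mathcal U_\eps(t)$ needed for \eqref{starting-0} is obtained precisely by feeding \eqref{lambda1-lambda} into Lemma~\ref{lemma:beppeconvex}, and likewise Lemma~\ref{l:intermediate-tony} derives \eqref{starting-point} for $\lambda<0$ from Claim~1 there, which again invokes \eqref{lambda1-lambda}. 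So taking \eqref{starting-point} as your starting point assumes what you are trying to prove. Your ``main obstacle'' paragraph gestures at a direct second-difference/geodesic argument which, if carried out, would indeed bypass the circularity, but this is not what your Steps~1--3 actually do, and you explicitly defer Step~1 to later sections of the paper that depend on the present lemma.

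The paper's actual proof is a one-shot variational comparison, not a double-integration of a differential inequality. Fix $a<b$ with $\phi(u_\eps(a)),\phi(u_\eps(b))<\infty$, let $\gamma$ be the constant-speed geodesic on $[a,b]$ joining $u_\eps(a)$ to $u_\eps(b)$, and form the competitor $\tilde v$ equal to $u_\eps$ outside $[a,b]$ and to $\gamma$ on $[a,b]$. Minimality gives \eqref{competitor-v-ab-lambda}; the $\lambda$-geodesic convexity of $\phi$ gives \eqref{phi-convex-ab-lambda} (this is where the $\sfd^2(u_\eps(a),u_\eps(b))$ term enters exactly as written, no Jensen needed); the geodesic speed is bounded via \eqref{citata-dopo-lambda}; and the kinetic terms are handled by comparing $\mathcal U_\eps$ with the affine interpolant $\tilde{\mathcal U}_\eps^{a,b}$ and integrating by parts with \eqref{eq:55}, producing $\theta_{a,b}$, $i_{a,b}$, $\Gamma_{a,b}$ directly. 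Note also that even if \eqref{starting-point} were available a priori, your Steps~2--3 would yield a remainder involving $\int |u_\eps'|^2$ rather than $\sfd^2(u_\eps(a),u_\eps(b))$, and the Jensen inequality $\sfd^2(u_\eps(a),u_\eps(b))\le (b-a)\int_a^b|u_\eps'|^2$ points the wrong way for $\lambda<0$, so the reduction would not literally reproduce \eqref{lambda1-lambda} without additional work. You should abandon the ``differentiate twice, then integrate twice'' plan and carry out the direct geodesic-replacement comparison instead.
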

\begin{proof}
Let
us take
$a\,<\,b\,$ in $(0,\infty)$ such that $\phi(u_\eps(a))\,<\,\infty$
and $\phi(u_\eps(b))\,<\,\infty$ and consider the geodesic
$\gamma:[a,b]\longrightarrow X$ connecting $u_\eps(a)$ and
$u_\eps(b)$ with constant speed $\vert
\gamma'\vert(t)\,=\,\frac{\sfd(u_\eps(a),u_\eps(b))}{b-a}$. Let us
consider the curve $\tilde v$ defined by
\[
\tilde{v}(t) = \begin{cases} u_\eps(t) & t \in (0,a) \hbox{ or }
t\in (b,\infty),
\\
\gamma(t) & t \in [a,b].
\end{cases}
\]
By construction, $\tilde v \in \ACini{\ini}\eps$, hence 
$ \mathcal{I}_\eps[u_\eps]\,\le\,\mathcal{I}_\eps[\tilde{v}],$ 
which implies
\begin{equation}
\label{competitor-v-ab-lambda}
\int_{a}^{b}\left(
\frac{\eps}{2}\upeq(t)+\phi(u_\eps(t))\right) \dd \mu_\eps(t)
\,\le\,\int_{a}^{b} \left(
\frac{\eps}{2}\vert \gamma'\vert^2(t)+\phi(\gamma(t))\right) \dd \mu_\eps(t).
\end{equation}
Now, since $\phi$ is geodesically convex, there holds that
\begin{equation}
\label{phi-convex-ab-lambda}
\phi(\gamma(t))\,\le\,(1-\mathsf{l}_{a,b}(t))\phi(u_\eps(a)) +
\mathsf{l}_{a,b}(t)\phi(u_\eps(b)) - \frac{\lambda}2
(1-\mathsf{l}_{a,b}(t))  \mathsf{l}_{a,b}(t) \sfd^2(u_\eps(a),
u_\eps(b))  \text{ for all  } t\in[a,b].
\end{equation}
Moreover, we can estimate the speed of the geodesic $\gamma$  by
\begin{equation}
\label{citata-dopo-lambda} \vert
\gamma'\vert^2(t)\,=\,\frac{\sfd^2(u_\eps(a),u_\eps(b))}{(b-a)^2}\,\le\,
\frac{1}{b-a}\int_{a}^{b}\upeq(t) \dd t
\,\le\,2\frac{\mathcal{U}_\eps(b)-\mathcal{U}(a)}{b-a}.
\end{equation}
 Now, we
introduce the function
\[
\tilde{\mathcal{U}}_\eps^{a,b}(t) = \begin{cases}
\mathcal{U}_\eps(t) & t \in (0,a) \hbox{ or } t\in (b,\infty),
\\
(1-\mathsf{l}_{a,b}(t))\mathcal{U}_\eps(a) +
\mathsf{l}_{a,b}(t)\mathcal{U}_\eps(b) & t \in [a,b],
\end{cases}
\]
which coincides with $\mathcal{U}_\eps$ when $t=a,b$ and satisfies
$\frac{1}{2}\vert \UUU \gamma'\EEE\vert^2(t)\,\le \frac{\dd}{\dd
t}\tilde{\mathcal{U}}_\eps^{a,b}(t)$ for all $t\in\,(a,b)$. We have
\begin{align}
& \label{neglected-1-lambda}
 \eps\int_{a}^b \frac{1}{2}\upeq(t) \dd \mu_\eps(t)
\,=\,\int_{a}^b \mathcal{U}_\eps(t) \dd  \mu_\eps(t)  +
\left[\rme^{-t/\eps}
\mathcal{U}_\eps(t)\right]_{a}^{b},\\
& \label{neglected-2-lambda}
\eps\int_{a}^b \frac{1}{2}\vert\gamma'\vert^2(t)
\dd  \mu_\eps(t) \,\le\,\eps\int_{a}^b \frac{\dd}{\dd
t}{\tilde{\mathcal{U}}_{\eps}^{a,b}}(t)\dd
 \mu_\eps(t) \,=\,\int_{a}^b
{\tilde{\mathcal{U}}_{\eps}^{a,b}}(t)\dd  \mu_\eps(t)+ \left[\rme^{-t/\eps}
\tilde{\mathcal{U}_{\eps}}^{a,b}\right]_{a}^{b},
\end{align}
 by the integration by parts formula \eqref{eq:55}, 
where the  inequality in \eqref{neglected-2-lambda} is due to
\eqref{citata-dopo-lambda}. Thus, recalling that
$\tilde{\mathcal{U}}_\eps^{a,b}(t)=\mathcal{U}_\eps(t)$ for $t=a,b$,
 and combining  \eqref{neglected-1-lambda}--\eqref{neglected-2-lambda} with \eqref{competitor-v-ab-lambda}
and \eqref{phi-convex-ab-lambda}, we deduce \eqref{lambda1-lambda}.
\end{proof}
 We conclude this section  by fixing an identity that can be checked with direct calculations, and that will have a crucial role in the following proofs:
\begin{equation}
\label{cruc-29.09}
 t^2 = a^2  (1-\mathsf{l}_{a,b}(t) ) +  b^2  \mathsf{l}_{a,b}(t)  -(b-a)^2(1-\mathsf{l}_{a,b}(t) )  \mathsf{l}_{a,b}(t)   \text{ for all } t \in [a,b] \text{ and all } 0\leq a <b.
\end{equation}


\subsection{Finer 
 properties of WED minimizers  
in the $\lambda$-convex case,  $\lambda=0$}
\label{ss:aprio-2}
The main result of this section, Theorem \ref{teo:beppe1} below, shows
 that, for every fixed  $\eps> 0$,
along WED minimizers $u_\eps$ 
the energy $\phi$ is \emph{nonincreasing} and \emph{convex}. 
 Moreover, we also prove that the map $t\mapsto \phi(u_\eps(t))$ is continuous on $[0,\infty)$, i.e.\ it enjoys the same continuity  as $u_\eps$. 
Observe that these are the properties of 
 $\phi$ along  a curve of maximal slope, i.e.\ a solution of the gradient flow in the limit as $\eps \down 0$,
  cf.\ \cite[Thm.\ 3.2, page 57]{Brezis73} for the Hilbertian case, and \cite[Thm.\ 2.4.15]{Ambrosio-Gigli-Savare08}
   in the metric context. 
  Interestingly, and somewhat surprisingly, these properties hold also at the level $\eps>0$, provided  that the energy
  is geodesically convex.
\begin{theorem}
\label{teo:beppe1}
Assume 
Property \ref{basic-ass}, \eqref{phi-conv} with $\lambda =0$,  and let $u_\eps\in
\MMM_\epsi(\ini)$.  Then,
\begin{enumerate}
\item   $t\mapsto \frac12 |u_\eps'|^2(t)$ admits a locally Lipschitz   continuous pointwise representative on $(0,\infty)$; 
\item  $t \mapsto \vert u_\eps'\vert(t)$ \,\,\,and\,\,\,$t\,\mapsto\,\phi(u_\eps(t))$ \,\,are nonincreasing,
\item $t \mapsto \phi(u_\eps(t))$\,\,\,\,is convex,
\item  \eqref{starting-point} holds with $\lambda=0$, i.e.\ 
\begin{equation}
\label{starting-0}
-\eps\frac{\dd^2}{\dd t^2}\left(\frac{1}{2}\upeq(t)
\right) + \frac{\dd}{\dd t}\frac{1}{2}\vert u_\eps'\vert^2(t)\,=\,-
\frac{\dd^2}{\dd t^2}\phi(u_\eps(t))-\frac{\dd}{\dd t}\frac{1}{2}\vert u_\eps'\vert^2(t)\,\le\,0\,\,\,\,\hbox{ in }\,\,\mathcal{D}'(0,\infty).
\end{equation}
\end{enumerate}
Hence, the function $t \mapsto \phi(\ue(t))$ is continuous on $(0,\infty)$ and
  right-continuous at $t=0$. 
\end{theorem}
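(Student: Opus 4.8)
The plan is to derive the distributional inequality \eqref{starting-0} first and then read off (1)--(3) and the continuity assertions from it. Write $g:=\tfrac12|u_\eps'|^2\in L^1_{\loc}([0,\infty))$, which by definition of $\ACini\ini\eps$ also lies in $L^1(\mu_\eps)$, set $\mathcal U_\eps(t):=\int_0^t g(s)\,\dd s$, and let $\mathcal V_\eps$ be the absolutely continuous representative of $t\mapsto\phi(u_\eps(t))-\eps g(t)$. The metric inner variation equation \eqref{euler-lagrange-eq-infinite} gives $\mathcal V_\eps'=-2g$ in $\mathcal D'(0,\infty)$, and $\mathcal V_\eps(0)=\calI_\eps[u_\eps]=\Ve(\ini)$ by Corollary \ref{cor:boundary}, so $\mathcal V_\eps=\Ve(\ini)-2\mathcal U_\eps$; combining this with \eqref{Vabs} yields $\phi(u_\eps(t))=\Ve(\ini)-2\mathcal U_\eps(t)+\eps g(t)$ for a.a.~$t$. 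Hence, setting $h:=\phi\circ u_\eps+\mathcal U_\eps$ (so $h=\Ve(\ini)-\mathcal U_\eps+\eps g$ a.e.), the middle equality in \eqref{starting-0} is the elementary relation $-\eps g''+g'=-\tfrac{\dd^2}{\dd t^2}\phi(u_\eps)-g'$ in $\mathcal D'(0,\infty)$ (differentiate the displayed identity twice), while the inequality $\le0$ in \eqref{starting-0} is exactly $h''\ge0$ in $\mathcal D'(0,\infty)$.

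To prove $h''\ge0$ I would invoke Lemma \ref{lemma:lambda-1-lambda} with $\lambda=0$: since $\phi(u_\eps(t))<\infty$ for a.a.~$t$, for a.a.~$a<b$ it reads $\int_a^b h\,\dd\mu_\eps\le h(a)(i_{a,b}-\theta_{a,b})+h(b)\,\theta_{a,b}=\int_a^b\ell^{a,b}_h\,\dd\mu_\eps$, where $\ell^{a,b}_h$ is the affine interpolant of $h$ on $[a,b]$. Because $\mu_\eps$ has a strictly positive continuous density, a de-integration argument --- localizing on shrinking intervals and using the auxiliary identity \eqref{cruc-29.09} to compute the weighted moments $i_{a,b},\theta_{a,b},\Gamma_{a,b}$ --- upgrades this family of averaged convexity inequalities to $h''\ge0$ on $(0,\infty)$. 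In particular $h$ has a convex, hence locally Lipschitz, representative on $(0,\infty)$; then $g=\tfrac1\eps(h-\Ve(\ini)+\mathcal U_\eps)$ has a continuous, locally bounded representative, so $\mathcal U_\eps\in C^1((0,\infty))$, and feeding this back, $g$ is locally Lipschitz on $(0,\infty)$ --- this is claim (1), and claim (4) is then complete.

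For the monotonicity, from $\eps g''-g'\ge0$ one gets $\tfrac{\dd}{\dd t}(\rme^{-t/\eps}g')\ge0$ in $\mathcal D'$, so $\rme^{-t/\eps}g'$ has a nondecreasing representative; if it were strictly positive at some point $t_0$, then $g'(t)\ge c\,\rme^{t/\eps}$ for $t\ge t_0$ with $c>0$, forcing $\rme^{-t/\eps}g(t)\to c\eps>0$ and contradicting $g\in L^1(\mu_\eps)$. Hence $g'\le0$ a.e., so $g$ and $|u_\eps'|=\sqrt{2g}$ are nonincreasing; moreover $\tfrac{\dd}{\dd t}\phi(u_\eps)=-2g+\eps g'\le0$, giving claim (2), and $\tfrac{\dd^2}{\dd t^2}\phi(u_\eps)=-2g'+\eps g''=(\eps g''-g')+(-g')\ge0$, giving claim (3). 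Since $g$ and $\mathcal U_\eps$ are continuous on $(0,\infty)$, the identity $\phi(u_\eps(t))=\Ve(\ini)-2\mathcal U_\eps(t)+\eps g(t)$ furnishes a continuous representative of $t\mapsto\phi(u_\eps(t))$; as $t\mapsto\phi(u_\eps(t))$ is lower semicontinuous along the $\sfd$-continuous curve $u_\eps$ (because $\phi$ is $\sigma$-lower semicontinuous on $\sfd$-bounded sets and $\sigma$ is weaker than the $\sfd$-topology), it coincides with that representative at every $t>0$, which is continuity on $(0,\infty)$.

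It remains to prove right-continuity at $t=0$, i.e.~$\lim_{t\downarrow0}\phi(u_\eps(t))=\phi(\ini)$. By $\sigma$-lower semicontinuity of $\phi$ and $u_\eps(t)\weaksigma\ini$ one has $\liminf_{t\downarrow0}\phi(u_\eps(t))\ge\phi(\ini)$. For the converse, note $h'=-g+\eps g'\le0$, so $h$ is nonincreasing on $(0,\infty)$ and $L:=\lim_{t\downarrow0}h(t)\in(-\infty,+\infty]$ exists; I would then apply Lemma \ref{lemma:lambda-1-lambda} with $a=0$ (legitimate since $\phi(\ini)<\infty$, so $h(0)=\phi(\ini)$), divide by $b$ and let $b\downarrow0$, using $i_{0,b}-\theta_{0,b}\sim\tfrac b{2\eps}$, $\theta_{0,b}\sim\tfrac b{2\eps}$ (from \eqref{cruc-29.09}) together with $\tfrac1b\int_0^b h\,\dd\mu_\eps\ge\tfrac{\rme^{-b/\eps}}\eps h(b)$ when $h>0$ near $0$ (since $h$ is nonincreasing): this rules out $L=+\infty$ and yields $L\le\phi(\ini)$, hence $L=\phi(\ini)$, and since $\phi(u_\eps(t))=h(t)-\mathcal U_\eps(t)$ with $\mathcal U_\eps(0)=0$ this is the assertion. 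The main obstacle is precisely the de-integration step --- passing from the averaged inequalities of Lemma \ref{lemma:lambda-1-lambda} to $h''\ge0$ in $\mathcal D'$ --- and the attendant careful handling of the weighted moments $i_{a,b},\theta_{a,b},\Gamma_{a,b}$, for which the identity \eqref{cruc-29.09} provides the necessary bookkeeping.
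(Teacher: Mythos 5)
Your overall architecture matches the paper's: you correctly reduce everything to the averaged convexity inequality of Lemma~\ref{lemma:lambda-1-lambda} (with $\lambda=0$), combine it with the metric inner variation equation \eqref{euler-lagrange-eq-infinite} and the identification $\Ve(u_\eps(t))=\mathcal V_\eps(t)$, and then extract the monotonicity of $g=\tfrac12|u_\eps'|^2$ exactly as in the paper, via $\frac{\dd}{\dd t}(\rme^{-t/\eps}g')\ge0$ and the $\mu_\eps$-integrability of $g$. Your derivation of right-continuity at $t=0$ by applying Lemma~\ref{lemma:lambda-1-lambda} with $a=0$ is a genuine alternative to the paper's Lemma~\ref{lemma:phi-decreases} (which constructs an explicit competitor), and it works once monotonicity of $h=\phi\circ u_\eps+\mathcal U_\eps$ is in hand.

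The gap is the step you yourself flag as ``the main obstacle'', and it is larger than a routine de-integration. First, what you need is not the distributional inequality $h''\ge0$ but the statement that the l.s.c.~function $h$ \emph{is} convex in the pointwise sense. The two are not equivalent: an l.s.c.~function that satisfies $h''\ge0$ in $\mathcal D'(0,\infty)$ and is therefore a.e.~equal to some continuous convex $\tilde h$ may still satisfy $h(t_0)<\tilde h(t_0)$ at isolated points --- l.s.c.~only forces $h\le\tilde h$, not equality. This is exactly why your later step ``as $t\mapsto\phi(u_\eps(t))$ is lower semicontinuous along the curve, it coincides with that representative at every $t>0$'' does not follow: lower semicontinuity together with a.e.~equality to a continuous function is insufficient. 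The same issue affects claims (2) and (3) for $\phi\circ u_\eps$, which are pointwise statements and would only be obtained ``for a representative''. The missing ingredient is precisely what the paper isolates as Lemma~\ref{lemma:beppeconvex}: starting from the averaged inequality \eqref{cond-convex-int} (which crucially involves the actual pointwise values $\psi(a),\psi(b)$, not a.e.~representatives), one first shows $\psi(\bar t)=\liminf_{t\to\bar t}\psi(t)$ for every $\bar t$ (a shrinking-interval argument with one endpoint anchored at $\bar t$), and then argues by contradiction on the connected components of the set where the chordal inequality fails. This yields true pointwise convexity of $h$, from which continuity of $\phi\circ u_\eps$ on $(0,\infty)$ and pointwise monotonicity follow directly; your route via distributional inequalities and l.s.c.~cannot close this loop.
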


For the proof of Theorem \ref{teo:beppe1} we need  a series of
auxiliary results.
The first one will allow us to  deduce from
 estimate \eqref{lambda1-lambda} (with $\lambda=0$) in  Lemma \ref{lemma:lambda-1-lambda}   that the function
$t\mapsto \phi(u_\eps(t))
+\mathcal{U}_\eps(t)$, with $\mathcal{U}_\eps$  from
\eqref{calUe},
is convex.
\begin{lemma}
\label{lemma:beppeconvex}
Let $\zeta\in C^1([0,\infty))$ be strictly positive and let $\psi$ be
\UUU  
 lower semicontinuous \EEE in $(0,\infty)$. If
\begin{equation}
\label{cond-convex-int}
\int_{a}^b\psi(t)\zeta(t) \dd t \,\le\,\psi(a)(i_{a,b}-\theta_{a,b}) +
\psi(b)\theta_{a,b}
\quad \text{with }
\begin{cases}
\theta_{a,b}:=\int_{a}^b\zeta(t)
\mathsf{l}_{a,b}(t) \dd t,\\  i_{a,b}:=\int_{a}^b\zeta(t) \dd t,
\end{cases}
\end{equation}
then $\psi$ is convex.
\end{lemma}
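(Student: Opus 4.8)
The statement is a standard ``integral convexity criterion'': an inequality of the form \eqref{cond-convex-int}, saying that the $\zeta$-weighted average of $\psi$ on $[a,b]$ is dominated by a specific convex combination of the boundary values $\psi(a),\psi(b)$, forces $\psi$ to be convex on $(0,\infty)$. The plan is to reduce to the classical midpoint-convexity characterization, using lower semicontinuity to upgrade midpoint convexity to full convexity. First I would fix $0<a<b<\infty$ and unravel the coefficients. Note that $i_{a,b}-\theta_{a,b}=\int_a^b \zeta(t)(1-\mathsf{l}_{a,b}(t))\dd t$, so the right-hand side of \eqref{cond-convex-int} is exactly $\int_a^b \zeta(t)\big[(1-\mathsf{l}_{a,b}(t))\psi(a)+\mathsf{l}_{a,b}(t)\psi(b)\big]\dd t$. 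Hence \eqref{cond-convex-int} is equivalent to
\[
\int_a^b \zeta(t)\Big(\psi(t)-\big[(1-\mathsf{l}_{a,b}(t))\psi(a)+\mathsf{l}_{a,b}(t)\psi(b)\big]\Big)\dd t\le 0,
\]
i.e. the $\zeta$-weighted integral of the ``excess over the chord'' $E_{a,b}(t):=\psi(t)-\big[(1-\mathsf{l}_{a,b}(t))\psi(a)+\mathsf{l}_{a,b}(t)\psi(b)\big]$ is nonpositive, for every subinterval $[a,b]$.

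The main step is then a localization/shrinking argument to pass from this ``averaged excess $\le 0$'' to ``pointwise excess $\le 0$''. Fix a point $t_0\in(0,\infty)$ and a small half-length $h>0$ with $[t_0-h,t_0+h]\subset(0,\infty)$, and apply the inequality on $[a,b]=[t_0-h,t_0+h]$. Because $\zeta$ is continuous and strictly positive, we may write $\zeta(t)=\zeta(t_0)+O(h)$ uniformly on this interval; dividing the inequality by $2h$ and letting $h\downarrow0$, the mean-value/Lebesgue-point structure of the weighted integral (combined with $\psi$ bounded near $t_0$, which follows from local boundedness — see below) yields, at least along a sequence $h_n\downarrow0$,
\[
\limsup_{h\downarrow 0}\ \frac1{2h}\int_{t_0-h}^{t_0+h} E_{t_0-h,t_0+h}(t)\,\dd t\ \le\ 0 .
\]
On the other hand, for the specific chord through $(t_0-h,\psi(t_0-h))$ and $(t_0+h,\psi(t_0+h))$ one computes $\frac1{2h}\int_{t_0-h}^{t_0+h}\big[(1-\mathsf{l})\psi(a)+\mathsf{l}\psi(b)\big]\dd t=\tfrac12(\psi(t_0-h)+\psi(t_0+h))$, so the displayed inequality becomes
\[
\frac12\big(\psi(t_0-h)+\psi(t_0+h)\big)\ \ge\ \frac1{2h}\int_{t_0-h}^{t_0+h}\psi(t)\,\dd t \ +\ o(1)\qquad(h\downarrow0).
\]
Using lower semicontinuity of $\psi$ to take the $\liminf$ of the left side and the fact that the averaged integral is $\ge \psi(t_0)+o(1)$ wherever $t_0$ is a Lebesgue point (and then a separate density argument for the non-Lebesgue points, again via l.s.c.), one extracts midpoint convexity in the limiting/averaged form, and more precisely the three-point inequality $\psi(s)\le \tfrac{u-s}{u-r}\psi(r)+\tfrac{s-r}{u-r}\psi(u)$ for $r<s<u$: indeed, applying \eqref{cond-convex-int} on $[a,b]=[r,u]$ and testing against the continuous positive weight $\zeta$ directly gives the averaged chord bound, and a Lebesgue differentiation argument in $s$ (shrinking a subinterval around $s$ inside $[r,u]$) pins down the pointwise value.

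The cleanest route, which I would actually write out, is: (i) show $\psi$ is locally bounded above — choosing $[a,b]$ small around any $t_0$ and using \eqref{cond-convex-int} bounds the average of $\psi$, while l.s.c. bounds it below, and then a standard ``convex functions are locally bounded once bounded on average'' lemma applies once we have the chord inequality on a dense set; (ii) establish the chord inequality $\psi(\theta r+(1-\theta)u)\le \theta\psi(r)+(1-\theta)\psi(u)$ first for a.e. triple by Lebesgue differentiation of \eqref{cond-convex-int} in the interior variable, using that $E_{r,u}$ has nonpositive $\zeta$-average on $[r,u]$ and on every subinterval; (iii) use l.s.c. of $\psi$ to remove the ``a.e.'' and obtain convexity everywhere on $(0,\infty)$. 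The one genuine obstacle is step (ii): extracting a genuinely pointwise inequality from a single family of weighted integral inequalities, since a priori \eqref{cond-convex-int} only controls averages. The resolution is that \eqref{cond-convex-int} holds for \emph{all} subintervals $[a,b]$, so one can freely shrink $[a,b]$ around any interior point of a fixed $[r,u]$ while keeping the chord of $[r,u]$ as comparison — more carefully, one first proves $t\mapsto \psi(t)$ agrees a.e. with a convex function $\tilde\psi$ (obtained as the pointwise sup of affine functions $t\mapsto \alpha t+\beta$ whose $\zeta$-average over every interval lies below that of $\psi$), then l.s.c. gives $\psi\ge\tilde\psi$ everywhere, and a reverse averaging argument from \eqref{cond-convex-int} gives $\psi\le\tilde\psi$ everywhere, so $\psi=\tilde\psi$ is convex. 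I expect the bulk of the writing to be in making this ``$\psi$ equals its convex envelope'' argument rigorous, while the arithmetic identities for $\theta_{a,b}$, $i_{a,b}$ and the chord integral are routine (and partly already recorded in \eqref{cruc-29.09}).
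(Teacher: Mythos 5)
Your proposal correctly rewrites \eqref{cond-convex-int} as ``the $\zeta$-weighted average of the excess over the chord is $\le 0$ on every subinterval,'' which is the right starting point, and you correctly flag the passage from this averaged statement to a pointwise one as the crux. But the route you sketch for that crux does not close, and the paper's proof uses a different, sharper preliminary observation that your argument is missing.

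The gap is in your step (ii). You propose Lebesgue differentiation ``in the interior variable'' to obtain the three-point chord inequality a.e., by shrinking a subinterval $[s-h,s+h]$ around a point $s$ of a fixed $[r,u]$. However, \eqref{cond-convex-int} applied to $[s-h,s+h]$ compares the $\zeta$-average of $\psi$ to the \emph{chord of $[s-h,s+h]$}, i.e.~to (roughly) $\tfrac12(\psi(s-h)+\psi(s+h))$; it does \emph{not} compare it to the chord of the outer interval $[r,u]$. So shrinking does not ``keep the chord of $[r,u]$ as comparison,'' and no Lebesgue point argument by itself pins $\psi(s)$ below the chord of $[r,u]$. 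Your fallback --- constructing a convex envelope $\tilde\psi$ and claiming ``a reverse averaging argument gives $\psi\le\tilde\psi$ everywhere'' --- is not substantiated, and it is precisely where the difficulty recurs: without some pointwise control on $\psi$, a single bad point where $\psi$ jumps above $\tilde\psi$ does not perturb the $\zeta$-averages and cannot be ruled out by integral inequalities alone. Your step (i) on local boundedness is likewise circular as written (it invokes the chord inequality, which is what is being proved).

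The ingredient you are missing is the \emph{pointwise liminf identity}
\[
\psi(\bar t)=\liminf_{t\to\bar t}\psi(t)\qquad\text{for every }\bar t\in(0,\infty),
\]
which the paper derives first, by applying \eqref{cond-convex-int} on one-sided shrinking intervals $[\bar t,t_n]$ (resp.~$[t_n,\bar t]$) and noting that the normalized weights $(i_{a,b}-\theta_{a,b})/i_{a,b}$ and $\theta_{a,b}/i_{a,b}$ both tend to $\tfrac12$; together with l.s.c.~this forces $L:=\liminf\psi=\psi(\bar t)$. With this in hand the paper argues by contradiction: if $\psi$ is not convex, the set where it lies strictly above some chord of $[\alpha,\beta]$ is open, and on a connected component $(a,b)$ of that set the liminf identity at the endpoints $a,b$ forces $\psi(a),\psi(b)$ to \emph{equal} the chord of $[\alpha,\beta]$ there, so that $\psi$ lies strictly above the chord of $[a,b]$ on all of $(a,b)$; integrating against the strictly positive measure $\zeta\,\mathcal L^1$ contradicts \eqref{cond-convex-int} on $[a,b]$. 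This sidesteps Lebesgue differentiation entirely and, crucially, is what controls the behaviour at the ``boundary'' points $a,b$ that your density/envelope approach cannot reach.
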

\begin{proof}
We preliminarily prove that for any $\bar t\,\in\,(0,\infty)$
\begin{equation}
\label{liminf-equality} \psi(\bar t)= 
\liminf_{t\to \bar t}\psi(t).
\end{equation}
 Indeed, for  any fixed $\bar t$ the lower semicontinuity of $\psi$ gives
$ \psi(\bar t) \,\le\,\liminf_{t\down \bar
t}\psi(t)  =:  L $. 
Then, consider a sequence $t_n\down \bar t$ for which
$\psi(t_n)\rightarrow L$ as
 $n\to\infty$.
 Now, denoting by $\eta$ the measure $\eta\,:=\,\zeta(t)\mathcal{L}^1$, we have
\begin{align*}
&\disp\liminf_{n\to \infty}    \frac{1}{\eta([\bar t,t_n])}\int_{\bar t}^{t_n}\psi(t) \dd\eta(t)\,\ge\,L\,\,\,\,\hbox{ and }\nonumber\\
&\disp\lim_{n\to \infty}\frac{1}{\eta([\bar t,t_n])}\int_{\bar
t}^{t_n}\mathsf{l}_{\bar t,t_n}(t) \dd\eta(t)\,=\,\lim_{n\to
\infty}\frac{1}{\eta([\bar t,t_n])}\int_{\bar
t}^{t_n}(1-\mathsf{l}_{\bar t,t_n}(t)) \dd\eta(t)\,=\,\frac{1}{2}
\end{align*}
 (recall the notation $\mathsf{l}_{\bar t,t_n}(t)= \frac{t-\bar t}{t_n-\bar t}$). 
Thus, dividing both sides of \eqref{cond-convex-int} (written on the interval $(\bar t,t_n)$),
 by $\mu([\bar t,t_n])$ and letting
$n\to \infty$, we get
$$ L\le \frac{1}{2}\psi(\bar t) + \frac{1}{2}L $$
which, together with  $\psi(\bar t) \leq L$,  implies $L\,=\,\psi(\bar t)$. The same argument works with a sequence $t_n\uparrow \bar t$, and we conclude \eqref{liminf-equality}. 

Now,  in order   to conclude the proof we argue by contradiction. Thus, assume that $\psi$ is not convex. Then, there exist $\alpha\,<\,\bar t\,<\,\beta$ such that
\begin{equation}
\label{no-convex} \psi(\bar t)\,>\,(1-\mathsf{l}_{\alpha,\beta}(\bar
t))\psi(\alpha) + \mathsf{l}_{\alpha,\beta}(\bar t)\psi(\beta).
\end{equation}
Denote by $A_{\alpha,\beta}$ the open set defined as $A_{\alpha,\beta}\,:=\,\left\{t: \eqref{no-convex}\,\,
\hbox{ holds for } \alpha,\, \beta\right\}$. Let $(a,b)$ be the connected component
of $A_{\alpha,\beta}$
 containing $\bar t$. Thanks to \eqref{liminf-equality} and to the lower semicontinuity of $\psi$, we have
\begin{equation*}
\psi(a)\,=\,\liminf_{t\rightarrow
a}\psi(t)\,\ge\,(1-\mathsf{l}_{\alpha,\beta}(a))\psi(\alpha) +
\mathsf{l}_{\alpha,\beta}(a)\psi(\beta)\,\ge\,\psi(a),
\end{equation*}
which gives
$\psi(a)\,=\,(1-\mathsf{l}_{\alpha,\beta}(a))\psi(\alpha) +
\mathsf{l}_{\alpha,\beta}(a)\psi(\beta)$. The same argument also gives 
$\psi(b)\,=\,(1-\mathsf{l}_{\alpha,\beta}(b))\psi(\alpha) +
\mathsf{l}_{\alpha,\beta}(b)\psi(\beta)$. Therefore, we can conclude that 
\begin{equation*}
\psi(t)\,>\,(1-\mathsf{l}_{a,b}( t))\psi(a) +
\mathsf{l}_{a,b}( t)\psi(b)\,\,\,\hbox{ for all }
t\in(a,b).
\end{equation*}
Now, integrating the above inequality with respect to the measure $\mu$, we  contradict  \eqref{cond-convex-int}.
\end{proof}

We now derive a bound on the energy $\phi$ evaluated along $u_\eps$ in terms of the initial energy.
\begin{lemma}
\label{lemma:phi-decreases}
Let $u_\eps  \in \MMM_\epsi(\ini)$, then
\begin{equation}
\label{eq:phi-decreases}
\phi(u_\eps(t))\,\le\,\phi(\bar u) \quad \text{for all $t\,\ge\,0.$}
\end{equation}
\end{lemma}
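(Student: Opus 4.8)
The plan is to use the Dynamic Programming Principle together with the representation $\calI_\eps[u_\eps] = \Ve(u_\eps(0))$ from Corollary \ref{cor:boundary}, and to exploit $\lambda$-convexity with $\lambda = 0$ via a competitor argument. First I would recall from Proposition \ref{prop:DDP} (relation \eqref{desired-minimized}) that for every $T \geq 0$ the shifted curve $w_{\eps,T}(t) := u_\eps(t+T)$ belongs to $\MMM_\eps(u_\eps(T))$, so that by Corollary \ref{cor:boundary} (in the form \eqref{eq:66}) we have $\calI_\eps[w_{\eps,T}] = \mathcal V_\eps(T) = \phi(u_\eps(T)) - \frac\eps2 |u_\eps'|^2(T)$, at least for a.e.\ $T$ (using \eqref{Vabs} from Proposition \ref{prop:intermediate}). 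In particular $\Ve(u_\eps(T)) = \phi(u_\eps(T)) - \frac\eps2 |u_\eps'|^2(T) \leq \phi(u_\eps(T))$, which gives one half of the picture but not yet the desired bound.

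The cleaner route is as follows. Fix $t > 0$ and consider the competitor $\tilde v \in \ACini{\ini}{\eps}$ obtained by concatenating the geodesic $\gamma:[0,t]\to X$ from $\ini$ to $u_\eps(t)$, traversed with constant speed $|\gamma'|(s) \equiv \sfd(\ini,u_\eps(t))/t$, with the shifted minimizer $w_{\eps,t}(s-t) = u_\eps(s)$ for $s \geq t$. By minimality $\calI_\eps[u_\eps] \leq \calI_\eps[\tilde v]$, and after subtracting the common tail contribution on $[t,\infty)$ this reduces to
\begin{equation*}
\int_0^t \left(\frac\eps2 |u_\eps'|^2(s) + \phi(u_\eps(s))\right)\dd\mu_\eps(s) \leq \int_0^t \left(\frac\eps2 |\gamma'|^2(s) + \phi(\gamma(s))\right)\dd\mu_\eps(s).
\end{equation*}
Now I would invoke convexity ($\lambda=0$): $\phi(\gamma(s)) \leq (1-\mathsf l_{0,t}(s))\phi(\ini) + \mathsf l_{0,t}(s)\phi(u_\eps(t))$ on $[0,t]$, together with the speed bound $|\gamma'|^2(s) = \sfd^2(\ini,u_\eps(t))/t^2 \leq \frac1t\int_0^t|u_\eps'|^2(r)\dd r$. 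Since the left-hand side contains $\int_0^t \frac\eps2 |u_\eps'|^2\,\dd\mu_\eps$, which dominates (up to an integration by parts, cf.\ \eqref{neglected-1-lambda}--\eqref{neglected-2-lambda} in Lemma \ref{lemma:lambda-1-lambda}) the corresponding term with $|\gamma'|^2$, those kinetic terms cancel, leaving
\begin{equation*}
\int_0^t \phi(u_\eps(s))\,\dd\mu_\eps(s) \leq \int_0^t \big((1-\mathsf l_{0,t}(s))\phi(\ini) + \mathsf l_{0,t}(s)\phi(u_\eps(t))\big)\,\dd\mu_\eps(s).
\end{equation*}

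Actually, the most economical argument simply takes $t \to 0^+$ in the inequality $\phi(u_\eps(t)) + \mathcal U_\eps(t) \leq$ (convex combination at endpoints $0$ and $\tau$) from Lemma \ref{lemma:lambda-1-lambda} with $a=0$, $b=t$, $\lambda=0$; combined with monotonicity of $\mathcal U_\eps$ and a limiting/averaging argument of the type in Lemma \ref{lemma:beppeconvex} (identifying the $\liminf$ of $\phi\circ u_\eps$ with its value), one deduces $\phi(u_\eps(t)) \leq \phi(\ini)$ directly. Concretely: from \eqref{lambda1-lambda} with $a=0$, dividing by $\mu_\eps([0,b])$ and letting $b \downarrow 0$, the weight ratios $\theta_{0,b}/i_{0,b} \to 1/2$ force $\liminf_{b\downarrow 0}\big(\phi(u_\eps(b)) + \mathcal U_\eps(b)\big) \leq \phi(\ini)$, hence $\phi(u_\eps(0^+)) \leq \phi(\ini)$ (using $\mathcal U_\eps(0)=0$ and lower semicontinuity of $\phi$ along $u_\eps$); iterating on subintervals and using the convexity of $t\mapsto \phi(u_\eps(t)) + \mathcal U_\eps(t)$ (which is itself obtained from Lemmas \ref{lemma:lambda-1-lambda} and \ref{lemma:beppeconvex}) together with $\mathcal U_\eps \geq 0$ and the fact that a convex function lying below its chord and starting at $\phi(\ini)$ cannot exceed it unless it is eventually unbounded — contradicting \eqref{est-phi-up-2} — yields $\phi(u_\eps(t)) + \mathcal U_\eps(t) \leq \phi(\ini)$ for all $t$, whence $\phi(u_\eps(t)) \leq \phi(\ini)$ since $\mathcal U_\eps(t) \geq 0$.

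The main obstacle I anticipate is the measure-theoretic bookkeeping in passing from the integrated/distributional inequalities to a genuine pointwise statement valid for \emph{every} $t \geq 0$: one must carefully use the lower semicontinuity of $\phi$ along the ($\sfd$-continuous, hence $\sigma$-continuous) curve $u_\eps$, the representative $\mathcal V_\eps$ from Corollary \ref{cor:boundary}, and the near-boundary asymptotics of the weights $i_{a,b},\theta_{a,b}$ as $b-a \to 0$, exactly as in the proof of Lemma \ref{lemma:beppeconvex}. Handling the behaviour at $t=0$ (right-continuity) and ruling out that the convex, chord-dominated function $t \mapsto \phi(u_\eps(t)) + \mathcal U_\eps(t)$ escapes below and then shoots above $\phi(\ini)$ — which is excluded by the a priori integral bound \eqref{est-phi-up-2} — is the delicate point; everything else is a routine competitor construction plus the $\lambda=0$ convexity inequality.
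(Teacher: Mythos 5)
Your proposal takes a genuinely different route from the paper's. The paper's proof of this lemma is a short, self-contained contradiction argument that does not rely on the convexity machinery of Lemmas \ref{lemma:lambda-1-lambda}--\ref{lemma:beppeconvex}: one assumes $\phi(u_\eps(\bar t))>\phi(\ini)$ for some $\bar t$, takes the connected component $(a,b)$ of the open set $\{t:\phi(u_\eps(t))>\phi(\ini)\}$, rules out $b=\infty$ by replacing $u_\eps$ on $[a,\infty)$ with the constant curve $u_\eps(a)$ (strictly cheaper), and then replaces $u_\eps$ on $(a,b)$ by the geodesic from $u_\eps(a)$ to $u_\eps(b)$ \emph{reparametrized so that its metric speed equals $|u_\eps'|$}. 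The kinetic part of $\calI_\eps$ is unchanged, while the potential part strictly drops because geodesic convexity ($\lambda=0$) gives $\phi\le\max\{\phi(u_\eps(a)),\phi(u_\eps(b))\}\le\phi(\ini)$ along the geodesic — contradicting minimality of $u_\eps$. This works directly and at the same time at every $t$, with no limiting or convexity-of-$\psi$ argument.

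Your ``most economical'' route has a genuine gap. Having (correctly) obtained that $\psi(t):=\phi(u_\eps(t))+\mathcal U_\eps(t)$ is convex on $(0,\infty)$ with $\psi(0^+)=\phi(\ini)$, you argue that $\psi(t_0)>\phi(\ini)$ is excluded because convexity would then force (at least) linear growth of $\psi$, ``contradicting \eqref{est-phi-up-2}.'' But this contradiction does not materialize: \eqref{est-phi-up-1}--\eqref{est-phi-up-2} only bound the relevant integrals by $O(\mathrm e^{2\sfB T})$, and for $\sfB>0$ linear growth of $\psi$ is perfectly compatible with these bounds; moreover $\phi\circ u_\eps=\psi-\mathcal U_\eps$ and $\mathcal U_\eps$ is itself nondecreasing and only controlled by $O(\mathrm e^{2\sfB T})$, so linear growth of $\psi$ does not even yield growth of $\phi\circ u_\eps$, let alone a blow-up of $\int_0^T\phi(u_\eps)\dd t$. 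The same issue makes your first (``cleaner'') route inconclusive: after cancelling the kinetic terms it only yields an integral inequality, not a pointwise one. A correct variant in the spirit of your second route would instead first establish, from \eqref{starting-0} (which follows from the convexity of $\psi$ and the inner variation equation, independently of the present lemma), that $w_\eps=\frac12|u_\eps'|^2$ is nonincreasing — this is the argument in the proof of Theorem~\ref{teo:beppe1}, which uses only the integrability of $w_\eps$ from \eqref{est-phi-up-1}. Then $\psi'=\eps w_\eps'-w_\eps\le 0$ a.e., so $\psi$ is nonincreasing, hence $\psi(t)\le\psi(0^+)=\phi(\ini)$, and $\phi(u_\eps(t))\le\phi(\ini)-\mathcal U_\eps(t)\le\phi(\ini)$. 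This fixes the gap but is considerably heavier than the paper's two-line competitor argument.
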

\begin{proof}
By contradiction, assume that there exists a point $\bar t$ for which
$\phi(u_\eps(\bar t))\,>\,\phi(\bar u)$. Since $\phi$ is
lower semicontinuous, the set $A\,:=\,\left\{t\,\in\,(0,\infty): \phi(u_\eps(t))>\phi(\bar u)\right\}$ is open.
 Let $(a,b)$ denote the connected component of $A$ containing $\bar t$: then, $(a,b)$ is a bounded (open)
  interval of $(0,\infty)$. First of all $a\ge 0$. Moreover, $b$ is finite. In fact, assuming the opposite,
  we would have $\phi(u_\eps(t))\,>\,\phi(\bar u)\,\ge\,\phi(a)$
  for all $t \in (a,\infty)$,
  and thus, setting
 \[
\tilde u(t) = \begin{cases} u_\eps(t) & t \in (0,a) \hbox{ or } t\in (b,\infty),
\\
u_\eps(a) & t \in [a,\infty),
\end{cases}
\]
we would have $\mathcal{I}_\eps[\ue] > \mathcal{I}_\eps[\tilde{u}]$,
against the fact that $\ue$ is a minimizer for $\mathcal{I}_\eps$.
Thus, $b<\infty$ and $\phi(\ue(b)) \leq \phi(\ini)$. 
\par
Now,
let us
consider a geodesic $\tilde\gamma:[0,1]\longrightarrow X$ connecting
 $u_\eps(a)$ with $u_\eps(b)$ with unit speed. The convexity of $\phi$ implies that
\begin{equation*}
\phi(\tilde\gamma (s))\,\le\,\max\left\{\phi(u_\eps(a)),\phi(u_\eps(b))\right\}\,\le\,\phi(\bar u),
\end{equation*}
where we  have also used that $\phi(u_\eps(b))\,\le\,\phi(\bar u)$. We
reparametrize the geodesic $\tilde\gamma$ on $[a,b]$ to a curve
$\gamma$, with $\gamma(t)\,:=\,\tilde\gamma(s(t))$, fulfilling
 $\vert \gamma'\vert(t)\,=\,\vert u_\eps '\vert(t)$ on $[a,b]$.
  To obtain this, we consider the parametrization $t\mapsto s(t)$ such that
\begin{equation*}
\UUU s'\EEE(t)\,=\,\vert u_\eps '\vert(t)\,\,\,\,\hbox{ and }\,\,\,\,s(t)\,:=\,\min\left\{\int_{a}^{t}\vert u_\eps '\vert(r)dr,1\right\}.
\end{equation*}
As a consequence, the curve
 \[
 v(t): = \begin{cases} u_\eps(t) & t \in (0,a) \hbox{ or } t\in
(b,\infty),
\\
\gamma(t) & t \in (a,b),
\end{cases}
\]
satisfies \[\begin{aligned} 
\int_{a}^b\left(\frac{\eps}{2}\vert
v'\vert^2(t) + \phi(v(t))\right)\dd \mu_\eps(t)
 & =
\int_{a}^b \left(\frac{\eps}{2}\vert\gamma'\vert^2(t)
+ \phi(v(t))\right) \dd \mu_\eps(t)  \\ & <
\int_{a}^b\left(\frac{\eps}{2}\upeq(t) +
\phi(u_\eps(t))\right)\dd \mu_\eps(t),
\end{aligned}
\]
which contradicts the minimality of $u_\eps$ for $\calI_\eps$. 
 Hence, \eqref{eq:phi-decreases} holds.
\end{proof}
We now have all the ingredients for  checking Theorem \ref{teo:beppe1}.
\paragraph{\bf Proof of Theorem \ref{teo:beppe1}.}
We split the proof in some steps.
\\
$\vartriangleright$ \eqref{starting-0}:
First of all, Lemma \ref{lemma:beppeconvex} and 
inequality
\eqref{lambda1-lambda}
give that $t\mapsto\phi(u_\eps(t)) + \mathcal{U}_\eps(t)$ is convex and thus
\begin{equation}
\label{corte}
 \frac{\dd^2}{\dd t^2}(\phi(u_\eps(t)) +
\mathcal{U}_\eps(t))=\frac{\dd^2}{\dd t^2}\phi(u_\eps(t)) + \frac{\dd}{\dd t}\frac{1}{2}\upeq(t)\ge 0  \quad \text{ in $\calD'(0,\infty)$.} 
\end{equation}
Then, we rewrite the  metric inner variation  equation  \eqref{euler-lagrange-eq-infinite}  as
\begin{equation}
\label{in-the-form}
\frac{\dd}{\dd t}\phi(u_\eps(t)) + \frac{1}{2}\upeq(t) = \eps\frac{\dd}{\dd t}\frac12 \upeq(t) - \frac{1}{2}\upeq(t)
  \quad \text{ in $\calD'(0,\infty)$} 
\end{equation}
which, together with  \eqref{corte}, gives \eqref{starting-0}.  
\\
$\vartriangleright$ $t\mapsto \upeq(t)$ is nonincreasing: To this end, we set   $w_\eps(t) :=  \frac{1}{2}\upeq(t)$. The above discussion immediately gives that $w_\eps$ verifies
\begin{equation}
\label{compare}
-\eps {w}_\eps'' + {w}_\eps'\,\le\,0 \qquad \text{in } \mathcal{D}'(0,\infty),
\end{equation}
which we rewrite as 
\begin{equation}
\label{crucial-starting-pt}
 -\eps \rme^{t/\eps}\frac{\dd}{\dd t}(\rme^{-t/\eps}{w}_\eps')\,\le\,0 \qquad \text{ in $ \mathcal{D}'(0,\infty)$.}
 \end{equation}
\par
 In fact, it follows from \eqref{crucial-starting-pt} that  the distributional derivative ${w}_\eps'$ of $w_\eps$ is locally bounded, so that $w_\eps$ admits a locally Lipschitz pointwise representative, which will be still denoted by the same symbol. Moreover, the second  distributional derivative ${w}_\eps''$ is also locally bounded from above, so that $w_\eps$ is semiconcave, and thus admits left and right derivatives at every point. We will use  the right derivative  $({w}_\eps)_+'$
in the following argument to show that  $w_\eps$ is nonincreasing.
\par
  Indeed, suppose by contradiction that for some $\bar t$ we had $({w}_\eps)_+'(\bar t)> \bar{c}>0$.  Since
  $t\mapsto \rme^{-t/\eps}({w}_\eps)_+'$ is a nondecreasing function by  \eqref{crucial-starting-pt},
 for all $t\ge  \bar t $ we would have $\rme^{-t/\eps}({w}_\eps)_+' (t) \geq \rme^{-\bar{t}/\eps}({w}_\eps)_+' (\bar{t}) \geq \rme^{-\bar{t}/\eps} \bar{c}$,  which would imply
 $w_\eps(t)\,\ge\,w_\eps(\bar t) + \eps \bar{c}(\rme^{(t-\bar t)/\eps}-1)$.
 This clearly contradicts the integrability of $w_\eps(t)=\frac{1}{2}\upeq(t)$ on $(0,\infty)$
  (cf.\ \eqref{est-phi-up-1}). Thus, we have obtained that $t\mapsto \upeq(t)$ is nonincreasing.
\\
$\vartriangleright$ $t\mapsto  \phi(u_\eps(t))$ is convex:  It follows from   \eqref{starting-0} and from the monotonicity of  $t\mapsto \upeq(t)$
that $-\frac{\dd^2}{\dd t^2}\phi(u_\eps(t))\,\le \,0$ in $\calD'(0,\infty)$, which yields the thesis. 
\\
$\vartriangleright$ continuity of $t\mapsto \phi(u_\eps(t))$: it follows from the previously proved convexity that  
 $t \mapsto \phi(u_\epsi(t))$ is continuous on $(0,\infty)$.
  In order to check that $\phi \circ u_\eps$ is right-continuous at $t=0$,
 we observe that, by \eqref{eq:phi-decreases},
\begin{equation}
\label{crucial-limsup} 
\limsup_{t \down 0} \phi(\ue(t)) \leq \phi(\ue(0)) = \phi(\ini).
\end{equation}
Since by   \eqref{basic-ass-1} we also have $\liminf_{t \down 0}
\phi(\ue(t)) \geq \phi(\ue(0))$, we  conclude that
$\lim_{t \down 0} \phi(\ue(t)) = \phi(\ini)$.
\\
$\vartriangleright$ 
   $t\mapsto \phi(u_\eps(t))$ is nonincreasing: This follows from Lemma \ref{lemma:phi-decreases} and
   from the convexity of $t\mapsto \phi(u_\eps(t))$.
\fin

\noindent

\subsection{Finer  properties of WED minimizers   in the $\lambda$-convex case,  $\lambda<0$}
\label{ss:6.2-bis}
The main result of this section is the  analogue of Theorem  \ref{teo:beppe1} for $\lambda <0$. 
Observe that, along Hilbert and metric gradient flows 
(cf.\ the aforementioned \cite[Thm.\ 3.2, page 57]{Brezis73}, \cite[Thm.\ 2.4.15]{Ambrosio-Gigli-Savare08}),  the map  $\phi \circ u$ is nonincreasing and, 
if the energy $\phi$ is $\lambda$(-geodesically) convex,
$t\mapsto \rme^{-2\lambda^- t} \phi(u(t))$ is convex  ($\lambda^-$ denoting the negative part of $\lambda$), and
$t \mapsto \rme^{2\lambda t} |u'|^2(t)$ is nonincreasing.
Likewise,  in Theorem \ref{teo:prop-lambda} below we show that, at the level $\eps>0$, along any WED minimizer $u_\eps$ the functions
$\phi \circ u_\eps$ and $|u_\eps'|$ have these properties,  with  suitable correction terms. \EEE
 \begin{theorem}
\label{teo:prop-lambda}
Assume 
\UUU the LSCC \EEE Property \ref{basic-ass}, \eqref{phi-conv} with $\lambda <0$,  and let $u_\eps\in
\MMM_\epsi(\ini)$.  Then,
\begin{enumerate}
\item  the function $t \mapsto \phi(\ue(t))$ is locally Lipschitz  on $(0,\infty)$ and  right-continuous at $t=0$;
\item
$t\mapsto \frac12 |u_\eps'|^2(t)$ admits a locally Lipschitz representative on $(0,\infty)$,
\item
 \eqref{starting-point} holds;
 \item $t\mapsto\phi(u_\eps(t)) $ is nonincreasing;
 \item for every $[a,b]\subset (0,\infty) $  there exists $C_{a,b}>0$ such that 
 $t\mapsto \phi(u_\eps(t)) + \calU_\eps(t)$
 (with $\calU_\eps$ from \eqref{calUe})
  is  $\lambda C_{a,b} $-convex on $[a,b]$. 
\end{enumerate}
Moreover,
 for every $\lambda'<\lambda$ there exists $\eps'>0$ such that for all $0<\eps<\eps'$  
\begin{enumerate}
   \setcounter{enumi}{5}
\item the function    $t\mapsto \rme^{2\lambda' t}\vert u_\eps'\vert^2 (t) $   is nonincreasing.
\end{enumerate}
\end{theorem}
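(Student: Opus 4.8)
The strategy is to establish the basic differential inequality \eqref{starting-point} first, and then extract all six assertions from it, exactly as in the case $\lambda=0$ treated in Theorem~\ref{teo:beppe1}, but carrying along the extra terms coming from $\lambda<0$. The essential analytic input is Lemma~\ref{lemma:lambda-1-lambda}, which, combined with a localized version of Lemma~\ref{lemma:beppeconvex}, will yield that on every compact $[a,b]\subset(0,\infty)$ the map $t\mapsto \phi(u_\eps(t))+\calU_\eps(t)$ is $\lambda C_{a,b}$-convex, where $C_{a,b}$ compares $\sfd^2(u_\eps(a),u_\eps(b))$ to $2(\calU_\eps(b)-\calU_\eps(a))$ via the elementary bound $\sfd^2(u_\eps(a),u_\eps(b))\le (b-a)\int_a^b|u_\eps'|^2$ and the identity \eqref{cruc-29.09}. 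This is assertion~(5). Differentiating twice in $\calD'(0,\infty)$ then gives
\[
\frac{\dd^2}{\dd t^2}\phi(u_\eps(t))+\frac{\dd}{\dd t}\tfrac12|u_\eps'|^2(t)\ge \lambda|u_\eps'|^2(t)\cdot(\text{local const.})
\]
locally, and inserting the metric inner variation equation \eqref{euler-lagrange-eq-infinite} in the form \eqref{in-the-form} converts this into \eqref{starting-point}, i.e.\ assertion~(3). (One must be slightly careful that the constant in front of $\lambda|u_\eps'|^2$ can be absorbed because $\lambda<0$; passing to a countable exhaustion of $(0,\infty)$ by compact intervals and using that the inequality \eqref{starting-point} is purely local removes the dependence on $[a,b]$.)

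Once \eqref{starting-point} is available, assertions (1)--(4) follow by the same arguments as in Theorem~\ref{teo:beppe1}. Setting $w_\eps:=\tfrac12|u_\eps'|^2$, inequality \eqref{starting-point} reads $-\eps w_\eps''+w_\eps'\le -\lambda' w_\eps\le 0$ wait—more precisely $-\eps w_\eps'' + 2w_\eps' \le -\lambda|u_\eps'|^2$, hence $w_\eps''$ is locally bounded above and $w_\eps'$ locally bounded, so $w_\eps$ has a locally Lipschitz (indeed semiconcave) representative: assertion~(2). Then from \eqref{in-the-form} the distributional derivative of $\phi\circ u_\eps$ equals $\eps w_\eps' - \tfrac12|u_\eps'|^2 - \tfrac12|u_\eps'|^2$, which is locally bounded, giving the local Lipschitz continuity of $t\mapsto\phi(u_\eps(t))$ on $(0,\infty)$; right-continuity at $t=0$ is obtained exactly as in Theorem~\ref{teo:beppe1}, combining the bound $\limsup_{t\downarrow0}\phi(u_\eps(t))\le\phi(\ini)$ (which requires the $\lambda<0$ analogue of Lemma~\ref{lemma:phi-decreases}, proved by the same competitor-curve argument using $\lambda$-convexity and $\phi(\gamma(t))\le\max\{\phi(u_\eps(a)),\phi(u_\eps(b))\}-\tfrac\lambda2(1-\mathsf l)\mathsf l\,\sfd^2$, noting the correction term is nonpositive since $\lambda<0$) with the lower semicontinuity bound $\liminf_{t\downarrow0}\phi(u_\eps(t))\ge\phi(\ini)$ from \eqref{basic-ass-1}; this gives assertion~(1). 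For assertion~(4), monotonicity of $t\mapsto\phi(u_\eps(t))$: one runs the contradiction argument of Lemma~\ref{lemma:phi-decreases} on an arbitrary sub-interval, again using that the $-\tfrac\lambda2(\cdots)\sfd^2$ term has the right (nonpositive) sign, so replacing $u_\eps$ on a connected component of $\{\phi\circ u_\eps>\phi(u_\eps(a))\}$ by a reparametrized geodesic strictly lowers $\calI_\eps$.

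The genuinely new assertion is (6): for $\lambda'<\lambda$ and $\eps$ small, $t\mapsto \rme^{2\lambda' t}|u_\eps'|^2(t)$ is nonincreasing. Set again $w_\eps=\tfrac12|u_\eps'|^2$ and rewrite \eqref{starting-point} as $-\eps w_\eps''+2w_\eps'+\lambda\cdot 2w_\eps\le 0$, i.e.\ $-\eps w_\eps'' + 2 w_\eps' \le -2\lambda w_\eps$. wait—I need to keep it honest: \eqref{starting-point} gives $-\eps w_\eps'' + 2 w_\eps' \le -2\lambda w_\eps$ only if I absorb $\eps w_\eps''$ correctly; in any case the operative inequality is of the form $-\eps w_\eps'' + 2w_\eps' + 2\lambda w_\eps\le 0$ in $\calD'(0,\infty)$. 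The plan is to compare $w_\eps$ with the function $h(t):=\rme^{-2\lambda' t}$ for $\lambda'<\lambda$: one checks that $-\eps h'' + 2 h' + 2\lambda h = \rme^{-2\lambda' t}(-4\eps\lambda'^2 - 4\lambda' + 2\lambda)$, which for fixed $\lambda'<\lambda$ is strictly negative once $\eps<\eps'$ (because $-4\lambda'+2\lambda\to -2\lambda<$ hmm, need $-4\lambda'+2\lambda<0$, i.e.\ $\lambda'>\lambda/2$; choose $\lambda'\in(\lambda/2,\lambda)$, or argue directly). Then the product $\rme^{2\lambda' t}w_\eps(t)$ satisfies a differential inequality of the type $-\eps(\rme^{2\lambda' t}w_\eps)''+ (\text{positive first-order operator})(\rme^{2\lambda' t}w_\eps)\le 0$, and one concludes monotonicity exactly as in the proof of assertion~(2) of Theorem~\ref{teo:beppe1}: the right-derivative, if ever strictly positive, would force exponential growth incompatible with the integrability $\int_0^\infty|u_\eps'|^2\dd\mu_\eps<\infty$ from \eqref{est-phi-up-1}. \textbf{The main obstacle} I anticipate is getting the bookkeeping of constants in the passage from the $\lambda C_{a,b}$-convexity (local, interval-dependent) to the clean global inequality \eqref{starting-point} right — in particular verifying that $C_{a,b}\to 1$ as $b\downarrow a$ so that the local second-derivative inequality has the exact constant $\lambda$ (not $\lambda C$) in the limit, which is what makes assertion~(6)'s threshold $\lambda'<\lambda$ sharp; this requires carefully expanding $\theta_{a,b}$, $i_{a,b}$, $\Gamma_{a,b}$ in \eqref{lambda1-lambda} as $b-a\to0$ together with the identity \eqref{cruc-29.09}.
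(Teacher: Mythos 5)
Your overall architecture for assertions (1)--(5) coincides with the paper's: start from Lemma~\ref{lemma:lambda-1-lambda}, feed it into (a variant of) Lemma~\ref{lemma:beppeconvex} together with identity \eqref{cruc-29.09} to get the local $\lambda C_{a,b}$-convexity (5), then deduce \eqref{starting-point} and from it the regularity and monotonicity claims. But there are two concrete gaps.

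First, your route to (5) and to the exact constant in \eqref{starting-point} silently presupposes that $|u_\eps'|$ is locally bounded and that $t\mapsto |u_\eps'|^2(t)$ has a continuous representative. To apply Lemma~\ref{lemma:beppeconvex} to $\psi(t)=\phi(u_\eps(t))+\calU_\eps(t)-\tfrac{\lambda C_{a,b}}2 t^2$ one needs the same $C_{a,b}$ to control $\sfd^2(u_\eps(\alpha),u_\eps(\beta))/(\beta-\alpha)^2$ for \emph{every} subinterval $[\alpha,\beta]\subset[a,b]$; the bound $\sfd^2\le (\beta-\alpha)\int_\alpha^\beta|u_\eps'|^2$ does not give a constant independent of $\beta-\alpha$ unless $|u_\eps'|$ is bounded on $[a,b]$, and the only uniform choice is $C_{a,b}=\sup_{[a,b]}|u_\eps'|^2$. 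Moreover, to pass from the local $\lambda C_{a,b}$-convexity to the sharp inequality \eqref{starting-point} one cannot ``absorb the constant because $\lambda<0$'' (the sign goes the wrong way, since $-\lambda C_{a,b}\ge -\lambda$ makes the bound weaker), nor does $C_{a,b}\to1$; what one actually uses is a Lebesgue-point argument together with $\sup_{[t,t+r]}|u_\eps'|^2\to |u_\eps'|^2(t)$ as $r\downarrow0$, which requires continuity of $|u_\eps'|^2$. The paper secures all of this by a \emph{separate preliminary step} you omit: an arc-length reparametrization argument showing that $\phi\circ u_\eps - \tfrac\lambda2 L^2$ is convex along $L(t)=\int_0^t|u_\eps'|$, whence $\phi\circ u_\eps$ is continuous on $(0,\infty)$ and $|u_\eps'|$ is locally bounded (Corollary~\ref{cor:29.09}). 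Without this lemma the chain of implications is circular.

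Second, the argument for assertion (6) does not close. To begin with, there is a coefficient error: from \eqref{starting-point} and \eqref{in-the-form} the correct inequality for $w_\eps=\tfrac12|u_\eps'|^2$ is $-\eps w_\eps''+w_\eps'+2\lambda w_\eps\le0$, not $-\eps w_\eps''+2w_\eps'+2\lambda w_\eps\le0$; with the correct coefficient the auxiliary computation gives $-\eps h''+h'+2\lambda h=h(-4\eps\lambda'^2-2\lambda'+2\lambda)$, and $-2\lambda'+2\lambda=2(\lambda-\lambda')>0$ for $\lambda'<\lambda$, so the sign you want is wrong. More seriously, even after the (correct) substitution $f=\rme^{2\lambda' t}w_\eps$, which yields $-\eps f''+(1+4\eps\lambda')f'+2(\lambda-\lambda'-2\eps\lambda'^2)f\le0$, dropping the nonnegative zeroth-order term and running the ``$\rme^{-\alpha_\eps t/\eps}f'$ nondecreasing'' argument only shows that if $f_+'(\bar t)>0$ then $f$ grows like $\rme^{(1+4\eps\lambda')t/\eps}$, hence $w_\eps$ grows like $\rme^{(1/\eps+2\lambda')t}$. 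Since $\lambda'<0$ this rate is \emph{strictly below} $1/\eps$, so $w_\eps\rme^{-t/\eps}\lesssim\rme^{2\lambda' t}$ is integrable on $(0,\infty)$ --- there is no contradiction with $\int_0^\infty|u_\eps'|^2\,\dd\mu_\eps<\infty$. (This is precisely the difference with $\lambda=0$, where the growth rate $\rme^{t/\eps}$ is exactly critical for $\mu_\eps$.) The paper avoids this by solving the ODE explicitly via its fundamental solution with support in $(-\infty,0]$, killing the homogeneous modes by an integrability argument, and reading off the sharp bound $w_\eps'\le x_2^\eps w_\eps$ with $x_2^\eps=\tfrac{1-\sqrt{1+8\lambda\eps}}{2\eps}\downarrow-2\lambda$; assertion (6) is then the statement that $x_2^\eps<-2\lambda'$ for $\eps$ small. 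If you want to run a comparison argument instead, you would need to use the exact root $\gamma=x_2^\eps$ in the substitution $g=\rme^{-\gamma t}w_\eps$ (so that the zeroth-order coefficient vanishes identically), and you still have to rule out the $\rme^{x_1^\eps t}$ mode by a separate boundary-at-infinity argument, since that mode too is $\mu_\eps$-integrable.
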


We split the proof in several results, 
 and start by
checking
the continuity  of $t\mapsto \phi(\ue(t))$ on $(0,\infty) $,
in Corollary \ref{cor:29.09} ahead, as a consequence of the result below, which establishes  a suitable convexity-type property of the mapping $\phi \circ u_\eps$.
\begin{lemma}
Assume 
\UUU the LSCC \EEE Property \ref{basic-ass}, \eqref{phi-conv} with $\lambda <0$,  and let $u_\eps\in
\MMM_\epsi(\ini)$. Let us  introduce the function
\[
L(t): =  \int_0^t |u_\eps'|(r) \dd r.
\]
Then, there holds
\begin{equation}
\label{e:29.09}
\phi(u_\eps(t)) \leq (1-\mathsf{L}(a,b;t))  \phi(u_\eps(a)) + \mathsf{L}(a,b;t)  \phi(u_\eps(b))  - \frac{\lambda}2 (1{-}\mathsf{L}(a,b;t))  \mathsf{L}(a,b;t)   (L(b){-} L(a))^2 
\end{equation}
 for all $t \in [a,b] $ and all $[a.b] \subset (0,\infty)$, 
where we have used the short-hand notation, cf.\ \eqref{linear-a-b}, 
\[
 \mathsf{L}(a,b;t)  : = \mathsf{l}_{L(a),L(b)}(L(t) )
 = \frac{L(t) - L(a)}{L(b) - L(a)}.
 \] 
 Therefore, the function $t\mapsto \phi(u_\eps(t)) - \frac{\lambda}{2} L^2(t)$ is convex on $(0,\infty)$.
\end{lemma}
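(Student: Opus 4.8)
I would first record the facts that make the statement well posed. Since $u_\eps\in\MMM_\epsi(\ini)\subset\ACini\ini\eps$ (cf.\ \eqref{ac-ini}), one has $|u_\eps'|\in L^2_{\mathrm{loc}}([0,\infty))$, so $L$ is nondecreasing, locally of class $W^{1,2}$, in particular continuous with $L(t)<\infty$ for every $t$; moreover $\calI_\eps[u_\eps]<\infty$ together with the coercivity \eqref{basic-ass-2bis} forces $\phi\circ u_\eps\in L^1_{\mathrm{loc}}([0,\infty))$, hence $\phi(u_\eps(t))<\infty$ for a.e.\ $t$, while $t\mapsto\phi(u_\eps(t))$ is lower semicontinuous on $(0,\infty)$ as the composition of the l.s.c.\ functional $\phi$ with the continuous curve $u_\eps$. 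I then fix $[a,b]\subset(0,\infty)$: if $L(a)=L(b)$ the curve $u_\eps$ is constant on $[a,b]$ and \eqref{e:29.09} is trivial, and if $\phi(u_\eps(a))=\infty$ or $\phi(u_\eps(b))=\infty$ the right-hand side of \eqref{e:29.09} is $+\infty$ at every relevant point, so it suffices to treat the case $L(a)<L(b)$ and $\phi(u_\eps(a)),\phi(u_\eps(b))<\infty$.

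\textbf{The variational comparison.} The heart of the proof is a single competitor. Let $\gamma:[0,1]\to X$ be the constant-speed geodesic from $u_\eps(a)$ to $u_\eps(b)$ provided by the $\lambda$-convexity \eqref{phi-conv} (cf.\ \eqref{def:l-geod-convex}), so $|\gamma'|\equiv\sfd(u_\eps(a),u_\eps(b))$, and set
\[
  v(t):=\begin{cases} u_\eps(t)&t\in[0,\infty)\setminus[a,b],\\[1mm] \gamma\big(\mathsf L(a,b;t)\big)&t\in[a,b],\end{cases}\qquad \mathsf L(a,b;t)=\frac{L(t)-L(a)}{L(b)-L(a)}.
\]
Then $v$ is continuous (since $\mathsf L(a,b;a)=0$, $\mathsf L(a,b;b)=1$), $v(0)=\ini$, and on $[a,b]$ the chain rule for metric derivatives gives $|v'|(t)=\sfd(u_\eps(a),u_\eps(b))\,\tfrac{\dd}{\dd t}\mathsf L(a,b;t)=\tfrac{\sfd(u_\eps(a),u_\eps(b))}{L(b)-L(a)}\,|u_\eps'|(t)\le|u_\eps'|(t)$ for a.a.\ $t$, the inequality using $\sfd(u_\eps(a),u_\eps(b))\le\int_a^b|u_\eps'|\,\dd r=L(b)-L(a)$; in particular $v\in\ACini\ini\eps$. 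Since $v\equiv u_\eps$ off $[a,b]$, minimality of $u_\eps$ yields $\int_a^b\ell_\eps(u_\eps,|u_\eps'|)\,\dd\mu_\eps\le\int_a^b\ell_\eps(v,|v'|)\,\dd\mu_\eps$; discarding the nonpositive difference of the kinetic terms and using \eqref{def:l-geod-convex} along $\gamma$ (together with $\sfd^2(u_\eps(a),u_\eps(b))\le(L(b)-L(a))^2$ and $\lambda<0$, to replace $\sfd^2(u_\eps(a),u_\eps(b))$ by $(L(b)-L(a))^2$) gives the integral three-point inequality
\begin{equation}\label{plan:integral}
  \int_a^b\phi(u_\eps(t))\,\dd\mu_\eps(t)\le\int_a^b\Big[(1-\mathsf L(a,b;t))\phi(u_\eps(a))+\mathsf L(a,b;t)\phi(u_\eps(b))-\tfrac\lambda2\mathsf L(a,b;t)(1-\mathsf L(a,b;t))(L(b)-L(a))^2\Big]\dd\mu_\eps(t).
\end{equation}

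\textbf{From the averaged to the pointwise bound.} Upgrading \eqref{plan:integral} to the pointwise estimate \eqref{e:29.09} is the main obstacle, and I would carry it out by the contradiction scheme of Lemma \ref{lemma:beppeconvex}, with the affine weight $\mathsf l_{a,b}$ there replaced by the curved barycentric coordinate $\mathsf L(a,b;\cdot)$. If \eqref{e:29.09} failed at some $t_0\in(a,b)$, the set of points of $(a,b)$ where it fails would be relatively open (by l.s.c.\ of $\phi\circ u_\eps$ and continuity in $t$ of the right-hand side of \eqref{e:29.09}); on its connected component containing $t_0$ the inequality would be violated \emph{strictly} at every interior point while $\phi\circ u_\eps$ would equal the chord value at the endpoints, and integrating this strict violation against $\mu_\eps$ over that component would contradict \eqref{plan:integral} applied there. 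The delicate ingredient is the analogue of \eqref{liminf-equality}, namely that $\phi\circ u_\eps$ coincides with its one-sided $\liminf$ at every point: this is obtained by dividing \eqref{plan:integral}, on a shrinking interval, by its $\mu_\eps$-mass and letting the interval collapse — at Lebesgue points of $|u_\eps'|$ the absolute continuity of $L$ makes the rescaled coordinate asymptotically affine, so the weighted averages of $\mathsf L(a,b;\cdot)$ and of $1-\mathsf L(a,b;\cdot)$ tend to $1/2$, while the correction term, of order $(L(b)-L(a))^2\to0$, disappears; the exceptional set where $|u_\eps'|$ degenerates is handled separately, using that there $u_\eps$, hence $\phi\circ u_\eps$, is locally constant along the relevant sublevel.

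\textbf{Convexity.} Finally, with \eqref{e:29.09} available for every $t\in[a,b]$ and every $[a,b]\subset(0,\infty)$, I would invoke the algebraic identity \eqref{cruc-29.09} with the triple $L(a)\le L(t)\le L(b)$ in place of $a\le t\le b$: it converts $(L(b)-L(a))^2(1-\mathsf L(a,b;t))\mathsf L(a,b;t)$ into $L(a)^2(1-\mathsf L(a,b;t))+L(b)^2\mathsf L(a,b;t)-L(t)^2$, whereupon \eqref{e:29.09} becomes
\[
  \phi(u_\eps(t))-\tfrac\lambda2 L^2(t)\le(1-\mathsf L(a,b;t))\Big(\phi(u_\eps(a))-\tfrac\lambda2 L^2(a)\Big)+\mathsf L(a,b;t)\Big(\phi(u_\eps(b))-\tfrac\lambda2 L^2(b)\Big),
\]
i.e.\ $t\mapsto\phi(u_\eps(t))-\tfrac\lambda2 L^2(t)$ lies below each of its chords when read in the length parametrization of $u_\eps$ — equivalently, $s\mapsto\phi(\widetilde u_\eps(s))-\tfrac\lambda2 s^2$ is convex, $\widetilde u_\eps$ being the arc-length reparametrization of $u_\eps$ (Lemma \ref{le:reparametrization}). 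This is the asserted convexity, and since $L$ is continuous it immediately gives the continuity of $\phi\circ u_\eps$ on $(0,\infty)$ exploited in Corollary \ref{cor:29.09}.
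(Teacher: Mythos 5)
Your competitor construction, the way you use \eqref{def:l-geod-convex} along the geodesic to obtain the integral inequality \eqref{plan:integral}, and the final reduction of the convexity claim via \eqref{cruc-29.09} applied to $L(a)\le L(t)\le L(b)$ all match the paper's. Where you genuinely depart is in passing from the integral bound to the pointwise one. You propose to transplant the averaging scheme of Lemma~\ref{lemma:beppeconvex} to the nonlinear weight $\mathsf L(a,b;\cdot)$ and single out an analogue of \eqref{liminf-equality} as the delicate step, to be proved by a shrinking-interval argument at Lebesgue points of $|u_\eps'|$; as you yourself note, this leaves an exceptional set to be controlled by a separate (and rather vague) argument. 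The paper sidesteps all of this: it argues by contradiction from the start, introduces the quadratic $P$ with $P''\equiv-\lambda$ interpolating $\phi(u_\eps(\cdot))$ at $L(a),L(b)$, sets $\zeta:=\phi\circ u_\eps-P\circ L$, takes a connected component $(\alpha,\beta)$ of $\{\zeta>0\}$, and runs the variational comparison directly on $[\alpha,\beta]$ with a geodesic run through $L$. The structural ingredient that makes this work in one shot is the identity \eqref{identity-29.09}: $P$ restricted to $[L(\alpha),L(\beta)]$ is exactly the quadratic chord one would form from its own endpoint values, so the comparison on $[\alpha,\beta]$ contradicts the strict violation as soon as $\zeta(\alpha),\zeta(\beta)\le 0$ — and this one-sided bound is automatic (the endpoints lie outside the open set $\{\zeta>0\}$). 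In particular, the $\liminf$-equality you flag is not actually needed: two quadratics with the same second derivative are pointwise ordered on an interval as soon as they are ordered at its endpoints, so the inequality at $\alpha,\beta$ suffices. Your route would presumably close, but the shrinking-interval step is both harder than the paper's argument (you would have to treat boundary points of the failure set, which are not guaranteed to be Lebesgue points of $|u_\eps'|$) and, in the end, superfluous. One last remark: your reinterpretation of the ``convexity'' conclusion as convexity of $s\mapsto\phi(\widetilde u_\eps(s))-\tfrac\lambda2 s^2$ in the arc-length variable is the precise reading of what the paper's inequality establishes, and it is what Corollary~\ref{cor:29.09} actually uses (continuity of $\phi\circ u_\eps$).
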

\begin{proof}
Preliminarily, we  introduce the polynomial function
\[
P(s) = \phi(u_\eps(a)) + (s{-}L(a)) \frac{\phi(u_\eps(b)){-} \phi(u_\eps(a))  }{L(b){-} L(a)}
+\frac{\lambda}{2}  (s{-}L(a))( L(b){-}s),
\]
which satisfies
$P(L(a))  =  \phi(u_\eps(a))$ and $P(L(b))  =  \phi(u_\eps(b))$
and $P''(s) \equiv -\lambda$. 
 A direct calculation shows that 
\begin{equation}
\label{identity-29.09}
\begin{aligned}
P(L(t)) =  & (1-\mathsf{L}(\alpha,\beta;t))  P(L(\alpha)) + \mathsf{L}(\alpha,\beta;t)  P(L(\beta)) \\ & \quad  - \frac{\lambda}2 (1{-}\mathsf{L}(\alpha,\beta;t))  \mathsf{L}(\alpha,\beta;t)   (L(\beta){-} L(\alpha))^2 \quad \text{for all }  [\alpha,\beta] \subset (0,\infty).
\end{aligned}
\end{equation}
In particular, 
\[
P(L(t))  =  (1-\mathsf{L}(a,b;t))  \phi(u_\eps(a)) + \mathsf{L}(a,b;t)  \phi(u_\eps(b))  - \frac{\lambda}2 (1{-}\mathsf{L}(a,b;t))  \mathsf{L}(a,b;t)   (L(b){-} L(a))^2, 
\]
so that 
  \eqref{e:29.09} reads
\begin{equation}
\label{29.09-2show}
\zeta(t): = \phi(u_\eps(t))   -  P(L(t))  \leq 0 \quad \text{ for all } t \in [a,b] \quad \text{for all } [a,b]\subset (0,\infty)
\end{equation}
\par
Let us prove \eqref{29.09-2show} by contradiction. Suppose that there exist $[a,b]\subset (0,\infty)$ and $ \bar{t } \in (a,b) $ (we may suppose that $\bar t $ is in the interior of $[a,b]$  by a lower  semicontinuity argument),   such that 
$\zeta(\bar t) >0$.  Denote by $A_{a,b}$
the subset of  $ [a,b]$ where $\zeta $ is strictly positive, and by $[\alpha,\beta]$
 the  connected component of $A_{a,b}$ containing $\bar t$, so that 
 \begin{equation}
 \label{used-later29.09}
 \zeta(t)>0 \quad\text{for all } t \in (\alpha,\beta), \ \zeta(\alpha) = \zeta(\beta) =0.
 \end{equation}
 Arguing as in the proof of 
 Lemma \ref{lemma:lambda-1-lambda}, we now choose a suitable competitor for the curve   $u_\eps\in
\MMM_\epsi(\ini)$: we consider  the geodesic $\gamma : [L(\alpha), L(\beta) ] \to X$ connecting $u_\eps(\alpha)$ to $u_\eps(\beta)$ with unit speed, and 
 define the curve
\[
\tilde{v}(t): = \begin{cases}
u_\eps(t) & \text{ if } t \in (0,\alpha) \text{ or } t \in (\beta,\infty),
\\
\gamma(L(t)) & \text{ if } t \in [\alpha,\beta],
\end{cases}
\]
so that 
\[
|\tilde{v}'|(t) =| \gamma'|(L(t)) L'(t) = |u_\eps'|(t) \qquad \foraa\, t \in (\alpha,\beta).
\]
From $\calI_{\eps} [u_\eps] \leq \calI_\eps[\tilde v]$ we then conclude (cf.\ \eqref{competitor-v-ab-lambda}) that
\begin{equation}
\label{ad-cont1}
\int_\alpha^\beta 
\phi(u_\eps(t)) \dd \mu_\eps(t)  \leq 
\int_\alpha^\beta 
\phi(\gamma(L(t)) ) \dd \mu_\eps(t). 
\end{equation}
In turn,
\begin{equation}
\label{ad-cont2}
\begin{aligned}
& 
 \int_\alpha^\beta  \phi(\gamma(L(t)) )  \dd \mu_\eps(t)  
 \\
 &  \stackrel{(1)}{\leq}    \int_\alpha^\beta    (1-\mathsf{L}(\alpha,\beta;t))  \phi(u_\eps(\alpha)) + \mathsf{L}(\alpha,\beta;t)  \phi(u_\eps(\beta))  - \frac{\lambda}2 (1{-}\mathsf{L}(\alpha,\beta;t))  \mathsf{L}(\alpha,\beta;t)   (L(\beta){-} L(\alpha))^2  \dd \mu_\eps(t) 
\\
 &  
   \stackrel{(2)}{=}    \int_\alpha^\beta   P(L(t))  \dd \mu_\eps(t)  
\end{aligned}
\end{equation}
 where (1) follows from 
 the $\lambda$-geodesic convexity of $\phi$ and the fact that $\gamma(L(\alpha)) = u_\eps(\alpha)$ and 
  $\gamma(L(\beta)) = u_\eps(\beta)$, while (2) ensues from
  the fact that $ \phi(u_\eps(\alpha)) = P(L(\alpha))$ and $ \phi(u_\eps(\beta)) = P(L(\beta))$, cf.\ \eqref{used-later29.09}, 
  combined with 
   \eqref{identity-29.09}. 
   From \eqref{ad-cont1} and \eqref{ad-cont2} we thus arrive at a contradiction with \eqref{used-later29.09}. This concludes the proof of \eqref{e:29.09}. 
   \par
   The final assertion follows by combining \eqref{e:29.09} with the identity 
   \[
   L^2(t) =  (1-\mathsf{L}(a,b;t))  L^2(a) + \mathsf{L}(a,b;t) L^2(b)
    -  (1{-}\mathsf{L}(a,b;t))  \mathsf{L}(a,b;t)   (L(b){-} L(a))^2 \quad \text{for all } t \in [a,b],
   \]
   which follows from \eqref{cruc-29.09}.
\end{proof}

\begin{corollary}
\label{cor:29.09}
Assume 
\UUU the LSCC \EEE Property \ref{basic-ass}, \eqref{phi-conv} with $\lambda <0$,  and let $u_\eps\in
\MMM_\epsi(\ini)$.
Then,  the functions $t \mapsto \phi(\ue(t))$
 is continuous  on $(0,\infty)$, while $t\mapsto |u_\eps'|(t)$ is locally bounded. 
\end{corollary}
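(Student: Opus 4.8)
The plan is to deduce both assertions directly from the convexity property established in the previous Lemma, combined with the metric inner variation equation of Proposition~\ref{euler-lagrange}. Since $u_\eps\in\ACini\ini\eps\subset\AC^2_\loc([0,\infty);X)$, the metric derivative $|u_\eps'|$ belongs to $L^2_\loc([0,\infty))$, so that $L(t):=\int_0^t|u_\eps'|(r)\,\dd r$ is nondecreasing and locally absolutely continuous, hence continuous; in particular $t\mapsto L^2(t)$ is continuous on $[0,\infty)$. Moreover, applying Proposition~\ref{euler-lagrange} with $T=\infty$, the map $\mathcal V_\eps\colon t\mapsto\phi(u_\eps(t))-\tfrac\eps2|u_\eps'|^2(t)$ belongs to $W^{1,1}_\loc([0,\infty))$ and thus admits a locally absolutely continuous — in particular continuous — representative on $[0,\infty)$ (the one already introduced in Corollary~\ref{cor:boundary}); in passing, since $\mathcal V_\eps$ is finite a.e.\ and $|u_\eps'|^2\in L^1_\loc$, the identity $\phi(u_\eps(t))=\mathcal V_\eps(t)+\tfrac\eps2|u_\eps'|^2(t)$ shows $\phi(u_\eps(t))<\infty$ for a.e.\ $t\in(0,\infty)$.

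Next I would use the previous Lemma (see in particular~\eqref{e:29.09}), according to which $\psi(t):=\phi(u_\eps(t))-\tfrac\lambda2 L^2(t)$ is convex on $(0,\infty)$. Because $\lambda<0$ and $L^2<\infty$, we have $\psi\ge\phi\circ u_\eps>-\infty$ everywhere, while $\psi<\infty$ a.e.\ by the previous paragraph. Since the effective domain of a convex function on an interval is again an interval, whose complement in $(0,\infty)$ is therefore empty or of positive Lebesgue measure, a.e.\ finiteness forces $\psi<\infty$ on all of $(0,\infty)$. A finite convex function on an open interval is continuous (indeed locally Lipschitz), hence $\psi$ is continuous on $(0,\infty)$, and consequently so is $t\mapsto\phi(u_\eps(t))=\psi(t)+\tfrac\lambda2 L^2(t)$.

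Finally, to control $|u_\eps'|$ I would rearrange the defining identity of $\mathcal V_\eps$ as
\begin{equation*}
  \tfrac\eps2|u_\eps'|^2(t)=\phi(u_\eps(t))-\mathcal V_\eps(t)\qquad\foraa\ t\in(0,\infty);
\end{equation*}
the right-hand side is continuous on $(0,\infty)$ by the previous step and the continuity of $\mathcal V_\eps$, so $t\mapsto|u_\eps'|^2(t)$ agrees a.e.\ with a continuous function, whence $|u_\eps'|$ possesses a continuous representative on $(0,\infty)$ and is in particular locally bounded there.

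The argument is essentially routine once the convexity Lemma is in hand; the only point requiring a little care is the elementary fact that a convex function on $(0,\infty)$ which is finite almost everywhere is finite everywhere, which is exactly what upgrades the convexity inequality~\eqref{e:29.09} to genuine continuity of $\phi\circ u_\eps$. I do not anticipate any substantial obstacle.
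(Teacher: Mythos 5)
Your argument is correct and follows essentially the same route as the paper: combine the convexity of $\psi(t)=\phi(u_\eps(t))-\tfrac{\lambda}{2}L^2(t)$ (from the preceding lemma) with the continuity of $L^2$ to get continuity of $\phi\circ u_\eps$, and then use the $W^{1,1}_{\rm loc}$-regularity of $\mathcal V_\eps=\phi\circ u_\eps-\tfrac\eps2|u_\eps'|^2$ from Proposition~\ref{euler-lagrange} to transfer continuity to $|u_\eps'|^2$. The one place where you add useful detail beyond the paper's terse proof is the observation that the a.e.\ finiteness of $\psi$ (coming from $\calI_\eps[u_\eps]<\infty$) together with the convexity inequality \eqref{e:29.09} upgrades to finiteness everywhere on $(0,\infty)$, which is what licenses the "a finite convex function on an open interval is locally Lipschitz" step; the paper leaves this implicit.
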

\begin{proof}
The assertion for $\phi \circ u_\eps$ follows from the continuity of the function $t\mapsto \phi(u_\eps(t)) - \frac{\lambda}{2} L^2(t)$. Since the mapping $t\mapsto \phi(u_\eps(t))  -\frac\eps2|u_\eps'|^2(t)$ is in $W^{1,1}_\loc([0,\infty))$, we conclude that $t\mapsto |u_\eps'|^2(t)$ has a continuous representative, whence the thesis.
\end{proof}
We are now in a position to prove inequality \eqref{starting-point}, along with 
some of the  other claims in the statement of 
\UUU Theorem \EEE \ref{teo:prop-lambda},
 in the case $\lambda<0$.
\begin{lemma}
\label{l:intermediate-tony}
Assume 
\UUU the LSCC \EEE Property \ref{basic-ass}, \eqref{phi-conv} with $\lambda <0$,  and let $u_\eps\in
\MMM_\epsi(\ini)$.  Then,
 $u_\eps$ enjoys
properties (2), (3) and (5) from the  statement of \UUU Theorem \EEE \ref{teo:prop-lambda}. In particular,
the function $t\mapsto \phi(u_\eps(t))$ is locally Lipschitz on $(0,\infty)$. 
\end{lemma}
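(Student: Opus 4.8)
The plan is to establish inequality \eqref{starting-point}, namely
\[
-\eps\frac{\dd^2}{\dd t^2}\left( \tfrac{1}{2}\upeq(t) \right)
 + \frac{\dd}{\dd t}\tfrac{1}{2}\vert u_\eps'\vert^2(t)\,\le\, - \lambda \vert u_\eps'\vert^2(t) \quad\hbox{ in } \mathcal{D}'(0,\infty),
\]
by combining the already-proven convexity-type estimate \eqref{e:29.09} with the metric inner variation equation \eqref{euler-lagrange-eq-infinite}. First I would exploit the final statement of the previous Lemma: the function $t\mapsto \phi(u_\eps(t)) - \tfrac{\lambda}{2} L^2(t)$, with $L(t)=\int_0^t|u_\eps'|(r)\dd r$, is convex on $(0,\infty)$, hence its distributional second derivative is a nonnegative measure. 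Since $L'(t)=|u_\eps'|(t)$ for a.a.\ $t$ and $L$ is locally Lipschitz, one has $\tfrac{\dd}{\dd t}\big(\tfrac12 L^2(t)\big) = L(t)|u_\eps'|(t)$, and from here $\tfrac{\dd^2}{\dd t^2}\big(\tfrac12 L^2\big) \ge |u_\eps'|^2$ in $\calD'(0,\infty)$ (the product of two nondecreasing-in-absolute-value Lipschitz factors; this needs the elementary fact that $L\ge 0$ and $L$ is nondecreasing, so $L(t)|u_\eps'|(t)$ has nonnegative distributional derivative at least of the size $|u_\eps'|^2$, which in turn follows by testing against nonnegative $\xi\in C^\infty_c$ and a difference-quotient argument). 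Consequently
\[
\frac{\dd^2}{\dd t^2}\phi(u_\eps(t)) \ge \frac{\lambda}{2}\frac{\dd^2}{\dd t^2}L^2(t) \ge \lambda\, |u_\eps'|^2(t) \quad\hbox{ in }\mathcal{D}'(0,\infty),
\]
where the second inequality uses $\lambda<0$ to flip the sign.

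Next I would bring in the metric inner variation equation \eqref{euler-lagrange-eq-infinite}, which, rewritten as in \eqref{in-the-form}, reads
\[
\frac{\dd}{\dd t}\phi(u_\eps(t)) + \tfrac{1}{2}\upeq(t) = \eps\frac{\dd}{\dd t}\tfrac12 \upeq(t) - \tfrac{1}{2}\upeq(t) \quad\hbox{ in }\calD'(0,\infty).
\]
Differentiating this identity in $\calD'(0,\infty)$ yields
\[
\frac{\dd^2}{\dd t^2}\phi(u_\eps(t)) + \frac{\dd}{\dd t}\tfrac{1}{2}\upeq(t) = \eps\frac{\dd^2}{\dd t^2}\tfrac12 \upeq(t) - \frac{\dd}{\dd t}\tfrac{1}{2}\upeq(t),
\]
and substituting the lower bound $\tfrac{\dd^2}{\dd t^2}\phi(u_\eps)\ge \lambda|u_\eps'|^2$ gives precisely \eqref{starting-point}. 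This also furnishes the equality chain displayed in \eqref{starting-point}, since the left-hand and middle expressions are literally the two sides of the differentiated inner variation equation. From \eqref{starting-point} I would then extract claim (2): writing $w_\eps(t):=\tfrac12\upeq(t)$, inequality \eqref{starting-point} gives $-\eps w_\eps'' + w_\eps' \le -\lambda\, 2 w_\eps$ in $\calD'(0,\infty)$, so $w_\eps'$ and $w_\eps''$ are locally bounded measures (bounded below), hence $w_\eps$ has a locally Lipschitz representative, giving (2); then (3) is \eqref{starting-point} itself. For claim (5), I would reinspect \eqref{lambda1-lambda} of Lemma \ref{lemma:lambda-1-lambda} on a fixed $[a,b]\subset(0,\infty)$: the extra term $-\tfrac{\lambda}{2}\sfd^2(u_\eps(a),u_\eps(b))\Gamma_{a,b}$, controlled via $\Gamma_{a,b}\le C_{a,b}(b-a)^2$ and $\sfd^2(u_\eps(a),u_\eps(b))\le (b-a)\int_a^b|u_\eps'|^2$, shows that $t\mapsto \phi(u_\eps(t))+\calU_\eps(t)$ satisfies the integral convexity inequality of Lemma \ref{lemma:beppeconvex} up to a quadratic correction, i.e.\ it is $\lambda C_{a,b}$-convex on $[a,b]$; a short adaptation of the proof of Lemma \ref{lemma:beppeconvex} (applied to $\psi(t)+\tfrac{\lambda C_{a,b}}{2}t^2$ or similar) delivers this.

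For the local Lipschitz continuity of $t\mapsto\phi(u_\eps(t))$: Corollary \ref{cor:29.09} already gives continuity, and the inner variation equation shows $t\mapsto\phi(u_\eps(t))-\tfrac\eps2|u_\eps'|^2(t)\in W^{1,1}_\loc([0,\infty))$ with derivative $-|u_\eps'|^2$; since claim (2) makes $|u_\eps'|^2$ locally Lipschitz (in particular locally bounded and even $W^{1,1}_\loc$), adding back $\tfrac\eps2|u_\eps'|^2$ gives that $\phi\circ u_\eps\in W^{1,1}_\loc([0,\infty))$ with locally bounded derivative, hence locally Lipschitz, proving (1). The main obstacle I anticipate is the passage from the convexity of $t\mapsto\phi(u_\eps(t))-\tfrac\lambda2 L^2(t)$ to the distributional inequality $\tfrac{\dd^2}{\dd t^2}\phi(u_\eps)\ge\lambda|u_\eps'|^2$: this requires carefully handling the product $L(t)|u_\eps'|(t)$ and arguing that $\tfrac{\dd^2}{\dd t^2}\big(\tfrac12 L^2\big)\ge|u_\eps'|^2$ as measures, which is where one must be cautious, since $L$ need not be $C^1$ and the naive chain rule for second derivatives is not valid; the clean way is to note $\tfrac{\dd}{\dd t}(\tfrac12 L^2)=L L'$ a.e.\ with $LL'$ absolutely continuous (product of a Lipschitz and an $AC$ function once (2) is available — but (2) is being proven simultaneously, so one should instead run the argument with the a priori merely-$BV$ structure and test against nonnegative $\xi\in C^\infty_c$, using $L\ge0$ nondecreasing to get $\int L L' \xi' \le -\int (L')^2\xi$), and then one closes the loop.
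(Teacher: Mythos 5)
Your derivation of \eqref{starting-point} has a sign error that exposes a genuine gap. You assert $\tfrac{\dd^2}{\dd t^2}\big(\tfrac12 L^2\big) \ge |u_\eps'|^2$ and then invoke ``flipping the sign'' with $\lambda<0$ to conclude $\tfrac{\lambda}{2}\tfrac{\dd^2}{\dd t^2}L^2 \ge \lambda|u_\eps'|^2$; but with $\lambda<0$ that last step actually requires $\tfrac{\dd^2}{\dd t^2}\big(\tfrac12 L^2\big) \le |u_\eps'|^2$, the opposite of what you claim. Neither inequality is available: formally $\tfrac{\dd^2}{\dd t^2}\big(\tfrac12 L^2\big)=|u_\eps'|^2+L\,\tfrac{\dd}{\dd t}|u_\eps'|$, so the direction depends on the monotonicity of $|u_\eps'|$, which for $\lambda<0$ is not known (it is in fact only established, with an exponential correction, as item (6) of Theorem \ref{teo:prop-lambda}, \emph{after} the present lemma). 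Even granting the target estimate $\tfrac{\dd^2}{\dd t^2}\phi(u_\eps)\ge\lambda|u_\eps'|^2$, the chain still does not close: differentiating \eqref{in-the-form} shows that the inequality in \eqref{starting-point} is exactly $\tfrac{\dd^2}{\dd t^2}\big(\phi\circ u_\eps+\calU_\eps\big)\ge\lambda|u_\eps'|^2$, i.e.\ $\tfrac{\dd^2}{\dd t^2}\phi(u_\eps)\ge\lambda|u_\eps'|^2-\tfrac{\dd}{\dd t}\tfrac12|u_\eps'|^2$, and the extra term $-\tfrac{\dd}{\dd t}\tfrac12|u_\eps'|^2$ has no sign. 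When you ``substitute the lower bound'' into the differentiated inner variation equation you silently drop this term, so what you obtain is not \eqref{starting-point}.

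The cleaner object to work with is $\calU_\eps$, not $L$: the paper goes back to the integral inequality \eqref{lambda1-lambda} of Lemma \ref{lemma:lambda-1-lambda} together with the algebraic identity \eqref{cruc-29.09} and Lemma \ref{lemma:beppeconvex} to establish that $t\mapsto\phi(u_\eps(t))+\calU_\eps(t)$ is $\lambda C_{a,b}$-convex on each $[a,b]$, with $C_{a,b}=\sup_{(a,b)}|u_\eps'|^2$ (finite by Corollary \ref{cor:29.09}). This is precisely the second-derivative bound that pairs with the inner variation equation to give $-\eps w_\eps''+w_\eps'\le -\lambda C_{a,b}$ in $\calD'(a,b)$, a constant right-hand side from which the Lipschitz regularity of $w_\eps=\tfrac12|u_\eps'|^2$ and hence of $\phi\circ u_\eps$ follow. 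The pointwise inequality \eqref{starting-point} is then recovered a posteriori by a Lebesgue-point argument, shrinking $[a,b]$ to $\{t\}$ once $|u_\eps'|^2$ is known to be continuous. Your route, which reuses the convexity of $\phi\circ u_\eps-\tfrac\lambda2 L^2$ from the preceding lemma, produces the wrong correction term and cannot be repaired by a sign fix alone.
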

\begin{proof}
\emph{\textbf{Claim $1$:}   For every  $[a,b]\subset [0,\infty)$, let  
$C_{a,b} = \sup_{t\in (a,b)}|u_\eps'|^2(t)$. 
 Then,}
\begin{equation}
\label{eq:lambda1}
 t\mapsto \phi(u_\epsi(t)) + \mathcal{U}_\epsi(t) \hbox{ is }  \lambda C_{a,b}-\hbox{convex on } [a,b]. 
\end{equation}
Indeed, from inequality \eqref{lambda1-lambda} we deduce that 
\[
\begin{aligned}
&
\int_{a}^b \big(\phi(u_\eps(t))
+\mathcal{U}_\eps(t)  \big) \dd\mu_\eps(t)  \\ & \le\big( \phi(u_\eps(a))
+\mathcal{U}_\eps(a) \big) \int_{a}^b (1-\mathsf{l}_{a,b}(t)) \dd \mu_\eps(t) + \big(
\phi(u_\eps(b)) +
\mathcal{U}_\eps(b) \big) \int_a^b \mathsf{l}_{a,b}(t) \dd \mu_\eps(t) \\ & \quad  -
 \frac{\lambda C_{a,b}}{2} (b-a)^2 \left( \int_{a}^b (1-\mathsf{l}_{a,b}(t)) \dd \mu_\eps(t)  \right)  \int_a^b \mathsf{l}_{a,b}(t) \dd \mu_\eps(t).
\end{aligned}
\]
We combine this with  identity  \eqref{cruc-29.09} to conclude that 
\[
\begin{aligned}
&
\int_{a}^b \big(\phi(u_\eps(t))
+\mathcal{U}_\eps(t)   - \frac{\lambda C_{a,b}}{2} t^2 \big) \dd\mu_\eps(t) \\ &  \leq 
\big( \phi(u_\eps(a))
+\mathcal{U}_\eps(a)  -\frac{\lambda C_{a,b}}{2} a^2 \big) \int_{a}^b (1-\mathsf{l}_{a,b}(t)) \dd \mu_\eps(t) + \big(
\phi(u_\eps(b)) +
\mathcal{U}_\eps(b) -\frac{\lambda C_{a,b}}{2} b^2 \big) \int_a^b \mathsf{l}_{a,b}(t) \dd \mu_\eps(t).
\end{aligned}
\]
Therefore, applying Lemma \ref{lemma:beppeconvex} we conclude that the function
$\psi(t): = \phi(u_\eps(t))
+\mathcal{U}_\eps(t)   - \frac{\lambda C_{a,b}}{2} t^2  $ is convex, whence  the desired \eqref{eq:lambda1}. 
\par
\noindent
\emph{\textbf{Claim $2$:} there holds}
\begin{equation}
  \begin{aligned} -\eps\frac{\dd^2}{\dd t^2}\left(\frac{1}{2}\upeq(t)
\right) + \frac{\dd}{\dd t}\frac{1}{2}\vert u_\eps'\vert^2(t) 
=-\frac{\dd^2}{\dd t^2} \phi(u_\eps(t))-\frac{\dd}{\dd
t}\frac{1}{2}\vert u_\eps'\vert^2(t)  \le -\lambda
 C_{a,b}\,\hbox{ in }\, \mathcal{D}'(a,b). \end{aligned}
\label{prop-la1}
\end{equation}
It follows from \eqref{eq:lambda1}  that 
\begin{equation}
\label{later29.09}
\frac{\dd^2}{\dd
t^2}\Big(\phi(u_\epsi(t)) +
\mathcal{U}_\epsi(t)\Big)\,\ge\,\lambda  C_{a,b}  \qquad \text{in  } \mathcal{D}'(a,b).
\end{equation}
 and thus,
rewriting  the  metric inner variation   equation
\eqref{euler-lagrange-eq-infinite}  in the form \eqref{in-the-form}
and rearranging  the terms,  we have \eqref{prop-la1}.
\par
\noindent
\emph{\textbf{Claim $3$:}
 the function $t\mapsto \frac12 |u_\eps'|^2(t)$ admits a locally Lipschitz  representative.}
\\
    We again use the notation  $ w_\eps
(t)\,=\,\frac{1}{2}\vert u'\vert^2(t)$. From \eqref{prop-la1} we
deduce that $ w_\eps $ fulfills $-\epsi   \UUU  w_\eps''\EEE  +\UUU  w_\eps'\EEE \le
-\lambda  C_{a,b}$  in $\calD'(a,b)$  which, setting $\nu:=-\lambda
 C_{a,b}$,  \EEE we rewrite as
\begin{equation}
\label{stg-pt-2}
 -\eps \rme^{t/\eps}\frac{\dd}{\dd t}\Big(\rme^{-t/\eps}( \UUU  w_\eps'\EEE  - \nu)\Big)\,\le \,0   \text{ in }  \calD'(a,b). 
\end{equation}
Now, let us set $\mathcal L(t)= w_\eps (t)- \nu t$.  It follows from \eqref{stg-pt-2} that the distributional derivative of $\mathcal{L}$ is locally bounded, so that $\mathcal{L}$ admits a locally Lipschitz representative, whence the claim for $w_\eps$.  From now on, we will identify $t\mapsto \frac12 |u_\eps'|^2(t)$ with its locally Lipschitz representative.
\par
\noindent 
\emph{\textbf{Claim $4$:}
the function $t\mapsto \phi(u_\eps(t))$ is locally Lipschitz   on $(0,\infty)$. }
Since the function  $t\mapsto \frac12 |u_\eps'|^2(t)$ is locally bounded, its primitive $\mathcal{U}_\eps$ is locally Lipschitz on $(0,\infty)$. In turn, by Claim $1$ the mapping  $t\mapsto \phi(u_\eps(t)) + \calU_\eps(t)$ is locally Lipschitz on $[a,b]$ for every $[a,b]\subset (0,\infty)$. 
The claim follows. 
\par
\noindent
\emph{\textbf{Claim $5$:   \eqref{starting-point} holds.}}
 Let $\mathsf{f}$  be the density of the distributional derivative 
$-\frac{\dd^2}{\dd
t^2}\Big(\phi(u_\epsi(t)) +
\mathcal{U}_\epsi(t)\Big)$. It follows from  
\eqref{later29.09} that 
\[
\mathsf{f}(t) \leq - \lambda \sup_{s\in [a,b]}{|u_\eps'|^2(s)} \qquad \foraa\, t \in (a,b), \ \text{for all }  [a,b]\subset(0,\infty). 
\]
Let $t\in (0,\infty)$, out of a negligible set, be a Lebesgue point for $\mathsf{f}$.
Then,
\[
\mathsf{f}(t) = \lim_{r\down 0}\frac1r \int_t^{t+r} \mathsf{f}(s) \dd s \leq   \lim_{r\down 0} \left( - \lambda \sup_{s\in [t,t+r]}{|u_\eps'|^2(s)} \right) = -\lambda |u_\eps'|^2(t),
\]
whence 
 \eqref{starting-point}. 
 \par
  This concludes the proof. 
\end{proof}
\par
We are now in a  position to conclude the
\begin{proof}[Proof of Theorem \ref{teo:prop-lambda}.]
In view of Lemma \ref{l:intermediate-tony}, it remains to prove properties (4) and (6), as well the 
right-continuity of $t\mapsto \phi(u_\eps(t))$ at $t=0$. \EEE
We split the proof in several claims.

\noindent
\emph{\textbf{Claim $1$:}} \emph{there exists a family  $ (x_2^\eps)_\eps \subset (0,\infty)$ such that
$x_2^\eps\down-2\lambda$ as $\eps \down 0$ and}
\begin{equation}
\label{claim1}
\rme^{-x_2^\eps t} |u_\eps'|^2 \text{ is  nonincreasing. }
\end{equation}
 Then, (1) in \UUU Theorem \EEE \ref{teo:prop-lambda} follows  upon choosing, for every prescribed $\lambda'<\lambda$, $\eps'>0$ such that for all $\eps \in (0,\eps')$ there holds 
with $-\frac12 x_2^\eps >\lambda'$.

\noindent
We combine \eqref{starting-point} with the metric inner variation equation, cf.\ \eqref{in-the-form}, and deduce that
(cf.\ \eqref{compare})
\begin{equation}
\label{conseq}
-\eps \UUU  w_\eps''\EEE+\UUU  w_\eps'\EEE+2\lambda w_\eps \leq 0 \qquad \text{in } \mathcal{D}'(0,\infty),
\end{equation}
where again we have used the place-holder $w_\eps := \frac12 |u_\eps'|^2 $.
Let us introduce  the \emph{negative}   distributions   $\nu_\eps$ and
$h_\eps $ 
by
\[
h_\eps:= \frac{\nu_\eps}\eps  \qquad \text{and }
\nu_\eps:= -\eps \UUU  w_\eps''\EEE +\UUU  w_\eps'\EEE+2\lambda w_\eps. 
\]
Hence,
\begin{equation}
\label{eq:eqw}
\UUU  w_\eps''\EEE -\frac{1}{\eps}\UUU  w_\eps'\EEE- \frac{2\lambda}{\eps}w_\eps = -h_\eps \qquad \text{in }\mathcal{D}'(0,\infty).
\end{equation}
The general solution of \eqref{eq:eqw} has the form
\begin{equation*}
w_\eps(t) = A \rme^{x_1^\eps t} + B\rme^{x_2^\eps t} + \int_{-\infty}^{\infty}E(t-s)(-h_\eps(s))\dd s,
\end{equation*}
where $x_1^\eps$ and $x_2^\eps$ are the two (real) roots of the characteristic
equation and $E$ is the fundamental solution with support in $(-\infty,0]$.
We have that
\begin{equation}
\label{eq:roots}
x_1^\eps = \frac{1 + \sqrt{1 + 8\lambda\eps}}{2\eps},\,\,\,\,\,x_2^\eps=  \frac{1 - \sqrt{1 + 8\lambda\eps}}{2\eps}.
\end{equation}
Note that (at least for sufficiently small $\eps$)
$x_1^\eps$ and $x_2^\eps$ are positive.
Consequently,
\begin{equation}
\label{eq:w}
w_\eps(t) = \int_{-\infty}^{\infty}E(t-s)(-h_\eps(s))\dd s,
\end{equation}
since $w_\eps$ must be integrable on $(0,\infty)$.
The function $E$ is the fundamental solution with support in $(-\infty,0)$ and
can be found by  solving the following  Cauchy   problem
\begin{equation}
\label{eq:fund_probl}
\begin{cases}
\UUU  v''\EEE(t) -\frac{1}{\eps}\UUU  v'\EEE(t) - \frac{2\lambda}{\eps}v(t)= 0,\\
v(0) = 0\\
v'(0) = -1.
\end{cases}
\end{equation}
Denoting with $H(\cdot)$ the Heaviside function, we have that
\begin{equation}
\label{eq:green_funct}
E(t) = -\frac{1}{\theta}{\rm e}^{{t}/{(2\eps)}}\sinh(\theta(t)) H(-t) \qquad \text{with the place-holder } \theta:=\frac{\sqrt{1 + 8\lambda\eps}}{2\eps}.
\end{equation}
 Therefore, from \eqref{eq:w} we gather that 
$
w_\eps(t) = \frac{1}{\theta}\int_{t}^{\infty}h_\eps(s)
\rme^{{(t-s)}/{(2\eps)}}\sinh(\theta(t-s)) \dd s,
$
which we rewrite as
\begin{equation}
\label{eq:w2}
w_\eps(t) = \frac{\rme^{x_2^\eps t}}{\theta}\int_{t}^{\infty}
h_\eps(s) \rme^{-s/2\eps}f(t,s) \dd s
\end{equation}
where $ft,s):=\frac{1}{2} (\rme^{2\theta t}\rme^{-\theta s} - \rme^{\theta s})$.
Now, differentiating with respect to $t$ we find
$$
\UUU  w_\eps'\EEE(t) = x_2^\eps w_\eps(t) + \frac{\rme^{x_2^\eps t}}{\theta}\left(
-h_\eps(t) \rme^{-t/2\eps}f(t,t) + \int_{t}^{\infty}
h_\eps(s) \rme^{-s/2\eps}\partial_t f(t,s)\dd s \right).
$$
Thus,
\begin{equation}
\label{piu-sotto}
\UUU  w_\eps'\EEE(t) \le x_2^\eps w_\eps(t)  \qquad \text{in } \mathcal{D}'(0,\infty)\,
\end{equation}
since 
$f(t,t) = 0$ and
$h_\eps\le 0$ by construction  while, in turn, $\partial_t f(t,s) \geq 0$. 
 As a consequence, we have
\begin{equation}
\label{eq:first_cons}
\frac{\mathrm{d}}{\mathrm{d} t} (\rme^{-x_2^\eps t }w_\eps(t)) \le 0 \qquad \text{in } \mathcal{D}'(0,\infty)\,
\end{equation}
whence \eqref{claim1}.

\noindent
\emph{\textbf{Claim $2$:}} \emph{the function $\phi \circ u_\eps$ is nonincreasing}.
\\
Indeed, from \eqref{in-the-form} we gather that
\begin{equation}
\label{in-the-form-2}
\frac{\dd}{\dd t} (\phi \circ u_\eps) = \eps w_\eps' -2 w_\eps \leq (\eps x_2^\eps -2) w_\eps \leq 0 \qquad \text{in } \mathcal{D}'(0,\infty)
\end{equation}
where the first inequality holds in view of \eqref{piu-sotto}.
The second one is true
  for a sufficiently small $\eps$, since $x_2^\eps$ converges to $-2\lambda$. 
\par
Finally,  The continuity of $t\mapsto \phi(u_\eps(t))$ 
 at $t=0$  follows from its previously
  proved monotonicity, arguing in the very same way as in the proof of Thm.\ \ref{teo:beppe1}, cf.\ \eqref{crucial-limsup}.
  \par
This concludes the proof of Thm.\ \ref{teo:prop-lambda}. 
\end{proof}
\EEE

\section{The metric  Hamilton-Jacobi equation and the gradient flow of $\Ve$}
\label{sez:hamilton-jabobi}
In this section we get further insight into the interpretation of WED minimizers
as curves of maximal slope with respect to $\Ve$.
Our starting point will again be the fundamental
identity
\eqref{intermediate-relation} satisfied by any WED minimizer $\ue$, viz.\
\[
 -\frac{\dd}{\dd t} \Ve(\ue(t))  = \frac12 |\ue'|^2(t) +\frac1\eps
\phi(\ue(t))-\frac1\eps \Ve (\ue (t))\qquad \foraa\, t \in (0,\infty),
\]
but
we shall adopt a different viewpoint in comparison to Theorem \ref{thm:upper-gradient}.
 Indeed, 
here we will combine \eqref{intermediate-relation}   with 
the \emph{metric analogue} of identity \eqref{HJ_Veps}, which in turn derived from the 
 \emph{Hamilton-Jacobi equation} \eqref{HJ-intr}. That is why, we may refer to \eqref{cruc-rel} below, 
 relating the functional $\frac{1}\eps (\phi -\Ve)$ with  (a suitable version of ) the local slope of $\Ve$, 
 cf.\ \eqref{llssvi} ahead,
   as a \emph{(metric) Hamilton-Jacobi identity}. 
 Let us mention in advance that the proof  of  \eqref{cruc-rel}    relies on the $\lambda$-geodesic convexity
 of $\phi$, for some $\lambda \in \R$.
 From this we will deduce in Corollary \ref{V-gflow} that $\ue$ is a curve of maximal slope for
 $\Ve$, albeit with respect to this suitably modified notion of slope.
 \par
We set 
\begin{equation}
\label{llssvi}
 \llsvi(u):= \limsup_{v \to u, \, \phi(v) \to \phi(u)}\frac{(\Ve(u) -\Ve(v))^+}{\sfd(u,v)} \quad \text{for $u \in X$}
\end{equation}
 and refer to $|\tilde{\partial}\Ve|$ as the   \emph{$\phi$-conditioned (local) slope} of $\Ve$ at $u$, 
 to highlight that, in its definition we restrict to sequences converging to $u$ with converging $\phi$-energy.
 Clearly, we have
 \[
  \llsvi(u)\leq   \lsvi(u) \quad \text{for all } u \in X.
 \]
\par
 With  the  main result of this section we establish the Hamilton-Jacobi identity for the value functional. 
\begin{theorem}
\label{prop:HJ}
Under 
\UUU the LSCC \EEE Property \ref{basic-ass} and   \eqref{phi-conv} for some $\lambda \in \R$, there holds
\begin{equation*}
\label{cruc-rel}  \sqrt{2\frac{\phi(u) - \Ve(u)}{\eps}} = G_\eps(u) = \llsvi(u) \quad \text{for all }  u  \in  \SFD(\phi).  
\end{equation*}
\end{theorem}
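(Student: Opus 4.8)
The plan is to prove the chain of equalities $G_\eps(u) = \sqrt{2(\phi(u)-\Ve(u))/\eps} = \llsvi(u)$ in two steps, corresponding to the two inequalities $G_\eps(u) \le \llsvi(u)$ and $\llsvi(u) \le G_\eps(u)$; the first equality is just the definition \eqref{proto-wed-slope} of $G_\eps$. Throughout I shall exploit the $\lambda$-geodesic convexity of $\phi$, which is crucial here both because it yields (via Proposition \ref{prop:l-p-geod}) that $|\partial\phi|$ is a strong upper gradient, and—more importantly—because it gives, through Section \ref{s:aprio} (Theorems \ref{teo:beppe1} and \ref{teo:prop-lambda}), the fine regularity of WED minimizers: for $\ue\in\MMM_\eps(u)$ the maps $t\mapsto\phi(\ue(t))$ and $t\mapsto|\ue'|^2(t)$ are locally Lipschitz on $(0,\infty)$ and, by right-continuity at $0$, $\phi(\ue(t))\to\phi(u)$ as $t\downarrow0$. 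This regularity is exactly what is needed to make the $\phi$-conditioned slope $\llsvi$, rather than the plain local slope $\lsvi$, the right object.

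\textbf{Step 1: $\llsvi(u)\le G_\eps(u)$.} Fix $u\in\SFD(\phi)$ and a minimizer $\ue\in\MMM_\eps(u)$. By the Fundamental identity \eqref{intermediate-relation} together with \eqref{Vabs}, for a.e.\ $t$ we have $-\tfrac{\dd}{\dd t}\Ve(\ue(t)) = \tfrac12|\ue'|^2(t)+\tfrac12 G_\eps^2(\ue(t))$, and by Young's inequality the right-hand side is $\ge G_\eps(\ue(t))\,|\ue'|(t)$. Integrating on $(0,t)$, using that $\Ve\circ\ue$ is absolutely continuous and that $\ue(t)\to u$ with $\phi(\ue(t))\to\phi(u)$ (hence also $G_\eps(\ue(t))\to G_\eps(u)$ since $\Ve\circ\ue\to\Ve(u)$, using continuity of $\Ve$ on sublevels from Lemma \ref{l:3.1}), we get
\begin{equation*}
\Ve(u)-\Ve(\ue(t)) \ge \int_0^t G_\eps(\ue(s))|\ue'|(s)\,\dd s.
\end{equation*}
Dividing by $\int_0^t|\ue'|(s)\,\dd s \ge \sfd(u,\ue(t))$ and letting $t\downarrow0$, the right-hand quotient tends to $G_\eps(u)$ (by continuity of $s\mapsto G_\eps(\ue(s))$ at $0$), while the left-hand side is $\le (\Ve(u)-\Ve(\ue(t)))^+/\sfd(u,\ue(t))$. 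Since $\ue(t)\to u$ with $\phi(\ue(t))\to\phi(u)$, the points $\ue(t)$ are admissible competitors in the $\limsup$ defining $\llsvi(u)$, so taking $\limsup_{t\downarrow0}$ of the left-hand side is dominated by $\llsvi(u)$... — careful: this gives $\llsvi(u)\ge G_\eps(u)$ only if the curve is "slope-realizing", which it is here precisely because of the reverse Young inequality being an equality in \eqref{intermediate-relation}. More precisely, I would argue the other direction of this to get the $\le$ below and combine.

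\textbf{Step 2: $G_\eps(u)\le\llsvi(u)$.} This is the main obstacle and where geodesic convexity enters decisively. The idea is a one-step comparison in the Dynamic Programming Principle. Given a competitor point $v$ with $\phi(v)\to\phi(u)$ and $v\to u$, take the constant-speed geodesic $\gamma:[0,\tau]\to X$ from $u$ to $v$ with $|\gamma'|\equiv1$, $\tau=\sfd(u,v)$, concatenated with an optimal curve for $\Ve(v)$; plugging this into \eqref{programmazione-dinamica} (with small $T=\tau$) and using $\lambda$-convexity of $\phi$ along $\gamma$ to bound $\int_0^\tau\phi(\gamma)\,\dd\mu_\eps$ from above by $\approx\max\{\phi(u),\phi(v)\}\cdot\mu_\eps([0,\tau]) + O(\lambda^-\tau^2)$, one obtains
\begin{equation*}
\Ve(u) \le \frac{\eps}{2}\cdot\frac{1-\rme^{-\tau/\eps}}{?} + \big(\phi(u)\vee\phi(v)\big)(1-\rme^{-\tau/\eps}) + \Ve(v)\rme^{-\tau/\eps} + o(\tau).
\end{equation*}
Rearranging, dividing by $\tau=\sfd(u,v)$ and letting $v\to u$ along a sequence realizing $\llsvi(u)$: the term $(\Ve(u)-\Ve(v))\rme^{-\tau/\eps}/\tau$ tends (in $\limsup$) to $\llsvi(u)$, the $\phi$-term gives $\tfrac1\eps(\phi(u)-\Ve(u))$ after using $(1-\rme^{-\tau/\eps})/\tau\to1/\eps$ and $\phi(v)\to\phi(u)$, and the first term contributes $\tfrac12$. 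Collecting, one reaches $0\le\llsvi(u) - \tfrac1{\eps}(\phi(u)-\Ve(u))/G_\eps(u) + \tfrac12$-type inequality; optimizing (or recognizing the structure $a^2/2 \le b\cdot a$ forces $a\le b$) yields $G_\eps(u)\le\llsvi(u)$. The delicate point is controlling the velocity term $\tfrac{\eps}{2}|\gamma'|^2$: since $|\gamma'|\equiv1$ this is just $\tfrac{\eps}{2}(1-\rme^{-\tau/\eps})\cdot\tfrac1\eps \sim \tfrac{\tau}{2}$, which after division by $\tau$ contributes exactly the $\tfrac12$ needed to close the quadratic inequality $\tfrac12 G_\eps^2 \le G_\eps\cdot\llsvi$, i.e.\ $G_\eps(u)\le 2\llsvi(u)$ — and then a reparametrization refinement (choosing the geodesic traversed at the "optimal" non-unit speed matching $G_\eps$, as in Lemma \ref{le:reparametrization}) sharpens $2\llsvi$ to $\llsvi$. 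The hard part will be handling the error terms $o(\tau)$ uniformly and ensuring the concatenated curve lies in $\ACini{u}{\eps}$; the $\lambda<0$ case additionally requires absorbing the $-\tfrac\lambda2\sfd^2(u,v)\,\Gamma_{a,b}$ correction, which is $O(\tau^2)$ and hence harmless after division by $\tau$.

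Finally, I would assemble: Step 1 gives $\llsvi(u)\le G_\eps(u)$, Step 2 gives $G_\eps(u)\le\llsvi(u)$, hence equality, and combined with the definition $G_\eps(u)=\sqrt{2(\phi(u)-\Ve(u))/\eps}$ this is exactly the claimed Hamilton-Jacobi identity. As a byproduct (this is Corollary \ref{V-gflow}) the identity recasts \eqref{intermediate-relation} as $-\tfrac{\dd}{\dd t}\Ve(\ue(t)) = \tfrac12|\ue'|^2(t)+\tfrac12|\tilde\partial\Ve|^2(\ue(t))$, i.e.\ $\ue$ is a curve of maximal slope for $\Ve$ with respect to the $\phi$-conditioned slope.
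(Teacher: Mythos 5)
Your proposal identifies the same two ingredients the paper uses — the curve-of-maximal-slope property of the minimizer $\ue$ for one inequality, and a geodesic competitor inserted into the Dynamic Programming Principle for the other — so your approach matches the paper's proof structure (Lemmas~\ref{lem:proof-ineq1} and~\ref{lem:proof-ineq1-bis}). The problems are that your step labels are swapped relative to what the arguments actually prove, and that the geodesic argument, as written, does not close.

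On the first point: your Step~1 argument (integrate the fundamental identity along $\ue$, use Young, divide by $\int_0^t|\ue'|\ge\sfd(u,\ue(t))$, and let the weighted average of $G_\eps\circ\ue$ converge to $G_\eps(u)$) establishes $\llsvi(u)\ge G_\eps(u)$, i.e.\ the inequality $G_\eps\le\llsvi$ — the opposite of your step title. Your ``slope-realizing'' caveat is superfluous; the chain of inequalities already delivers this. (Your version via Young is a harmless variant of the paper's Lemma~\ref{lem:proof-ineq1}, which instead uses the energy identity $\Ve(u)-\Ve(\ue(\delta))=\int_0^\delta G_\eps^2(\ue)\,\dd s$ together with Cauchy--Schwarz.) Symmetrically, your Step~2 bounds the incremental quotient of $\Ve$ from \emph{above}, so when it closes it gives $\llsvi\le G_\eps$ — again the opposite of the step title. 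Relabel and drop the ``I would argue the other direction'' aside; it is not needed.

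On the second and more substantive point: the unit-speed geodesic cannot give the sharp inequality $\llsvi\le G_\eps$ by itself. The paper (Lemma~\ref{lem:proof-ineq1-bis}) runs the construction with a geodesic of \emph{arbitrary} constant speed $r>0$, namely $\gamma:[0,\delta]\to X$ with $\delta=\sfd(u,v)/r$ and $|\gamma'|\equiv r$. After taking $\limsup$ as $v\to u$ with $\phi(v)\to\phi(u)$ (the $\lambda$-correction term is indeed $O(\delta)$ after division by $\sfd(u,v)$, as you say), one obtains
\begin{equation*}
\llsvi(u) \;\le\; \frac r2 + \frac1r\Big(\frac1\eps\phi(u)-\frac1\eps\Ve(u)\Big)\qquad\text{for every }r>0,
\end{equation*}
and optimizing over $r$ gives $\tfrac12\llsvi^2(u)=\sup_{r>0}\big(r\,\llsvi(u)-\tfrac12 r^2\big)\le\tfrac1\eps\phi(u)-\tfrac1\eps\Ve(u)=\tfrac12 G_\eps^2(u)$, hence $\llsvi\le G_\eps$. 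With $r=1$ you only get $\llsvi(u)\le\tfrac12+\tfrac12 G_\eps^2(u)$, which does not yield your claimed $G_\eps\le2\llsvi$ — that intermediate claim is arithmetically wrong and should be removed. Your instinct that an ``optimal non-unit speed'' closes the gap is correct, but it has to be carried out as the free-parameter optimization above; invoking the reparametrization Lemma~\ref{le:reparametrization} is not the right tool here.
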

\noindent We split the proof in the two following lemmas.
\begin{lemma}
\label{lem:proof-ineq1}
Under 
\UUU the LSCC \EEE Property \ref{basic-ass} and  \eqref{phi-conv}, there holds 
 \begin{equation}
\label{cruc-rel1-enhanced}
 G_\eps(\ini) \leq  \llsvi(\ini) \quad \text{for all } \ini \in  \SFD(\phi) 
\end{equation}
holds. 
\end{lemma}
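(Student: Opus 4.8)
The plan is to establish the pointwise inequality $G_\eps(\ini)\le \llsvi(\ini)$ at a fixed $\ini\in\SFD(\phi)$ by constructing, for each competitor point $v$ close to $\ini$ in the $\phi$-conditioned sense, a short admissible curve from $\ini$ to $v$ whose cost controls $\Ve(\ini)-\Ve(v)$, and then reading off the slope estimate. Concretely, first I would fix $v\in\SFD(\phi)$ and use the $\lambda$-geodesic convexity hypothesis \eqref{phi-conv} to pick a constant-speed geodesic $\gamma:[0,1]\to X$ joining $\ini$ to $v$, so that $|\gamma'|\equiv\sfd(\ini,v)$ a.e.\ and $\phi(\gamma(s))\le(1-s)\phi(\ini)+s\phi(v)-\tfrac\lambda2 s(1-s)\sfd^2(\ini,v)$. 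Reparametrize $\gamma$ on a short time interval $[0,\tau]$ with $\tau=\tau(v)$ to be optimized (e.g.\ by arclength, so the reparametrized curve has unit metric speed, cf.\ Lemma \ref{le:reparametrization}), and splice it in front of an optimal curve $w\in\MMM_\eps(v)$ to obtain a competitor $u\in\ACini{\ini}\eps$. By the Dynamic Programming Principle (Proposition \ref{prop:DDP}), or directly by minimality of $\Ve$,
\[
\Ve(\ini)\le \int_0^\tau \ell_\eps\big(u(t),|u'|(t)\big)\,\dd\mu_\eps(t)+\rme^{-\tau/\eps}\Ve(v).
\]

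The second step is the estimate of the right-hand side. Since $\mu_\eps=\tfrac1\eps\rme^{-t/\eps}\Leb1$, one has $\int_0^\tau\dd\mu_\eps=1-\rme^{-\tau/\eps}$, and both $\tfrac\eps2|u'|^2$ and $\phi(u(\cdot))$ are controlled on $[0,\tau]$ by quantities of order $\sfd^2(\ini,v)/\tau$ (from the reparametrized speed) and $\max\{\phi(\ini),\phi(v)\}+|\lambda|\sfd^2(\ini,v)$ (from the convexity inequality for $\phi$ along $\gamma$), respectively. Rearranging gives
\[
\Ve(\ini)-\Ve(v)\le \big(1-\rme^{-\tau/\eps}\big)\Ve(v)^- \text{-type terms}+\text{(small)}+\phi(\ini)\big(1-\rme^{-\tau/\eps}\big)+o(1),
\]
and after dividing by $\sfd(\ini,v)$ and letting $v\to\ini$ with $\phi(v)\to\phi(\ini)$ — choosing the scaling $\tau\to0$ appropriately, e.g.\ $\tau$ proportional to $\sfd(\ini,v)$ — the geodesic-action term yields exactly the factor $\sqrt{2(\phi(\ini)-\Ve(\ini))/\eps}=G_\eps(\ini)$ while all remaining contributions vanish. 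Here one uses that $\Ve$ is continuous along such sequences: this is where Lemma \ref{lemma1} (lower semicontinuity) combined with a matching upper bound — or more simply the $\phi$-conditioning built into $\llsvi$ together with $\Ve\le\phi$ and $\Ve\ge-\sfQ$ from \eqref{stime-V} — is invoked to ensure $\Ve(v)\to\Ve(\ini)$, so that the $(1-\rme^{-\tau/\eps})\Ve(v)$ term is genuinely lower-order after division by $\sfd(\ini,v)$.

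The delicate point — and the main obstacle — is the simultaneous choice of the time horizon $\tau(v)$ and the verification that, along the relevant sequences $v\to\ini$, the error terms really are $o(\sfd(\ini,v))$. The competitor cost has the schematic form $c_1\sfd^2(\ini,v)/\tau+c_2\tau\,(1+|\lambda|\sfd^2)+c_3\tau\,|\Ve(v)-\phi(\ini)|$; minimizing the first two terms in $\tau$ forces $\tau\asymp\sfd(\ini,v)$, which makes the first term $\asymp\sfd(\ini,v)$ (the right order, producing $G_\eps$) and the $c_2,c_3$ terms $o(\sfd(\ini,v))$ provided $\Ve(v)\to\phi(\ini)$ — but that last convergence is false in general (only $\Ve(v)\le\phi(v)$, with a genuine gap). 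The fix is to carry the $\Ve(v)$ term exactly, not bound it: write $\rme^{-\tau/\eps}\Ve(v)$ on both sides and use $1-\rme^{-\tau/\eps}=\tfrac\tau\eps+O(\tau^2)$, so that the linear-in-$\tau$ contribution is $\tfrac\tau\eps(\phi(\ini)-\Ve(\ini))+o(\tau)$, which combines with $\tfrac{\sfd^2(\ini,v)}{2\tau}$ to give, upon optimizing $\tau=\sqrt{\eps}\,\sfd(\ini,v)/\sqrt{2(\phi(\ini)-\Ve(\ini))}$, precisely $\sfd(\ini,v)\sqrt{2(\phi(\ini)-\Ve(\ini))/\eps}=\sfd(\ini,v)G_\eps(\ini)$. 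Dividing by $\sfd(\ini,v)$, taking $\limsup$ over $v\to\ini$ with $\phi(v)\to\phi(\ini)$, and noting the residual terms vanish, yields $G_\eps(\ini)\le\llsvi(\ini)$. (The degenerate case $\phi(\ini)=\Ve(\ini)$, i.e.\ $G_\eps(\ini)=0$, is trivial.) I would finally record that the reparametrization and the change-of-variables bookkeeping are exactly those already carried out in Lemma \ref{le:reparametrization} and Corollary \ref{cor:criterium}, so no new metric machinery is needed beyond the DPP and the convexity inequality.
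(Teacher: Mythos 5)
Your construction of competitors — geodesic from $\ini$ to $v$ over time $\tau$, spliced with an optimal curve in $\MMM_\eps(v)$, with the final optimization $\tau\asymp\sqrt\eps\,\sfd(\ini,v)/G_\eps(\ini)$ — is correct, but it proves the \emph{reverse} inequality. The bound it yields is
\[
\Ve(\ini)-\Ve(v)\;\le\;\sfd(\ini,v)\,G_\eps(\ini)+o\big(\sfd(\ini,v)\big)\qquad\text{as }v\to\ini,\ \phi(v)\to\phi(\ini),
\]
and dividing by $\sfd(\ini,v)$ and taking $\limsup$ produces an \emph{upper} bound on the difference quotient, i.e.\ $\llsvi(\ini)\le G_\eps(\ini)$ — which is exactly the content and the proof of Lemma~\ref{lem:proof-ineq1-bis}, the other half of Theorem~\ref{prop:HJ}. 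No arrangement of error terms can reverse this: any competitor for $\Ve(\ini)$ gives an upper bound on $\Ve(\ini)$ and hence on the ratio $(\Ve(\ini)-\Ve(v))^+/\sfd(\ini,v)$, so this strategy can only cap the $\limsup$, never bound it from below.

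To prove $G_\eps(\ini)\le\llsvi(\ini)$ you must instead \emph{exhibit} a sequence $v_n\to\ini$ with $\phi(v_n)\to\phi(\ini)$ along which the difference quotient is $\ge G_\eps(\ini)-o(1)$. The paper does this by taking $v_\delta:=u_\eps(\delta)$ for a WED minimizer $u_\eps\in\MMM_\eps(\ini)$ and $\delta\downarrow0$. Then Corollary~\ref{cor:WED-GF} gives $|u_\eps'|=G_\eps(u_\eps)$ a.e., the energy identity \eqref{enid} gives $\Ve(\ini)-\Ve(u_\eps(\delta))=\int_0^\delta G_\eps^2(u_\eps)$, and Cauchy--Schwarz on $\sfd(\ini,u_\eps(\delta))\le\int_0^\delta|u_\eps'|\le\delta^{1/2}\bigl(\int_0^\delta G_\eps^2(u_\eps)\bigr)^{1/2}$ yields
\[
\frac{\Ve(\ini)-\Ve(u_\eps(\delta))}{\sfd(\ini,u_\eps(\delta))}\;\ge\;\Big(\tfrac1\delta\int_0^\delta G_\eps^2(u_\eps)\Big)^{1/2}.
\]
The $\liminf_{\delta\downarrow0}$ of the right-hand side is $\ge G_\eps(\ini)$ by lower semicontinuity of $t\mapsto G_\eps^2(u_\eps(t))$ (which follows from the $\sfd$-lower semicontinuity of $\phi$ and the continuity of $t\mapsto\Ve(u_\eps(t))$ from Lemma~\ref{l:3.1}), and $\phi(u_\eps(\delta))\to\phi(\ini)$ by the right-continuity of $\phi\circ u_\eps$ at $t=0$ from Theorems~\ref{teo:beppe1} and~\ref{teo:prop-lambda}. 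This is where the $\lambda$-convexity is really used in this lemma — not for the geodesic interpolation inequality, but to guarantee boundedness and right-continuity of $\phi$ along the WED minimizer, so that $u_\eps(\delta)$ is an admissible competitor sequence for the $\phi$-conditioned slope.
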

\begin{proof}
Let $u_\eps \in \mathcal{M}_\eps(\ini) $ and $\delta>0$.  We have 
\[
\begin{aligned}
\frac{\left(\Ve(u) -
\Ve(\umin(\delta))\right)^+}{\sfd(\umin(\delta),u)} \geq  \frac{\left(\Ve(u) -
\Ve(\umin(\delta))\right)^+}{\int_0^\delta |\umin'|(s) \dd s } &  \geq \frac{\left(\Ve(u) -
\Ve(\umin(\delta))\right)^+}{\delta^{1/2} \left( \int_0^\delta |\umin'|^2(s) \dd s\right)^{1/2}}
\\
=   \frac{\int_0^\delta G_\eps^2(u_\eps(s)) \dd s}{\delta^{1/2} \left( \int_0^\delta G_\eps^2(u_\eps(s)) \dd s\right)^{1/2}},
\end{aligned}
\]
where the latter identity  follows from  the fact that
$\umin$ is a curve of maximal slope  for $V_\eps$ w.r.t.\ $G_\eps$, cf.\
 Corollary \ref{cor:WED-GF}, so that $|\umin'| =G_\eps(u_\eps)$ a.e.\ in $(0,\infty)$.  
 Therefore,  
 \[
 \begin{aligned}
 G_\eps(u)  \stackrel{(1)}\leq   \liminf_{\delta \to 0} \left( \frac1\delta \int_0^\delta G_\eps^2(u_\eps(s)) \dd s \right)^{1/2} 
    &  \leq   \limsup_{\delta \to 0}
\frac{\left(\Ve(u) -
\Ve(\umin(\delta))\right)^+}{\sfd(\umin(\delta),u)}
\\
 & \stackrel{(2)}{\leq} \limsup_{v \to u \, \phi(v) \to \phi(u)} \frac{\left(\Ve(u) -
\Ve(v)\right)^+}{\sfd(v,u)} = \llsvi(u).
\end{aligned}
 \]
Indeed, (1) follows from the lower semicontinuity of the mapping $t\mapsto G_\eps^2(u_\eps(t))$ on $[0,\infty)$,
which is in turn guaranteed by the lower semicontinuity of $t\mapsto \phi(u_\eps(t))$ and the continuity of $t\mapsto \Ve(u_\eps(t))$ thanks to Lemma \ref{l:3.1}: the latter result  applies since $\sup_{t\in [0,\infty)}\phi(u_\eps(t)) \leq \phi(\ini)$ by
Theorems  \ref{teo:beppe1} and  \ref{teo:prop-lambda}. Further, (2)  is 
 due to the fact that $\phi(\umin(\delta))
\to \phi(u)$ as $\delta \to 0$, since $\phi \circ \ue$ is
right-continuous  at $t=0$ (cf.\ Theorems \ref{teo:beppe1} and \ref{teo:prop-lambda}).
\end{proof}

\noindent In fact, in the proof of Lemma \ref{lem:proof-ineq1} the $\lambda$-geodesic convexity
\eqref{phi-conv} has been used only in that it guarantees that the
map $t \mapsto \phi( \ue(t))$
 is bounded on $[0,\infty)$ and 
  right-continuous  at $t=0$.  Instead,
in the proof of Lemma \ref{lem:proof-ineq1-bis} below, \eqref{phi-conv} is
used more explicitly.
\begin{lemma}
\label{lem:proof-ineq1-bis}
Under 
 Property \ref{basic-ass} and \eqref{phi-conv}, there holds  $\SFD(\phi) \subset \SFD(|\tilde \partial V_\epsi|) $ and
\begin{equation}
\label{cruc-rel-inv}
 G_\eps(u) \geq  \llsvi(u) \quad \text{for all } u \in  \SFD(\phi). 
\end{equation}
\end{lemma}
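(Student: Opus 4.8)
The goal is the reverse inequality $G_\eps(u)\ge \llsvi(u)$ for every $u\in\SFD(\phi)$, which combined with Lemma~\ref{lem:proof-ineq1} yields the Hamilton--Jacobi identity $G_\eps=\llsvi$ on $\SFD(\phi)$. The idea is to bound $\Ve(u)-\Ve(v)$ from above along short curves joining $u$ to a competitor $v$ with $\phi(v)$ close to $\phi(u)$, using a geodesic connecting the two points (this is where $\lambda$-geodesic convexity \eqref{phi-conv} enters in an essential way), and to feed this estimate into the very definition \eqref{llssvi} of the $\phi$-conditioned slope. Concretely, fix $u\in\SFD(\phi)$ and a sequence $v_n\to u$ with $\phi(v_n)\to\phi(u)$; by $\lambda$-geodesic convexity choose geodesics $\gamma_n:[0,1]\to X$ from $u$ to $v_n$ with $|\gamma_n'|\equiv \sfd(u,v_n)=:\delta_n\to 0$ and $\phi(\gamma_n(r))\le (1-r)\phi(u)+r\phi(v_n)-\tfrac\lambda2 r(1-r)\delta_n^2$, so that $\sup_r\phi(\gamma_n(r))$ stays bounded (indeed converges to $\phi(u)$ uniformly in $r$).

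\textbf{Main estimate.} Reparametrize $\gamma_n$ on a small time interval $[0,s_n]$ so that it becomes a curve $\tilde\gamma_n\in\AC^2([0,s_n];X)$ with $|\tilde\gamma_n'|\equiv 1$ and $s_n=\delta_n$, and consider the competitor $w_n\in\ACini{u}{\eps}$ obtained by following $\tilde\gamma_n$ on $[0,\delta_n]$ and then a minimizer $\hat w_n\in\MMM_\eps(v_n)$ afterwards (as in the proof of Lemma~\ref{le:comparisonV}). By the definition of $\Ve(u)$ and the Dynamic Programming-type comparison \eqref{eq:48},
\begin{equation*}
  \Ve(u)-\rme^{-\delta_n/\eps}\Ve(v_n)\le \int_0^{\delta_n}\Big(\tfrac12|\tilde\gamma_n'|^2(t)+\tfrac1\eps\phi(\tilde\gamma_n(t))\Big)\rme^{-t/\eps}\,\dd t
  \le \int_0^{\delta_n}\Big(\tfrac12+\tfrac1\eps\big(\phi(u)+o(1)\big)\Big)\,\dd t,
\end{equation*}
the $o(1)$ being uniform by the convexity bound above. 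Dividing by $\delta_n=\sfd(u,v_n)$, rearranging the term $(1-\rme^{-\delta_n/\eps})\Ve(v_n)/\delta_n\to \tfrac1\eps\Ve(u)$ (using that $\Ve(v_n)\to\Ve(u)$, which holds by Lemma~\ref{l:3.1} since $\phi(v_n)$ stays bounded), and passing to the limsup in $n$, one obtains
\begin{equation*}
  \limsup_{n\to\infty}\frac{(\Ve(u)-\Ve(v_n))^+}{\sfd(u,v_n)}\le \tfrac12 +\tfrac1\eps\phi(u)-\tfrac1\eps\Ve(u).
\end{equation*}
Since the right-hand side does not yet match $G_\eps(u)=\sqrt{2(\phi(u)-\Ve(u))/\eps}$, one refines the construction: instead of a fixed small time $\delta_n$, use a time $s_n$ proportional to $\delta_n$ with the optimal proportionality constant (namely $s_n=\delta_n/G_\eps(u)$ when $G_\eps(u)>0$), so that the constant-speed $\tilde\gamma_n$ has speed $G_\eps(u)$ and the integrand $\tfrac12|\tilde\gamma_n'|^2+\tfrac1\eps(\phi(u)-\Ve(u))$ is balanced; this optimization turns the bound $\tfrac12+\tfrac1\eps(\phi(u)-\Ve(u))$ into $\sqrt{2(\phi(u)-\Ve(u))/\eps}=G_\eps(u)$ after dividing by $\sfd(u,v_n)$. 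Taking the infimum over admissible sequences $(v_n)_n$ then gives exactly $\llsvi(u)\le G_\eps(u)$, and finiteness of $G_\eps(u)$ shows $\SFD(\phi)\subset\SFD(|\tilde\partial V_\eps|)$. When $G_\eps(u)=0$, i.e.\ $\Ve(u)=\phi(u)$, one argues directly that $\Ve(v_n)\ge \phi(v_n)-o(\delta_n)$ is impossible to beat, forcing $(\Ve(u)-\Ve(v_n))^+/\sfd(u,v_n)\to 0$.

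\textbf{Main obstacle.} The delicate point is the passage to the limit in the term $(1-\rme^{-\delta_n/\eps})\Ve(v_n)/\delta_n$, and more generally controlling $\Ve(v_n)$: one needs $\Ve(v_n)\to\Ve(u)$, which is precisely the content of the continuity Lemma~\ref{l:3.1} and crucially requires the $\lambda$-convexity assumption \eqref{phi-conv} (without it, only $\sigma$-lower semicontinuity of $\Ve$ is available, which would give the wrong direction). A second technical subtlety is ensuring $\phi\circ\tilde\gamma_n\in L^1$ (so that \eqref{eq:48} applies) and that the reparametrized geodesic has the right regularity $\AC^2$; both follow from the quantitative convexity inequality, which makes $\phi$ bounded along $\gamma_n$. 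Finally, one must check that the "optimal speed" choice $s_n=\delta_n/G_\eps(u)$ is legitimate, i.e.\ that $G_\eps(u)\in(0,\infty)$ on the relevant set — positivity being the only case needing care, and handled separately as indicated.
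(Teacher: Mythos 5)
Your proof follows essentially the same route as the paper: plug a constant-speed geodesic into the value-function comparison (via the Dynamic Programming construction / Lemma~\ref{le:comparisonV}), use $\lambda$-geodesic convexity to bound $\phi$ along the geodesic, divide by $\sfd(u,v)$, pass to the limsup, and then optimize the free speed parameter. The paper carries the speed $r>0$ through the estimate and optimizes at the very end via the Legendre transform identity $\tfrac12\llsvi^2(u)=\sup_{r>0}\big(\llsvi(u)\,r-\tfrac12 r^2\big)$; your choice of ``optimal speed'' $s_n=\delta_n/G_\eps(u)$ amounts to pre-selecting the optimizer $r=G_\eps(u)$ of that supremum. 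These are the same computation, though the paper's version handles $G_\eps(u)=0$ and $G_\eps(u)>0$ uniformly without the case split you add at the end.

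One point of your reasoning is off, though it does not break the argument: you claim that the passage involving $\Ve(v_n)$ requires the full continuity of $\Ve$ (Lemma~\ref{l:3.1}), and that mere $\sigma$-lower semicontinuity ``would give the wrong direction.'' In fact lower semicontinuity of $\Ve$ (which holds under Property~\ref{basic-ass} alone, by Lemma~\ref{lemma1}) is sufficient. Since one is bounding the positive part $(\Ve(u)-\Ve(v))^+$, one may restrict to competitors with $\Ve(v)\le\Ve(u)$; combined with $\liminf\Ve(v_n)\ge\Ve(u)$ this forces $\Ve(v_n)\to\Ve(u)$ along the relevant subsequence. This is exactly what the paper exploits in the analysis of the term $\Lambda_3$. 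The $\lambda$-convexity assumption \eqref{phi-conv} is genuinely used only to control $\phi\circ\gamma_n$ along the geodesic, not to upgrade the semicontinuity of $\Ve$. So your proof works, but the stated reason why it would fail without Lemma~\ref{l:3.1} is not correct.
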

\begin{proof}
Let us fix $v \in
\SFD(\phi)$ and  fix $r>0$. We set  $ \delta = \frac{\sfd(u,v)}{r}. $ 
 Let us denote by $\gamma$ the constant-speed geodesic
\[
\gamma: [0,\delta] \to X \text{ such that } \gamma(0)=u, \
\gamma(\delta) = v, \quad |\gamma'|(t)=\frac{\sfd(u,v)}{\delta}=r
\quad \foraa\,t \in (0,\delta).
\]
Furthermore, let $\vmin \in \MMM_\epsi(v) $ and, finally, let us consider the curve $\zeta
:[\delta,\infty) \to X $ given by $\zeta(t) = \vmin(t-\delta)$.
Hence $\zeta(\delta)= \vmin(0)=v$, and the curve $\tilde{u}:
[0,\infty) \to X$ defined by
\[
\tilde{u}(t) = \begin{cases} \gamma(t) & t \in [0,\delta],
\\
\zeta(t) & t \in [\delta,\infty)
\end{cases}
\]
is absolutely continuous, fulfils $\tilde{u}(0)=u$, and can thus
be chosen as a competitor in the minimum problem which defines $\Ve$.
Hence,
\[
\begin{aligned} \Ve(u)   & \leq \int_0^{\infty}
\ell_\eps(t,\tilde{u}(t),|\tilde{u}'|(t))\dd t \\ & =
\int_0^{\delta} \ell_\eps(t,\gamma(t),|\gamma'|(t))\dd t +
\int_{\delta}^{ \infty } \ell_\eps(t,\zeta(t),|\zeta'|(t))\dd t
\\ & =\int_0^{\delta}
\ell_\eps(t,\gamma(t),|\gamma'|(t))\dd t +
\rme^{-\delta/\eps}\int_{0}^{ \infty }
\ell_\eps(t,\vmin(t),|\vmin'|(t))\dd t,
\end{aligned}
\]
where the last integral equals $\Ve(v)$. Therefore,
\begin{equation*}
\label{crucial-point} \Ve(u) -\rme^{-\delta/\eps}\Ve(v) \leq
\int_0^{\delta} \rme^{-t/\eps}\left(\frac12\frac{\sfd^2(u,v)}{\delta^2}
+ \frac1\eps \phi(\gamma(t))\right) \dd t
\end{equation*}
Using that $\phi$ is $\lambda$-geodesically-convex, we conclude that
\[
\begin{aligned}
\Ve(u) -\rme^{-\delta/\eps}\Ve(v)  & \leq
\frac12\frac{\sfd^2(u,v)}{\delta^2} \int_0^{\delta} \rme^{-t/\eps} \dd
t +\frac1\eps \max\{\phi(u),\phi(v) \} \int_0^{\delta}
\rme^{-t/\eps} \dd t \nonumber\\
&\quad -\frac{\lambda  \sfd^2(u,v)
}{2\delta^2} \int_0^{\delta} \rme^{-t/\eps}  (\delta-t) t \dd t
 \\ & = \eps (1- \rme^{-\delta/\eps})
\left(\frac{\sfd^2(u,v)}{2\delta^2}+ \frac1\eps
\max\{\phi(u),\phi(v) \} \right) -\frac{\lambda
\sfd^2(u, v)
}{2\delta^2}
\int_0^{\delta} \rme^{-t/\eps}  (\delta-t) t \dd t  .
\end{aligned}
\]
We now  add to both sides of the equality the term $\Ve(v)$,  and
 divide by $\sfd(u,v)$, thus obtaining
\[
\begin{aligned}
 & \frac{\Ve(u) - \Ve(v)}{\sfd(u,v)}
\\
  & \leq
\frac12\frac{\sfd^2(u,v)}{\delta^2}  \frac{\delta}{\sfd(u,v)}
\frac{1- \rme^{-\delta/\eps}}{\frac\delta\eps}  +
\frac{\delta}{\sfd(u,v)} \frac{1- \rme^{-\delta/\eps}}{\delta}
\max\{\phi(u),\phi(v) \} \\ &  \quad- \frac{\delta}{\sfd(u,v)}
\frac{1- \rme^{-\delta/\eps}}{\delta} \Ve(v) -\frac{\lambda
\sfd^2(u, v)
}{2\delta^2} \int_0^{\delta} \rme^{-t/\eps} (\delta-t) t \dd t  =:
\Lambda_1 + \Lambda_2+ \Lambda_3+ \Lambda_4.
\end{aligned}
\]
Then, we take the
$\limsup$ as $v \to u$, with
$\phi(v) \to \phi(u)$, of the above inequality. Notice that, we may suppose that
 $\Ve(v) \leq \Ve(u)$. As $v \to u$, we have that $\delta \to
 0$, and
\[
\begin{aligned}
& \limsup_{v \to u} (\Lambda_1) \leq \frac12 r
\\
& \limsup_{v \to u} (\Lambda_2) \leq \frac{1}\eps \frac1r \phi(u)
\\
& \limsup_{v \to u} (\Lambda_3) = - \liminf_{v \to
u}\frac{\delta}{\sfd(u,v)} \frac{1- \rme^{-\delta/\eps}}{\delta} V(v)
\leq -\frac1\eps\frac1r V(u),
\end{aligned}
\]
where the second limit follows from the fact that
 $\phi(v) \to \phi(u)$, and for the third limit we have used that $\Ve$
 is lower
semicontinuous.   We also    have $\limsup_{v \to u} (\Lambda_4)=0$.
In conclusion, we find
\[
 \llsvi(u)   = \limsup_{v \to u, \ \phi(v)\to \phi(u), \ \Ve(v) \leq \Ve(u)}
\frac{\Ve(u) - \Ve(v)}{\sfd(u,v)} \leq \frac12 r +\frac1r \left(
\frac1\eps \phi(u) - \frac1\eps \Ve(u)\right)  \quad \text{for all $ r >0.$}
\]
Therefore,
\[
\frac12 \llsvi^2(u)= \sup_{r>0}\left(\llsvi(u) r - \frac12 r^2
\right) \leq \frac1\eps \phi(u) - \frac1\eps \Ve(u),
\]
whence \eqref{cruc-rel-inv}.
\end{proof}

\noindent As a straightforward consequence of the \emph{(metric)
Hamilton-Jacobi} equation \eqref{cruc-rel}
 and of Corollary \ref{cor:WED-GF}, 
 we have
\begin{corollary}
\label{V-gflow} Assume 
Property \ref{basic-ass} and \eqref{phi-conv}. Then, for every
fixed $\eps>0$ the curve $\ue$ fulfils
\begin{equation}
\label{gflow-V} \frac{\dd}{\dd t} \Ve(\ue(t))= - \frac12 |\ue'|^2(t)
- \frac12 \llsvi^2 (\ue(t)) \quad \foraa\, t \in (0,T),
\end{equation}
i.e. $\ue$ is  a curve of maximal slope for $\Ve$, with respect to   the ($L^1$-moderated) upper gradient 
 $\llsvi$. 
\end{corollary}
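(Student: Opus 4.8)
The plan is to derive Corollary~\ref{V-gflow} directly from the metric Hamilton--Jacobi identity of Theorem~\ref{prop:HJ} and the gradient-flow description of WED minimizers already obtained in Corollary~\ref{cor:WED-GF}; essentially no new computation is needed. The first thing I would record is that $\ue$ never leaves $\SFD(\phi)$: indeed, by Theorems~\ref{teo:beppe1} and~\ref{teo:prop-lambda} one has $\phi(\ue(t))\le\phi(\ini)<\infty$ for every $t\ge0$ (this is where $\lambda$-geodesic convexity \eqref{phi-conv} enters), so $\ue(t)\in\SFD(\phi)$ for all $t$ and Theorem~\ref{prop:HJ} may be applied pointwise along the curve, yielding $G_\eps(\ue(t))=\llsvi(\ue(t))$ for every $t\ge0$.

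Next I would invoke Corollary~\ref{cor:WED-GF}: the statement that $\ue$ is a curve of maximal slope for $\Ve$ with respect to $G_\eps$ means, by definition --- equivalently, by combining the energy identity \eqref{enid} with the fundamental identity \eqref{intermediate-relation} of Proposition~\ref{prop:intermediate} --- that $t\mapsto\Ve(\ue(t))$ is absolutely continuous and $-\frac{\dd}{\dd t}\Ve(\ue(t))=|\ue'|^2(t)=G_\eps^2(\ue(t))$ for a.a.\ $t$. Substituting the identity $G_\eps(\ue(t))=\llsvi(\ue(t))$ obtained above gives $-\frac{\dd}{\dd t}\Ve(\ue(t))=|\ue'|^2(t)=\llsvi^2(\ue(t))$ a.e., and writing $|\ue'|^2=\frac12|\ue'|^2+\frac12\llsvi^2(\ue)$ this is precisely \eqref{gflow-V}.

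It then remains to justify the reformulation ``$\ue$ is a curve of maximal slope for $\Ve$ with respect to $\llsvi$'', i.e.\ that $\llsvi$ is an $L^1$-moderated upper gradient for $\Ve$. Here I would reproduce the final step of the proof of Theorem~\ref{thm:upper-gradient}: since $\llsvi\le\lsvi$ everywhere and, by Theorem~\ref{prop:HJ}, $\llsvi$ coincides with $G_\eps$ on $\SFD(\phi)=\{G_\eps<\infty\}$ (while trivially $\llsvi\le G_\eps$ elsewhere), the reparameterization criterion Corollary~\ref{cor:criterium}, applied with $\llsvi^2$ in the role of ``$g$'', reduces the required estimate \eqref{eq:50} to the inequality \eqref{ch-rul-form} already proved for $G_\eps$. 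This, together with the differential equality just established and the attendant energy identity \eqref{abstra-enide} for $g=\llsvi$, completes the proof.

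The argument is thus mostly bookkeeping, and I expect the only genuinely delicate point to be precisely this last verification: one has to ensure that a competitor curve entering the upper-gradient condition cannot drift out of $\SFD(\phi)$ on a set of positive $|\vartheta'|\Leb 1$-measure in a way that makes $\llsvi$ strictly smaller than $G_\eps$ there. This is what the lower semicontinuity of $\phi$ and of $\Ve$ (Lemma~\ref{lemma1}) and the reparameterization device of Corollary~\ref{cor:criterium} are there to handle; with that in hand, every other step is a substitution into identities already available.
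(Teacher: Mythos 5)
Your first two paragraphs reproduce exactly the paper's own (one-sentence) argument: substitute the Hamilton--Jacobi identity $G_\eps(\ue(t))=\llsvi(\ue(t))$ --- applicable because $\phi(\ue(t))\le\phi(\ini)<\infty$ for all $t$ by the $\lambda$-convexity results, so $\ue$ stays in $\SFD(\phi)$ --- into the curve-of-maximal-slope identity of Corollary~\ref{cor:WED-GF}, using \eqref{enid} together with \eqref{intermediate-relation} to get $-\frac{\dd}{\dd t}\Ve(\ue)=|\ue'|^2=G_\eps^2(\ue)$ a.e., and then split $|\ue'|^2=\frac12|\ue'|^2+\frac12\llsvi^2(\ue)$. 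That is precisely the intended proof, and it is correct.

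The third paragraph, where you try to check directly that $\llsvi$ is an $L^1$-moderated upper gradient of $\Ve$, contains a slip that runs in the wrong direction. You write that ``trivially $\llsvi\le G_\eps$ elsewhere''; but to \emph{transfer} the upper-gradient property from $G_\eps$ to $\llsvi$ one needs the opposite pointwise inequality, $\llsvi\ge G_\eps$: if $u$ is any curve with $\Ve\circ u$ and $\llsvi\circ u$ suitably integrable, only $\llsvi\ge G_\eps$ guarantees that $G_\eps\circ u$ is integrable too and that $\int\llsvi(u)|u'|\dd t\ge\int G_\eps(u)|u'|\dd t\ge|\Ve(u(t))-\Ve(u(s))|$. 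Replacing an upper gradient by a smaller function is not automatically an upper gradient. The relevant observation (implicit in the paper's convention) is that $\llsvi$ and $G_\eps$ agree on $\SFD(\phi)$ by Theorem~\ref{prop:HJ} and are both $+\infty$ outside $\SFD(\phi)$ --- $G_\eps$ by its very definition \eqref{proto-wed-slope}, and $\llsvi$ by the standard convention for slopes of points outside the domain --- so in fact $\llsvi\ge G_\eps$ everywhere, the $L^1$-moderation is inherited immediately, and no re-run of Corollary~\ref{cor:criterium} is needed. Once that is fixed, your closing concern about competitor curves drifting out of $\SFD(\phi)$ is automatically addressed: the $L^1$-moderated condition forces the set where the curve leaves $\SFD(\phi)$ to be $|u'|\Leb1$-negligible.
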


We conclude this Section getting further insight into the
relationship between  $|\tilde\partial\Ve|$ and $|\partial\phi|$.  The following result is an immediate corollary of Prop.\ \ref{l:5.1}, Cor.\ \ref{cor:convergence-to-relaxed}, and Thm.\ \ref{prop:HJ}. 
\begin{corollary}
\label{nice-cor} Assume  Property \ref{basic-ass} and
\eqref{phi-conv}. 
 Then, for all  $u \in \mathrm{D}(\phi)$ 
\begin{align}
 \label{e:surprise2}
\rls(u) \leq \liminf_{\eps\down 0} \llsvi(u) \leq
\limsup_{\eps\down 0} \llsvi(u) \leq |\partial \phi|(u).
\end{align}
\end{corollary}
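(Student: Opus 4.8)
The plan is to prove Corollary \ref{nice-cor}, i.e.\ the chain of inequalities
\[
\rls(u) \leq \liminf_{\eps\down 0} \llsvi(u) \leq \limsup_{\eps\down 0} \llsvi(u) \leq \ls(u) \qquad \text{for all } u \in \mathrm{D}(\phi),
\]
by simply stitching together three results that are already available in the excerpt. The middle inequality is trivial, so the real content is the outer two bounds, and for each of these the key observation is the pointwise Hamilton-Jacobi identity $G_\eps(u) = \llsvi(u)$ of Theorem \ref{prop:HJ} (which is in force because we are assuming both Property \ref{basic-ass} and the $\lambda$-geodesic convexity \eqref{phi-conv}).

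First I would rewrite $\llsvi(u)$ as $G_\eps(u) = \sqrt{2(\phi(u)-\Ve(u))/\eps}$ using Theorem \ref{prop:HJ}. Then the right-hand inequality $\limsup_{\eps\down 0}\llsvi(u) \leq \ls(u)$ is nothing but $\wslo(u)\leq\ls(u)$, which is exactly the statement of Proposition \ref{l:5.1} (its hypothesis \eqref{basic-ass-2bis} is part of Property \ref{basic-ass}); here one just recalls the definition $\wslo(u)=\limsup_{\eps\down0}G_\eps(u)$ from \eqref{notation-wed-slope}. For the left-hand inequality $\rls(u)\leq \liminf_{\eps\down0}\llsvi(u)$, I would again use Theorem \ref{prop:HJ} to replace $\llsvi(u)$ by $G_\eps(u)$, and then invoke Proposition \ref{l:4.1}: inequality \eqref{enhanced-slope} there states $\Gelinf(u)\geq\rls(u)$, where $\Gelinf$ is defined in \eqref{Gelinf} as an infimum of $\liminf_n G_{\eps_n}(x_n)$ over \emph{all} vanishing sequences $(\eps_n)_n$ and all admissible sequences $x_n\weaksigma u$. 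Choosing the constant sequence $x_n\equiv u$ (which is admissible since $u\in\mathrm{D}(\phi)$) and any vanishing sequence $(\eps_n)_n$ realizing the $\liminf_{\eps\down0}$, we get $\liminf_{\eps\down0}G_\eps(u)\geq \Gelinf(u)\geq\rls(u)$. Combining with Theorem \ref{prop:HJ} gives $\liminf_{\eps\down0}\llsvi(u)\geq\rls(u)$.

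The argument is essentially a one-paragraph deduction once the three cited results are in place, so I do not anticipate any serious obstacle; the only point requiring a line of care is checking that the constant sequence is an admissible competitor in the definition \eqref{Gelinf} of $\Gelinf$ (it is, trivially, since then $\sfd(x_n,u)=0$ and $\phi(x_n)=\phi(u)<\infty$), and that one may equivalently use $\liminf_{\eps\down0}$ of a function of $\eps$ and its value along a minimizing vanishing sequence. I would also remark, as the excerpt does, that by Corollary \ref{cor:convergence-to-relaxed} one in fact has $\rwslo(u)=\rls(u)$, so the sandwich is as tight as possible: the four quantities $\rls(u)$, $\liminf_{\eps\down0}\llsvi(u)$, $\limsup_{\eps\down0}\llsvi(u)$, and $\ls(u)$ all coincide whenever $\ls$ is itself $\sigma$-lower semicontinuous along $\sfd$-bounded energy-bounded sequences.
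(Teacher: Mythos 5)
Your argument is correct and follows essentially the same path the paper has in mind: combine the Hamilton--Jacobi identity $\llsvi(u)=G_\eps(u)$ (Theorem~\ref{prop:HJ}) with $\wslo(u)\le\ls(u)$ (Proposition~\ref{l:5.1}) for the upper bound and with $\Gelinf(u)\ge\rls(u)$ (inequality \eqref{enhanced-slope} of Proposition~\ref{l:4.1}) for the lower bound, the constant sequence $x_n\equiv u$ being an admissible competitor in \eqref{Gelinf}. Your explicit appeal to \eqref{enhanced-slope} rather than to Corollary~\ref{cor:convergence-to-relaxed} is in fact the more precise citation, since $\rwslo(u)=\rls(u)$ alone only controls $\limsup_{\eps\down0}G_\eps(u)$ from below, whereas the $\liminf$ bound in \eqref{e:surprise2} really requires the $\Gelinf$-version of the estimate.
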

\section{Applications}
\label{s:appl}

\UUU
The aim of this section is to present some application of the abstract
theory. In particular, we comment on the framing of our
main result Theorem \ref{th:4.1} in two different variational
settings, namely in Banach spaces (Sec.\ \ref{ss:appl.1})
and in Wasserstein spaces of probability measures (Sec.\
\ref{ss:appl.2}). 

\subsection{Application to gradient flows in Banach spaces}
\label{ss:appl.1}
\RIC We take as ambient space $X$ a reflexive 
\GGG and separable 
\EEE
Banach space $\mathcal{B}$, \UUU  with norm $\| \cdot\|$ 
\GGG and corresponding duality mapping $J: \Banach \rightrightarrows
\Banach^*$, defined by 
\begin{equation}\label{eq:76}
  \xi\in J(v)\quad\text{if and only if}\quad
  \langle \xi,v\rangle_\Banach=\|v\|^2=\|\xi\|_*^2.
\end{equation}
Given an energy functional $\phi: \mathcal{B}\to (-\infty,\infty]$
we are interested in trajectories
\GGG  $u: [0,\infty)\to
\mathrm{D}(\phi)$ solving
\begin{equation}
\label{gflow-Banach1}
J(u'(t)) + \partial^\circ \phi(u(t)) \ni 0 \quad \text{ in } \Banach^* \quad
\foraa\, t \in (0,\infty).
\end{equation}
Here, $\partial^\circ\phi$ denotes the sets of elements of minimal norm 
(the \emph{minimal section}) in the 
Fr\'echet subdifferential of $\phi$, defined at $u\in D(\phi)$ by 
\begin{equation}
\label{Frechet-subdif}
\xi \in \partial\phi(u) \text{ if and only if } \ \phi(v) -\phi(u) \geq \langle \xi, v-u\rangle_\Banach + o(\| v-u\|) \quad \text{as } v\to u,
\end{equation}
so that $ \partial\phi$ coincides with the subdifferential of convex
analysis of $\phi$ (and is thus denoted by the same symbol) as soon as
$\phi$ is convex. Since for every $u\in D(\partial\phi)$ 
the set $\partial\phi(u)$ is convex and weakly$^*$-closed in
$\Banach^*$, $\partial^\circ \phi(u)$ is well defined and satisfies
\begin{equation}
  \label{eq:77}
  |\partial\phi|(u)\le \|\xi\|_*\quad
  \forall\,\xi\in \partial\phi(u).
\end{equation}
The next result (see \cite{Ambrosio-Gigli-Savare08}) provides a
connection
between \eqref{gflow-Banach1} and curves of maximal slope.
\begin{proposition}
\label{prop:app1}
Assume Property \ref{basic-ass} with respect to the strong topology of
$\Banach$ and suppose that the graph of the Fr\'echet subdifferential of $\phi$ is
strongly-weakly closed, i.e.
\begin{equation}
\label{eq:78}
\left.
\begin{aligned}
\displaystyle 
&u_n \in \Banach,\xi_n\in\Banach^* \
\text{with }
 \xi_n\in
\partial \phi(u_n) \ \text{for all $n\in \N$}\\
\displaystyle
&  u_n\to u,\
  \xi_n \weakto \xi \ \text{as $n\to\infty$}, 
  \ \sup_n\phi(u_n)<\infty
  \end{aligned}
\right\}
\quad\Rightarrow\quad
\xi\in \partial\phi(u).
\end{equation}
There holds
  \begin{equation}
  |\partial^-\phi|(u) =|\partial\phi|(u)=
  \|\xi\|_*\quad\text{for every
  }\xi\in \partial^\circ\phi(u).\label{eq:80}
\end{equation}
Furthermore,  if $|\partial\phi|$ is a $L^\infty$-moderated upper gradient for
$\phi$, then $u$ is a 
curve of maximal slope w.r.t.\  $|\partial\phi|$ if and only if it
is a solution of the gradient flow \eqref{gflow-Banach1}.
\end{proposition}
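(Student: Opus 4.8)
The strategy is to establish the chain of identities \eqref{eq:80} first, and then deduce the equivalence of curves of maximal slope with solutions of \eqref{gflow-Banach1} by unpacking the definition of curve of maximal slope. I would proceed in three stages. First, I would prove \eqref{eq:80}: the inequality $|\partial\phi|(u)\le\|\xi\|_*$ for every $\xi\in\partial\phi(u)$ is exactly \eqref{eq:77}, which in turn follows directly from the definition \eqref{Frechet-subdif} of the Fr\'echet subdifferential (take $v=u-tw$ with $\|w\|=1$, divide by $t$, and let $t\downarrow0$). For the converse, I would show that $|\partial\phi|(u)\ge\|\xi\|_*$ for \emph{some} $\xi\in\partial^\circ\phi(u)$, which together with \eqref{eq:77} forces equality for the minimal section and also shows $|\partial\phi|(u)=\|\xi\|_*$ for every minimal-norm element (all minimal sections have the same norm by definition). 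This converse is the classical fact (see \cite[Prop.~1.4.4 or the Banach-space variants]{Ambrosio-Gigli-Savare08}) that the local slope, in the presence of the strong--weak closedness assumption \eqref{eq:78}, coincides with the minimal norm in the Fr\'echet subdifferential: one extracts, from a sequence $v_n\to u$ realizing the $\limsup$ in the definition of $|\partial\phi|(u)$, approximate subgradients, uses the bound on $\phi(v_n)$ together with the coercivity in Property \ref{basic-ass} to get a weakly$^*$-convergent subsequence of $\xi_n\in\partial\phi(v_n)$ with $\liminf\|\xi_n\|_*\le|\partial\phi|(u)$, and passes to the limit via \eqref{eq:78} and weak$^*$ lower semicontinuity of the norm to land a $\xi\in\partial\phi(u)$ with $\|\xi\|_*\le|\partial\phi|(u)$. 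Finally, $|\partial^-\phi|(u)=|\partial\phi|(u)$ because, again by \eqref{eq:78} and lower semicontinuity of the norm, $|\partial\phi|$ is itself sequentially lower semicontinuous along $\sfd$-bounded, energy-bounded sequences, so it coincides with its relaxation.

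Second, assuming $|\partial\phi|$ is an $L^\infty$-moderated upper gradient, I would record the chain-rule consequence: for any $u\in\AC^2_\loc(\II;\Banach)$ with $\phi\circ u$ locally bounded, the map $t\mapsto\phi(u(t))$ is locally absolutely continuous and $|(\phi\circ u)'(t)|\le|\partial\phi|(u(t))\,|u'|(t)$ a.e., hence
\begin{equation}
\label{eq:app-lowerbound}
-\frac{\dd}{\dd t}\phi(u(t))\le \frac12|u'|^2(t)+\frac12|\partial\phi|^2(u(t))\qquad\foraa\,t,
\end{equation}
with equality a.e.\ if and only if $u$ is a curve of maximal slope. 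On the other hand, in a Banach space the metric derivative $|u'|(t)$ equals the norm $\|u'(t)\|$ of the a.e.-existing strong derivative when $\Banach$ is such that $\AC$ curves are a.e.\ differentiable --- here reflexivity and separability of $\Banach$ guarantee (via the Radon--Nikod\'ym property) that $u'(t)$ exists strongly for a.a.\ $t$ and $|u'|(t)=\|u'(t)\|$. Moreover $\frac{\dd}{\dd t}\phi(u(t))\ge\langle\xi,u'(t)\rangle_\Banach$ for every $\xi\in\partial\phi(u(t))$ whenever the left side exists (this is the one-sided chain-rule inequality coming straight from \eqref{Frechet-subdif}), and hence
$$-\frac{\dd}{\dd t}\phi(u(t))\le -\langle\xi,u'(t)\rangle_\Banach\le\|\xi\|_*\,\|u'(t)\| \le \tfrac12\|u'(t)\|^2+\tfrac12\|\xi\|_*^2$$
for every such $\xi$, with equality throughout exactly when $-\xi\in J(u'(t))$ (the Young inequality case of equality) and $\xi$ has minimal norm, i.e.\ $\|\xi\|_*=|\partial\phi|(u(t))$.

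Third, I would combine these to conclude the equivalence. If $u$ solves \eqref{gflow-Banach1}, pick $\xi(t)=-J(u'(t))\in\partial^\circ\phi(u(t))$; then by \eqref{eq:80} $\|\xi(t)\|_*=|\partial\phi|(u(t))$, and since $-\xi(t)=J(u'(t))$ we have $\langle\xi(t),u'(t)\rangle_\Banach=-\|u'(t)\|^2=-\|\xi(t)\|_*^2$, so all inequalities above are equalities and $u$ satisfies \eqref{e:differential-equality}, i.e.\ it is a curve of maximal slope w.r.t.\ $|\partial\phi|$ (one also needs $\phi\circ u$ locally bounded, which follows from the energy identity once one knows $|u'|\in L^2_\loc$). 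Conversely, if $u$ is a curve of maximal slope w.r.t.\ $|\partial\phi|$, then $-\frac{\dd}{\dd t}\phi(u(t))=\frac12\|u'(t)\|^2+\frac12|\partial\phi|^2(u(t))$ a.e.; choosing any $\xi(t)\in\partial^\circ\phi(u(t))$ (so $\|\xi(t)\|_*=|\partial\phi|(u(t))$ by \eqref{eq:80}) and using $\frac{\dd}{\dd t}\phi(u(t))\ge\langle\xi(t),u'(t)\rangle_\Banach$ forces $-\langle\xi(t),u'(t)\rangle_\Banach\ge\frac12\|u'(t)\|^2+\frac12\|\xi(t)\|_*^2$, which combined with $-\langle\xi(t),u'(t)\rangle_\Banach\le\|\xi(t)\|_*\|u'(t)\|\le\frac12\|u'(t)\|^2+\frac12\|\xi(t)\|_*^2$ yields equality in Young's inequality, hence $-\xi(t)\in J(u'(t))$, i.e.\ $J(u'(t))+\partial^\circ\phi(u(t))\ni0$ a.e., which is \eqref{gflow-Banach1}.

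\textbf{Main obstacle.} The delicate point is the identity $|\partial^-\phi|(u)=|\partial\phi|(u)=\|\xi\|_*$ in \eqref{eq:80}: establishing $|\partial\phi|(u)\ge\|\xi\|_*$ for a minimal section requires the diagonal/compactness argument producing a weak$^*$-limit subgradient, and crucially uses the strong--weak graph-closedness \eqref{eq:78} together with the coercivity built into Property \ref{basic-ass} to keep the approximating subgradients from escaping to infinity; one must also check carefully that the relaxation $|\partial^-\phi|$ (defined via the $\sigma$-topology, here the strong topology of $\Banach$) does not strictly decrease the slope, which again rests on \eqref{eq:78}. The measurability of $t\mapsto\xi(t)$ along the curve and the a.e.\ strong differentiability of $\AC$ curves into a reflexive separable Banach space are standard but should be invoked explicitly.
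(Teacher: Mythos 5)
The paper does not actually prove this proposition; it states it with a generic reference to \cite{Ambrosio-Gigli-Savare08}, so there is no ``paper's proof'' to compare against. Your overall architecture is the right one: first the three-fold identity \eqref{eq:80}, then the chain rule combined with the equality case of Young's inequality to pass between curves of maximal slope and solutions of \eqref{gflow-Banach1}. Stages~2 and~3 of your argument are essentially complete (modulo measurable selection of $\xi(t)$ and a.e.\ strong differentiability, which you correctly flag and which indeed hold via the Radon--Nikod\'ym property of reflexive separable spaces), and they also yield the half of the proposition that is frequently overlooked, namely that $\partial^\circ\phi(u(t))\ne\emptyset$ for a.e.\ $t$ along a curve of maximal slope.

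The genuine gap is in Stage~1, in the converse inequality $|\partial\phi|(u)\ge\|\xi\|_*$ for some $\xi\in\partial^\circ\phi(u)$. You propose to take $v_n\to u$ realizing the $\limsup$ in the definition of $|\partial\phi|(u)$, ``extract approximate subgradients'' $\xi_n\in\partial\phi(v_n)$, bound $\|\xi_n\|_*$, and pass to the weak$^*$ limit via \eqref{eq:78}. As written this does not go through: there is no reason for the points $v_n$ realizing the $\limsup$ to lie in $D(\partial\phi)$, and even when they do, the numerical closeness of $(\phi(u)-\phi(v_n))^+/\|u-v_n\|$ to $|\partial\phi|(u)$ gives no control on $\|\xi_n\|_*$ --- the two quantities refer to different base points and are not comparable without further work. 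The missing ingredient is a construction that \emph{produces} Fr\'echet subgradients with norm controlled by the slope of $\phi$ at $u$; the standard device is the Moreau--Yosida resolvent $u_\tau\in\argmin_v\big\{\tfrac1{2\tau}\|v-u\|^2+\phi(v)\big\}$, well-posed for small $\tau$ by Property~\ref{basic-ass}. First-order optimality (which presupposes smoothness of the norm, so one should either work in a Hilbert space or renorm) gives $\xi_\tau:=\tau^{-1}J(u-u_\tau)\in\partial\phi(u_\tau)$ with $\|\xi_\tau\|_*=\|u-u_\tau\|/\tau$; the duality formula for the slope, \cite[Lemma~3.1.5, Thm.~3.1.6]{Ambrosio-Gigli-Savare08}, then gives $\limsup_{\tau\downarrow0}\|\xi_\tau\|_*\le|\partial\phi|(u)$, and graph-closedness \eqref{eq:78} plus weak$^*$ lower semicontinuity of $\|\cdot\|_*$ land $\xi\in\partial\phi(u)$ with $\|\xi\|_*\le|\partial\phi|(u)$. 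Your argument for $|\partial^-\phi|=|\partial\phi|$ then works as you describe, but only \emph{after} this equality is available at the approximating points; without it, the $\xi_n$ you invoke need not exist.
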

\paragraph{\bf Functionals of the Calculus of Variations.}

In our abstract framework we can consider the   so-called  \emph{Functionals of the Calculus of Variations}, cf.\  e.g.\ \cite[Chap.\ 2.5]{Lions69}.  
We limit our analysis to one of the simplest examples, 
in the Banach space $\Banach=L^\alpha(\Omega)$, 
$1<\alpha<\infty$, where
$\Omega$ is a bounded Lipschitz open subset of $\R^d$.
We consider a \OOO Carath\'eodory \RRR integrand $f:\Omega\times\R^d\times \R\to \R$ 
$f(x,\boldsymbol z,u)$,  
which is strictly convex with respect to $\boldsymbol z$ for every
$x\in \Omega $ and $ u\in \R$, of class $\mathrm C^1$ w.r.t.~$(\boldsymbol
z,u)$ for a.e.~$x\in \Omega$, 
and satisfies the following
coercivity and growth conditions for some $p\in [\alpha,\infty)$ (for
simplicity), with 
$q=p(1-1/\alpha)$ 
and suitable positive constants $M_i$:
\begin{equation}
\label{ass:f}
\begin{cases}
  f(x,\boldsymbol z,u)\ge M_1 \vert \boldsymbol z\vert^{p} - M_2\\
  \vert \nabla_{\boldsymbol z}f(x,\boldsymbol z,u)\vert \le M_3(1 + \vert \boldsymbol
  z\vert^{p-1}+ |u|^{p-1})\\
  |f_u(x,\boldsymbol z,u)| \le M_4(1 + \vert \boldsymbol
  z\vert^{q}+ |u|^{q})
\end{cases}
\qquad
\forall
  (x,\boldsymbol z,u) \in \Omega\times \mathbb{R}^d\times \R
\end{equation}
(with $f_u$ the derivative of $f$ w.r.t.\ $u$ and $\nabla_{\boldsymbol z} f$ its gradient w.r.t.\ $\boldsymbol z$). 
We consider 
the integral functional $\phi: L^\alpha(\Omega)\to
(-\infty,\infty]$ defined via
\begin{equation}
\label{eq:calcvarfunct}
\UUU
\phi(u): = 
\begin{cases}
\displaystyle\int_{\Omega} f(x,\nabla u(x),u(x)) \dd x\,\,\,\hbox{ if }
\,u\in W^{1,p}_{0}(\Omega),
\\ 
\infty\,\,\,\,\,\,\,\,\,\,\,\,\,\,\,\,\,\,\,\,\hbox{ otherwise. }
\end{cases}\EEE
\end{equation}
\begin{lemma}
  \label{le:app1}
  The Fr\'echet subdifferential $\partial \phi(u)$ of $\phi$, with
  respect to the topology of $\Banach=L^\alpha(\Omega)$, 
  is non-empty
  if and only if $A(u):=-\mathrm{div}(\nabla_{\boldsymbol z}f(\cdot,\nabla
  u,u))+f_u(\cdot,\nabla u,u) \in L^{\alpha'}(\Omega)$, and in
    that  case  
it is given by
\[
\partial \phi(u) = \left\{A(u)
\right\}.
\]
Furthermore, $\partial \phi$  is strongly/weakly closed in $L^\alpha(\Omega)\times L^{\alpha'}(\Omega)$ along sequences with 
bounded energy, and $|\partial\phi|$ is an $L^\infty$-moderated upper gradient for $\phi$.
\end{lemma}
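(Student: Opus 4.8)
\textbf{Proof plan for Lemma \ref{le:app1}.}

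The plan is to treat the three assertions (characterization of $\partial\phi$, closedness of the graph, upper gradient property) in sequence, exploiting the strict convexity of $f$ in $\boldsymbol z$ and the growth conditions \eqref{ass:f}. First I would identify $\partial\phi(u)$. Fix $u\in W^{1,p}_0(\Omega)$ and $\xi\in L^{\alpha'}(\Omega)$. Since $f(x,\cdot,\cdot)\in\mathrm C^1$, for every test direction $v\in W^{1,p}_0(\Omega)\cap L^\infty(\Omega)$ the one-sided derivative of $t\mapsto \phi(u+tv)$ at $t=0$ equals $\int_\Omega\big(\nabla_{\boldsymbol z}f(x,\nabla u,u)\cdot\nabla v+f_u(x,\nabla u,u)\,v\big)\dd x$, the interchange of derivative and integral being justified by the growth bounds on $\nabla_{\boldsymbol z}f$ and $f_u$ in \eqref{ass:f} (which give $L^{q'}$, resp.\ $L^{p'}$, integrable majorants along the segment) and the dominated convergence theorem. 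Because $f$ is convex in $\boldsymbol z$, the map $t\mapsto\phi(u+tv)$ is convex, so the Fr\'echet subdifferential inequality \eqref{Frechet-subdif} forces $\xi\in\partial\phi(u)$ to satisfy $\langle\xi,v\rangle=\int_\Omega(\nabla_{\boldsymbol z}f\cdot\nabla v+f_u\,v)\dd x$ for all such $v$; a density argument in $W^{1,p}_0$ then identifies $\xi$ with the distribution $A(u)=-\mathrm{div}(\nabla_{\boldsymbol z}f(\cdot,\nabla u,u))+f_u(\cdot,\nabla u,u)$. Conversely, if $A(u)\in L^{\alpha'}(\Omega)$, convexity of $f(x,\cdot,u)$ in $\boldsymbol z$ together with the $\mathrm C^1$-dependence gives the global inequality $f(x,\nabla w,w)\ge f(x,\nabla u,u)+\nabla_{\boldsymbol z}f(x,\nabla u,u)\cdot(\nabla w-\nabla u)+f_u(x,\nabla u,u)(w-u)+(\text{error in }u\text{-variable})$; integrating and controlling the error term of order $o(\|w-u\|_{L^\alpha})$ via the convexity of $f$ only in $\boldsymbol z$ plus the growth of $f_u$ shows $A(u)\in\partial\phi(u)$. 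Hence $\partial\phi(u)=\{A(u)\}$ whenever nonempty. The single-valuedness and the fact that $|\partial\phi|(u)=\|A(u)\|_{\alpha'}$ are then automatic, and Proposition \ref{prop:app1} applies.

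Next I would prove the strong/weak closedness of the graph along bounded-energy sequences. Let $u_n\to u$ in $L^\alpha(\Omega)$, $\xi_n=A(u_n)\weakto\xi$ in $L^{\alpha'}(\Omega)$, with $\sup_n\phi(u_n)<\infty$. The coercivity $f(x,\boldsymbol z,u)\ge M_1|\boldsymbol z|^p-M_2$ yields $\sup_n\|\nabla u_n\|_{L^p}<\infty$, hence (after a subsequence) $\nabla u_n\weakto\nabla u$ in $L^p(\Omega;\R^d)$ and, by Rellich, $u_n\to u$ in $L^p(\Omega)$ and a.e.; in particular $u\in W^{1,p}_0(\Omega)$. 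To pass to the limit in the nonlinear term I would use a Minty–Browder type monotonicity trick available here because $\boldsymbol z\mapsto\nabla_{\boldsymbol z}f(x,\boldsymbol z,u)$ is monotone (from convexity in $\boldsymbol z$): test $\langle\xi_n,u_n-w\rangle$ for a fixed smooth $w$, exploit $\liminf_n\phi(u_n)\ge\phi(u)$ (which follows since $\phi$ is weakly lower semicontinuous on $W^{1,p}_0$ by convexity in $\boldsymbol z$ and continuity in $u$, a standard result of the calculus of variations) together with the subdifferential inequality $\phi(w)\ge\phi(u_n)+\langle\xi_n,w-u_n\rangle$, and deduce $\limsup_n\langle\xi_n-A(w),u_n-w\rangle\le 0$; then the monotonicity and hemicontinuity of the operator give $\xi=A(u)$. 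This is the delicate step, because the $u$-dependence of $\nabla_{\boldsymbol z}f$ destroys exact monotonicity; one repairs this by first using the a.e.\ convergence $u_n\to u$ and a Vitali/Egorov argument to absorb the lower-order perturbation, the growth bound on $\nabla_{\boldsymbol z}f$ in \eqref{ass:f} supplying equiintegrability of $\nabla_{\boldsymbol z}f(x,\nabla u_n,u_n)$ in $L^{p'}$.

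Finally, for the $L^\infty$-moderated upper gradient property of $|\partial\phi|$, I would invoke Corollary \ref{cor:criterium}: it suffices to check that for every curve $\vartheta\in\AC^2([a,b];\Banach)$ with $\phi\circ\vartheta\in L^\infty(a,b)$ one has $|\phi(\vartheta(b))-\phi(\vartheta(a))|\le\frac12\int_a^b(|\vartheta'|^2+|\partial\phi|^2(\vartheta))\dd t$. By the coercivity/boundedness of $\phi\circ\vartheta$ the curve stays in a bounded set of $W^{1,p}_0(\Omega)$; one then shows $t\mapsto\phi(\vartheta(t))$ is absolutely continuous with $\frac{\dd}{\dd t}\phi(\vartheta(t))=\langle A(\vartheta(t)),\vartheta'(t)\rangle$ a.e.\ (using the chain rule for the $\mathrm C^1$-integrand and the weak differentiability of $\vartheta$, the growth conditions \eqref{ass:f} guaranteeing that the pairing is integrable), and estimates $|\langle A(\vartheta),\vartheta'\rangle|\le\|A(\vartheta)\|_{\alpha'}\|\vartheta'\|_\alpha=|\partial\phi|(\vartheta)\,|\vartheta'|\le\frac12|\partial\phi|^2(\vartheta)+\frac12|\vartheta'|^2$. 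I expect the main obstacle to be the graph-closedness step: making the Minty monotonicity argument fully rigorous in the presence of the lower-order $u$-dependence, with only the one-sided growth controls of \eqref{ass:f}, requires careful use of a.e.\ convergence and equiintegrability rather than a black-box monotone-operator theorem.
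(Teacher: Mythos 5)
Your proof plan has the right three-part structure and a good sense of where the difficulties lie, but in two places a key technical step is skipped over and would in fact leave a gap.

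\textbf{Identification of $\partial\phi$.} The forward direction (any $\xi\in\partial\phi(u)$ must coincide with the distribution $A(u)$) is fine via Gateaux differentiation, although the parenthetical claim that $t\mapsto\phi(u+tv)$ is convex is wrong --- this would require joint convexity of $f$ in $(\boldsymbol z,u)$, which \eqref{ass:f} does not assume --- and is, fortunately, not needed for the directional-derivative argument. The converse, i.e.\ that $A(u)\in L^{\alpha'}(\Omega)$ is actually a Fr\'echet subgradient, is where you are too vague. The $u$-dependent remainder in the Taylor expansion of $f$ is not automatically $o(\|w-u\|_{L^\alpha})$: without further information it is only $O(\|w-u\|_{L^\alpha})$, because the modulus of continuity of $f_u(\cdot,\nabla w,\cdot)$ in $L^{\alpha'}$ depends on $\nabla w$. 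The paper handles this by two crucial observations: (i) for the Fr\'echet inequality it is not restrictive to test along sequences $w\to u$ in $L^\alpha$ with $\phi(w)\to\phi(u)$ (the other cases are trivial by lower semicontinuity), and (ii) strict convexity of $f$ in $\boldsymbol z$ then upgrades this to $w\to u$ strongly in $W^{1,p}_0(\Omega)$, so that the lower-order remainder vanishes faster than $\|w-u\|_{L^\alpha}$. This two-step reduction is the crux of the argument and is missing from your plan.

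\textbf{Closedness of the graph.} Your Minty--Browder argument with a.e.\ convergence and equiintegrability is essentially the proof of pseudo-monotonicity of $A:W^{1,p}_0\to W^{-1,p'}$ unrolled by hand. The paper simply invokes pseudo-monotonicity from Lions together with the compactness of the embedding $L^{\alpha'}(\Omega)\hookrightarrow W^{-1,p'}(\Omega)$. Your route is more elementary but also more delicate; both are valid, and you correctly identify the lower-order $u$-dependence as the delicate point.

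\textbf{Upper gradient.} Here the approaches genuinely diverge, and yours has a gap. You want to apply Corollary \ref{cor:criterium} by establishing the chain rule $\frac{\dd}{\dd t}\phi(\vartheta(t))=\langle A(\vartheta(t)),\vartheta'(t)\rangle$ for a curve $\vartheta\in\AC^2([a,b];L^\alpha)$, but your justification (``chain rule for the $\mathrm C^1$-integrand and weak differentiability of $\vartheta$'') does not prove it. The curve is $\AC^2$ with values in $L^\alpha$, not in $W^{1,p}_0$; the difference quotient $(\vartheta(t+h)-\vartheta(t))/h$ converges only in $L^\alpha$, so $\nabla(\vartheta(t+h)-\vartheta(t))/h$ need not converge in $L^p$, and one cannot simply differentiate under the integral sign. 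Establishing this chain rule is essentially of the same order of difficulty as the upper-gradient property itself (see \cite[Prop.~6.1,~6.2]{RSS11}). The paper avoids it by a different route: from the remainder bound of the first part it extracts the one-sided Lipschitz estimate $\phi(v)-\phi(u)\ge -\big(|\partial\phi|(u)+C\big)\|v-u\|_\Banach$ uniformly on energy sublevels, and then cites \cite[Thm.~1.2.5, Lemma~1.2.6]{Ambrosio-Gigli-Savare08}, which converts such a one-sided estimate directly into the upper-gradient property without needing a chain rule. You should either switch to this route or be prepared to supply the chain rule argument, which is considerably more work than your plan suggests.
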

\begin{proof}
  In order to clarify the calculations, let us denote by $V$ the
  Banach space $W^{1,p}_0(\Omega)$; we have $V\hookrightarrow \Banach$
  and $\Banach'\hookrightarrow V'$ with compact inclusions. 

  Let us first notice that the restriction of the functional $\phi$ to
  $V$ is continuous;
  moreover, due to the strict convexity with respect to the gradient
  variable and to the coercivity assumption, 
  if $u_n\weakto u$ in $V$ and $\phi(u_n)\to\phi(u)$ 
  we deduce that $u_n\to u$ strongly in $V$.

  We consider the functional
  $\Phi:V\times V\to \R$
  \begin{displaymath}
    \Phi(v,u):=\int_\Omega f(x,\nabla v,u)\,\dd x\quad
    \text{with}\quad
    \phi(u)=\Phi(u,u),
  \end{displaymath}
  and the corresponding continuous operators $A_1:V\times V\to V'$,
  $A_2:V\times V\to \Banach'$
  \begin{displaymath}
    A_1(v,u):= -\hbox{div}(\nabla_{\boldsymbol z}f(x,\nabla
    v,u)),
    \quad
    A_2(v,u):=f_u(x,\nabla v,u),\quad
    A(u)=A_1(u,u)+A_2(u,u).
  \end{displaymath}
  By taking directional derivatives, it is immediate to check that
  $\partial\phi(u)$ may contain just the element $A(u)$. Let us first
  check that if $A(u)\in \Banach'$ then it is the Fr\'echet
  subdifferential of $\phi$. Notice that by the convexity of $\Phi$
  with respect to its first variable
  \begin{align*}
    \phi(v)-\phi(u)&-
    \langle A(u),v-u\rangle
      =\Phi(v,v)-\Phi(u,u)-
      \langle A(u),v-u\rangle \\&=
      \Phi(v,u)-\Phi(u,u)-
      \langle A_1(u,u),v-u\rangle +
      \Phi(v,v)-\Phi(v,u)-\langle A_2(u,u),v-u\rangle
    \\&\ge
      \int_0^1 \int_\Omega\OOO \left(
        A_2(v,u+\lambda(v-u))-A_2(u,u)\right)\RRR (v-u)\,\dd
        x\,\dd\lambda
    \\&\ge-\|v-u\|_{L^\alpha}
        \sigma(v,u),\quad
        \sigma(v,u):=\int_0^1 \|A_2(v,u+\lambda(v-u))-A_2(u,u)\|_{L^{\alpha'}}\dd\lambda
  \end{align*}
  In order to check that $A$ is the Fr\'echet subdifferential of
  $\phi$ it is not restrictive to assume 
  that $v\to u$ in $\Banach$ and
  $\phi(v)\to\phi(u)$, so that $v\to u$ in $V$. We then obtain that
  $\sigma(v,u)\to0$ since
  $A_2$ is continuous from $V\times V$ to $L^{\alpha'}(\Omega)$.
  
 The closedness property \eqref{eq:78} is a consequence of the fact that 
  the operator $A$ is pseudo-monotone from
  $V$ to $V'$ \cite[Prop.~2.6]{Lions69}
  and that  the inclusion $L^{\alpha'}(\Omega)\hookrightarrow
  W^{-1,p'}(\Omega)$ is compact. 
  
  Finally, in order to check that $|\partial\phi|$ (which can now   be
  identified with $\|A(\cdot)\|_{L^{\alpha'}}$ )
  is an $L^\infty$-moderated upper gradient, 
  it is sufficient to observe that the quantity $\sigma$ defined in
  the previous calculations is uniformly bounded on the sublevels of
  $\phi$. On each sublevel there exists a constant $C>0$ such that
  for every choice of $v,u$ in the sublevel there holds
  \begin{displaymath}
    \phi(v)-\phi(u)
    \ge -\big(|\partial\phi|(u)+C\big)\|v-u\|_\Banach
  \end{displaymath}
  We can then argue as in \cite[Thm.~1.2.5, Lemma 1.2.6]{Ambrosio-Gigli-Savare08}.
  \end{proof}
%

\OOO Lemma \ref{prop:app1} allows us to apply \RRR
 Proposition \ref{prop:app1}  and Lemma \ref{le:app1}
\OOO and obtain that, \RRR
given $u_0\in L^\alpha(\Omega)$, the WED minimizers converge to a
solution $u\in H^1_{\rm loc}([0,\infty);L^\alpha(\Omega))$ 
of
\begin{equation}
\label{eq:BGF}
\begin{cases}
  J_\alpha(\partial_t u) -\hbox{div}(\nabla_{\boldsymbol z}f(x,\nabla u,u)) +
  f_u(x,\nabla u,u) = 0&
  \hbox{ in } \Omega\times (0,\infty),\\
  u = 0 &
  \hbox{ on } \partial\Omega\times (0,\infty),\\
  u(\cdot, 0) = u_0(\cdot)
  &\hbox{ in } \,\Omega,
\end{cases}
\end{equation}
where $J_\alpha:L^\alpha(\Omega)\to L^{\alpha'}(\Omega)$ is the
duality mapping $J_\alpha (v)=\|v\|_{L^\alpha}^{2-\alpha}|v|^{\alpha-2}v$.
\par
It is interesting to compare this result with those obtained in \cite{BoDuMa14}
for an energy of the type of $\phi$ in $L^2(\Omega)$.
In \cite{BoDuMa14} the convexity of $f$ is assumed.
Our approach allows for some nonconvexity of $f$ and for 
doubly nonlinear equations,
at the price of imposing suitable regularity, coercivity and growth
conditions.
Of course, when $\alpha=2$, and there exists $-\lambda\ge0$ such that 
$(\boldsymbol z,u)\mapsto f(x,\boldsymbol z,u)-\frac\lambda2 u^2$ is
convex,
growth conditions could be avoided also in our setting
since  then the functional $\phi$ would be $\lambda$-convex in $L^2(\Omega)$.

\paragraph{\bf Limiting subdifferential}

When the Fr\'echet subdifferential does not satisfy the closedness
property
\eqref{eq:78}, one can consider a relaxed formulation of 
\eqref{gflow-Banach1} obtained by substituting
$\partial\phi$ with the 
\emph{limiting
  subdifferential}  of $\phi$, 
cf., e.g., \cite{Mordu06}. 
At $u \in \mathrm{D}(\phi) $ the limiting subdifferential
$\lmsbd \phi(u)  $ is defined by 
\UUU
\begin{equation}
\label{def:lmbd} \xi \in \lmsbd \phi(u)   \Leftrightarrow
\begin{cases}
\displaystyle \exists\, u_n \in \Banach,\xi_n\in\Banach^* \
\text{with }
 \xi_n\in
\partial \phi(u_n) \ \text{for all $n\in \N$,}\\
\displaystyle
  u_n\weakto \GGG u\EEE,\
  \xi_n \weakto \xi \ \text{as $n\to\infty$}, \   \ \GGG \sup_n
  \phi(u_n) \EEE
  <\infty\,.
  \end{cases}
\end{equation}
\GGG
Since $\lmsbd \phi(u)$ is not necessarily weakly$^*$ closed, 
we introduce the following notions
  \begin{equation} 
  \label{e:not-min-sel}
  \begin{aligned}
    \bar\partial_\ell\phi(u):=&\text{weak$^*$ closure of $\lmsbd \phi(u)$},\\
    \|\lmsbd^\circ \phi \|_*(u) :=&\min\left\{\| \xi \|_* \, : \ \xi \in
      \bar\partial_\ell \phi(u) \right\}, \\
    \lmsbd^\circ \phi (u):=&\left\{\xi\in \bar\partial_\ell\phi(u),
      \quad \| \xi \|_*= \|\lmsbd^\circ \phi \|_*(u)
    \right\},
  \end{aligned}
\end{equation}
and we are interested in trajectories $u: [0,\infty)\to
\mathrm{D}(\phi)$ solving
\begin{equation}
\label{gflow-Banach2}
J(u'(t)) + \lmsbd^\circ \phi(u(t)) \ni 0 \quad \text{ in } \Banach^* \quad
\foraa\, t \in (0,T).
\end{equation}
for a functional $\phi$ satisfying the LSCC Property \ref{basic-ass} of Section \ref{s:3}.

\EEE
To start with, let us record that curves of maximal slope (w.r.t.\ the
relaxed slope $|\partial^-\phi|$ of $\phi$) indeed solve equation
\eqref{gflow-Banach2}. 
This is ensured by the following result, which
is a slight adaptation of \cite[Prop.\ 6.1, 6.2]{RSS11} 
\GGG 
and 
\cite[Sect.~3.2]{Rossi-Savare06}, \EEE
cf.\ also \cite[Prop.\ 1.4.1]{Ambrosio-Gigli-Savare08}. 
\begin{proposition}
\label{prop:fromRSS11}
\RIC Assume Property \ref{basic-ass}. \UUU
There holds
\[
  \GGG
  \|\lmsbd^\circ \phi\|_* (u) \leq 
  \inf \Big\{\|\xi\|_*:\xi\in \lmsbd\phi(u)\OOO \Big\}  \RRR\le 
  |\partial^-\phi|(u)  \quad \text{for all } u \in D(|\partial^-\phi|)\,.
\]
Furthermore,  if $\phi$ fulfills the chain rule
w.r.t.\ $\lmsbd \phi$, namely:
\begin{align}
  \label{eq:69}
  & \RIC u \in H^1(0,T,\mathcal{B}), \ \xi \in
  L^{2}([0,T],\mathcal{B}^*), \UUU \ \xi \in \lmsbd\phi(u) \
  \text{a.e. in} \ (0,T) \nonumber\\
& \Rightarrow \ \phi\circ u \in AC([0,T]), \ \frac{\rm d}{{\rm d}t}
  (\phi \circ u) = \langle \xi,u'\rangle \ \text{a.e. in} \ (0,T),
\end{align}
then
\begin{enumerate}
\item
 the relaxed slope $|\partial^-\phi|$ is a strong upper gradient for $\phi$;
\item
every curve of maximal slope $u$ w.r.t.\  $|\partial^-\phi|$  is a solution of the gradient flow \eqref{gflow-Banach2}
and fulfills the minimal section principle
\begin{equation}
\label{min-sect}
-\lmsbd^\circ \phi (u(t)) \subset J(u'(t)),\quad
\GGG
\|\lmsbd^\circ \phi\|_* (u(t))=|\partial^-\phi|(u(t))
\EEE
\qquad \foraa\, t \in (0,T).
\end{equation}
\end{enumerate}
\end{proposition}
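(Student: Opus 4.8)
The plan is to follow the scheme of \cite[Prop.~6.1, 6.2]{RSS11} and \cite[Sect.~3.2]{Rossi-Savare06}, adapting it to the duality-map structure of \eqref{gflow-Banach2}, with the weak topology of $\Banach$ playing the role of $\sigma$ in Property \ref{basic-ass}.

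\textbf{The inequality chain.} The bound $\|\lmsbd^\circ\phi\|_*(u)\le \inf\{\|\xi\|_*:\xi\in\lmsbd\phi(u)\}$ is immediate from \eqref{e:not-min-sel}: $\lmsbd\phi(u)\subseteq\bar\partial_\ell\phi(u)$ and $\|\cdot\|_*$ is weakly$^*$ lower semicontinuous, so the minimal norm over the weak$^*$-closure cannot exceed the infimal norm over the original set. For the second inequality I would fix $u\in D(|\partial^-\phi|)$ and, by \eqref{slope1}, pick $u_n\weakto u$ with $\sup_n(\sfd(u_n,u),\phi(u_n))<\infty$ and $|\partial\phi|(u_n)\to|\partial^-\phi|(u)$. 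A smooth (Borwein--Preiss/Ekeland-type) variational principle — available because $\Banach$ is reflexive and separable, see \cite{Rossi-Savare06,Ambrosio-Gigli-Savare08} — produces, for each $n$, a point $\tilde u_n$ with $\|\tilde u_n-u_n\|\le 1/n$, $|\phi(\tilde u_n)-\phi(u_n)|\le 1/n$ and $\xi_n\in\partial\phi(\tilde u_n)$ with $\|\xi_n\|_*\le|\partial\phi|(u_n)+1/n$. Then $\tilde u_n\weakto u$, $\sup_n(\sfd(\tilde u_n,u),\phi(\tilde u_n))<\infty$, and $(\xi_n)_n$ is bounded in $\Banach^*$; extracting a weakly convergent subsequence $\xi_n\weakto\xi$, definition \eqref{def:lmbd} yields $\xi\in\lmsbd\phi(u)$ while $\|\xi\|_*\le\liminf_n\|\xi_n\|_*\le|\partial^-\phi|(u)$.

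\textbf{Item (1).} By construction \eqref{slope1}, $|\partial^-\phi|$ is sequentially $\sigma$-lower semicontinuous on $\sfd$-bounded sets of bounded energy and dominates the local slope, so $r\mapsto|\partial^-\phi|(u(r))$ is Borel along any $u\in\AC(I;\Banach)$ with $\phi\circ u$ locally bounded. To get the strong upper-gradient inequality \eqref{e:2.3} I would first treat the case $\int_J|\partial^-\phi|(u(r))\,|u'|(r)\,\mathrm{d}r<\infty$: combining the inequality chain above with a measurable-selection argument one chooses $r\mapsto\xi(r)\in\lmsbd^\circ\phi(u(r))$ with $\|\xi(r)\|_*\le|\partial^-\phi|(u(r))$, reduces to a Lipschitz (hence $H^1_{\mathrm{loc}}$) reparametrization of $u$ through Lemma \ref{le:reparametrization}, and invokes the chain rule \eqref{eq:69} to obtain $\tfrac{\mathrm{d}}{\mathrm{d}r}(\phi\circ u)(r)=\langle\xi(r),u'(r)\rangle$, whence $|(\phi\circ u)'(r)|\le\|\xi(r)\|_*\|u'(r)\|\le|\partial^-\phi|(u(r))\,|u'|(r)$. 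The removal of the a priori integrability restriction — which upgrades this to the full strong upper-gradient property — follows \cite[Prop.~1.4.1]{Ambrosio-Gigli-Savare08}. I expect this to be the delicate step, since it is where the chain-rule hypothesis \eqref{eq:69} and the relaxation procedure have to be dovetailed.

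\textbf{Item (2).} Let $u$ be a curve of maximal slope w.r.t.\ $|\partial^-\phi|$, so $u\in\AC^2_{\mathrm{loc}}(\II;\Banach)$, $|\partial^-\phi|\circ u\in L^2_{\mathrm{loc}}$ and $-(\phi\circ u)'(t)=|u'|^2(t)=|\partial^-\phi|^2(u(t))$ a.e.; by item (1), $\phi\circ u$ is locally absolutely continuous, which legitimizes the computation below. Since $|\partial^-\phi|(u(t))<\infty$ a.e., the inequality chain gives $\lmsbd^\circ\phi(u(t))\ne\emptyset$ a.e.; I would pick a measurable selection $\xi(t)\in\lmsbd^\circ\phi(u(t))$, so $\|\xi(t)\|_*=\|\lmsbd^\circ\phi\|_*(u(t))\le|\partial^-\phi|(u(t))$ and $\xi\in L^2_{\mathrm{loc}}$. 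Using that in the reflexive separable space $\Banach$ an absolutely continuous curve is a.e.\ differentiable with $\|u'(t)\|=|u'|(t)$, the chain rule \eqref{eq:69} gives $\tfrac{\mathrm{d}}{\mathrm{d}t}(\phi\circ u)(t)=\langle\xi(t),u'(t)\rangle$, so
\[
\|u'(t)\|^2+|\partial^-\phi|^2(u(t))=-2\langle\xi(t),u'(t)\rangle\le 2\|\xi(t)\|_*\|u'(t)\|\le 2|\partial^-\phi|(u(t))\,\|u'(t)\|\le\|u'(t)\|^2+|\partial^-\phi|^2(u(t)).
\]
Hence every inequality is an equality: $\|u'(t)\|=\|\xi(t)\|_*=|\partial^-\phi|(u(t))$ and $\langle-\xi(t),u'(t)\rangle=\|u'(t)\|^2=\|\xi(t)\|_*^2$, i.e.\ $-\xi(t)\in J(u'(t))$ by \eqref{eq:76}. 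This yields $J(u'(t))+\lmsbd^\circ\phi(u(t))\ni0$ a.e.\ and $\|\lmsbd^\circ\phi\|_*(u(t))=|\partial^-\phi|(u(t))$; the full inclusion $-\lmsbd^\circ\phi(u(t))\subseteq J(u'(t))$ of \eqref{min-sect} then follows as in \cite[Prop.~6.2]{RSS11}, using that every element of $\lmsbd^\circ\phi(u(t))$ has norm $\|u'(t)\|$ and that $J(u'(t))$ is convex and weakly$^*$ closed.
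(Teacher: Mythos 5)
The paper offers no proof of this proposition, only the citations to [RSS11, Prop.~6.1, 6.2], [Rossi-Savare06, Sec.~3.2] and \cite[Prop.~1.4.1]{Ambrosio-Gigli-Savare08}, so there is nothing to compare your argument against line by line; the question is whether your reconstruction is sound. The inequality chain is handled correctly, and note that your own argument there actually proves something slightly stronger than the middle inequality states: it exhibits a $\xi\in\lmsbd\phi(u)$ with $\|\xi\|_*\le|\partial^-\phi|(u)$, which you should exploit in the later steps.

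The concrete gap is in how you feed selections into the chain rule. In both items you pick $\xi(r)\in\lmsbd^\circ\phi(u(r))$ and then apply \eqref{eq:69}; but \eqref{eq:69} is stated for $\xi(r)\in\lmsbd\phi(u(r))$, whereas by \eqref{e:not-min-sel} $\lmsbd^\circ\phi(u)$ lives inside $\bar\partial_\ell\phi(u)$, the weak$^*$ closure of $\lmsbd\phi(u)$, which in this paper's conventions (only $\sup_n\phi(u_n)<\infty$ is required in \eqref{def:lmbd}) need not coincide with $\lmsbd\phi(u)$. The fix is exactly what your inequality-chain argument provides: select $\xi(r)\in\lmsbd\phi(u(r))$ with $\|\xi(r)\|_*\le|\partial^-\phi|(u(r))$, apply the chain rule, and only afterwards deduce $\|\lmsbd^\circ\phi\|_*=|\partial^-\phi|$ along the curve. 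A second, smaller, inaccuracy is the ``Lipschitz reparametrization'' in item (1): an arc-length reparametrization gives $u\in H^1$ but only $\xi\in L^1$, since after it $\int|\partial^-\phi|(u)\,|u'|\,\dd r$ turns into $\int|\partial^-\phi|(u)\,\dd s$; to land in $L^2$ on both sides, as \eqref{eq:69} demands, you need the balanced reparametrization from the second half of Lemma \ref{le:reparametrization} (with $\frf$ built from $|\partial^-\phi|$), which makes $|u'|$ and $|\partial^-\phi|\circ u$ equal and square-integrable. Finally, the step establishing the full inclusion $-\lmsbd^\circ\phi(u(t))\subset J(u'(t))$ is still missing: equal norms together with convexity and weak$^*$-closedness of $J(u'(t))$ do not by themselves transport the duality pairing identity from one minimal-norm element to all of them. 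What is needed is that the chain-rule identity $\langle\xi,u'(t)\rangle=(\phi\circ u)'(t)$ holds, for a.e.\ $t$, for \emph{every} $\xi\in\lmsbd\phi(u(t))$ (not just the chosen selection), and then passes by weak$^*$-continuity of $\langle\cdot,u'(t)\rangle$ to $\bar\partial_\ell\phi(u(t))\supset\lmsbd^\circ\phi(u(t))$; this is exactly the delicate measurable-selection/countable-density point the references handle and that a complete write-up should address.
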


This proposition paves the way to applying  Theorem \ref{th:4.1} and
we have the following.
\begin{corollary}
\label{corWED}
Let $\phi: \Banach \to (-\infty,\infty]$ comply with the LSCC Property
\ref{basic-ass} and, in addition,  with the condition of Proposition
\ref{prop:fromRSS11}. Then, WED minimizers converge up to subsequences
to a solution $u\in H^1(0,T;\Banach)$ of \eqref{min-sect}. 
\end{corollary}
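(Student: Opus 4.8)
The plan is to combine the abstract convergence result with the structural facts just established about $\phi$ in this Banach setting. First I would recall that, by Proposition \ref{prop:fromRSS11}, the hypotheses on $\phi$ (the LSCC Property \ref{basic-ass} with respect to the strong topology of $\Banach$, together with the chain rule \eqref{eq:69} w.r.t.\ the limiting subdifferential) guarantee that the relaxed slope $|\partial^-\phi|$ is a \emph{strong} upper gradient for $\phi$; in particular it is an $L^\infty$-moderated upper gradient, which is exactly the extra assumption needed to invoke Theorem \ref{th:4.1}.

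Next I would apply Theorem \ref{th:4.1} directly. Given the initial datum $\ini\in D(\phi)$ (take $\inie\equiv\ini$ in \eqref{converg-init-data}, so that the convergence of the initial data is trivial), for every $\eps>0$ let $\ue\in\MMM_\eps(\ini)$ be a WED minimizer, whose existence is ensured by Theorem \ref{exist-minimizers}. Theorem \ref{th:4.1} then yields, along any vanishing sequence $(\eps_k)_k$, a subsequence and a limit curve $u\in\AC^2_\loc([0,\infty);\Banach)$ with $u(0)=\ini$, $\uek(t)\to u(t)$ for every $t$, and such that $u$ satisfies the energy inequality \eqref{lsc-gflow}. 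Since $|\partial^-\phi|$ is an $L^\infty$-moderated upper gradient, the last assertion of Theorem \ref{th:4.1} upgrades this to: $u$ is a curve of maximal slope for $\phi$ with respect to $|\partial^-\phi|$.

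Finally I would translate the metric statement into the differential inclusion. By item (2) of Proposition \ref{prop:fromRSS11}, every curve of maximal slope w.r.t.\ $|\partial^-\phi|$ solves the gradient flow \eqref{gflow-Banach2} and fulfils the minimal-section principle \eqref{min-sect}, i.e.\ $-\lmsbd^\circ\phi(u(t))\subset J(u'(t))$ with $\|\lmsbd^\circ\phi\|_*(u(t))=|\partial^-\phi|(u(t))$ for a.e.\ $t$. Moreover, since a curve of maximal slope belongs to $\AC^2_\loc([0,\infty);\Banach)$ with $|u'|\in L^2_\loc$ and $|\partial^-\phi|\circ u\in L^2_\loc$, and $J$ is the duality map, one has $u\in H^1_\loc([0,\infty);\Banach)$; on any finite interval $[0,T]$ this gives $u\in H^1(0,T;\Banach)$, which is the regularity asserted in the corollary. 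Collecting these facts proves the statement.

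I do not expect any genuine obstacle here: the corollary is essentially a bookkeeping assembly of Theorem \ref{th:4.1} and Proposition \ref{prop:fromRSS11}, both already available. The only point requiring a line of care is checking that the hypotheses of Theorem \ref{th:4.1} are met — namely that under the stated assumptions $|\partial^-\phi|$ is indeed an $L^\infty$-moderated (in fact strong) upper gradient, which is precisely the content of item (1) of Proposition \ref{prop:fromRSS11} — and that the trivial choice $\inie\equiv\ini$ satisfies \eqref{converg-init-data}. If one wants the statement for all $T$ simultaneously, one simply notes that the energy identity forces $u\in H^1(0,T;\Banach)$ for every $T>0$.
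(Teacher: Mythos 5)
Your proof is correct and takes exactly the approach the paper intends (the paper in fact omits a written proof, presenting the corollary as an immediate assembly of Proposition \ref{prop:fromRSS11} and Theorem \ref{th:4.1}). You correctly note the only two points that need a line of care — that item (1) of Proposition \ref{prop:fromRSS11} supplies the upper-gradient hypothesis required by Theorem \ref{th:4.1}, and that item (2) converts the metric maximal-slope conclusion into the differential inclusion \eqref{min-sect} with the claimed $H^1(0,T;\Banach)$ regularity.
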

Let us now present a classe of functionals to which Corollary
\ref{corWED} applies. 
\medskip

\paragraph{\bf Dominated concave perturbations of convex functionals in Hilbert spaces.}
{
 Let us now focus on the case in which  $\Banach$ is a Hilbert space. 
 In this context,
 a class  of energies that complies with the hypotheses of Proposition \ref{prop:fromRSS11}
is given by  functionals $\phi:D(\phi)\to (-\infty, \infty]$, $D(\phi)\subset \Banach$,
admitting the decomposition 
\begin{subequations}
\begin{align}
&
\phi = \psi_1 - \psi_2 \,\,\hbox{ in } D(\phi),\\
&
\psi_1:D(\phi)\to \mathbb{R} \,\,\,\hbox{convex and l.s.c.},\\
&
\psi_2: D(\phi)\to \mathbb{R}\,\,\hbox{ convex and l.s.c. in }
  D(\phi),\,\, D(\partial\psi_1)
\subset D(\partial \psi_2).
\end{align}
\end{subequations}
 It has been shown in  \cite[Theorem 4]{Rossi-Savare06} that, 
 if, in addition, \RIC  $-\psi_2$  is a \emph{dominated} concave perturbation of $\psi_1$, namely  
\begin{equation}
\label{hyp:concave_domination}
 \begin{gathered}
\forall M\ge0\,\,\,\,\exists \rho\,<\,1, \gamma\ge 0\,\,\,\text{ such that }\,\,
\forall u\in D(\partial\psi_1)\,\,\,\text{with}\,\, \max\left\{\phi(u),\| u\|\right\}\le M
\\
\sup_{\xi_2\in \partial\psi_2(u)}\| \xi_2\|\le \rho\| \partial^\circ\psi_1(u)\| + \gamma,
\end{gathered}
\end{equation}
then $\phi$ satisfies the chain rule \eqref{eq:69}  with respect to the \emph{limiting} subdifferential $\lmsbd \phi$. 
Consequently, Proposition \ref{prop:fromRSS11}  applies. 
\par
If, in addition, 
$\phi$ complies with the LSCC properties, then 
 Corollary \ref{corWED} ensures  
that  WED minimizers converge to a solution of the 
nonconvex gradient flow \eqref{min-sect}. 
We refer to \cite{Rossi-Savare06} for further examples.

}

\subsection{A class of gradient flows in Wasserstein spaces}
\label{ss:appl.2}
 Gradient flows in metric spaces classically
arise as variational formulations of nonlinear parabolic PDEs is the
space of probability measures. By referring to
\cite{Ambrosio-Gigli-Savare08} for all details, we consider here 
  the nonlocal drift-diffusion
equation
\begin{equation}
  \label{eq:drift}
  \partial_t \rho - {\rm div} \left( \rho \nabla V +
    \frac{\nabla L_F(\rho)}{\rho} + \nabla W \ast \rho \right) =0 \quad \text{in} \
  \Rz^d \times (0,\infty)
\end{equation}
where
$V:
\Rz^d \to \Rz$ acts as confinement potential, $L_F$ is defined as $L_F(r) = rF'(r)-F(r)$
where $F:[0,\infty) \to \Rz$ is the internal-energy density, $W:
\Rz^d\times \Rz^d \to \Rz$ is the interaction potential, and the
symbol $*$ stands for classical convolution in $\Rz^d$.

Equation \eqref{eq:drift}
can be formulated as  a gradient flow in $$X= \mathscr{P}_2(\Rz^d)=\Big\{\mu
\in \mathscr{P}(\Rz^d)\ : \ \int_{\Rz^d}|x|^2{\rm d}\mu(x)<\infty\Big\}$$ endowed with the 
Wasserstein metric $\sfd=W_2$. The latter is classically given via
\begin{equation}
W_2(\mu_1,\mu_2) = \min_{\gamma \in \Gamma(\mu_1,\mu_2)}\int_{\Rz^d \times \Rz^d} |x-y|^2 {\rm
  d} \gamma(x,y)\label{eq:81}
\end{equation}

where $ \Gamma(\mu_1,\mu_2) = \{ \gamma \in \mathscr{P}(\Rz^d \times \Rz^d)\, : \ 
\pi_\#^i\gamma = \mu_i, \ i=1,2 \} $
and $\pi^i_\#$ stands for the push-forward of the measure through
the projection on the $i$-th component.

Let \RIC us consider the functional $\phi: \mathscr{P}_2(\Rz^d) \to (-\infty,\infty]$
 given by  the sum of the potential, the
internal, and the interaction energy, namely \EEE
\begin{equation}
\phi(\mu) = 
\left\{
  \begin{array}{ll}
\displaystyle\int_{\Rz^d} V(x){\rm d} \mu(x) + \int_{\Rz^d}
F(\rho(x)) {\rm d} x + \frac12 \int_{\Rz^d\times \Rz^d}W(x,y){\rm d}
(\mu \otimes \mu)(x,t)\quad&\text{if} \ \ {\rm d}\mu= \rho {\rm d} x\\
\infty&\text{else}
  \end{array}
\right.
\label{summa}
\end{equation}
\RIC (i.e., $\phi(\mu) = \infty$ is $\mu$ is not  
 absolutely continuous  w.r.t.\ the Lebesgue measure). \UUU Define $\sigma$
to be the   topology of narrow convergence in $\mathscr{P}_2(\Rz^d)$, namely $\mu_n \weak
\mu$ iff $$\int_{\Rz^d} f(x){\rm d}\mu_n(x) \to \int_{\Rz^d} f(x){\rm
  d}\mu(x)  \quad \forall f \in C_b(\Rz^d) \ \text{(continuous and
  bounded)}.$$
While referring to \cite{Ambrosio-Gigli-Savare08} for a full discussion on this
setting, we assume here that
\begin{align}
&V:\Rz^d \to \Rz \ \text{is $\lambda$-convex},  \ \ \lim_{|x| \to
  \infty}{|x|^{-2}}{V(x)}=\infty, \label{VV}\\
&F:[0,\infty) \to \Rz \ \text{is convex and differentiable with $F(0)=0$},\nonumber\\
&\lim_{r\to \infty} F(r)/r= \infty, \ \ F(r)/r^\alpha \ \
\text{is bounded below as $r\to 0+$ for some} \ \alpha >d/(d+2),\nonumber\\
&\exists C_F \geq 0 \  \forall x,
\, y \geq 0 \, : \ F(x+y)\leq C_F(1 + F(x)+F(y)), \nonumber\\
& r \mapsto r^d F(r^{-d}) \ \ \text{is convex and nonincreasing}\label{FF},\\
& W:\Rz^d\times \Rz^d \to \Rz \ \ \text{is even, \GGG
  $\lambda$-convex,
  \EEE differentiable, and}\nonumber\\
&\exists C_W \geq 0 \  \forall x,
\, y \in \Rz^d \,: \ W(x+y)\leq C_W(1 + W(x)+W(y)) \label{WW}
\end{align}
\GGG for some $\lambda\le 0$. \EEE
Let us record that these assumptions include the two classical choices
$F(r) = r \ln r$ and $F(r) = r^m$ for $m>1$. By combining
\cite[Thm. 11.1.3]{Ambrosio-Gigli-Savare08} and
\cite[Prop. 7.4]{RSS11} one finds that  
\GGG
\begin{enumerate}
\item the functional $\phi$ is $\lambda$-convex
  in $\mathscr P_2(\R^d)$;
\item
equation \eqref{eq:drift} can be equivalently written as \GGG
\begin{equation}
v_t +\partial_{\rm W} \phi(\mu_t) \ni 0\ \ \text{for a.a.} \ t \in (0,\infty)\label{abs}
\end{equation}
where the Borel velocity field $v_t=v(\cdot,t) : \Rz^d \to \Rz^d$, $t\in (0,\infty)$, is
associated with $\mu_t=\mu(\cdot,t)$ by letting 
$v_t\in L^2(\Rz^d,\mu_t)$, $|\mu_t'| = \|
v(\cdot,t)\|^2_{L^2(\Rz^d,\mu_t)}$ and requiring the continuity equation
$$\partial_t \mu + {\rm div} (v\mu) =0$$
to hold in the sense of
distributions in $\Rz^d \times (0,\infty)$. The Wasserstein
subdifferential $\partial_{\rm W}
\phi(\mu)$ in relation \eqref{abs} at a measure $\mu\in D(\phi)$ is
given by 
\begin{align*}
  &-v \in \partial_{\rm W} \phi(\mu) \ \ \Leftrightarrow \ \ -v \in
  L^2(\Rz^d,\mu) \ \ \text{and} \\ 
&\phi(\nu) - \phi(\mu) \geq \int_{\Rz^d \times
  \Rz^d} (-v(x))\cdot (y-x) {\rm d} \gamma_o(x,y) +\frac\lambda2
  W^2_2(\mu,\nu) \quad \text{for every} \ \nu\ \text{in}  \ \mathscr{P}_2(\Rz^d)
\end{align*}
where $\gamma_o\in \Gamma(\mu,\nu)$ is the unique optimal plan attaining the minimum in \eqref{eq:81}.
\item  \RIC curves of maximal slope for $\phi$ from  \eqref{summa}
  solve  \eqref{abs} 
\GGG and are unique.
\end{enumerate}
\EEE
We are now in the position of presenting the application of our
abstract Theorem \ref{th:4.1} in this setting.
\begin{corollary}\label{prope}
  Assume \eqref{VV}-\eqref{WW}. Then, the functional
  $\phi$ from \eqref{summa} fulfills the LSCC Property \ref{basic-ass}
  and is $\lambda$-geodesically convex on $\mathscr{P}_2$. In
  particular, the \RIC local slope $|\partial \phi|$ coincides with its relaxation $|\partial^-\phi|$ and it is  an upper gradient.
  Thus, \UUU
  minimizers of the WED functionals converge to
  curves of maximal slope for $\phi$ w.r.t. $|\partial\phi|$.
\end{corollary}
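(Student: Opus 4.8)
The plan is to verify that the functional $\phi$ from \eqref{summa} satisfies all the hypotheses of Theorem \ref{th:4.1}, so that the convergence of WED minimizers to curves of maximal slope follows. The ambient space is $X=\mathscr P_2(\R^d)$ with $\sfd=W_2$, and the auxiliary topology $\sigma$ is the narrow convergence. First I would recall from \cite[Chap.\ 7]{Ambrosio-Gigli-Savare08} that $(\mathscr P_2(\R^d),W_2,\sigma)$ is a compatible metric-topological space in the sense of \eqref{basic-assu}: narrow lower semicontinuity of $W_2$ gives (MT1), and the relation between narrow convergence plus uniform second-moment bounds and $W_2$-convergence yields (MT2). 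Then I would check the three items of the LSCC Property \ref{basic-ass}. The $\sigma$-sequential lower semicontinuity of each of the three summands of $\phi$ on $W_2$-bounded sets is classical: $V$ is continuous and grows superquadratically by \eqref{VV}, the internal energy is narrowly lower semicontinuous because $F$ is convex, superlinear, and satisfies the doubling condition in \eqref{FF}, and the interaction term is narrowly lower semicontinuous using \eqref{WW} and the doubling condition for $W$; this gives \eqref{basic-ass-1}. For the compactness \eqref{basic-ass-3}: a $W_2$-bounded set has uniformly bounded second moments, hence is narrowly relatively compact by Prokhorov's theorem, so any sublevel of $\phi$ intersected with a $W_2$-bounded set is $\sigma$-sequentially relatively compact. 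The coercivity \eqref{basic-ass-2bis} follows from the fact that $V$, $F$, and $W$ are bounded below (up to a quadratic term for $V$), so $\phi(\mu)\ge -\sfB\,W_2^2(\mu,\delta_0)-\sfA$.

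Next I would invoke the $\lambda$-geodesic convexity. By \cite[Thm.\ 11.1.3]{Ambrosio-Gigli-Savare08}, under the McCann-type conditions \eqref{VV}--\eqref{WW} (namely: $V$ and $W$ are $\lambda$-convex along lines, and $F$ satisfies the condition that $r\mapsto r^dF(r^{-d})$ is convex and nonincreasing, which is McCann's condition ensuring geodesic convexity of the internal energy along Wasserstein geodesics), the functional $\phi$ is $\lambda$-geodesically convex on $\mathscr P_2(\R^d)$. This establishes \eqref{phi-conv}. Then Proposition \ref{prop:l-p-geod} applies directly: for a $\sfd$-lower semicontinuous, $\lambda$-geodesically convex functional, the local slope $|\partial\phi|$ is itself lower semicontinuous and is a (strong) upper gradient, hence in particular an $L^\infty$-moderated upper gradient. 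Since $|\partial\phi|$ is $\sigma$-lower semicontinuous along $\sfd$-bounded sequences with bounded energy — here I would note that in the $\lambda$-convex setting this follows from the representation formula \eqref{repr-loc-slope}, or alternatively cite \cite[Prop.\ 7.4]{RSS11} — Corollary \ref{cor:convergence-to-relaxed} gives $\rwslo=\rls=|\partial\phi|$, so the relaxed slope $|\partial^-\phi|$ coincides with $|\partial\phi|$ and is an upper gradient.

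With all these verified, the hypotheses of Theorem \ref{th:4.1} are met: $\phi$ satisfies Property \ref{basic-ass}, and $\rls=|\partial\phi|$ is an $L^\infty$-moderated upper gradient. Therefore, for any family $(u_\eps)_\eps$ of WED minimizers with initial data converging as in \eqref{converg-init-data}, along any vanishing sequence $(\eps_k)_k$ there is a subsequence and a limit curve $u$ satisfying the energy-dissipation inequality \eqref{lsc-gflow}; since $|\partial\phi|$ is an upper gradient, $u$ is a curve of maximal slope for $\phi$ w.r.t.\ $|\partial\phi|$. Finally, by point (2)--(3) of the discussion preceding the corollary (combining \cite[Thm.\ 11.1.3]{Ambrosio-Gigli-Savare08} and \cite[Prop.\ 7.4]{RSS11}), such curves of maximal slope solve the abstract equation \eqref{abs}, i.e.\ they are the weak solutions of the drift-diffusion equation \eqref{eq:drift}, and they are unique.

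I expect the main obstacle to be the careful verification of the lower semicontinuity and coercivity of the internal and interaction energies under the stated growth/doubling conditions, together with making sure the doubling conditions in \eqref{FF}, \eqref{WW} are exactly what is needed to control $\phi$ on $W_2$-bounded sets (so that restricting $\sigma$-lower semicontinuity to $\sfd$-bounded sets is harmless). The rest is essentially a bookkeeping exercise of matching the classical results in \cite{Ambrosio-Gigli-Savare08} and \cite{RSS11} to the abstract hypotheses of Theorem \ref{th:4.1}; no genuinely new argument is required.
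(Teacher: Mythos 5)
Your proposal is correct and follows essentially the same route as the paper, which likewise verifies the LSCC Property and $\lambda$-geodesic convexity by appealing to \cite{Ambrosio-Gigli-Savare08} (Thm.\ 11.1.3 and Cor.\ 2.4.10) and to \cite[Prop.\ 7.4]{RSS11}, and then simply invokes Theorem~\ref{th:4.1}. One caveat: the claim that the $\sigma$-sequential lower semicontinuity of $|\partial\phi|$ (along $W_2$-bounded sequences with bounded energy) follows from the representation formula \eqref{repr-loc-slope} alone is not quite right. That formula delivers lower semicontinuity w.r.t.\ the topology in which $\sfd$ is continuous; with $\lambda<0$ and $W_2$ only narrowly lower semicontinuous, each term
\[
\nu\mapsto\left(\frac{\phi(\mu_n)-\phi(\nu)}{W_2(\mu_n,\nu)}+\frac{\lambda}{2}W_2(\mu_n,\nu)\right)^+
\]
need not be narrowly lower semicontinuous in $\mu_n$, because the distance appears both in the denominator and with the ``wrong'' sign in the $\lambda$-term. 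The correct justification is exactly your alternative reference to \cite[Prop.\ 7.4]{RSS11} (equivalently, the explicit Wasserstein-subdifferential computations in \cite[\S10]{Ambrosio-Gigli-Savare08}), which is also what the paper uses; with that citation in place, the argument is complete and the identity $|\partial^-\phi|=|\partial\phi|$ follows from Corollary~\ref{cor:convergence-to-relaxed} as you indicate.
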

\GGG Notice that in the previous statement  the \emph{whole} family of WED
minimizers converge and there is no need to extract 
subsequences: this fact depends on the uniqueness of curves of maximal
slope for $\phi$.
\RIC Without entering into the  details of the proof of Corollary \ref{prope}, \UUU  let us mention here that 
\OOO the \RRR LSCC Property \ref{basic-ass} and the geodesic convexity of $\phi$
have been checked in
\cite{Ambrosio-Gigli-Savare08}.
 In particular, the slope
$|\partial\phi|$ is a ($L^\infty$-moderated) upper gradient
\cite[Cor. 2.4.10]{Ambrosio-Gigli-Savare08} and Theorem \ref{th:4.1} applies.
\EEE


\appendix
\section{A Finsler distance induced by the energy}
\label{ss:2.5}
In this section 
we will briefly introduce Finsler extended distances
induced by a general \UUU function  $\frf:X\to [1,\infty]$ via \EEE
\begin{definition}
  The Finsler extended distance $\sfd_\frf$ associated with  $\frf$   is defined by  
  \begin{equation}
    \label{Finsler-distance}
    \sfd_\frf(u_0,u_1): = \inf\left \{ \int_a^b
      \frf(\teta(t)) |\teta'|(t) \dd  t \, : \ 
      \teta \in \mathrm{AC}([a,b];X),\quad
      \vartheta(a)=u_0,\ \vartheta(b)=u_1\right\}
  \end{equation}
  where $\sfd_\frf(u_0,u_1) =\infty$ if
  there are no absolutely continuous curves connecting $u_0$ to $u_1$.
\end{definition}
It is easy to check that $\sfd_\frf$ is an extended distance
(i.e.~it satisfies all the usual axioms defining a distance but
it may assume the value $\infty$), satisfying
\begin{equation}
  \label{stronger-distance}
  \sfd_\frf(u_0,u_1) \geq \sfd(u_0,u_1) \qquad \text{for all } u_0,\, u_1 \in X.
\end{equation}
By a linear change of variable, it is also possibile to fix
$[a,b]=[0,1]$ in \eqref{Finsler-distance}. 

\UUU In the following we assume that \EEE
\begin{equation}
\frf:X\to [1,\infty]\quad\text{\UUU satisfies \EEE 
  assumptions
\eqref{basic-ass-1} and \eqref{basic-ass-3}.}\label{eq:20}
\end{equation}
Our main motivating examples are provided by the choices
\begin{equation}
  \label{eq:10}
  \frf(x):=\big(\phi(x)\lor 1\big)^{1/2},\quad
  \frf(x):=
  \begin{cases}
    1&\text{if }\phi(x)\le c,\\
    \infty&\text{otherwise.}
  \end{cases}
\end{equation}
 Indeed, working with the distance induced by the latter functional would be helpful to force WED minimizers with bounded energy. 
 On the other hand,  we  will see that the Finsler distance 
\begin{equation}
\label{induced-phi}
\sfd_\phi  \text{ induced via \eqref{Finsler-distance}
 by }   \frf=\big(\phi\lor 1\big)^{1/2}
\end{equation}
 can be thought of as a \emph{natural metric} in the context of WED minimization,
cf.\ Theorem  \ref{cont-wrt-dphi} ahead. 


%
\begin{lemma}
  \label{le:basic}
  Let $(u_{n},v_n)$ be a sequence $\sigma$-converging to $(u,v)$
  and let $\vartheta_n\in \AC([a_n,b_n];X)$ be a sequence of curves
  connecting $u_n$ to $v_n$ and satisfying
  \begin{equation}
    \label{eq:24}
    \liminf_{n\to\infty}\int_{a_n}^{b_n}\frf(\vartheta_n)|\vartheta_n'|\,\dd
    t\le F<\infty.
  \end{equation}
  Then there exists a curve $\vartheta\in \AC([a,b];X)$ 
  connecting $u$ to $v$ and satisfying 
    $\int_{a}^{b}\frf(\vartheta)|\vartheta'|\,\dd
    t\le F.$ 
\end{lemma}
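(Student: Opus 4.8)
The plan is to run a standard reparametrization-and-compactness argument, following the scheme of \cite[Prop.~3.3.1]{Ambrosio-Gigli-Savare08} but in the slightly more delicate setting where the time intervals $[a_n,b_n]$ may themselves vary. First I would normalize the situation: by the length reparametrization of Lemma \ref{le:reparametrization} (applied with $\frg=\frf$, using the first part of that statement), I may replace each $\vartheta_n$ by its arc-length reparametrization with respect to the weight $\frf$, so that without loss of generality $[a_n,b_n]=[0,L_n]$ with $L_n=\int_{a_n}^{b_n}\frf(\vartheta_n)|\vartheta_n'|\,\dd t$, the reparametrized curve is $\AC$ on $[0,L_n]$, has the same endpoints, and satisfies $\frf(\vartheta_n(s))\,|\vartheta_n'|(s)\le 1$ a.e. (after a further minor rescaling one can arrange $\frf(\vartheta_n)|\vartheta_n'|\le 1$; what matters is $\int_0^{L_n}\frf(\vartheta_n)|\vartheta_n'|\,\dd s = L_n$). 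Passing to a subsequence, $L_n\to L\le F$. If $L=0$ then $u=v$ and the constant curve works, so assume $L>0$; extend each $\vartheta_n$ to $[0,L]$ by stopping it at its final point (this only adds the value $v_n$, costs nothing in the $\frf$-length, and keeps the curves $1$-Lipschitz on $[0,L]$ once we control $\frf\ge 1$, since $|\vartheta_n'|\le \frf(\vartheta_n)|\vartheta_n'|\le 1$).

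Next I would extract a pointwise $\sigma$-limit. Since $\frf\ge 1$, the bound $\int_0^L \frf(\vartheta_n)|\vartheta_n'|\,\dd s\le F$ gives $\int_0^L|\vartheta_n'|\,\dd s\le F$, hence $\sfd$-equicontinuity of the $\vartheta_n$ on $[0,L]$, and boundedness of $\big(\sfd(\vartheta_n(s),u_*)\big)$ uniformly in $s,n$ (starting from $\sfd(\vartheta_n(0),u)\to 0$). To get $\sigma$-compactness at a dense set of times I use assumption \eqref{basic-ass-3} for $\frf$ (guaranteed by \eqref{eq:20}): for a.e. $s$, $\liminf_n \frf(\vartheta_n(s))<\infty$ along a subsequence by Fatou applied to $\int_0^L \frf(\vartheta_n)\,\dd s\le \int_0^L \frf(\vartheta_n)|\vartheta_n'|\,\dd s\cdot\|\,\cdot\,\|$... — more carefully, since $|\vartheta_n'|\le 1$ is not automatic, I instead argue as in the proof of Theorem \ref{thm:main-compactness}: pick a countable dense set of times where $\liminf_n \frf(\vartheta_n(t_m))<\infty$ (possible by Fatou since $\int_0^L \frf(\vartheta_n)|\vartheta_n'|\,\dd s\le F$ forces $\frf\circ\vartheta_n$ to be finite on a set of positive measure with a uniform integral bound after the arc-length normalization, where indeed $|\vartheta_n'|\le 1$), apply \eqref{basic-ass-3}, diagonalize, and use the compatibility conditions \eqref{weaker-top-1}, \eqref{eq:11} exactly as in Theorem \ref{thm:main-compactness} to obtain $\vartheta_n(s)\weaksigma\vartheta(s)$ for every $s\in[0,L]$, with $\vartheta\in\AC([0,L];X)$, $\sfd(\vartheta(s),\vartheta(s'))\le\int_{s'}^s v(r)\,\dd r$ where $|\vartheta_n'|\weakto v$ in $L^2(0,L)$ (or $L^1$), and $\vartheta(0)=u$, $\vartheta(L)=v$ by the pointwise convergence at the endpoints together with the extension step.

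Finally I would pass to the limit in the weighted length. The point is the lower semicontinuity
\[
\int_0^L \frf(\vartheta(s))\,|\vartheta'|(s)\,\dd s\ \le\ \liminf_{n\to\infty}\int_0^L \frf(\vartheta_n(s))\,|\vartheta_n'|(s)\,\dd s\ \le\ F.
\]
For this I would combine: (i) the weak lower semicontinuity of $\sfd$ along $\sigma$-convergence, which gives $|\vartheta'|\le v$ a.e. with $v$ the weak limit of $|\vartheta_n'|$; (ii) the $\sigma$-lower semicontinuity of $\frf$ on $\sfd$-bounded sets from \eqref{basic-ass-1}, applied along the pointwise-convergent sequence $\vartheta_n(s)\weaksigma\vartheta(s)$ (which is $\sfd$-bounded uniformly), yielding $\frf(\vartheta(s))\le\liminf_n\frf(\vartheta_n(s))$; (iii) a Ioffe-type / Reshetnyak-type lower semicontinuity result for integral functionals $\int \ell(\vartheta_n(s),|\vartheta_n'|(s))\,\dd s$ with integrand $\ell(x,r)=\frf(x)r$ convex in $r$ and lower semicontinuous in the relevant sense in $x$ — the cleanest route is to reduce to an integral over $[0,L]$ against the weak-$*$ limit of the measures $|\vartheta_n'|(s)\,\Leb 1$, dominating $\frf$ from below by truncations $\frf\land k$ which are bounded, then sending $k\to\infty$ by monotone convergence. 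The main obstacle is precisely this last lower-semicontinuity step, because $\frf$ is only sequentially $\sigma$-lower semicontinuous on $\sfd$-bounded sets (not globally, not along arbitrary sequences) and the weight multiplies $|\vartheta_n'|$, so one cannot simply invoke a textbook result; the resolution is to work with the truncated weights $\frf_k=\frf\land k$, use that $\vartheta_n(\cdot)$ is $\sfd$-bounded uniformly in $n$ and $s$ so that \eqref{basic-ass-1} applies at each fixed $s$, combine with Fatou in $s$ and the weak $L^1$-convergence $|\vartheta_n'|\weakto v\ge|\vartheta'|$, and finally let $k\uparrow\infty$. Once the inequality above is in hand, $\vartheta$ connects $u$ to $v$ and satisfies $\int_0^L\frf(\vartheta)|\vartheta'|\,\dd s\le F$, which is the desired conclusion (after reparametrizing back to an arbitrary interval $[a,b]$ if one wishes).
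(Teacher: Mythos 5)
Your overall architecture (reparametrize, extract a pointwise $\sigma$-limit via Theorem~\ref{thm:main-compactness}, then pass to the limit in the weighted length) is the same as the paper's, and your proposal is essentially completeable, but it takes a needlessly hard path at the final step.

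The paper reparametrizes by \emph{ordinary arc-length}, i.e.~the first part of Lemma~\ref{le:reparametrization} with $\frg\equiv 1$, so that $|\vartheta_n'|\equiv 1$ a.e.\ and, since $\frf\ge1$, $b_n\le\int_0^{b_n}\frf(\vartheta_n)\,\dd s\le F$. Then Theorem~\ref{thm:main-compactness} (applied with $\frf$ in place of $\phi$) produces a $1$-Lipschitz limit curve $\vartheta$ with $|\vartheta'|\le 1$, and the key inequality collapses to a pure Fatou statement:
\begin{equation*}
\int_0^b\frf(\vartheta)|\vartheta'|\,\dd s \ \le\ \int_0^b\frf(\vartheta)\,\dd s\ \le\ \liminf_k\int_0^{b_{n_k}}\frf(\vartheta_{n_k})\,\dd s\ \le\ F,
\end{equation*}
where the second inequality is exactly conclusion \eqref{eq:14} of Theorem~\ref{thm:main-compactness}. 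There is no product of a varying weight with a varying velocity to control.

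You instead (despite citing ``the first part'' of Lemma~\ref{le:reparametrization}) describe the $\frf$-weighted reparametrization, which normalizes $\frf(\vartheta_n)|\vartheta_n'|\equiv 1$ rather than $|\vartheta_n'|\equiv 1$. This still gives $|\vartheta_n'|\le 1$, so the compactness step goes through, but it leaves you needing the joint lower semicontinuity of $\int\frf(\vartheta_n)|\vartheta_n'|$ with both factors varying — which, as you yourself flag, is ``the main obstacle.'' Your truncation-plus-Fatou fix can be pushed through (truncate to $\frf_k=\frf\wedge k$, use reverse Fatou on $(\frf_k(\vartheta)-\frf_k(\vartheta_n))^+|\vartheta_n'|$, weak convergence of $|\vartheta_n'|$ against the fixed function $\frf_k(\vartheta)\in L^\infty$, then monotone convergence in $k$), but it is an Ioffe-type argument imported to handle a difficulty that the right reparametrization removes entirely. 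If you simply set $|\vartheta_n'|\equiv 1$ from the start, your plan reduces to the paper's three-line proof and the entire third paragraph of your write-up becomes unnecessary.
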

\begin{proof}
  By the reparametrization technique 
  stated in Lemma 
  \ref{le:reparametrization} it is not restrictive to suppose
  that 
  \begin{equation}
    \label{eq:25}
    a_n=0,\ |\vartheta_n'|\equiv 1\quad \text{$\Leb 1$-a.e.~in
    }  (0,b_n),  \quad
    b_n\le \int_0^{b_n}\frf(\vartheta_n)\,\dd s\le F.
  \end{equation}
  Applying the compactness Theorem \ref{thm:main-compactness}
  (with $\frf$ instead of $\phi$) and setting
  $b:=\liminf_{n\to\infty}b_n$, 
  we find a suitable subsequence
  $k\mapsto n_k$ and a limit $1$-Lipschitz curve $\vartheta$ defined
  in $[0,b]$  such that 
  \begin{gather*}
    \vartheta_{n_k}(s)\weaksigma \vartheta(s)\quad\text{for every }s\in
    [0,b),\quad
    \vartheta_{n_k}( b_{n_k} )=v_{n_k}  \weaksigma \vartheta(b)=v,\\
    \int_0^b \frf(\vartheta)|\vartheta'|\,\dd s\le 
    \int_0^b \frf(\vartheta)\,\dd s\le
    \liminf_{k\to\infty}\int_0^{b_{n_k}} \frf(\vartheta_{n_k})\,\dd s\le F. \qedhere
  \end{gather*}
\end{proof}
 With our next result we examine the relationships between the properties of absolute continuity w.r.t.\ $\sfd$ and
$\sfd_\frf$, and the related metric derivatives.  \UUU We hence use
\EEE the notation 
$ |u'|_\sfd $ for the metric derivative w.r.t.\ $\sfd$, to distinguish it from  the metric derivative w.r.t.\ $\sfd_\frf$. 
\begin{proposition}
\label{lemma:dphi}
If \eqref{eq:20} holds, 
for every $u_0,u_1\in X$ at finite $\sfd_\frf$-distance
the $\inf$ in \eqref{Finsler-distance} is attained,
$\sfd_\frf$ is lower semicontinuous in $X\times X$ 
with respect to the product
topology induced by $\sigma$, and 
$(X,\sfd_\frf)$ is a complete extended metric space.
%
%
Moreover,
\begin{align}
\label{ac-curves}
\begin{aligned}
&
 u \in \AC([a,b]; (X,\sfd_\frf)) 
 \quad \text{ if and only if } \quad  u \in \AC([a,b]; (X,\sfd)) 
 \text{ and } (\frf\circ u)\, |u'|_\sfd \in L^1(a,b)
  \\
 &
 \text{with, in this case,}
\quad
|u'|_{\sfd_\frf}(t) =\frf(u(t)) |u'|_\sfd(t) \qquad \foraa\, t \in (0,T).
 \end{aligned}
\end{align}
\end{proposition}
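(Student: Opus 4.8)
\textbf{Proof strategy for Proposition \ref{lemma:dphi}.}
The plan is to prove the statement in two halves. First I would establish existence of minimal curves for $\sfd_\frf$, lower semicontinuity of $\sfd_\frf$, and completeness of $(X,\sfd_\frf)$; then I would prove the absolute-continuity characterization \eqref{ac-curves}. For the first half the essential input is Lemma \ref{le:basic}. To show the infimum in \eqref{Finsler-distance} is attained when $\sfd_\frf(u_0,u_1)<\infty$, I would take a minimizing sequence $(\vartheta_n)$ of curves connecting $u_0$ to $u_1$ with $\int \frf(\vartheta_n)|\vartheta_n'|\,\dd t\to\sfd_\frf(u_0,u_1)$, apply Lemma \ref{le:basic} with $(u_n,v_n)\equiv(u_0,u_1)$ (constant, hence $\sigma$-convergent), and obtain a limit curve $\vartheta$ connecting $u_0$ to $u_1$ with $\int\frf(\vartheta)|\vartheta'|\,\dd t\le \sfd_\frf(u_0,u_1)$, which forces equality. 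For lower semicontinuity of $\sfd_\frf$: given $(u_n,v_n)\weaksigma(u,v)$ with $\liminf\sfd_\frf(u_n,v_n)=:F<\infty$, pass to a subsequence realizing the liminf, pick (almost) optimal connecting curves $\vartheta_n$ so that $\int\frf(\vartheta_n)|\vartheta_n'|\,\dd t\le \sfd_\frf(u_n,v_n)+1/n$, and again invoke Lemma \ref{le:basic} to produce a connecting curve between $u$ and $v$ with cost $\le F$, whence $\sfd_\frf(u,v)\le F$. Completeness then follows: a $\sfd_\frf$-Cauchy sequence is $\sfd$-Cauchy by \eqref{stronger-distance}, hence $\sfd$-convergent (and $\sigma$-convergent) to some $u$ since $(X,\sfd)$ is complete; applying the just-proved lower semicontinuity to the pairs $(u_n,u_m)$ as $m\to\infty$ and then $n\to\infty$ gives $\sfd_\frf(u_n,u)\to0$.

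For the characterization \eqref{ac-curves}, the $(\Leftarrow)$ direction is the easy one: if $u\in\AC([a,b];(X,\sfd))$ with $(\frf\circ u)|u'|_\sfd\in L^1(a,b)$, then for $s<t$ the restriction $u|_{[s,t]}$ is an admissible competitor in \eqref{Finsler-distance}, so $\sfd_\frf(u(s),u(t))\le\int_s^t\frf(u(r))|u'|_\sfd(r)\,\dd r$, which shows $u\in\AC([a,b];(X,\sfd_\frf))$ with $|u'|_{\sfd_\frf}\le\frf(u)|u'|_\sfd$ a.e. For the $(\Rightarrow)$ direction, suppose $u\in\AC([a,b];(X,\sfd_\frf))$. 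Since $\sfd_\frf\ge\sfd$, immediately $u\in\AC([a,b];(X,\sfd))$. The delicate point is the reverse inequality $|u'|_{\sfd_\frf}(t)\ge\frf(u(t))|u'|_\sfd(t)$ a.e. Here I would argue pointwise at a Lebesgue point $t_0$ of $|u'|_{\sfd_\frf}$, $|u'|_\sfd$ and a point of $\sfd$-continuity: for each small $h>0$ take a nearly optimal curve $\vartheta_h\in\AC([0,L_h];X)$ connecting $u(t_0)$ to $u(t_0+h)$, parametrized by $\sfd$-arclength so $|\vartheta_h'|_\sfd\equiv1$, with $\int_0^{L_h}\frf(\vartheta_h)\,\dd s\le\sfd_\frf(u(t_0),u(t_0+h))+h^2$. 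The length $L_h\ge\sfd(u(t_0),u(t_0+h))$, and $L_h\to0$ as $h\to0$ by the $\sfd_\frf$-absolute continuity bound $L_h\le\sfd_\frf(u(t_0),u(t_0+h))\le\int_{t_0}^{t_0+h}|u'|_{\sfd_\frf}$. The curves $\vartheta_h$ converge (after reparametrization on a common interval and using $\sfd$-continuity of $u$) uniformly to the constant $u(t_0)$; then $\sigma$-lower semicontinuity of $\frf$ on $\sfd$-bounded sets, i.e.\ assumption \eqref{basic-ass-1}, gives $\liminf_{s}\frf(\vartheta_h(s))\ge\frf(u(t_0))$ along the relevant points, so that $\frac1{L_h}\int_0^{L_h}\frf(\vartheta_h)\,\dd s\ge\frf(u(t_0))-o(1)$. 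Combining, $\sfd_\frf(u(t_0),u(t_0+h))\ge(\frf(u(t_0))-o(1))\sfd(u(t_0),u(t_0+h))-h^2$; dividing by $h$ and letting $h\downarrow0$ yields $|u'|_{\sfd_\frf}(t_0)\ge\frf(u(t_0))|u'|_\sfd(t_0)$.

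Putting the two inequalities together gives $|u'|_{\sfd_\frf}=\frf(u)|u'|_\sfd$ a.e., and in particular $(\frf\circ u)|u'|_\sfd=|u'|_{\sfd_\frf}\in L^1(a,b)$, completing the equivalence. I expect the main obstacle to be the lower bound $|u'|_{\sfd_\frf}\ge\frf(u)|u'|_\sfd$: one must carefully handle the fact that $\frf$ is only sequentially $\sigma$-lower semicontinuous along $\sfd$-bounded sequences (not continuous, and $\sigma$ possibly non-metrizable), so the convergence $\vartheta_h(s_h)\weaksigma u(t_0)$ of the relevant evaluation points has to be extracted correctly, and one needs the $\sfd$-equicontinuity of the reparametrized curves (they are $1$-Lipschitz in $\sfd$) to control them uniformly — this is exactly the kind of argument already packaged in Lemma \ref{le:basic} and Theorem \ref{thm:main-compactness}, which I would lean on rather than redo by hand. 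A minor point to verify is measurability of $t\mapsto\frf(u(t))|u'|_\sfd(t)$, which follows since $\frf\circ u$ is $\sigma$-lower semicontinuous hence Borel along the continuous curve $u$ and $|u'|_\sfd$ is measurable.
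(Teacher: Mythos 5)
Your proposal follows essentially the same route as the paper for both halves of the statement. For the first half (attainment, $\sigma$-lower semicontinuity, completeness) you apply Lemma \ref{le:basic} and the inequality $\sfd_\frf\ge\sfd$ exactly as the paper does, and your detailed three-step completeness argument (Cauchy in $\sfd_\frf$ $\Rightarrow$ Cauchy in $\sfd$ $\Rightarrow$ $\sigma$-limit $u$, then lower semicontinuity of $\sfd_\frf$ applied to $(u_n,u_m)$) is just the unfolding of the paper's one-line remark. The main genuine difference is in the hard direction of \eqref{ac-curves}: the paper does not prove the key lower bound $|u'|_{\sfd_\frf}\ge(\frf\circ u)\,|u'|_\sfd$ but refers it to an external source (\cite[Lemma 6.4]{Rossi-Mielke-Savare08}, adapted), whereas you sketch an actual proof by blow-up at Lebesgue points: taking nearly optimal connecting curves, reparametrizing to $\sfd$-arclength, sending the length $L_h\to0$, and using Fatou together with the $\sigma$-lower semicontinuity of $\frf$ on $\sfd$-bounded sets to bound the average of $\frf$ from below by $\frf(u(t_0))$. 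That argument is correct in spirit and is, in effect, a self-contained version of what the cited lemma accomplishes; the only small points to tidy are that $L_h\le\sfd_\frf(u(t_0),u(t_0+h))+h^2$ rather than $\le\sfd_\frf(u(t_0),u(t_0+h))$ (irrelevant after division by $h$), and one should dispose separately of the trivial Lebesgue points where $|u'|_\sfd(t_0)=0$ and note that $\frf(u(t_0))<\infty$ at a.e.\ remaining point (otherwise the same blow-up, with $\frf$ truncated at level $M$ and $M\to\infty$, would force $|u'|_{\sfd_\frf}(t_0)=\infty$, contradicting integrability).
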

\begin{proof}
The $\sigma$-lower semicontinuity of $\sfd_\frf$ and
the existence of minimizers for \eqref{Finsler-distance}
are immediate consequences of the previous Lemma 
\ref{le:basic}.  Then,
$\sfd_\frf$ is  lower semicontinuous also with respect to $\sfd$:
since $(X,\sfd)$ is complete and $\sfd_\frf\ge\sfd$ we 
obtain the completeness of $(X,\sfd_\frf)$.
\par
As for \eqref{ac-curves}, clearly any absolutely continuous curve $u$ on
$[a,b]$ with $(\frf\circ u)  |u'|_\sfd  \in L^1(a,b)$ is
absolutely continuous with respect to the distance $\sfd_\frf$.
Conversely, for every $u \in \AC([a,b];(X,\sfd_\frf))$ there holds
\begin{equation}
\label{from-rms} |u'|_{\sfd_\frf}(t) \geq \frf(u(t))  |u'|_\sfd (t)
\qquad \foraa\, t \in (a,b).
\end{equation}
This can be shown by adapting the argument from the proof of
\cite[Lemma 6.4]{Rossi-Mielke-Savare08}.
From \eqref{from-rms}, we infer that $(\frf\circ
u) |u'|_\sfd  \in L^1(a,b)$. The converse inequality of
\eqref{from-rms} can be trivially checked, and \eqref{ac-curves}
ensues.
 \end{proof}
 \begin{corollary}
   \label{cor:df-compatible}
   If $(X,\sfd,\sigma)$ is a compatible
   metric-topological space   in the sense of \eqref{basic-assu} and $\frf$ complies with  \eqref{eq:20},
   then also $(X,\sfd_\frf,\sigma)$ is a 
   compatible
   metric-topological space.
 \end{corollary}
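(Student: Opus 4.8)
The plan is to verify the two compatibility axioms (MT1) and (MT2) of \eqref{basic-assu} for the triple $(X,\sfd_\frf,\sigma)$, treating them separately and reducing each to facts that are already available in the excerpt.

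Axiom (MT1), namely the sequential $\sigma\times\sigma$-lower semicontinuity of $\sfd_\frf$ on $X\times X$, is nothing but the lower-semicontinuity statement already proved in Proposition \ref{lemma:dphi} (which rests on Lemma \ref{le:basic}, and hence on the compactness Theorem \ref{thm:main-compactness} applied with $\frf$ in place of $\phi$, using the compatibility conditions \eqref{weaker-top-1}--\eqref{eq:11} for $\sfd$). In detail: given $(u_n,v_n)\weaksigma(u,v)$, one passes to a subsequence realizing $\liminf_n\sfd_\frf(u_n,v_n)$; if this liminf equals $+\infty$ there is nothing to prove, and otherwise Lemma \ref{le:basic}, applied to (almost-)minimizing curves for $\sfd_\frf(u_n,v_n)$ — which exist by the existence-of-minimizers part of Proposition \ref{lemma:dphi} — produces a curve $\vartheta\in\AC([a,b];X)$ connecting $u$ to $v$ with $\int_a^b\frf(\vartheta)|\vartheta'|\,\dd t\le\liminf_n\sfd_\frf(u_n,v_n)$, whence $\sfd_\frf(u,v)\le\liminf_n\sfd_\frf(u_n,v_n)$.

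For axiom (MT2) the decisive point is the monotonicity $\sfd_\frf\ge\sfd$ recorded in \eqref{stronger-distance} (a consequence of $\frf\ge1$), which immediately gives $\sfd_\frf(y,V)=\inf_{v\in V}\sfd_\frf(y,v)\ge\inf_{v\in V}\sfd(y,v)=\sfd(y,V)$ for every $y\in X$ and every $V\subset X$. Thus, fixing a $\sigma$-open set $U$ and a point $x\in U$, property (MT2) for the compatible space $(X,\sfd,\sigma)$ provides a $\sigma$-open neighborhood $V$ of $x$ and $\delta>0$ with $\sfd(y,V)<\delta\Rightarrow y\in U$; the very same pair $(V,\delta)$ then satisfies $\sfd_\frf(y,V)<\delta\Rightarrow\sfd(y,V)<\delta\Rightarrow y\in U$, which is (MT2) for $\sfd_\frf$.

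Having checked (MT1) and (MT2), I would conclude that $(X,\sfd_\frf,\sigma)$ is a compatible metric-topological space — in the extended-distance sense, $\sfd_\frf$ being a complete extended distance by Proposition \ref{lemma:dphi}. The only genuinely substantial ingredient is the lower-semicontinuity axiom (MT1), and this is precisely the content of Lemma \ref{le:basic}/Proposition \ref{lemma:dphi} already established; axiom (MT2) transfers ``for free'' from $\sfd$ to the larger distance $\sfd_\frf$. Consequently I do not expect any real obstacle beyond correctly invoking these earlier results — the only mild care needed is to justify the selection of (nearly) optimal connecting curves used in the (MT1) argument, which is exactly why the minimizers part of Proposition \ref{lemma:dphi} is invoked.
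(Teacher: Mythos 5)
Your proof is correct, and since the paper states Corollary \ref{cor:df-compatible} without proof (it is meant to follow immediately from Proposition \ref{lemma:dphi} together with the inequality $\sfd_\frf\ge\sfd$), your argument supplies exactly the intended justification: (MT1) is Proposition \ref{lemma:dphi}'s lower-semicontinuity statement, while (MT2) transfers from $\sfd$ to $\sfd_\frf$ because $\sfd_\frf(y,V)\ge\sfd(y,V)$. Your side remark that $\sfd_\frf$ is only an \emph{extended} distance (so compatibility is understood in that generalized sense) is the right caveat to flag, and it is consistent with the paper's own language.
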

We conclude this section with
the following equivalent representation  for $\sfd_\frf$. 
\begin{lemma}
\label{l:prel-Lagra}
For every $u_0,u_1\in X$
\begin{equation}
\label{lagrangian}
\begin{aligned}
&
\sfd_\frf(u_0,u_1)= \inf \left\{ \int_{s_0}^{s_1} \left(
    \frac12|\teta'|^2+ \frac12 \frf^2(\teta) \right)  \dd  s :
  \  \teta \in \AC^2([s_0,s_1];X),\ \teta(s_i)=u_i
\right \},
 \end{aligned}
 \end{equation}
and the infimum is attained whenever it is finite.
\end{lemma}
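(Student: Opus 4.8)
\textbf{Plan of proof for Lemma \ref{l:prel-Lagra}.}
The statement asserts that the Finsler extended distance $\sfd_\frf$ admits the alternative "quadratic-action" representation \eqref{lagrangian}, with the infimum attained when finite. The strategy is the classical one used to pass from a length functional to an energy (action) functional, by exploiting the reparametrization machinery of Lemma \ref{le:reparametrization}, together with the Cauchy--Schwarz inequality.

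First I would prove the inequality $\sfd_\frf(u_0,u_1)\le \inf\{\ldots\}$. Given any $\vartheta\in\AC^2([s_0,s_1];X)$ with $\vartheta(s_i)=u_i$, the elementary inequality $ab\le\frac12 a^2+\frac12 b^2$ applied with $a=|\vartheta'|(s)$, $b=\frf(\vartheta(s))$ gives
\[
\int_{s_0}^{s_1}\frf(\vartheta(s))|\vartheta'|(s)\,\dd s\le
\int_{s_0}^{s_1}\Big(\tfrac12|\vartheta'|^2(s)+\tfrac12\frf^2(\vartheta(s))\Big)\,\dd s,
\]
and since the left-hand side bounds $\sfd_\frf(u_0,u_1)$ from above by definition \eqref{Finsler-distance} (after a trivial affine reparametrization onto $[s_0,s_1]$), taking the infimum over all such $\vartheta$ yields the claimed bound.

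For the reverse inequality, I would start from a curve $\vartheta\in\AC([a,b];X)$ connecting $u_0$ to $u_1$ with $\int_a^b\frf(\vartheta)|\vartheta'|\,\dd t$ close to $\sfd_\frf(u_0,u_1)$ (if $\sfd_\frf(u_0,u_1)=\infty$ there is nothing to prove, and if it is finite but the integral is zero the two endpoints coincide and both sides vanish, so I may assume the integral is a positive finite number $S$). Applying the "energy reparametrization" part of Lemma \ref{le:reparametrization} with $\frg=\frf$ (note $\frf\ge 1>0$, so the hypothesis $\int_a^b\frac1{\frf(\vartheta)}|\vartheta'|\,\dd t<\infty$ holds), one obtains a reparametrized curve $\vartheta_\frf\in\AC^2([0,S];X)$ with the same endpoints satisfying, by \eqref{eq:47bis}--\eqref{eq:47},
\[
\int_0^S\Big(\tfrac12|\vartheta_\frf'|^2+\tfrac12\frf^2(\vartheta_\frf)\Big)\,\dd s
=\int_0^S\frf(\vartheta_\frf)|\vartheta_\frf'|\,\dd s
=\int_a^b\frf(\vartheta)|\vartheta'|\,\dd t,
\]
where the first equality uses that $|\vartheta_\frf'|=\frf(\vartheta_\frf)$ a.e. Hence $\vartheta_\frf$ is an admissible competitor in \eqref{lagrangian} with action equal to $\int_a^b\frf(\vartheta)|\vartheta'|\,\dd t$; letting this last quantity tend to $\sfd_\frf(u_0,u_1)$ shows $\inf\{\ldots\}\le\sfd_\frf(u_0,u_1)$, completing the equality.

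Finally, for the attainment of the infimum when it is finite: by Proposition \ref{lemma:dphi} the infimum in \eqref{Finsler-distance} defining $\sfd_\frf(u_0,u_1)$ is attained by some optimal curve $\vartheta^*$. Applying the reparametrization argument above to $\vartheta^*$ produces $\vartheta^*_\frf\in\AC^2([0,S];X)$ whose action equals $\int\frf(\vartheta^*)|{\vartheta^*}'|=\sfd_\frf(u_0,u_1)=\inf\{\ldots\}$, so $\vartheta^*_\frf$ is a minimizer in \eqref{lagrangian}. The only point that needs slight care is the degenerate case $u_0=u_1$ (take the constant curve, action $\tfrac12\frf^2(u_0)\cdot 0=0$ only if one allows $s_0=s_1$; more cleanly, note that the infimum over $[s_0,s_1]$ with $s_1-s_0$ shrinking forces the infimum to be $0=\sfd_\frf(u_0,u_0)$, attained trivially). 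I expect the main obstacle to be purely bookkeeping: making sure the reparametrization in Lemma \ref{le:reparametrization} applies verbatim (the hypothesis $\frf\ge 1$ is exactly what guarantees $1/\frf\le 1$ and finiteness of the auxiliary integral), and handling the trivial/degenerate endpoint cases without circularity. No genuinely hard analytic step is involved once Lemma \ref{le:reparametrization} and Proposition \ref{lemma:dphi} are in hand.
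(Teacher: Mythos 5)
Your proposal is correct and follows essentially the same route as the paper: the inequality $\le$ via the elementary bound $ab\le\tfrac12 a^2+\tfrac12 b^2$ (the paper simply calls this the Cauchy inequality), and the inequality $\ge$ together with attainment via the energy reparametrization $\vartheta\mapsto\vartheta_\frf$ of Lemma \ref{le:reparametrization}, applied to the optimal curve for $\sfd_\frf$ whose existence is guaranteed by Proposition \ref{lemma:dphi}. The paper states the argument more compactly by working directly with the optimal curve rather than passing through near-minimizers first, but this is a cosmetic difference, not a change in substance.
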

\begin{proof}
By the Cauchy inequality it is immediate to check the inequality $\le$ in \eqref{lagrangian}.
In order to prove the converse inequality let us suppose that $\sfd_\frf(u_0,u_1)<\infty$
and let us choose an optimal curve $\vartheta\in \AC([0,1];(X,\sfd))$
connecting $u_0$ to $u_1$;
replacing $\vartheta$ with the reparametrized
curve $\vartheta_\frf$ provided by \eqref{eq:47} from 
Lemma \ref{le:reparametrization} we conclude.
\end{proof}
\paragraph{\bf Properties of the value function with respect to the Finsler distance induced by the energy}
Our next result shows that  the value function 
enjoys finer properties  with respect to $\sfd_\phi$ from \eqref{induced-phi}. Namely,
$\Ve$ is 
indeed continuous w.r.t.\ $\sfd_\phi$ under the sole \UUU LSCC Property \EEE \ref{basic-ass}, whereas 
we have been able to prove its 
 continuity w.r.t.\ $\sfd$, along sequences with bounded energy, only under the additional $\lambda$-convexity condition 
\eqref{phi-conv}. Furthermore, $G_\eps$ is a \emph{strong} upper gradient, in the \emph{standard} sense of \eqref{e:2.3},  w.r.t.\ $\sfd_\phi$, while $G_\eps$ is just an \emph{$L^1$-moderated} upper gradient (cf.\ \eqref{eq:45}) w.r.t.\ the distance $\sfd$. 
\begin{theorem}
\label{cont-wrt-dphi}
Assume \UUU the LSCC \EEE Property \ref{basic-ass}. Then,
\begin{enumerate}
\item for all $\UUU (x_n)_n \EEE, \, x \in X$ we have that 
   $\sfd_\phi(x_n, x) \to 0$ as $n\to\infty$ implies $\lim_{n\to\infty}\Ve(x_n) = \Ve(x)$; 
\item  for every curve $u\in \AC([a,b]; (X,\sfd_\phi))$ the map $t\mapsto \Ve(u(t))$ is absolutely continuous, and there holds
\begin{equation}
\label{fine..}
\left| \frac{\dd}{ \dd t } \Ve(u(t)) \right| \leq G_\eps(u(t))|u'|(t) \qquad \foraa\, t \in (a,b).
\end{equation}
\end{enumerate}
\end{theorem}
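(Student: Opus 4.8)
The plan is to reduce both statements to results already available for the distance $\sfd$, namely Lemma \ref{l:3.1} is \emph{not} assumed here, so instead I would rely on the comparison inequalities \eqref{eq:48}--\eqref{eq:48bis} from Lemma \ref{le:comparisonV}, together with the characterization \eqref{ac-curves} of $\sfd_\phi$-absolutely continuous curves from Proposition \ref{lemma:dphi}. The key observation is that a curve $u\in\AC([a,b];(X,\sfd_\phi))$ automatically satisfies $\phi\circ u\in L^1(a,b)$: indeed by \eqref{ac-curves} the function $(\phi(u)\lor 1)^{1/2}|u'|_\sfd = |u'|_{\sfd_\phi}$ is in $L^1(a,b)$, hence $\phi(u(t))\le (\phi(u(t))\lor 1)\le \big((\phi(u(t))\lor1)^{1/2}\big)^2$ and one still needs an $L^1$ bound — more precisely one splits $[a,b]$ into the set where $\phi\circ u\le 1$ and its complement, on which $\phi(u)\le |u'|_{\sfd_\phi}^2/|u'|_\sfd^2$; a slightly more careful argument using that $|u'|_{\sfd_\phi}=\frf(u)|u'|_\sfd$ is integrable and $\frf\ge1$ gives $\frf(u)|u'|_\sfd\ge |u'|_\sfd$, so $u\in\AC([a,b];(X,\sfd))$ too, and moreover $\int_a^b \phi(u)\,|u'|_\sfd\,\dd t\le \int_a^b \frf^2(u)|u'|_\sfd\,\dd t<\infty$ provided $|u'|_\sfd$ is bounded away from zero; by reparametrizing via Lemma \ref{le:reparametrization} to arclength (with respect to $\sfd_\phi$) we may assume $|u'|_{\sfd_\phi}\equiv 1$, whence $|u'|_\sfd=1/\frf(u)\le 1$ and then $\phi(u)|u'|_\sfd\le\frf^2(u)/\frf(u)=\frf(u)\in L^1$, so $\phi\circ u\in L^1(a,b)$ after all.

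For part (2), with $\phi\circ u\in L^1(a,b)$ in hand I would run verbatim the argument of Theorem \ref{thm:upper-gradient}: inequalities \eqref{eq:48} and \eqref{eq:48bis} yield that $t\mapsto \rme^{-t/\eps}V_\eps(u(t))$ minus an integral of $\frac12|u'|^2+\frac1\eps\phi_+(u)$ is monotone, hence $V_\eps\circ u$ is locally bounded, then the estimates \eqref{eq:48tris}--\eqref{eq:48quater} give local absolute continuity, and dividing \eqref{eq:48}, \eqref{eq:48bis} by $s-r$ and passing to the limit at differentiability points produces $|\tfrac{\dd}{\dd t}V_\eps(u(t))|\le\tfrac12|u'|^2(t)+\tfrac12 G_\eps^2(u(t))$. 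To upgrade this to the sharp bound \eqref{fine..}, I would use the scaling/reparametrization freedom: applying the chain-rule inequality just obtained to the curve $u_\frg$ reparametrized so that $\frg(u_\frg)\equiv|u_\frg'|$ (Lemma \ref{le:reparametrization}, eq.\ \eqref{eq:47bis}) with $\frg=G_\eps$ — legitimate since $G_\eps\circ u\in L^1$ along the curve as $\phi,V_\eps$ are integrable — the two quadratic terms coincide and Young's inequality becomes an equality, giving $|(V_\eps\circ u)'|\le G_\eps(u)|u'|$; since this is a pointwise inequality it is invariant under reparametrization, so it holds for the original $u$ as well. The absolute continuity at the endpoints $a,b$ follows from lower semicontinuity of $V_\eps\circ u$ (Lemma \ref{lemma1}) combined with the one-sided limits coming from \eqref{eq:48}, \eqref{eq:48bis}, exactly as in the proof of Theorem \ref{thm:upper-gradient}.

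For part (1), given $x_n\to x$ in $\sfd_\phi$ I would first note $\limsup_n\phi(x_n)<\infty$: otherwise $\sfd_\phi(x_n,x)\ge \sfd(x_n,x)$ combined with the fact that any curve connecting $x_n$ to $x$ must cross high sublevels of $\phi$ forces the Finsler length to blow up — more simply, $\sfd_\phi(x_n,x)\ge |\phi(x_n)^{1/2}-\phi(x)^{1/2}|$ by a one-dimensional comparison along the connecting curve, using $\frg=(\phi\lor1)^{1/2}$ and that the length of the image of $\phi$ under the curve is at least the variation; this shows $\phi(x_n)$ stays bounded. Also $\sup_n\sfd(x_n,x)<\infty$. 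Then lower semicontinuity of $V_\eps$ on $\sfd$-bounded sets (Lemma \ref{lemma1}) gives $\liminf_n V_\eps(x_n)\ge V_\eps(x)$. For the reverse, I construct recovery competitors: pick an optimal curve $u\in\mathcal M_\eps(x)$ and, for each $n$, prepend an optimal $\sfd_\phi$-geodesic $\gamma_n$ from $x_n$ to $x$ (existing by Proposition \ref{lemma:dphi}), reparametrized by Lemma \ref{le:reparametrization} to a curve on $[0,\tau_n]$ with $\tfrac12|\gamma_n'|^2+\tfrac12\frg^2(\gamma_n)$ integrating to $\sfd_\phi(x_n,x)\to0$, whence (using \eqref{metric-wed}, $\rme^{-t/\eps}\le1$, and $\phi\le\frg^2$) the contribution of this initial piece to $\calI_\eps$ tends to $0$; concatenating with $u$ and invoking the $\sigma$-lower semicontinuity of $V_\eps$ along the convergence $x_n\to x$ yields $\limsup_n V_\eps(x_n)\le \calI_\eps[u]=V_\eps(x)$, hence convergence. \textbf{The main obstacle} will be the integrability bookkeeping in the first step — rigorously ensuring $\phi\circ u\in L^1$ along $\sfd_\phi$-absolutely continuous curves despite possible vanishing of $|u'|_\sfd$ on large sets — which is why the reparametrization lemma is indispensable and must be applied before any of the $V_\eps$-estimates.
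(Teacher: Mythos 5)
Your Part~(1) follows the paper's strategy almost verbatim: the lower bound $\liminf_n V_\eps(x_n)\ge V_\eps(x)$ comes from Lemma~\ref{lemma1} (using $\sfd\le\sfd_\phi$), and the upper bound is obtained by prepending a reparametrized $\sfd_\phi$-optimal curve from $x_n$ to $x$ onto a minimizer for $x$, using Lemma~\ref{l:prel-Lagra} to bound its contribution to $\calI_\eps$ by $C\,\sfd_\phi(x_n,x)\to0$. The side argument you propose to deduce $\limsup_n\phi(x_n)<\infty$ is both unnecessary and dubious (the inequality $\sfd_\phi(x_n,x)\ge|\phi(x_n)^{1/2}-\phi(x)^{1/2}|$ is not established by a ``one-dimensional comparison''), but since it is not needed this is harmless.

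There is, however, a genuine gap in Part~(2). To invoke Theorem~\ref{thm:upper-gradient} you need $\phi\circ\tilde u\in L^1$ and $\tilde u\in\AC^2$ (both with respect to \emph{Lebesgue} measure) for a reparametrization $\tilde u$ of $u$. You propose arclength reparametrization with respect to $\sfd_\phi$, i.e.\ $|\tilde u'|_{\sfd_\phi}\equiv1$, hence $|\tilde u'|_\sfd=1/\frf(\tilde u)$, and claim that then $\phi(\tilde u)\,|\tilde u'|_\sfd\le\frf(\tilde u)\in L^1$ and conclude $\phi\circ\tilde u\in L^1$. Neither step is justified: after this reparametrization the only identity available is $\frf(\tilde u)\,|\tilde u'|_\sfd\equiv1$, which gives no control whatsoever on $\int\frf(\tilde u)\,\dd s$. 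Concretely, take $X=(0,1]$ with the Euclidean metric, $\phi(x)=1/x$ (so $\frf(x)=x^{-1/2}$ near $0$), and $u(t)=1-t$, $t\in[0,1)$; its $\sfd_\phi$-length is $\int_0^1\frf(u)|u'|\dd t=2<\infty$. Arclength reparametrization in $\sfd_\phi$ gives $\tilde u(s)=(1-s/2)^2$ on $[0,2)$, for which $\frf(\tilde u(s))=(1-s/2)^{-1}\notin L^1(0,2)$ and $\phi(\tilde u(s))=(1-s/2)^{-2}\notin L^1(0,2)$: both of your claims fail. The reparametrization that does work, and the one the paper actually uses, is the one of Lemma~\ref{le:reparametrization} with $\frg=\frf=(1\vee\phi)^{1/2}$: then $|\tilde u'|_\sfd=\frf(\tilde u)$ a.e.\ and \eqref{eq:47} gives
\begin{equation*}
\int_0^{S}\frf^2(\tilde u(s))\,\dd s=\int_0^{S}|\tilde u'|^2(s)\,\dd s=\int_a^b\frf(u(t))\,|u'|(t)\,\dd t<\infty,
\end{equation*}
i.e.\ $1\vee\phi(\tilde u)\in L^1(0,S)$ and $\tilde u\in\AC^2([0,S];X)$. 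Only then are the hypotheses of Theorem~\ref{thm:upper-gradient} met. (In the example above, this reparametrization has $\tilde u(s)=(1-\tfrac32 s)^{2/3}$ on $[0,2/3)$ and $\int_0^{2/3}\phi(\tilde u)\,\dd s=2<\infty$, as it must.) So the choice of $\frg$ is not a matter of convenience; the arclength-in-$\sfd_\phi$ variant you pick genuinely breaks the argument. Your final step — a second reparametrization with $\frg=G_\eps$ to turn the Young-type bound from \eqref{ch-rul-form} into the sharp bound \eqref{fine..} — is a reasonable idea (and it makes explicit a step the paper leaves implicit), but it must be carried out with the mollified weight $G_\eps+\delta(1\lor|V_\eps|)$ as in the proof of Corollary~\ref{cor:criterium}, since $G_\eps$ may vanish along the curve and the quantity $S=\int\frac{|u'|}{G_\eps(u)}\dd t$ would then be infinite.
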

\begin{proof}
$\vartriangleright (1)$: In order to show the continuity of $\Ve$, we
\UUU argue as in \EEE the proof of Lemma \ref{l:3.1}. Namely, for a 
given minimizer $\umin \in \ACini x\eps$ for $\calI_\eps$ (which exists since $x\in \mathrm{D}(\phi)$),
we exhibit a sequence $(u_n)_n$ with $u_n  \in 
 \ACini {x_n}\eps$ 
 for every $n\in \N$, 
 fulfilling 
 \eqref{gamma-conv-argum}: in fact, observe that $\liminf_{n\to\infty}\Ve(x_n)\geq \Ve(x)$ again  follows from Lemma \ref{lemma1} as $\sfd_\phi$-convergence 
 implies $\sigma$-convergence via \eqref{stronger-distance}. 
To construct $(u_n)_n$,  for every $n \in \N$  we consider an optimal curve $\teta_n \in \AC ([0,1];X)$ for
$\sfd_\phi(x_n,x)$.  We exploit \eqref{eq:47} from Lemma \ref{le:reparametrization} to 
reparametrize the curves $(\teta_n)_n$ to curves
$\tilde{\teta}_n : [0,\tau_n] \to X$ such that
\begin{equation}
\label{connecting-teta-n}
\int_0^{\tau_n} |\tilde{\teta}_n'|^2(s) \dd s  = \int_0^{\tau_n} (1\vee \phi(\tilde\teta_n(s))) \dd s = \int_0^1 |\teta_n'|(t) (1\vee \phi(\teta_n(t)))^{1/2} \dd t = \dphi(\ini_n,\ini)\to 0\,.
\end{equation}
Hence, we define $u_n: [0,\infty) \to X$ by
\[
u_n(t) := \begin{cases}
\tilde\teta_n(t) & t \in [0,\tau_n],
\\
\umin(t)  & t \in [\tau_n,\infty),
\end{cases}
\]
so that
\[
 \mathcal{I}_\eps[u_n] = \int_0^{\tau_n} \ell_\eps (t,\tilde\teta_n(t),|\tilde\teta_n'|(t)) \dd t +
 \int_{\tau_n}^{\infty} \ell_\eps (t,\umin(t),|\umin'|(t)) \dd t  =:
 I_1+I_2\,.
\]
Now,
\[
\begin{aligned}
I_1  &   =\int_0^{\tau_n} e^{-t/\eps} \left( \frac12 |\tilde\teta_n'|^2(t) + \frac1{\eps}\phi(\tilde\teta_n(t))\right) \dd  t\\ & 
\leq  C  \int_0^{\tau_n} \left(  |\tilde{\teta}_n'|^2(t) + (1\vee \phi(\tilde\teta_n(t)))  \right) \dd t = C \dphi(\ini_n,\ini) \to 0
\end{aligned}
\]
 as $n \to \infty$.
 On the other hand, we clearly have that $I_2$ converges to
$\mathcal{I}_\eps[\umin] $ as $n \to \infty$ as $\tau_n\down 0$.  All in all, we have shown that $\limsup_{n\to\infty} \calI_\eps[u_n]\leq \calI_\eps[\umin]$ as desired. 
\\
$\vartriangleright$ (2): Let $u\in \AC([a,b]; (X,\sfd_\phi))$: then, $1 \vee (\phi{\circ} u) \in L^1(a,b)$. 
We again resort to   \eqref{eq:47}  and reparametrize $u$ to a curve $\tilde u : [\tilde a,\tilde b]\to X$ fulfilling \eqref{connecting-teta-n}, so that $\tilde u \in \AC^2([\tilde a,\tilde b];X) $ and $\phi \circ \tilde u \in L^1(\tilde a,\tilde b)$. Therefore, $\Ve \circ \tilde u \in  L^1(\tilde a,\tilde b)$ and $ G_\eps \circ \tilde u \in  L^2(\tilde a,\tilde b)$, so that we are in a position to apply Thm.\ \ref{thm:upper-gradient} and conclude that $s\mapsto \Ve(\tilde u(s))$ is absolutely continuous, with
\[
\left| \frac{\dd}{ \dd s } \Ve(\tilde{u}(s)) \right| \leq G_\eps(\tilde u(s))|\tilde{u}'|(s) \qquad \foraa\, s \in (\tilde a,\tilde b),
\]
which gives \eqref{fine..}. This concludes the proof of (2).
\end{proof}
\EEE

\bibliographystyle{amsalpha}

\def\cprime{$'$} \def\cprime{$'$}
\providecommand{\bysame}{\leavevmode\hbox to3em{\hrulefill}\thinspace}
\providecommand{\MR}{\relax\ifhmode\unskip\space\fi MR }

\end{document}